\documentclass[10pt,leqno]{amsart}
\usepackage{graphicx} 
\usepackage{pgfplots}
\baselineskip=16pt

\usepackage[backend=biber,style=alphabetic,doi=true,isbn=true, maxalphanames=4, 
  minalphanames=4,maxbibnames=99, url = false]{biblatex}
\addbibresource{bibliography.bib}

\usepackage{eucal}

\usepackage[utf8]{inputenc}
\usepackage[english]{babel}
\usepackage{microtype}
\usepackage{mathrsfs}
\usepackage{amsthm, thmtools}
\usepackage{amsmath, centernot}
\usepackage{amssymb}
\usepackage{indentfirst,csquotes}
\usepackage{amscd}
\usepackage{mathtools}
\usepackage{wasysym}
\usepackage{braket}
\usepackage{hyperref}
\usepackage{appendix}
\usepackage{dsfont}
\usepackage{enumitem}
\usepackage{framed}
\usepackage{caption}
\usepackage{subfig}
\usepackage[all]{xy}
\usepackage{xfrac}
\usepackage{hyperref}
\usepackage{color}
\usepackage{bbm}
\usepackage{stmaryrd}
\usepackage{fancyhdr}
\usepackage{tikz-cd}
\usepackage{graphicx}
\usepackage{nomencl}
\usepackage{ esint }
\usepackage{tikz}
\usepackage{multicol}

\DeclareMathAlphabet{\pazocal}{OMS}{zplm}{m}{n}

\numberwithin{equation}{section}

\allowdisplaybreaks

\fancyhf{}
\fancyhead[L]{}
\fancyhead[R]{}
\fancyfoot[L]{}
\fancyfoot[R]{\thepage}

\newtheorem{teorema}{Theorem}[section]
\newtheorem{prop}[teorema]{Proposition}
\newtheorem{co}[teorema]{Corollary}
\newtheorem{lemma}[teorema]{Lemma}

\newtheorem{quest}[teorema]{Question}

\newtheorem{df}[teorema]{Definition}

\newtheorem{oss}[teorema]{Remark}

\newtheorem{ass}[teorema]{Assumption}

\newcommand{\R}{\mathbb{R}}

\newcommand{\Q}{\mathbb{Q}}
\newcommand{\N}{\mathbb{N}}

\newcommand{\PP}{\mathcal{P}}

\newcommand{\X}{\textit{X}}

\newcommand{\supp}{\operatorname{supp}}

\newcommand{\CE}{\operatorname{CE}}

\topmargin= .5cm
\textheight= 20cm
\textwidth= 32cc
\baselineskip=16pt

\evensidemargin= .9cm
\oddsidemargin= .9cm

\newsavebox{\TitleExpr}
\sbox{\TitleExpr}{$(\mathcal{P}_p(\mathbb{R}^d), W_p, Q)$}

\newcommand{\PPpa}{\PP^{\operatorname{pa}}}
\newcommand{\PPN}{\PP^N}
\newcommand{\e}{\mathrm{e}}
\newcommand{\ttm}{\mathtt{m}}
\newcommand{\ttg}{\mathtt{g}}
\newcommand{\tth}{\mathtt{h}}
\newcommand{\ttH}{\mathtt{H}}
\renewcommand{\Q}{Q}

\newcommand{\Ss}{\mathbb{S}}

\DeclareUnicodeCharacter{0308}{\^{}}


\title[The metric measure space of probability measures]{A study of the metric measure space of probability measures via a purely atomic superposition principle}
\author{Alessandro Pinzi}
\date{\today}

\begin{document}

\begin{abstract}
We study the continuity equation on the metric measure space $(\PP_p(X),W_p,\Q)$, when $X$ is either the Euclidean space or a compact, oriented, and boundaryless Riemannian manifold, for some suitable reference measure $\Q \in \PP_p(\PP_p(X))$, which by construction is concentrated over purely atomic measures. In fact, we consider the equation $\partial_tM_t +\operatorname{div}_{\PP}(b_tM_t) = 0$, where $(M_t)_{t\in[0,T]} \subset\PP(\PP(X))$ and $b:[0,T]\times \R^d \times \PP(\R^d) \to \R^d$, assuming that $M_t\ll \Q$ for all $t\in[0,T]$, to then show when the purely atomic property is inherited by the liftings of the curve $M_t$ given by the nested superposition principle in \cite{pinzisavare2025}.

On the Euclidean space, the main assumption is that the $r$-capacity of the diagonal $\Delta \subset \R^d \times \R^d$ is zero with respect to $\nu \otimes \nu$, where $\nu$ is the barycenter of the reference measure $\Q$. We will give sufficient conditions to ensure it, and in particular, thanks to the Nash embedding theorem, this analysis will allow us to extend the main results from the Euclidean space to Riemannian manifolds. 

Finally, we exploit this atomic superposition principle to show the lack of the Sobolev-to-Lipschitz property for functions in $W^{1,p}(\PP_p(X),W_p,Q)$. Then, we complete the analysis showing that, however, the $L^2$-Wasserstein space endowed with suitable reference measure $Q$, satisfies a Bakry--\'Emery curvature condition.
\end{abstract}

\maketitle

{\small
		\keywords{\noindent {\bf Keywords}: superposition principle, random measures, atomic measures, Wasserstein space, metric measure spaces}.
		\par
		\subjclass{\noindent {\bf 2020 MSC:} 60G57, 49Q22, 30L99, 35R06.
			
		}
	}

{\small\tableofcontents}

 \section{Introduction}
In recent years, the problem of endowing the space of probability measures with a ``nice'' reference measure attracted many researchers. 
It has been mostly studied to understand the closure of the associated Dirichlet form and then its corresponding diffusion process, as done in \cite{dello2022dirichlet, delloschiavo2024massive, schiavo2025hellinger, marx2018new, konarovskyi2019modified, von2009entropic, sturm2024wasserstein, ren2024diffusion, ren2024ornstein}. In this paper, we study the continuity equation, and in general absolutely continuous curves of measures, on the metric measure space $(\PP_p(X),W_p,\Q)$, for suitable $p>1$ and reference measure $\Q\in \PP(\PP(X))$ concentrated over $\PP_p(X)$, when $X$ is either the Euclidean space $\R^d$ or a compact, boundaryless, and oriented Riemannian manifold. To our knowledge, this analysis is the first of his kind and makes it possible to consider a wide range of reference measures which include some of the ones built in the aforementioned articles.

The reference measures we consider are built in the following way. Define 
\begin{equation}\label{inf simplex intro}
\bold{T} := \left\{ \bold{a}=(a_i)_{i\in \N} \in [0,1]^\N \ : \ a_{i+1}\leq a_i, \ \sum_{i=1}^{+\infty} a_i = 1 \right\};
\end{equation} 
consider any probability measure $\pi\in \PP(\bold{T})$. Now, given a Polish space $X$ and an atomless probability measure $\nu\in \PP(X)$; define the embedding function  
\begin{equation}\label{eq: embedding atomic}
\begin{aligned}
    \operatorname{em}:\bold{T} \times X^\infty  \to \PP(X),\quad
    \operatorname{em}(\bold{a}, \bold{x})  := \sum_{i=1}^{+\infty} a_i \delta_{x_i},
\end{aligned}    
\end{equation}
to set $\Q_{\pi,\nu}:= (\operatorname{em}_\# \pi \otimes \nu^{\infty})$. In particular, if $\nu\in \PP_p(X)$, then $\Q_{\pi,\nu}$ is concentrated over $\PP_p(X)$. Then, we denote our class of reference measures as
\begin{equation}\label{eq: ref meas class intro}
    \mathcal{Q}(X):= \{\Q_{\pi,\nu} \ : \ \pi \in \PP(\bold{T}), \ \nu\in \PP(X) \text{ atomless}\}.
\end{equation}

It is important to stress the fact that any $Q \in \mathcal{Q}(X)$ is concentrated over the set of \textit{purely atomic probability measures}, that can be defined as $\PPpa(X):= \operatorname{em}(\bold{T}\times X^\infty)$.

We will mainly work with $X = \R^d$. Then, the main results will be transferred to the Riemannian setting exploiting the Nash isometric embedding theorem. So, we proceed by describing the main results and the strategy of the proofs in the Euclidean setting. The details about the Riemannian case can be found in Section \ref{sec: manifold}.

\bigskip 

\noindent \textbf{Continuity equation and superposition principle.}
For a curve of (narrowly continuous) measures $(\mu_t)_{t\in[0,T]}\subset \PP(\R^d)$ and a given Borel vector field $v:[0,T]\times \R^d \to \R^d$, we say that the \textit{continuity equation} $\partial_t\mu_t + \operatorname{div}(v_t\mu_t) = 0$ is satisfied if 
\begin{equation}\label{eq: CE intro}
    \int_0^T \int_{\R^d} |v_t(x)|d\mu_t(x) dt <+\infty \quad \text{and} \quad \frac{d}{dt}\int_{\R^d}\phi(x) d\mu_t(x) = \int_{\R^d} v_t(x)\cdot \nabla\phi(x) d\mu_t(x),
\end{equation}
in the sense of distributions in $(0,T)$, for all $\phi \in C_c^{1}(\R^d)$. A fundamental result for us is the Ambrosio's superposition principle \cite[Theorem 8.2.1]{ambrosio2005gradient}: for any solution of the continuity equation, there exists $\lambda \in \PP(C([0,T],\R^d))$ which has marginals $\mu_t$, that is
\begin{equation}\label{eq: marginal intro}
    (\e_t)_\sharp \lambda = \mu_t \ \ \forall t\in[0,T], \quad \text{where } \e_t(\gamma):= \gamma(t),
\end{equation}
and that is concentrated over curves $\gamma \in C([0,T],\R^d)$ that satisfies
\begin{equation}\label{eq: ODE intro}
    \gamma\in AC([0,T],\R^d), \quad \text{and} \quad \dot{\gamma}(t) = v_t(\gamma(t)) \quad \text{for a.e. } t\in[0,T]. 
\end{equation}

Returning to the main setting of the present paper, when we endow the space of probability measure with a measure $Q \in \mathcal{Q}(\R^d)$, it will be necessary to work with curves of probability measures $(\mu_t)$ that are purely atomic. In this setting, a natural question about the superposition principle can be raised.

\begin{quest}\label{question}
If a (narrowly continuous) curve $(\mu_t)_{t\in[0,T]} \subset \PPpa(\R^d)$ solves a continuity equation in the sense of \eqref{eq: CE intro}, does it admit a purely atomic superposition of curves $\lambda \in \PPpa(C([0,T],\R^d))$ satisfying \eqref{eq: marginal intro} and is concentrated over $\gamma$'s as in \eqref{eq: ODE intro}?
\end{quest}

The answer to this question is in general false: in Section \ref{section: counterexample}, we build a $W_\infty$-Lipschitz curve of measures in $\PPpa(\R)$, and in particular for which there exists a bounded vector field $v:[0,T]\times \R \to \R$ such that \eqref{eq: CE intro} is satisfied, but it admits a unique lifting $\lambda \in \PP(C([0,T],\R))$ that is atomless. 

This raises non-trivial problems in the study of evolutions in the metric measure space $(\PP_p(\R^d),W_p,Q)$, for all $Q \in \mathcal{Q}(\R^d)$. A fundamental result to overcome this is to ask the continuity equation to be satisfied when tested against a more general class of functions. Notice that, a necessary condition for having a positive answer to the previous question is that the weights of the atoms are constant in time. In view of this, we strengthen \eqref{eq: CE intro} asking that for all $\hat{\phi} = \hat{\phi}(x,r) \in C_c^1(\R^d\times (0,1])$
\begin{equation}\label{eq: CE intro weights}
    \frac{d}{dt}\int_{\R^d} \phi(x,\mu_t(\{x\})) d\mu_t(x) = \int_{\R^d}v_t(x) \cdot \nabla_x \phi(x,\mu_t(\{x\})) d\mu_t(x),
\end{equation}
in the sense of distributions in $(0,T)$. Under this stronger assumption, we can give a positive answer to Question \ref{question}. For a more rigorous discussion, we refer to Lemma \ref{lemma: main} and to Section \ref{subsec: proof}.

\bigskip

\noindent \textbf{Continuity equation for random measures and nested superposition principle.} In \cite{pinzisavare2025}, it has been introduced the continuity equation for general (narrowly continuous) curves of measures $(M_t)_{t\in[0,T]} \subset \PP(\PP(\R^d))$. To recall it, we need to introduce what is a vector field in this setting and which is the natural class of test functions. The latter is given by the \textit{cylinder functions}, see also \cite{fornasier2023density, sodini2023general}.

\begin{df}[Cylinder functions]\label{def: cyl functions}
    A functional $F:\PP(\R^d)\to\R$ is called a $C^1_c$-cylinder function, and we write $F\in \operatorname{Cyl}_c^1(\PP(\R^d))$, if there exists $k\in \N$, $\Phi = (\phi_1,\dots,\phi_k)\in C^1_c(\R^d;\R^k)$ and $\Psi\in C^1_b(\R^k)$ such that 
    \begin{equation}\label{cyl functions}
        F(\mu) = \Psi\left( L_\Phi(\mu) \right), \quad L_\Phi(\mu) =  \big(L_{\phi_1}(\mu) ,\dots , L_{\phi_k}(\mu)\big), \quad  L_{\phi_i}(\mu) := \int_{\R^d} \phi_i(x) d\mu(x).
    \end{equation}
    Its Wasserstein gradient is then defined as 
    \begin{equation}\label{wass gradient cyl}
        \nabla_W F(x,\mu):= \sum_{i=1}^k \partial_{i}\Psi\left( L_{\Phi}(\mu)\right) \nabla \phi_i(x) \quad \forall x\in \R^d, \ \forall\mu \in \PP(\R^d).
    \end{equation} 
\end{df}

Then, it has been proved in \cite[Section 4]{pinzisavare2025} that the natural choice for a vector field in this setting is a \textit{non-local vector field}, that is a Borel map $b:[0,T]\times \R^d\times \PP(\R^d) \to \R^d$. They are also called probability vector fields in literature, see \cite{CSS22, cavagnari2023dissipative}. Then, we say that the continuity equation $\partial_t M_t + \operatorname{div}_\PP(b_tM_t) = 0$ is satisfied if 
\begin{equation}\label{eq: CERM intro}
    \begin{gathered}
        \int_0^T \int_{\PP}\int_{\R^d} |b_t(x,\mu)|d\mu(x) dM_t(\mu) dt <+\infty \quad  \text{and} \\  \forall F \in \operatorname{Cyl}_c^1(\PP(\R^d))\quad 
        \frac{d}{dt} \int_{\PP} F(\mu) dM_t(\mu)  = \int_{\PP}\int_{\R^d} \nabla_W F(x,\mu) \cdot b_t(x,\mu) d\mu(x) dM_t(\mu),
    \end{gathered}   
    \end{equation}
in the sense of distributions in $(0,T)$. One of the main results of \cite{pinzisavare2025} is a version of the Ambrosio's superposition principle for this setting, which can be resumed as follows (for a more rigorous statement, see also Theorem \ref{thm: nested superposition}). Given $(M_t)_{t\in[0,T]} \subset \PP(\PP(\R^d))$ a narrowly continuous curve that solves $\partial_tM_t + \operatorname{div}_\PP(b_tM_t) = 0$, then it can be lifted to:
\begin{itemize}
    \item$\Lambda \in \PP(C([0,T],\PP(\R^d)))$ such that $\Lambda$a.e.~$\boldsymbol{\mu}$ solves 
        \begin{equation}\label{eq: nlce intro}
        \partial_t\mu_t + \operatorname{div}(b_t(\cdot,\mu_t)\mu_t)=0.
        \end{equation}
        \item $\mathfrak{L}\in \PP(\PP(C([0,T],\R^d)))$ such that $\mathfrak{L}$-a.e.~$\lambda\in \PP(C([0,T],\R^d))$
        is concentrated over 
        absolutely continuous curves 
        $\gamma$ that are solutions of
        \begin{equation}\label{eq: sps intro}
        \dot{\gamma}(t) = b(t,\gamma_t,(e_t)_\#\lambda)\quad\text{in }(0,T).
        \end{equation}
\end{itemize}

In this paper, we are interested in the evolution of curves $(M_t)_{t\in[0,T]}\subset \PP(\PP(\R^d))$ such that $M_t \ll Q$ for all $t\in [0,T]$, for some fixed $Q \in \mathcal{Q}(\R^d)$. Consequently, any lifting $\Lambda \in \PP(C([0,T],\PP(\R^d)))$ will be automatically concentrated on curves of purely atomic measures. Then, as for Question \ref{question}, it is natural to ask when there exists a lifting $\mathfrak{L}\in \PP(\PP(C([0,T],\R^d)))$ that satisfies the above properties and it is further concentrated over $\PPpa(C([0,T],\R^d))$. 

Again, this is a non-trivial question, and to positively answer to it we need further assumptions. In particular, assume that $Q = Q_{\pi,\nu}$, then we need to control that the diagonal in $\R^d\times \R^d$ is not too loaded by the measure $\nu\otimes \nu$, in a capacitary sense: to this aim, we introduce, for all $r\in[1,+\infty)$, the quantity 
\begin{equation*}
    \begin{aligned}
         \operatorname{cap}_{r,\nu}(\Delta) :=\inf \bigg\{ \int_{\R^{2d}} |h(x,y)|^r +  |\nabla h(x,y)|^r d\nu\otimes\nu(x,y) : 
         h\in C^{1}_b(\R^{2d}), \ h= 1 \text{ on }\Delta, \  h \leq 1 \bigg\}.
    \end{aligned}
\end{equation*}

Then, one of the main theorems of this paper is the following, where we use the notations
\begin{equation}\label{eq: maps for marginals}
    \begin{aligned}
        & E:\PP(C([0,T],\R^d)) \to C([0,T],\PP(\R^d)), \quad E(\lambda):= ((\e_t)_\sharp \lambda)_{t\in [0,T]},
        \\
        & E_t:\PP(C([0,T],\R^d)) \to \PP(\R^d), \quad E_t(\lambda) := (\e_t)_\sharp \lambda,
        \\
        & \mathfrak{e}_t: C([0,T],\PP(\R^d)) \to \PP(\R^d), \quad \mathfrak{e}_t(\boldsymbol{\mu}):= \mu_t.
    \end{aligned}
\end{equation}

\begin{teorema}\label{main theorem intro}
    Let $\Lambda \in \PP(C([0,T],\PP(\R^d)))$ be concentrated over solutions of \eqref{eq: nlce intro} and define $\boldsymbol{M} = (M_t)_{t\in[0,T]} \in C([0,T],\PP(\PP(\R^d)))$ as $M_t = (\mathfrak{e}_t)_\sharp \Lambda$. Assume:
    \begin{enumerate}
        \item $M_t = u_t \Q$, with $u \in L^1_t(L^q(\Q))$, with $q\in[1,+\infty]$;
        \item for some $p>1$, it holds
        \begin{equation}
            \int\int_0^T\int_{\PP(\R^d)} \int_{\R^d} |b_t(x,\mu)|^p d\mu(x) dM_t(\mu) dt<+\infty;
        \end{equation}
        \item $\operatorname{cap}_{r,\nu}(\Delta) = 0$, with $r\in [1,+\infty)$;
        \item $\frac{p'}{r}+\frac{1}{q}\leq1$, where $p'$ is the conjugate exponent of $p$.
    \end{enumerate}
    Then, there exists $\mathfrak{L}\in \PP(\PP(AC^p([0,T],\R^d)))$ such that $E_\sharp \mathfrak{L} = \Lambda$, it is concentrated over $\lambda \in \PPpa(C([0,T],\R^d))$ that are, in turn, concentrated over solutions of \eqref{eq: sps intro}.
    In particular, $(E_t)_\sharp \mathfrak{L} = M_t$ for all $t\in[0,T]$ and $\Lambda$ is concentrated over $C([0,T],\PP^{\operatorname{pa}}(\R^d))$.
\end{teorema}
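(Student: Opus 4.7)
The plan is to build $\mathfrak{L}$ by lifting $\Lambda$ fiberwise using Lemma \ref{lemma: main}, whose applicability has to be verified with the help of the capacity assumption. I view $\Lambda$ as distributing curves $\boldsymbol{\mu}\in C([0,T],\PP(\R^d))$ that solve the non-local continuity equation \eqref{eq: nlce intro}. For each such $\boldsymbol{\mu}$, assumption (1) implies that for every $t$, $\mu_t\ll\Q$, and therefore $\mu_t\in\PPpa(\R^d)$, so $\boldsymbol{\mu}$ is already a curve of purely atomic measures. What Lemma \ref{lemma: main} provides, once the enhanced continuity equation \eqref{eq: CE intro weights} is verified with $v_t(x):=b_t(x,\mu_t)$, is a purely atomic lifting $\lambda^{\boldsymbol{\mu}}\in\PPpa(C([0,T],\R^d))$ of $\boldsymbol{\mu}$ concentrated on solutions of \eqref{eq: sps intro}. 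A standard measurable selection argument then yields a Borel map $\boldsymbol{\mu}\mapsto\lambda^{\boldsymbol{\mu}}$, and defining $\mathfrak{L}$ as the push-forward of $\Lambda$ under this map furnishes the object in the statement.

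The substantive content is therefore the verification, for $\Lambda$-a.e.~$\boldsymbol{\mu}$ and every $\hat{\phi}\in C^1_c(\R^d\times(0,1])$, of
$$\frac{d}{dt}\int_{\R^d}\hat{\phi}(x,\mu_t(\{x\}))\,d\mu_t(x)=\int_{\R^d}b_t(x,\mu_t)\cdot\nabla_x\hat{\phi}(x,\mu_t(\{x\}))\,d\mu_t(x)$$
in the sense of distributions. The natural route is to integrate in $\boldsymbol{\mu}$ against $\Lambda$ and invoke the global continuity equation \eqref{eq: CERM intro}. The nonlinear functional
$$F_{\hat{\phi}}(\mu):=\int_{\R^d}\hat{\phi}(x,\mu(\{x\}))\,d\mu(x)=\iint_{\R^{2d}}\mathbbm{1}_\Delta(x,y)\,\tilde{\phi}(x,\mu(\{x\}))\,d(\mu\otimes\mu)(x,y),$$
with $\tilde{\phi}(x,r):=r^{-1}\hat{\phi}(x,r)$ extended by zero at $r=0$, is not a cylinder function, so I would approximate it by replacing $\mathbbm{1}_\Delta$ with smooth functions $h_\epsilon\in C^1_b(\R^{2d})$ equal to $1$ on $\Delta$ and with small $W^{1,r}(\nu\otimes\nu)$-norm, delivered by the zero-capacity assumption (3); iterating the same diagonal trick also handles the inner $\mu(\{x\})$-dependence. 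After \eqref{eq: CERM intro} is applied to the resulting smooth cylinder-type functionals, one has to drive to zero as $\epsilon\to 0$ an error of the schematic form
$$\int_0^T\int_{\PP(\R^d)}\iint_{\R^{2d}}|b_t(x,\mu)|\,|\nabla_x h_\epsilon(x,y)|\,d\mu(x)d\mu(y)\,dM_t(\mu)\,dt.$$

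This is, in my view, the main obstacle of the proof, and it is also where the four exponents align. Using the special structure $\Q=\operatorname{em}_\sharp(\pi\otimes\nu^\infty)$, the measure $d\mu(y)\,dM_t(\mu)$ is comparable with $\|u_t\|_{L^q(\Q)}\,d\nu(y)$ via H\"older with exponent $q$; pairing $b$ with $\nabla h_\epsilon$ by H\"older with exponents $p$ and $p'$ in $(t,\mu,x)$ and with exponent $r$ in $y$, the remainder is bounded by a constant multiple of $\|b\|_p\,\|u\|_q\,\|h_\epsilon\|_{W^{1,r}(\nu\otimes\nu)}$ exactly when $p'/r+1/q\le 1$, which is precisely assumption (4); the capacity hypothesis then forces this quantity to $0$. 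At this point the enhanced CE holds for $\Lambda$-a.e.~$\boldsymbol{\mu}$, Lemma \ref{lemma: main} yields the desired purely atomic liftings fiberwise, and their selection produces the $\mathfrak{L}$ claimed in the statement; in particular $(E_t)_\sharp\mathfrak{L}=M_t$ and $\Lambda$ is concentrated on $C([0,T],\PPpa(\R^d))$.
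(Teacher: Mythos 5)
Your proposal follows essentially the same route as the paper's proof: approximate generalized cylinder functionals by cylinder functions using the capacity assumption, run a H\"older chain governed by the exponent balance $p'/r + 1/q \le 1$ to pass the continuity equation to the larger test class, then apply Lemma~\ref{lemma: main} fiberwise and finish with measurable selection. Two small imprecisions are worth fixing. First, you write that assumption~(1) gives ``for every $t$, $\mu_t\ll\Q$, and therefore $\mu_t\in\PPpa(\R^d)$'' -- this is a type mismatch ($\mu_t\in\PP(\R^d)$ while $\Q\in\PP(\PP(\R^d))$), and even after correcting it to $M_t\ll\Q$ the order of quantifiers only yields: for $\Lambda$-a.e.\ $\boldsymbol{\mu}$, $\mu_t\in\PPpa(\R^d)$ for \emph{a.e.}\ $t\in[0,T]$ (by Fubini), which is exactly hypothesis (H2) of Lemma~\ref{lemma: main}; the upgrade to \emph{all} $t$ is then a conclusion of that lemma, not a starting point. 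Second, your representation of $F_{\hat\phi}$ as a double integral against $\mathbbm{1}_\Delta$, with the phrase ``iterating the same diagonal trick'' for the residual $\mu(\{x\})$-dependence inside $\tilde\phi$, is vaguer than needed; the paper sidesteps the iteration by directly approximating $\mu[x]\approx\int h(x,y)\,d\mu(y)$ inside the outer smooth function (the $F_{h,\rho,f}$ of Lemma~\ref{interaction cyl function}), so a single $h_\varepsilon$ handles the whole nonlinearity at once and the gradient error splits cleanly into the $I_{1,n}$ and $I_{2,n}$ terms that your H\"older bookkeeping then controls.
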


The proof of the previous theorem relies on extending the class of functions that we can use to test the continuity equation $\partial_tM_t + \operatorname{div}_\PP(b_tM_t) = 0$ and $\partial_t\mu_t + \operatorname{div}(b_t(\cdot,\mu_t)\mu_t) = 0$, for $\Lambda$-a.e. $(\mu_t)_{t\in[0,T]}$. Indeed, under the assumptions of Theorem \ref{main theorem intro}, we can show that we can test them against the set of \textit{generalized cylinder functions}, already introduced in \cite{dello2022dirichlet,delloschiavo2024massive}.

\begin{df}[Generalized cylinder functions]\label{def: GC functions}
    Given $\hat{\phi}\in C_c^1(\R^d\times (0,1])$, define
    \[\hat{L}_{\hat{\phi}}(\mu) := \int_{\R^d} \phi(x,\mu[x]) d\mu(x), \quad \mu[x]:= \mu(\{x\}).\]
    Then the set of cylinder functions $\operatorname{GC}_c^1(\PP(\R^d))$ is made of functions $\hat{F}:\PP(\R^d)\to \R$ such that 
    \begin{equation}\label{gen cyl functions}
        \hat{F}(\mu) = \Psi\big(\hat{L}_{\hat{\Phi}}(\mu)\big), \qquad \hat{L}_{\hat{\Phi}}(\mu) = \left( \hat{L}_{\hat{\phi}_1}(\mu),\dots, \hat{L}_{\hat{\phi}_k}(\mu) \right)
    \end{equation}
    for some $k\in \N$, $\Psi\in C^1(\R^k)$, $\hat{\Phi} = (\hat{\phi}_1,\dots,\hat{\phi}_k) \in C^1_c(\R^d\times(0,1];\R^k)$. 
    The Wasserstein differential of a generalized cylinder function is defined by 
    \begin{equation}
        \nabla_W \hat{F}(x,\mu) := \sum_{i=1}^k \partial_i \Psi\big(\hat{L}_{\hat{\Phi}}(\mu)\big) \nabla_x \phi_i(x,\mu[x]). 
    \end{equation}
    In the case $\Psi \in C^\infty(\R^k)$ and $\hat{\phi}_1,\dots,\hat{\phi}_k \in C_c^\infty(\R^d \times (0,1])$, we say that $\hat{F}\in \operatorname{GC}_c^\infty(\PP(\R^d))$.
\end{df}

The capacity assumption is needed to suitably approximate any generalized cylinder function with a sequence of cylinder functions, see Theorem \ref{superposition in duality with gen cyl functions}, whose proof has been inspired by similar approximation results in \cite{dello2022dirichlet, delloschiavo2024massive}. This approximation will be the ingredient that will allow us to use them as test functions for the continuity equations, so that now we can rely on a positive answer to Question \eqref{question} for $\Lambda$-a.e. $(\mu_t)_{t\in [0,T]}$ to conclude the proof. The details are left for Section \ref{subsec: nested atomic}.

\bigskip

\textbf{Metric counterpart and the Riemannian case.} A metric version of the atomic nested superposition principle, Theorem \ref{main theorem intro}, can be proved as well. We mean that, instead of working with a curve of measures $(M_t)_{t\in[0,T]} \subset \PP(\PP(\R^d))$ that solves some continuity equation, we assume that it is $p$-absolutely continuous with respect to the $p$-Wasserstein on Wasserstein metric. The rigorous statement is given in Theorem \ref{main theorem metric}. This result relies on the strong link between curves of measures that solve a continuity equation and absolutely continuous curves, as shown in \cite{ambrosio2005gradient, stepanov2017three, pinzisavare2025}.

In Section \ref{section: capacity}, the capacity assumption in Theorem \ref{main theorem intro} will be extensively studied. In particular, we will see how the exponent $r$ is related to the geometry of the support of the measure $\nu$. This analysis, will allow us to work with $\nu$ that are supported on an embedded compact Riemannian manifold without boundary, and in particular, the Nash isometric embedding theorem will give also an intrinsic version of Theorem \ref{main theorem intro}, when the ambient space $\R^d$ is replaced with a compact, oriented, and boundaryless Riemannian manifold.

\bigskip

\textbf{Application to the metric measure Wasserstein space.} Assume now that $X$ is either $\R^d$ or a compact, oriented, and boundaryless Riemannian manifold, and fix $Q = Q_{\pi,\nu} \in \mathcal{Q}(X)$, such that $\nu\in \PP_p(X)$, for $p\in[1,+\infty)$. We use a metric-measure viewpoint to introduce the space of Sobolev functions over $\PP_p(X)$, denoted $W^{1,p}(\PP_p(X),W_p,Q)$. In particular, the definition through test plans fits particularly well in our setting, and it can be used to show that, if $p$ is large enough, the Sobolev-to-Lipschitz property is not satisfied, that is: there exists $F \in W^{1,p}(\PP_p(X),W_p,Q)$ with bounded gradient $|DF|\leq L$ which does not admit a $L$-Lipschitz representative. Our counterexample shows actually something stronger: in fact, we build a function $F \in W^{1,p}(\PP_p(X),W_p,Q)$ that has null gradient $Q$-a.e., but that cannot admit a constant representative. In particular, this counterexample shows the lack of a Poincar\'e inequality as well: for all $N\in \N$ it holds
\begin{equation}\label{eq: poincare intro}
    \int_{\PP_p(X)} \bigg| F(\mu) - \int_{\PP_p(X)} F dQ_{\pi,\nu} \bigg|^p dQ_{\pi,\nu}(\mu) > N \int_{\PP_p(X)} |DF|^p(\mu) dQ_{\pi,\nu}.
\end{equation}

The previous construction works under very general assumptions:
\begin{itemize}
    \item if $X = \R^d$, with $d\geq 2$, and $\nu \ll \mathcal{L}^d$ with bounded density, then the previous counterexample applies for $p\geq \frac{d}{d-1}$, see Theorem \ref{thm: euclidean case} and Proposition \ref{prop: sharp capacity in R^d};
    \item in the Riemannian case, say that $k\geq 2$ is the dimension of the manifold and that $\nu$ is absolutely continuous with respect to its volume measure, with bounded density. Then, the previous counterexample works for $p\geq \frac{k}{k-1}$, see Theorem \ref{thm: manifold case}.
\end{itemize}

In the previous assumptions, we can see that the case $p=2$ is covered, implying that the metric measure space $(\PP_2(X),W_2,Q)$ cannot be a $\operatorname{RCD}$ space, whose definition is recalled in Definition \ref{def: rcd}. However, thanks to \cite{fornasier2023density}, it is always an infinitesimally Hilbertian space. Moreover, only for the Riemannian setting, in Section \ref{subsec: BE} we show that it also shares curvature property in the Bakry--\'Emery sense, if the base space $(X,d,\nu)$ does, see Theorem \ref{thm: BE wass}. A similar study was done in \cite{schiavo2023wasserstein}, and in \cite{schiavo2021configuration, schiavo2022configuration} for configuration spaces. The proof of the Bakry--\'Emery property relies on the characterization of the diffusion process associated with $(\PP_2(X),W_2,Q)$ (and its heat semigroup) proved in \cite{delloschiavo2024massive}, that has been proved only in the compact manifold case. If such a characterization holds in the Euclidean setting as well, then the Bakry--\'Emery property can be recovered in this setting as well. For more details on this discussion, see Section \ref{sec: sobolev and curvature}.

It is important to stress that the validity of the solely Bakry--\'Emery property is, in general, not strong enough. For example, under the ergodicity assumption of the reference measure $Q$, it can be shown that the Bakry--\'Emery condition gives the validity of a Poincar\'e inequality, see \cite{bakry2013analysis, chafai2024logarithmic}. In our case this cannot hold by absence of ergodicity, as already observed in \cite[Theorem 1.4]{delloschiavo2024massive}.

\bigskip

\textbf{Plan of the paper.} In \textbf{Section \ref{sec: prel}}, we give some preliminaries, in particular we fix the notation about the space of probability measures, together with its natural narrow topology and the Wasserstein distance. We also recall the atomic topology presented in \cite{ethier1994convergence}, that will be useful throughout the paper. Then, we recall the standard and the nested superposition principle, and we define the notion of $(p,\nu)$-capacity of the diagonal $\Delta\subset \R^d\times \R^d$. 

In \textbf{Section \ref{section: counterexample}}, we build the example of a $W_\infty$-Lipschitz curve of purely atomic measures that does not admit a purely atomic lifting. 

In \textbf{Section \ref{section atomic}}, we move to the proof of our main results, the atomic nested superposition principles, in the Euclidean setting. In particular, we introduce the set of generalized cylinder functions and we show how the capacity assumption allows to use them to test the continuity. 

Then, in \textbf{Section \ref{section: capacity}} we make an analysis on the capacity assumption (see also Appendix \ref{app: capacity}).

In \textbf{Section \ref{sec: manifold}}, we study the same problem when the underlying space is a boundaryless, oriented, and compact Riemannian manifold. The key tool is the Nash isometric embedding theorem, that allows to move back and forth all the objects between the manifold and the Euclidean space in which it is isometrically embedded. 

Finally, in \textbf{Section \ref{sec: sobolev and curvature}}, we introduce general metric measure spaces, and in particular the definition of Sobolev space through test plans, and the synthetic notion of Ricci curvature bounded from below known as Bakry--\'Emery condition. We then prove the failure of the Sobolev-to-Lipschitz property and the Poincar\'e inequality as a consequence of the atomic nested superposition principles (both in the Euclidean and Riemannian setting), and we conclude the analysis of the Wasserstein metric measure space showing the inheritance of the Bakry--\'Emery condition from the underlying space (in the Riemannian setting).

\bigskip

\noindent \textbf{Acknowledgments.} The author warmly thanks Lorenzo Dello Schiavo, Giuseppe Savar\'e and Giacomo Enrico Sodini for the fruitful discussions on this topic. The author sincerely thanks the anonymous reviewer for their careful and thorough reading of our manuscript, and for their comments which contributed to improve the clarity and overall presentation of the paper.
\bigskip

\noindent \textbf{Declaration on funding.} This research did not receive any specific grant from funding agencies in the public, commercial, or not-for-profit sectors.
\section{Preliminaries}\label{sec: prel}
Here is a list of the main notation used in the following:\smallskip
\halign{$#$\hspace{0.7cm}\ & #\hfil
\cr
C_b(X), \, (C_b(X;\R^n) )&continuous and bounded functions from $X$ to $\R$ (resp. $\R^n$)
\cr 
\Delta, \, \Delta_\varepsilon &resp., the diagonal and the $\varepsilon$-strip around the diagonal of $\R^d\times \R^d$
\cr
\mathcal{B}(X) & Borel sets of $X$
\cr
\PP(X) & the space of Borel probability measures over a Polish space $X$
\cr
\PPpa(X) & the space of purely atomic probability measures
\cr
\PP(B), \,\text{for }B\in \mathcal{B}(X) & probability measures in $\PP(X)$ concentrated over $B$
\cr
\PP(\PP(X)) & the space of random measures over a Polish space $X$
\cr
g_\sharp, \, g_{\sharp\sharp} & resp., the push forward and the double push-forward (see \eqref{eq: double push forward})
\cr
\widetilde{M} & see \eqref{eq: widetilde M}
\cr
W_{p,d} & the $p$-Wasserstein distance built on the metric space $(X,d)$
\cr
C([0,T],X) & continuous curves in $X$
\cr
AC^p([0,T],X) & absolutely continuous curves in $(X,d)$
\cr
\e_t,\, \mathfrak{e}_t, \, E_t  & see §\ref{subsub: curves}
\cr
}

\vspace{0.5cm}

\noindent Let $(X,\tau)$ be a Polish space. We will denote by $\mathcal{B}(X)$ its Borel $\sigma$-algebra.
\subsection{Probability measures, narrow topology and Wasserstein distance}
\subsubsection{Space of probability measures}
Let $\mathcal{P}(X)$ be the space of the Borel probability measures over $X$. We endow it with the narrow topology, i.e. the coarsest one for which 
\begin{equation}
    \mathcal{P}(X) \ni \mu \mapsto \int_{X} \phi d\mu \ \text{ is continuous for all } \phi\in C_b(X),
\end{equation}
where $C_b(X)$ denotes the space of real-valued, bounded, and continuous functions. 

If not otherwise specified, the space $\PP(X)$ is always considered endowed with the narrow topology, denoted $\tau_{\mathtt{n}}$. In particular, we denote by $C([0,T],\PP(X))$ the space of curves of probability measures that are narrowly continuous. For any $\mu_n,\mu \in \PP(X)$, we write $\mu_n \to \mu$ if the convergence holds with respect to the narrow topology.

Given a measurable function $f:Z_1 \to Z_2$, where $(Z_i,\mathcal{S}_i)$ are general measurable spaces, and a measure $\mu\in \PP(Z_1)$, we denote with $f_\sharp\mu \in \PP_+(Z_2)$ the push-forward measure, defined as 
\[f_\sharp\mu(S) := \mu(f^{-1}(S)) \quad \forall S\in \mathcal{S}_2.\]

The same construction works for the set of Borel non-negative measures $\mathcal{M}_+(X)$, that is a Polish space when endowed with the narrow topology.

We will often denote as $\PP(B)$, for a given set $B\in \mathcal{B}(X)$, the probability measures $\mu \in \PP(X)$ for which $\mu(B) = 1$.

\subsubsection{Space of (laws of) random probability measures} The resulting space $\PP(X)$ endowed with narrow topology is still a Polish space, so we can reiterate the construction and endow $\PP(\PP(X))$ with the so-called narrow over narrow topology. As above, with $C([0,T],\PP(\PP(X)))$ we denote the space of continuous curves of random probability measures.

Given $M\in \PP(\PP(X))$ we define the \textit{barycenter measure} $\operatorname{bar}[M]\in \PP(X)$ via 
\begin{equation}\label{eq: bar}
    \int_{X} f(x) d\operatorname{bar}[M](x) = \int_{\PP(X)}\int_X f(x) d\mu(x) dM(\mu) \ \ \  \text{for all Borel functions }f:X \to [0,1].
\end{equation}

Moreover, it will be useful the iterated push-forward of a random measure induced by a function $g:X \to Y$, where $X$ and $Y$ are two Polish spaces: given a random measure $M\in \PP(\PP(X)),$ we denote 
\begin{equation}\label{eq: double push forward}
g_{\sharp\sharp}M:= G_{\sharp}M, \quad G:\PP(X) \to \PP(Y), \ G(\mu) := g_\sharp\mu.
\end{equation}
It is well defined due to \cite[Appendix D]{pinzisavare2025}, where the Borel measurability of $G$ is shown. Moreover, for all $M\in \PP(\PP(X))$, we will denote by $\widetilde{M} \in \PP(X\times \PP(X))$ the measure defined through
\begin{equation}\label{eq: widetilde M}
    \int H(x,\mu) d\widetilde{M}(x,\mu) = \int \int H(x,\mu) d\mu(x) dM(\mu) \text{ for all measurable }H:X\times \PP(X) \to [0,1]. 
\end{equation}

\subsubsection{Wasserstein metric} Fix $p\in [1,+\infty]$, and a complete and separable metric space $(X,d)$. As usual, with $\PP_p(X)$ we denote the space of probability measures $\mu$ for which 
\begin{equation}
    \|d(\cdot,x^*)\|_{L^p(\mu)}<+\infty \ \text{ for some (and then all) } x^* \in X.
\end{equation}
Then, the $p$-Wasserstein distance between $\mu,\nu\in\PP_p(X)$ is defined as 
\[W_{p,d}(\mu,\nu) := \min \left\{ \|d(\cdot,\cdot)\|_{L^p(\X\times X,\pi)} \ : \ \pi \in \Gamma(\mu,\nu) \right\},\]
\[\Gamma(\mu,\nu):= \Big\{\pi \in \PP(X\times X) \ : \ \pi(A\times X) = \mu(A), \quad \pi(X\times B)  = \nu(B) \quad \forall A,B \in \mathcal{B}(X)\Big\}.\]
It is well-known \cite[Proposition 7.1.5]{ambrosio2005gradient} that $(\PP_p(X),W_{p,d})$ is complete and separable, and for $p\in[1,+\infty)$ the convergence induced is 
\[W_{p,d}(\mu_n,\mu) \to 0 \iff \mu_n \to \mu \quad \text{and} \quad \int_X d^p(x,x^*) d\mu_n(x) \to \int_X d^p(x,x^*) d\mu(x).\]
In particular, if the distance $d$ is bounded, $\PP_p(X) = \PP(X)$ and $W_{p,d}$ metrizes the narrow topology.
As for the narrow over narrow topology, we can reiterate the construction to build the \textit{Wasserstein on Wasserstein} distance $\mathcal{W}_p:= W_{p,W_p}$ over $\PP_p(\PP_p(X))$.

\subsubsection{Purely atomic measures and atomic topology}\label{subsec: atomic topology}
Let $(X,\tau)$ be a Polish space. We introduce some notations used in \cite{dello2022dirichlet, delloschiavo2024massive}. To introduce the set of purely atomic probability measures, denote the infinite simplex $\bold{T}\subset [0,1]^\N$ as in \eqref{inf simplex intro}, and its subset $\bold{T}_0$ as
\begin{equation}\label{eq: infinite simplex}
\begin{aligned}
    \bold{T}_0 := \left\{ \bold{a} \in \bold{T} \ : \ a_{i+1}<a_i \ \forall i \in \N  \right\}.
\end{aligned}
\end{equation}
Moreover, denote by $X^\infty_{\neq} \subset X^\N$ the set of sequences $(x_n)_{n\in \N}$ such that $x_i\neq x_j$ for all $i\neq j$. Both $\bold{T}$ and $X^\N$ are endowed with the infinite product topology, and in particular $\bold{T}_0$ is endowed with the subspace topology. 
We then consider the following spaces of purely atomic measures:
\begin{equation}
    \begin{aligned}
        \PP^{\operatorname{pa}}(X) & := \left\{\mu\in \PP(X) \ : \exists(z_i)_{i\in \N} \in X^\infty_{\neq}, \ \exists(a_i)_{i\in \N} \in \mathbf{T}, \text{ s.t. } \mu = \sum_i a_i \delta_{z_i}\right\},
    \\
        \PP_0^{\operatorname{pa}}(X) & := \left\{\mu\in \PP(X) \ : \ \exists(z_i)_{i\in \N} \in X^\infty_{\neq}, \ \exists(a_i)_{i\in \N} \in \mathbf{T}_0, \text{ s.t. } \mu = \sum_i a_i \delta_{z_i}\right\},
        \end{aligned}
    \end{equation}

We will make use of a useful topology on the space of probability measures, called `atomic topology'. We recall here the definition and its main properties, for which we refer to \cite[§2]{ethier1994convergence}. Given $\mu\in \PP(X)$, we denote 
\begin{equation}
    \mu^*:= \sum_{x\in X} \mu(\{x\})^2 \delta_x \in \mathcal{M}_+(X).
\end{equation}

\begin{df}[Atomic convergence]
    Let $\mu_n,\mu\in \mathcal{P}(X)$. We say that $\mu_n \overset{\tau_a}{\longrightarrow} \mu$ if $\mu_n\to\mu$ narrowly and $\mu_n^*(X) \to \mu^*(X)$.
\end{df}

\begin{lemma}
    Let $d$ be any distance that induces $\tau$ and such that $(X,d)$ is complete. Let $\Psi:[0,+\infty)\to [0,1]$ be a continuous and non-increasing function. Then, define the distance $D_{p,d,\Psi}$ over $\PP_p(X,d)$ as
    \begin{equation}\label{eq: atomic wass distance}
        D_{p,d,\Psi}(\mu,\nu) := W_{p,d}(\mu,\nu) + \sup_{\varepsilon\in(0,1)}\left| \int_{X\times X} \Psi\left( \frac{d(x,y)}{\varepsilon} \right) d\big( \mu\otimes \mu - \nu\otimes \nu\big)(x,y)\right|.   
    \end{equation}
    Then $D_{p,d,\Psi}(\mu_n,\mu) \to 0 $ if and only if $W_{p,d}(\mu_n,\mu) \to 0$ and $\mu_n^* \to \mu^*$. In particular, if $d$ is bounded, $D_{p,d,\Psi}$ induces the atomic convergence and the induced topology $\tau_{\mathtt{a}}$, called atomic topology, is Polish. In particular, the Borel sets of $\PP(X)$ built with respect to this topology, coincide with the ones built using the narrow topology.
\end{lemma}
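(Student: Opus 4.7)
The plan splits the proof into three steps: verifying that $D_{p,d,\Psi}$ satisfies the metric axioms, characterizing convergent sequences, and deducing Polishness together with the coincidence of Borel structures when $d$ is bounded.

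The metric properties are essentially formal: non-degeneracy is inherited from $W_{p,d}$, while the triangle inequality for the supremum term reduces to applying $\sup_{\varepsilon}|\cdot|$ to the additive identity $\int\Psi(d/\varepsilon)\,d(\mu\otimes\mu-\rho\otimes\rho) = \int\Psi(d/\varepsilon)\,d(\mu\otimes\mu-\nu\otimes\nu) + \int\Psi(d/\varepsilon)\,d(\nu\otimes\nu-\rho\otimes\rho)$.

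The core step is the convergence characterization. After an affine normalization of $\Psi$ so that $\Psi(0)=1$ and $\lim_{r\to\infty}\Psi(r)=0$ (the constant case trivializes the supremum term), I would introduce the auxiliary functional $g_{\varepsilon}(\mu):=\int\Psi(d(x,y)/\varepsilon)\,d\mu\otimes\mu(x,y)$ and observe via dominated convergence that $g_{\varepsilon}(\mu)$ is non-decreasing in $\varepsilon$ with $g_{\varepsilon}(\mu)\searrow(\mu\otimes\mu)(\Delta)=\mu^{*}(X)$ as $\varepsilon\to 0^{+}$. For the sufficient direction I would split $\sup_{\varepsilon\in(0,1)}$ at a threshold $\delta$: on $[\delta,1)$ the family $\{(x,y)\mapsto\Psi(d(x,y)/\varepsilon)\}$ is uniformly equicontinuous and uniformly bounded, hence joint narrow convergence $\mu_n\otimes\mu_n\to\mu\otimes\mu$ combined with tightness yields uniform convergence of the integrals; on $(0,\delta)$ I would use the telescoping
\[ g_{\varepsilon}(\mu_n)-g_{\varepsilon}(\mu) = \bigl[g_{\varepsilon}(\mu_n)-\mu_n^{*}(X)\bigr] - \bigl[g_{\varepsilon}(\mu)-\mu^{*}(X)\bigr] + \bigl[\mu_n^{*}(X)-\mu^{*}(X)\bigr], \]
bounding each non-negative excess $g_{\varepsilon}(\sigma)-\sigma^{*}(X) = \int_{\Delta^c}\Psi(d/\varepsilon)\,d\sigma\otimes\sigma$ by $(\sigma\otimes\sigma)(\{0<d<\delta\})+\Psi(\delta/\varepsilon)$; the first piece is made uniformly small as $\delta\to 0^{+}$ via the Portmanteau inequality applied to the closed set $\{d\leq\delta\}$ together with the hypothesis $\mu_n^{*}(X)\to\mu^{*}(X)$, while the second vanishes uniformly in $\varepsilon$ on a suitably shrunk range. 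For the converse, from $\sup_{\varepsilon}|g_{\varepsilon}(\mu_n)-g_{\varepsilon}(\mu)|\to 0$ and the pointwise inequality $\mu_n^{*}(X)\leq g_{\varepsilon}(\mu_n)$, letting first $n\to\infty$ for fixed $\varepsilon$ and then $\varepsilon\to 0^{+}$ yields $\limsup_n\mu_n^{*}(X)\leq\mu^{*}(X)$; the reverse inequality is the standard lower semicontinuity of the atomic mass under narrow convergence. The narrow convergence $\mu_n^{*}\to\mu^{*}$ then follows from total mass convergence via Portmanteau, using that $\mu\mapsto\mu^{*}(U)$ is lower semicontinuous on open $U$.

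For the final step, when $d$ is bounded $W_{p,d}$ metrizes the narrow topology, so the characterization above identifies $D_{p,d,\Psi}$-convergence with atomic convergence. Completeness transfers from $W_{p,d}$: a Cauchy sequence converges narrowly to some $\mu$, and passing to the limit $m\to\infty$ in the Cauchy estimate $\sup_{\varepsilon}|g_{\varepsilon}(\mu_n)-g_{\varepsilon}(\mu_m)|<\eta$ (using that each $g_{\varepsilon}$ is narrowly continuous) yields $D$-convergence. Separability follows by approximating any $\mu$ with finitely supported measures having rational weights that match its dominant atomic structure, and the coincidence of the Borel $\sigma$-algebras associated to $\tau_{\mathtt{a}}$ and $\tau_{\mathtt{n}}$ is a general fact about Polish refinements via Kuratowski's theorem on continuous bijections between Polish spaces. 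The main obstacle of the whole argument is the uniform-in-$\varepsilon$ control near $\varepsilon=0$ in the sufficient direction: the monotonicity of $g_{\varepsilon}$ and the Portmanteau bound on the closed diagonal-neighborhood $\{d\leq\delta\}$ are what convert the scalar hypothesis $\mu_n^{*}(X)\to\mu^{*}(X)$ into the required uniform smallness.
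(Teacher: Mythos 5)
Your proof is self-contained where the paper's is a two-line citation (Lemmas 2.1--2.3 of Ethier--Kurtz for the convergence characterization, Schwartz's theorem for the Borel coincidence). The central device---the auxiliary functional $g_{\varepsilon}(\mu)=\int\Psi(d/\varepsilon)\,d\mu\otimes\mu$, monotone in $\varepsilon$ and decreasing to $(\mu\otimes\mu)(\Delta)=\mu^{*}(X)$, together with a near/far split of the supremum in $\varepsilon$---is sound and is essentially the engine behind the cited lemmas. Your completeness argument (pass to the limit in the Cauchy bound using narrow continuity of each $g_{\varepsilon}$) and the normalization remark about $\Psi$ (the lemma as stated is vacuously false if $\Psi$ is constant, so one tacitly assumes $\Psi(0)>\lim_{r\to\infty}\Psi(r)$) are both correct and worth keeping.

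There are, however, two genuine slips, both concerning semicontinuity. First, you justify the reverse inequality $\liminf_n\mu_n^{*}(X)\geq\mu^{*}(X)$ by appealing to ``the standard lower semicontinuity of the atomic mass under narrow convergence''---but $\mu\mapsto\mu^{*}(X)=(\mu\otimes\mu)(\Delta)$ is \emph{upper}, not lower, semicontinuous under narrow convergence (Portmanteau on the closed set $\Delta$; a counterexample to lower semicontinuity is $\mu_n$ uniform on $[-1/n,1/n]$, $\mu=\delta_{0}$, where $\mu_n^{*}(X)=0\not\geq 1$). Fortunately your own setup already gives the reverse inequality for free: from $\sup_{\varepsilon}|g_{\varepsilon}(\mu_n)-g_{\varepsilon}(\mu)|=:\eta_n\to 0$ and $\mu^{*}(X)\leq g_{\varepsilon}(\mu)\leq g_{\varepsilon}(\mu_n)+\eta_n$, send $\varepsilon\to0^{+}$ to get $\mu^{*}(X)\leq\mu_n^{*}(X)+\eta_n$. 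Replace the wrong citation by this one-liner. Second, to upgrade $\mu_n^{*}(X)\to\mu^{*}(X)$ to narrow convergence $\mu_n^{*}\to\mu^{*}$ you invoke ``$\mu\mapsto\mu^{*}(U)$ is lower semicontinuous on open $U$.'' This is also false (same counterexample with $U$ a small open neighborhood of $0$ gives $\mu_n^{*}(U)=0\not\to 1=\mu^{*}(U)$). The upgrade is true but not a Portmanteau one-liner; this is precisely what Ethier--Kurtz Lemma 2.1 (which the paper cites) establishes by a more delicate argument tracking how close atom weights and locations must be to realize the total atomic mass, and you would need to either reproduce that argument or, more simply, state the lemma's conclusion with the weaker formulation $\mu_n^{*}(X)\to\mu^{*}(X)$ (which is all the definition of $\tau_{\mathtt a}$ in the paper actually requires).
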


\begin{proof}
    The first part is a consequence of Lemma 2.1, Lemma 2.2 and Lemma 2.3 from \cite{ethier1994convergence}. The second part follows from \cite{schwariz1973radon} and the fact that $\tau_a$ is stronger than $\tau_n$.
\end{proof}

The following lemma shows how the atomic convergence actually forces convergence of the atoms and their weights. For the proof, we refer to \cite[Lemma 2.5]{ethier1994convergence}.

\begin{lemma}\label{lemma: atomic topology}
    Assume that $\mu_n \to \mu$ narrowly. Let $\{a_i^{(n)}, x_i^{(n)}\}_{i\in \N}$ and $\{a_i ,x_i\}_{i\in \N}$ be, respectively, the set of atoms of $\mu_n$ and $\mu$, with locations $(x_i^{(n)})_{i\in \N},(x_i)_{i\in\N}\in X^\infty_{\neq}$ and ordered weights $(a_i^{(n)})_{i\in \N}, (a_i)_{i\in \N} \in \bold{T}$. Then 
    \begin{enumerate}
        \item $\mu_n \overset{\tau_a}{\longrightarrow} \mu$ if and only if $a_i^{(n)} \to a_i$ for all $i\in \N$.
    Moreover, if $a_k > a_{k+1}$ for some $k\in \N$, then the set of locations $\{x_1^{(n)},\dots,x_k^{(n)}\} \to \{x_1,\dots,x_k\}$;
    \item if $\mu\in \PPpa(X)$, then $\mu_n \overset{\tau_a}{\longrightarrow} \mu$ if and only if $\sum_i |a_i^{(n)} -a_i| \to 0$.
    \end{enumerate}
\end{lemma}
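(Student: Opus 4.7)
My plan is to prove Part (1) by a subsequence argument combining three ingredients: compactness of $\mathbf{T}$ in the product topology, tightness of $(\mu_n)$, and the Portmanteau theorem to relate atom limits to $\mu$-atoms. The atomic convergence hypothesis is what upgrades a sequence of inequalities coming from narrow convergence into equalities, and this is precisely where splitting phenomena such as $\tfrac12\delta_{-1/n}+\tfrac12\delta_{1/n}\to\delta_0$ are ruled out (note that there $\mu_n^*(X)=\tfrac12$ while $\mu^*(X)=1$).

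For the main direction of Part (1), fix an arbitrary subsequence and extract via Tychonoff a further one with $a_i^{(n)}\to b_i$ for every $i$. The ordering in $\mathbf{T}$ gives the uniform bound $a_{N+1}^{(n)}\leq\tfrac{1}{N+1}$, hence $\sum_{i>N}(a_i^{(n)})^2\leq\tfrac{1}{N+1}$, which provides uniform tail control on the sequence $\bigl((a_i^{(n)})^2\bigr)_{i\in\N}$. For each $\varepsilon>0$ only finitely many $b_i$ exceed $\varepsilon$, and for those indices tightness of $(\mu_n)$ (granted by Prokhorov) confines $x_i^{(n)}$ to a compact set; a diagonal extraction thus produces $y_i=\lim_n x_i^{(n)}\in X$ for every $i$ with $b_i>0$. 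Portmanteau applied to shrinking closed balls around each $y_i$ shows that $y_i$ must be a $\mu$-atom (otherwise the positive mass $b_i$ would contradict $\mu(\{y_i\})=0$), and that for every $\mu$-atom $x_j$
\[
\sum_{i\,:\,y_i=x_j} b_i \,\leq\, a_j.
\]
Squaring and using $(\sum b_i)^2\geq\sum b_i^2$ for non-negative entries produces the chain
\[
\sum_i b_i^2 \,\leq\, \sum_j \Bigl(\sum_{i:y_i=x_j}b_i\Bigr)^{\!2} \,\leq\, \sum_j a_j^2 \,=\, \mu^*(X).
\]
The uniform tail estimate above makes the convergence $\sum_i(a_i^{(n)})^2\to\sum_i b_i^2$ elementary, and comparison with the hypothesis $\mu_n^*(X)\to\mu^*(X)$ forces $\sum_i b_i^2 = \mu^*(X)$; equality throughout the chain then forces, for each $\mu$-atom $x_j$, exactly one index $i$ with $y_i=x_j$ and $b_i=a_j$. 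The monotonicity of $\mathbf{T}$ pins down $b_i=a_i$ for every $i$, and since this holds along every subsequence, $a_i^{(n)}\to a_i$.

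The converse in Part (1) and both directions of Part (2) reduce to Scheff\'e's lemma on the counting measure: from pointwise convergence $a_i^{(n)}\to a_i$ of non-negative entries with common total $\sum_i a_i^{(n)}=\sum_i a_i=1$ one deduces $\sum_i|a_i^{(n)}-a_i|\to 0$, and since $|(a_i^{(n)})^2-a_i^2|\leq 2|a_i^{(n)}-a_i|$ this upgrades to $\sum_i(a_i^{(n)})^2\to \mu^*(X)$. For the \emph{moreover} statement, the strict gap $a_k>a_{k+1}$ combined with $a_i^{(n)}\to a_i$ eventually separates the top-$k$ weights of $\mu_n$ from the rest, and so every accumulation point of $x_i^{(n)}$ for $i\leq k$ is forced by the bijection from the main step to lie in $\{x_1,\dots,x_k\}$. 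The principal technical obstacle I anticipate is the bookkeeping in the equality case when $(a_i)$ has plateaus, since the chain of inequalities determines only the multiset $\{b_i\}_{b_i>0}$ rather than a canonical enumeration; this is resolved precisely by the non-increasing normalisation built into $\mathbf{T}$, which selects a well-defined labeling after the fact.
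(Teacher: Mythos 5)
The paper does not prove this lemma; it defers entirely to \cite[Lemma~2.5]{ethier1994convergence}. Your proposal is therefore a self-contained argument rather than a paraphrase, and it is correct in substance: the key lever is exactly the right one, namely that the Cauchy--Schwarz-type chain
\[
\sum_i b_i^2 \;\leq\; \sum_j \Bigl(\sum_{i:\,y_i=x_j} b_i\Bigr)^{\!2} \;\leq\; \sum_j a_j^2 \;=\; \mu^*(X)
\]
is saturated under atomic convergence, and the equality case pins down a bijection between the positive $b_i$'s and the atoms of $\mu$, after which monotonicity of the enumeration in $\mathbf{T}$ forces $b_i=a_i$. The tightness argument confining the $x_i^{(n)}$ to a compact set once $b_i>0$, and the Portmanteau step via shrinking closed balls, are both sound. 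Two points deserve a small repair. First, your appeal to Scheff\'e's lemma in the converse of Part (1) relies on $\sum_i a_i = \sum_i a_i^{(n)} = 1$, which is automatic only when $\mu$ is purely atomic; as stated with $(a_i)\in\mathbf{T}$ the paper does impose this, but it is cleaner (and matches the greater generality of the Ethier--Kurtz original) to run the converse through the uniform tail bound $\sum_{i>N}(a_i^{(n)})^2\leq\tfrac{1}{N+1}$ that you already established, which gives $\sum_i (a_i^{(n)})^2\to\sum_i a_i^2$ directly from pointwise convergence on each finite block, with no mass-balance assumption. Second, for the \emph{moreover} claim you only assert that accumulation points of $x_i^{(n)}$, $i\leq k$, \emph{lie in} $\{x_1,\dots,x_k\}$; to get set convergence you also need surjectivity, i.e.\ that each $x_j$ with $j\leq k$ is actually attained as a limit. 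This does follow from the bijection you constructed — the strict gap $a_k>a_{k+1}$ forces the induced map on indices to restrict to a permutation of $\{1,\dots,k\}$ along every convergent subsequence — but the final sentence should say so rather than stop at containment.
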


The next result can be found in \cite{delloschiavo2024massive}. Anyway, we give an original proof for completeness. 

\begin{lemma}\label{lemma: pa measures are measurable}
    The sets $\PP^{pa}(X)$ and $\PP^{pa}_0(X)$
    are Borel subsets of $\PP(X)$.
\end{lemma}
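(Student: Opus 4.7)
The plan is to show each set is Borel separately; since the printed definitions of $\PPpa(X)$ and $\PPpa_0(X)$ coincide verbatim, I read $\PPpa_0(X) = \operatorname{em}(\bold{T}_0 \times X^\infty_{\neq})$ as the intended definition (a likely typo in the statement).

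\textbf{Borelness of $\PPpa(X)$.} I would construct a Borel function $N \colon \PP(X) \to [0,1]$ equal to the total atomic mass of $\mu$, so that $\PPpa(X) = N^{-1}(\{1\})$. First, show that
$$g(x,\mu) := \mu(\{x\})$$
is jointly Borel on $X \times \PP(X)$, by writing $g = \lim_{n\to\infty} \phi_n$ with
$$\phi_n(x,\mu) := \int_X (1 - n\, d(x,y))_+ \, d\mu(y).$$
Each $\phi_n$ is jointly continuous: combining the uniform estimate $\sup_y \bigl| (1-n\, d(x_k, y))_+ - (1 - n\, d(x, y))_+ \bigr| \leq n\, d(x_k, x)$ with narrow convergence $\mu_k \to \mu$ gives $\phi_n(x_k, \mu_k) \to \phi_n(x, \mu)$; the pointwise limit $\phi_n(x,\mu) \to \mu(\{x\})$ follows by dominated convergence. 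Then $A := \{g > 0\}$ is Borel, and the total atomic mass
$$N(\mu) := \int_X \indi_A(x,\mu) \, d\mu(x)$$
is Borel in $\mu$ by the functional monotone class theorem applied to the product $\sigma$-algebra $\mathcal{B}(X) \otimes \mathcal{B}(\PP(X)) = \mathcal{B}(X \times \PP(X))$: the class of bounded Borel $h$ for which $\mu \mapsto \int h(x,\mu) \, d\mu(x)$ is Borel contains the $\pi$-system of products $f(x) k(\mu)$ with $f \in C_b(X)$, $k \in C_b(\PP(X))$ (since $\mu \mapsto k(\mu) \int f\, d\mu$ is continuous), and is closed under linear combinations and bounded monotone limits. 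One then checks that $N(\mu) = 1$ if and only if $\mu$ is purely atomic.

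\textbf{Borelness of $\PPpa_0(X)$.} The plan is to apply the Lusin--Souslin theorem. The domain $\bold{T}_0 \times X^\infty_{\neq}$ is a Borel subset of the Polish space $\bold{T} \times X^\infty$, hence itself Polish. The restriction of $\operatorname{em}$ to this domain is continuous and \emph{injective}: for $\mu = \sum a_i \delta_{z_i}$ with $(a_i) \in \bold{T}_0$ (strictly decreasing and positive), the sorted atom weights recover $(a_i)$ uniquely, and since the $a_i$ are pairwise distinct, each $z_i$ is uniquely determined as the atom of $\mu$ having mass $a_i$. Lusin--Souslin then yields that the image $\PPpa_0(X) = \operatorname{em}(\bold{T}_0 \times X^\infty_{\neq})$ is Borel in $\PP(X)$.

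The main technical point is the joint Borel measurability of $g$ and the monotone-class argument upgrading it to Borelness of $N$; both are routine but require care. The argument for $\PPpa_0(X)$ is by contrast a clean application of descriptive set theory, once the injectivity on the strictly-decreasing domain is observed.
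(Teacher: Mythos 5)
Your proposal is correct, and it takes a genuinely different route from the paper's. The paper exploits the atomic topology $\tau_a$: having already shown in the preceding lemma that $\tau_a$ is Polish, stronger than the narrow topology, and generates the same Borel $\sigma$-algebra, the paper simply observes that $\PP^{\operatorname{pa}}(X)$ is $\tau_a$-closed, and that $\PP^{\operatorname{pa}}_0(X)$ is the countable intersection of the $\tau_a$-open sets $\{\mu \in \PP^{\operatorname{pa}}(X) : a_j > a_{j+1}\}$ (using Lemma~\ref{lemma: atomic topology} for openness). This is very short but front-loads the work into establishing the atomic topology machinery (in particular the non-trivial fact that two comparable Polish topologies share their Borel sets). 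Your argument is self-contained and avoids the atomic topology altogether: for $\PP^{\operatorname{pa}}(X)$ you build the atomic-mass functional $N$ directly as a Borel function (via joint Borelness of $(x,\mu)\mapsto \mu(\{x\})$ and a monotone class argument) and take a level set; for $\PP^{\operatorname{pa}}_0(X)$ you read the intended definition $\PP^{\operatorname{pa}}_0(X) = \operatorname{em}(\mathbf{T}_0 \times X^\infty_{\neq})$ behind the paper's typo and apply Lusin--Souslin, using that $\operatorname{em}$ is injective on the strictly-decreasing domain. Both routes are valid; yours is longer but more elementary and does not require the Polishness of $\tau_a$, while the paper's is a two-line corollary of infrastructure it has already set up and will reuse elsewhere.

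One small remark: for the Lusin--Souslin step, what you actually need is that $\operatorname{em}$ restricted to $\mathbf{T}_0\times X^\infty_{\neq}$ is Borel and injective into a Polish space; Borelness of $\operatorname{em}$ follows from narrow continuity (which holds by the tail-sum estimate $\sum_{i>N} a_i^{(n)} \to \sum_{i>N} a_i$), so you do not need the stronger $\tau_a$-continuity used by the paper. It is also worth noting explicitly that every $\mathbf{a}\in\mathbf{T}_0$ has all entries strictly positive (if $a_j=0$ then $a_{j+1}<0$ is impossible), which is what makes each atom's location recoverable from its weight and hence gives injectivity.
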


\begin{proof}
    The set $\PP^{\operatorname{pa}}(X)$ is closed in the atomic topology, and in particular it is a Borel subset of $\PP(X)$. 
    Regarding $\PP^{\operatorname{pa}}_0(X)$, for any $j\in \N$, define 
    \(\PP^{\operatorname{pa}}_{0,j}(X) := \left\{ \mu\in \PP^{\operatorname{pa}}(X)\ : \ a_j > a_{j+1} \right\} 
    \)
    which is open because its complement $\{\mu \in \PP^{\operatorname{pa}}(X) \ : \ a_j = a_{j+1}\}$ is closed thanks to Lemma \ref{lemma: atomic topology}. Then, simply notice that 
    \(\PP^{\operatorname{pa}}_0(X) = \bigcap_{j\in \N} \PP^{\operatorname{pa}}_{0,j}(X).\)
\end{proof}

A last simple but useful lemma is stated here.

\begin{lemma}\label{lemma: push for of PA meas}
    Let $X$ and $Y$ be Polish spaces and $f:X \to Y$ a Borel map. If $\mu\in \PPpa(X)$, then $f_\sharp \mu \in \PPpa(Y)$.
\end{lemma}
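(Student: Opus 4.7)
The plan is to push forward the atomic representation of $\mu$ and reorganize it so as to fit the definition of $\PPpa(Y)$. Since $\mu\in\PPpa(X)$, we may write
\[
\mu=\sum_{i\in\N} a_i\,\delta_{x_i},\qquad (a_i)_{i\in\N}\in\mathbf{T},\ (x_i)_{i\in\N}\in X^\infty_{\neq}.
\]
By the change-of-variables formula for the push-forward, for every bounded Borel $\varphi:Y\to\R$ one has $\int \varphi\, d(f_\sharp\mu)=\int \varphi\circ f\, d\mu=\sum_i a_i\,\varphi(f(x_i))$, so that as measures on $Y$
\[
f_\sharp\mu=\sum_{i\in\N} a_i\,\delta_{f(x_i)}.
\]

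The only subtlety is that the points $f(x_i)$ may coincide. To reduce to an enumeration in $Y^\infty_{\neq}$, I would define the equivalence relation $i\sim j\Leftrightarrow f(x_i)=f(x_j)$ on $\N$, pick a representative $i_\alpha$ in each class $\alpha$ and set $y_\alpha:=f(x_{i_\alpha})$ and $b_\alpha:=\sum_{j\sim i_\alpha} a_j$. The family $\{y_\alpha\}_\alpha$ consists of distinct points of $Y$ and the (at most countable) index set $A$ is either finite or countably infinite; in the finite case I pad with $y_\alpha$ arbitrary distinct and $b_\alpha=0$. Relabelling by a bijection $\sigma:\N\to A$ chosen so that the sequence $(b_{\sigma(n)})_{n\in\N}$ is non-increasing (this can always be done since $(b_\alpha)\in[0,1]^A$ is summable), I obtain
\[
f_\sharp\mu=\sum_{n\in\N} b_{\sigma(n)}\,\delta_{y_{\sigma(n)}},
\]
with $(y_{\sigma(n)})\in Y^\infty_{\neq}$, $(b_{\sigma(n)})_{n\in\N}$ non-increasing in $[0,1]$ and $\sum_n b_{\sigma(n)}=\sum_i a_i=1$. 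Hence $(b_{\sigma(n)})\in\mathbf{T}$ and $f_\sharp\mu\in\PPpa(Y)$.

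The only potential obstacle is the countable padding/relabelling needed when the collection of distinct values $\{f(x_i)\}$ is finite or when several weights tie; both are handled by the standard observation that any summable non-negative family admits a non-increasing enumeration, possibly completed with zeros to an $\N$-indexed sequence. No use is made of any regularity of $f$ beyond Borel measurability, which ensures that the push-forward $f_\sharp\mu$ is a well-defined Borel probability measure on $Y$.
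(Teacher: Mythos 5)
Your argument is correct and shares the paper's key step, namely the push-forward identity $f_\sharp\mu=\sum_i a_i\,\delta_{f(x_i)}$ obtained from the change-of-variables formula. What you do beyond the paper is make the re-grouping explicit: since $f$ need not be injective, the sequence $(f(x_i))_i$ may fail to lie in $Y^\infty_{\neq}$, so $\sum_i a_i\,\delta_{f(x_i)}$ is not a priori an admissible representation in the sense of the definition of $\PPpa(Y)$, which requires distinct atoms and a non-increasing weight sequence in $\mathbf{T}$. You handle this by merging the classes $\{i:f(x_i)=f(x_j)\}$, padding with zero-weight distinct points if finitely many values survive, and selecting a non-increasing enumeration of the resulting summable weights; all of this is sound. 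The paper's proof stops at the identity $f_\sharp\mu=\sum_i a_i\,\delta_{f(x_i)}$ and concludes membership in $\PPpa(Y)$ directly, which amounts to a tacit appeal to the (later-stated, in \S\ref{subsec: reference}) observation that the image of $\operatorname{em}$ over $\mathbf{T}\times Y^\infty$ — i.e.\ over sequences with possible repetitions — coincides with $\PPpa(Y)$. Your version is thus slightly more self-contained at the point where the lemma appears; the paper's is terser but borrows from a fact stated afterwards in the text.
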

\begin{proof}
    A simple calculation shows that, writing $\mu = \sum_{i} a_i \delta_{x_i}$, for all $G:Y \to [0,1]$ Borel function
    \[\int_Y G(y)d(f_\sharp \mu)(y) = \sum_{i} a_i G(f(x_i)) = \int_Y G(y) d\bigg( \sum_i a_i \delta_{f(x_i)} \bigg),\]
    giving that $f_\sharp\mu = \sum_i a_i \delta_{f(x_i)} \in \PPpa(Y)$.
\end{proof}

\subsection{Curves of probability measures and probability over curves}

\subsubsection{Probability measures over curves}\label{subsub: curves}
Let $(X,\tau)$ be a Polish space. The space $C([0,T],X)$ of continuous curves in $X$, endowed with the compact-open topology, is a Polish space. The function $\e_t: C([0,T],X) \to X$ corresponds to the evaluation at a given time $t\in[0,T]$. 
\\
The previous setting applies to the spaces $C([0,T],\PP(X))$ and $\PP(C([0,T],X))$, so that we can consider probability measures over them, that we will usually denote as $\Lambda \in \PP(C([0,T],\PP(X)))$ and $\mathfrak{L}\in \PP(\PP(C([0,T],X)))$.

Since these last two spaces will play a fundamental role for the rest of the paper, we fix some additional notation regarding them:
\begin{itemize}
    \item the evaluation map from $C([0,T],\PP(X))$ to $\PP(X)$ at a fixed time $t\in [0,T]$, will be denoted as $\mathfrak{e}_t$;
    \item the push-forward of the evaluation map $\e_t$ will be denoted as $E_t:= (\e_t)_{\sharp}$. In particular, $(E_t)_\sharp = (\e_t)_{\sharp\sharp}$
\end{itemize}

\subsubsection{Absolutely continuous curves} Let $(X,d)$ be a complete and separable metric space. We say that a continuous curve $\boldsymbol{x}:[0,T] \to X$ is absolutely continuous, and we write $\boldsymbol{x}\in 
   AC([0,T],X)$, if it exists a function $g\in L^1(0,T)$ such that 
   \begin{equation}\label{abs cont def}
        d(\boldsymbol{x}(t),\boldsymbol{x}(s)) \leq \int_s^t g(r) dr \quad \text{whenever}\quad 0\leq s \leq t \leq T.
   \end{equation}
   If $g\in L^p(0,T)$, for some $p\in (1,+\infty]$, we say that $\boldsymbol{x}\in 
   AC^p([0,T],X)$. The space $AC^p([0,T],X)$, for $p\in[1,+\infty]$ is a Borel subset of $C([0,T],X)$.

It is well known \cite{ambrosio2005gradient} that absolutely continuous curves admit a \textit{metric derivative}, that is,
\begin{equation}
        \lim_{s\to t} \frac{d(
        \boldsymbol{x}(s),
        \boldsymbol{x}(t))}{|t-s|} =:|\dot{\boldsymbol{x}}|_{d}
        \kern-2pt(t)
    \end{equation}
    exists for $\mathcal{L}^1$-a.e. $t\in [0,T]$ 
    and it 
    provides the smallest $g$ (in an almost everywhere pointwise sense) that fits in \eqref{abs cont def}. 

    In particular, we can apply this definition to the Wasserstein space $(\PP_p(X),W_p)$ and the Wasserstein on Wasserstein space $(\PP_p(\PP_p(X)),\mathcal{W}_p)$. In this paper we will deal with the spaces:
    \begin{itemize}
        \item $AC^p([0,T],\PP_p(X))$ that is a Borel subset of $C([0,
        T],\PP(X))$ for all $p\geq 1$;
        \item $AC^p([0,T],\PP_p(\PP_p(X)))$ that is a Borel subset of $C([0,
        T],\PP(\PP(X)))$ for all $p\geq 1$.
    \end{itemize}

\subsubsection{Continuity equation for curve of measures}
Given a Borel measurable vector field $v:[0,T]\times \R^d \to \R^d$ and a curve of probability measures $\boldsymbol{\mu} = (\mu_t)_{t\in[0,T]} \in C([0,T],\PP(\R^d))$, we say that $\partial_t \mu_t + \operatorname{div}(v_t\mu_t) = 0$ if 
    \begin{equation}\label{eq: CE}
    \begin{aligned}
        \int_0^T \int_{\R^d} |v_t(x)|d\mu_t(x) dt <+\infty \quad \text{and} \quad \text{for all }\xi\in C_c^1(0,T), \ \phi \in C_c^1(\R^d)
        \\
        \int_0^T \xi'(t) \int_{\R^d} \phi(x) d\mu_t(x) dt = - \int_0^T \xi(t) \int_{\R^d} \nabla \phi(x) \cdot v(t,x) d\mu_t(x) dt.
    \end{aligned}   
    \end{equation}
We recall some important properties of curves that are solution of a continuity equation: they characterize absolutely continuous curve and they can be represented as superposition of solution to the associated ordinary differential equation. 

\begin{teorema}\cite[Theorem 8.3.1]{ambrosio2005gradient}
    Let $\boldsymbol{\mu} = (\mu_t)_{t\in[0,T]}\in C([0,T],\PP(\R^d)) $ and assume that $\mu_0 \in \PP_p(\R^d)$. If $p\in(1,+\infty)$, the following are equivalent:
    \begin{enumerate}
        \item $\boldsymbol{\mu}\in AC^p([0,T],\PP_p(\R^d))$;
        \item there exists $v:[0,T]\times \R^d \to \R^d$ such that 
        \begin{equation}
            \int_0^T \int_{\R^d}|v(t,x)|^pd\mu_t(x) dt <+\infty \quad \text{and} \quad \partial_t\mu_t + \operatorname{div}(v_t\mu_t) = 0.
        \end{equation}
    \end{enumerate}
\end{teorema}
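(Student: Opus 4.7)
The proof establishes an equivalence, so I would treat the two implications separately. The direction $(2) \Rightarrow (1)$ is the easy one and is essentially a consequence of the superposition principle, while $(1) \Rightarrow (2)$ requires a variational construction of the vector field and is the harder part.

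For $(2) \Rightarrow (1)$, the plan is to invoke the standard superposition principle (Ambrosio's theorem recalled in the Introduction) to lift $\boldsymbol{\mu}$ to some $\lambda \in \PP(C([0,T],\R^d))$ supported on absolutely continuous curves $\gamma$ with $\dot\gamma(r) = v_r(\gamma(r))$ for a.e.\ $r$, and with $(\e_t)_\sharp \lambda = \mu_t$ for every $t$. Then for $0 \le s < t \le T$ the coupling $(\e_s, \e_t)_\sharp \lambda$ is admissible between $\mu_s$ and $\mu_t$, so
\begin{equation*}
W_p^p(\mu_s,\mu_t) \le \int |\gamma(t)-\gamma(s)|^p \, d\lambda(\gamma) \le \int \Bigl(\int_s^t |v_r(\gamma(r))|\, dr\Bigr)^p d\lambda(\gamma),
\end{equation*}
and Hölder plus Fubini yields $W_p^p(\mu_s,\mu_t) \le (t-s)^{p-1}\int_s^t \int_{\R^d} |v_r|^p d\mu_r\, dr$, which gives the $AC^p$ property together with the pointwise bound $|\dot{\boldsymbol{\mu}}|_{W_p}(t) \le \|v_t\|_{L^p(\mu_t)}$ for a.e.\ $t$.

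For $(1) \Rightarrow (2)$, I would build the vector field as a limit of piecewise-geodesic approximations. Fix a uniform partition $0 = t_0^N < \dots < t_N^N = T$ and, on each sub-interval $[t_{i-1}^N, t_i^N]$, choose an optimal plan $\pi_i^N \in \Gamma_o(\mu_{t_{i-1}^N}, \mu_{t_i^N})$ and use it to define a displacement-interpolated curve $\mu^N_t$ and an associated Borel vector field $v^N_t$, essentially given by reading the velocity $(y-x)/(t_i^N - t_{i-1}^N)$ through $\pi_i^N$ along the geodesic. The constant-speed property of Wasserstein geodesics combined with the absolute continuity of $\boldsymbol{\mu}$ yields a uniform bound
\begin{equation*}
\int_0^T \int_{\R^d} |v^N_t(x)|^p \, d\mu^N_t(x)\, dt \le \int_0^T |\dot{\boldsymbol{\mu}}|_{W_p}^p(t)\, dt + o_N(1).
\end{equation*}
Then I would view the pair $(\mu^N_t, v^N_t\mu^N_t)$ as a vector-valued measure on $[0,T]\times\R^d$, extract a weak-$*$ limit $(\mu_t, \boldsymbol{E})$, and invoke the joint convexity and lower semicontinuity of the functional $(\mu,\boldsymbol{E}) \mapsto \int\!\int |d\boldsymbol{E}/d\mu|^p d\mu\, dt$ to identify $\boldsymbol{E} = v_t \mu_t$ for some $v \in L^p$. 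The continuity equation, being linear in $(\mu,\boldsymbol{E})$, passes to the weak limit when tested against $\phi \otimes \xi \in C_c^1(\R^d)\otimes C_c^1(0,T)$.

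The main obstacle is the limit step in $(1) \Rightarrow (2)$: one must simultaneously ensure weak-$*$ tightness of the vector-valued measures $v^N_t \mu^N_t$ on the non-compact space $[0,T]\times \R^d$, preserve absolute continuity with respect to the weak limit $\mu_t$, and recover the continuity equation. Tightness in the spatial variable is obtained from the $p$-moment bound $\mu_0 \in \PP_p(\R^d)$ propagated in time via $AC^p$, while the identification of the Radon–Nikodym derivative and the $L^p$ bound come from the convex duality formulation of the action functional (Benamou–Brenier type). As a byproduct, optimality combined with the inequality from $(2) \Rightarrow (1)$ identifies $\|v_t\|_{L^p(\mu_t)} = |\dot{\boldsymbol{\mu}}|_{W_p}(t)$ for a.e.\ $t$.
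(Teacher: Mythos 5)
The paper does not prove this result; it is cited verbatim from Ambrosio--Gigli--Savar\'e \cite[Theorem 8.3.1]{ambrosio2005gradient}, so there is no in-paper proof to compare against. Your proposal is a correct proof, but it departs from the reference's argument in both directions. For $(2)\Rightarrow(1)$ the reference uses a direct mollification estimate (their Lemma~8.1.9) to obtain $W_p(\mu_s,\mu_t)\le\int_s^t\|v_r\|_{L^p(\mu_r)}\,dr$; your route through the superposition principle is cleaner but not independent, since that principle is itself proved in the reference using the same mollification machinery---still, in the present paper the superposition principle is already recalled as Theorem~\ref{thm: superposition}, so invoking it as a black box is legitimate. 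For $(1)\Rightarrow(2)$ the reference constructs $v_t$ for a.e.\ $t$ by rescaling optimal plans $\gamma_h$ between $\mu_t$ and $\mu_{t+h}$, passing to a narrow limit as $h\to 0$, and taking the barycenter; your piecewise-geodesic interpolation followed by weak-$*$ compactness and lower semicontinuity of the Benamou--Brenier action is a genuinely different, more global variational route. Both yield the minimal velocity with $\|v_t\|_{L^p(\mu_t)}=|\dot{\boldsymbol{\mu}}|_{W_p}(t)$ for a.e.\ $t$, but your version additionally needs $W_p(\mu^N_t,\mu_t)\to 0$ uniformly in $t$ (which follows from the $AC^p$ modulus of continuity, since the curves agree on the grid) and the joint weak-$*$ lower semicontinuity of $(\mu,\boldsymbol{E})\mapsto\int |d\boldsymbol{E}/d\mu|^p\,d\mu$; both are standard but should be stated explicitly if the argument is written out in full.
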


\begin{teorema}\label{thm: superposition}\cite[Theorem 8.2.1]{ambrosio2005gradient}
    Let $v:[0,T]\times \R^d \to \R^d$ and $\boldsymbol{\mu} = (\mu_t)_{t\in[0,T]} \in C([0,T],\PP(\R^d))$ be satisfying \eqref{eq: CE}. Then, there exists a probability measure $\PP(C([0,T],\R^d))$ such that $(\e_t)_\sharp\lambda = \mu_t$ for all $t\in[0,T]$ and $\lambda$ is concentrated over curves $\gamma\in AC([0,T],\R^d)$ that satisfies 
    \begin{equation}\label{eq: ODE}
        \dot{\gamma}(t) = v_t(\gamma(t)) \quad \text{for a.e. }t\in[0,T].
    \end{equation}
    Vice versa, if $\lambda\in \PP(C([0,T],\R^d))$ is concentrated over absolutely continuous curves that satisfy \eqref{eq: ODE} and $\int \int_0^T |\dot\gamma(t)| dt d\lambda(\gamma)<+\infty$, then $\mu_t := (\e_t)_\sharp\lambda$ satisfies \eqref{eq: CE}.
\end{teorema}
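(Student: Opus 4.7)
The converse direction is routine, so I would handle it first. Given $\lambda$ concentrated on $AC$ curves solving the ODE with $\int\!\int_0^T|\dot\gamma(t)|\,dt\,d\lambda<+\infty$, for $\phi\in C_c^1(\R^d)$ the map $t\mapsto \int\phi\,d\mu_t=\int\phi(\gamma(t))\,d\lambda(\gamma)$ is absolutely continuous with derivative $\int\nabla\phi(\gamma(t))\cdot\dot\gamma(t)\,d\lambda=\int\nabla\phi(\gamma(t))\cdot v_t(\gamma(t))\,d\lambda(\gamma)=\int\nabla\phi\cdot v_t\,d\mu_t$, by dominated convergence using the hypothesis. The integrability $\int_0^T\!\int|v_t|\,d\mu_t\,dt<+\infty$ is immediate from the $L^1$-bound on $|\dot\gamma|$ against $\lambda$ together with the ODE.

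For the main direction, I would proceed by regularization. Let $\rho_\varepsilon$ be a standard mollifier on $\R^d$, set $\mu_t^\varepsilon:=\mu_t*\rho_\varepsilon$, and define
\[
w_t^\varepsilon(x):=\frac{\big((v_t\mu_t)*\rho_\varepsilon\big)(x)}{\mu_t^\varepsilon(x)},
\]
which is smooth and Lipschitz in $x$ for each fixed $\varepsilon,t$. A direct computation shows $\partial_t\mu_t^\varepsilon+\operatorname{div}(w_t^\varepsilon\mu_t^\varepsilon)=0$ in the classical sense. Since $w^\varepsilon$ is locally Lipschitz, there is a well-defined flow $X^\varepsilon:[0,T]\times\R^d\to\R^d$ solving $\partial_t X_t^\varepsilon(x)=w_t^\varepsilon(X_t^\varepsilon(x))$ with $X_0^\varepsilon=\operatorname{id}$. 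Setting $\lambda^\varepsilon:=(X_\cdot^\varepsilon)_\sharp\mu_0^\varepsilon\in\PP(C([0,T],\R^d))$ one gets $(\e_t)_\sharp\lambda^\varepsilon=\mu_t^\varepsilon$ by uniqueness of the flow representation.

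Next I would establish tightness of $\{\lambda^\varepsilon\}$ in $\PP(C([0,T],\R^d))$. Marginal tightness at $t=0$ follows from $\mu_0^\varepsilon\to\mu_0$ narrowly. For equi-integrability of the derivatives, Jensen's inequality applied pointwise to the definition of $w^\varepsilon$ yields
\[
\int\!\int_0^T|\dot\gamma(t)|\,dt\,d\lambda^\varepsilon(\gamma)=\int_0^T\!\int|w_t^\varepsilon|\,d\mu_t^\varepsilon\,dt\leq \int_0^T\!\int|v_t|\,d\mu_t\,dt<+\infty,
\]
with the same bound for $\int_0^s|\dot\gamma(t)|dt$ on subintervals, providing Ascoli-type equicontinuity in probability. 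Extracting a narrowly convergent subsequence $\lambda^{\varepsilon_n}\to\lambda$, continuity of the evaluation map and narrow convergence $\mu_t^\varepsilon\to\mu_t$ give $(\e_t)_\sharp\lambda=\mu_t$ for every $t$.

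The hard part, and what I expect to be the main obstacle, is to show that $\lambda$ is concentrated on solutions of the ODE $\dot\gamma=v_t(\gamma)$. The strategy is to use the lower semicontinuous functional
\[
\Phi(\gamma):=\int_0^T\big(|\dot\gamma(t)-v_t(\gamma(t))|\wedge 1\big)\,dt\quad\text{(defined as }+\infty\text{ if }\gamma\notin AC),
\]
or a continuous truncation built via mollification in time, and show $\int\Phi\,d\lambda=0$. Under $\lambda^\varepsilon$ curves satisfy $\dot\gamma=w_t^\varepsilon(\gamma)$ exactly, so
\[
\int\Phi\,d\lambda^\varepsilon=\int_0^T\!\int\big(|w_t^\varepsilon(x)-v_t(x)|\wedge 1\big)\,d\mu_t^\varepsilon(x)\,dt,
\]
and I would reduce matters to proving this quantity tends to $0$. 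This requires a convergence-of-difference-quotients lemma: for $\mu_t\,dt$-a.e.\ $(t,x)$ one has $w_t^\varepsilon(x)\to v_t(x)$, which follows from the Lebesgue differentiation theorem applied to $v_t\mu_t$ and $\mu_t$, combined with a Vitali-type dominated convergence (the truncation at $1$ gives uniform integrability). Passing to the limit via lower semicontinuity of $\Phi$ then yields $\int\Phi\,d\lambda\leq\liminf_\varepsilon\int\Phi\,d\lambda^\varepsilon=0$, so $\lambda$-a.e.\ curve is absolutely continuous and satisfies $\dot\gamma=v_t(\gamma)$ for a.e.\ $t$, as required.
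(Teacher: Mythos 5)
The paper does not prove this result: it is cited verbatim from \cite[Theorem 8.2.1]{ambrosio2005gradient} and used as a black box, so there is no ``paper's proof'' to compare against. Your sketch recapitulates the standard AGS mollification-and-flow argument, and the overall architecture (converse by direct computation, forward by mollifying $\mu_t$, constructing the Lipschitz flow of $w_t^\varepsilon=((v_t\mu_t)*\rho_\varepsilon)/(\mu_t*\rho_\varepsilon)$, extracting a narrow limit, and killing the ODE defect via lower semicontinuity of a suitable functional) is correct.

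Two technical points deserve more care before this could be called a complete proof, both stemming from the fact that the theorem is stated here under the bare $L^1$ hypothesis $\int_0^T\int|v_t|\,d\mu_t\,dt<+\infty$, whereas AGS Theorem 8.2.1 works under an $L^p$ bound with $p>1$. First, your tightness step is not quite right as written: an $L^1$ bound on $\int\int_0^T|\dot\gamma|\,dt\,d\lambda^\varepsilon$ gives sublevel sets that are \emph{not} compact in $C([0,T],\R^d)$ (curves with $\int_0^T|\dot\gamma|\le M$ are not equicontinuous). In the $L^1$ setting one first upgrades via de la Vall\'ee-Poussin to a superlinear convex $\theta$ with $\int_0^T\int\theta(|v_t|)\,d\mu_t\,dt<\infty$; Jensen then transfers this to a uniform bound on $\int\int_0^T\theta(|\dot\gamma|)\,dt\,d\lambda^\varepsilon$, whose sublevel sets \emph{are} compact. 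Second, the convergence $\int_0^T\int\big(|w_t^\varepsilon-v_t|\wedge 1\big)\,d\mu_t^\varepsilon\,dt\to 0$ is more delicate than ``Lebesgue differentiation for $\mu_t\,dt$-a.e.\ $(t,x)$'': the integral is against $\mu_t^\varepsilon$, not $\mu_t$, so pointwise a.e.\ convergence of $w^\varepsilon_t$ must be coupled with a duality or Young-measure argument (in AGS this is precisely where $p>1$ is used to get weak compactness of $w^\varepsilon_t\mu_t^\varepsilon$). Both gaps are fixable, but in the $p=1$ regime they are genuine obstacles and are exactly why the $L^1$ superposition principle required arguments beyond the original AGS proof.
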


As a consequence of the previous two theorems, we recover the lifting result by S. Lisini \cite{lisini2007characterization} in the Euclidean space.

\begin{teorema}
     Let $\boldsymbol{\mu}=(\mu_t)_{t\in [0,T]} \in AC^p([0,T],\PP_p(\R^d))$ with $p\in(1,+\infty)$. Then, there exists a lifting $\lambda\in \PP(C([0,T],\R^d))$ such that  $(\e_t)_{\#}\lambda = \mu_t$ for all $t\in [0,T]$ and $\lambda$ is concentrated over $AC^p([0,T],\R^d)$. Moreover,
        \begin{equation}\label{minimal lifting}
            \int \int_0^T |\dot\gamma(t)|^p dt d\lambda (\gamma) = \int_0^T |\dot{\boldsymbol{\mu}}|^p_{W_p} < +\infty.
        \end{equation}
    On the other hand, for any $p\in [1,+\infty)$, given $\lambda\in \PP(C_T(\R^d))$ concentrated over absolutely continuous curves, with $(e_0)_\#\lambda\in \PP_p(\R^d)$, the curve $\boldsymbol{\mu}:= ((e_t)_\#\lambda)_{t\in [0,T]}$ belongs to $AC^p_T(\PP_p(\R^d))$.
\end{teorema}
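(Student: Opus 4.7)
The plan is to deduce the forward direction by combining the two theorems just stated, and to handle the converse via a direct coupling argument.

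First, given $\boldsymbol{\mu}\in AC^p([0,T],\PP_p(\R^d))$, I would apply the equivalence in the theorem cited from \cite[Thm.~8.3.1]{ambrosio2005gradient} to obtain a Borel vector field $v:[0,T]\times\R^d\to\R^d$ solving the continuity equation $\partial_t\mu_t+\operatorname{div}(v_t\mu_t)=0$ in the sense of \eqref{eq: CE} with $\int_0^T\int |v_t|^p\,d\mu_t\,dt<+\infty$. Using the minimality statement in that reference, one can in fact choose $v$ so that $\|v_t\|_{L^p(\mu_t)}=|\dot{\boldsymbol{\mu}}|_{W_p}(t)$ for a.e.\ $t\in[0,T]$.

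Next, I would invoke Theorem \ref{thm: superposition} to produce $\lambda\in\PP(C([0,T],\R^d))$ with $(\e_t)_\sharp\lambda=\mu_t$ for all $t$ and concentrated on absolutely continuous curves $\gamma$ satisfying $\dot{\gamma}(t)=v_t(\gamma(t))$ for a.e.\ $t$. Applying Tonelli together with the pointwise identity $|\dot\gamma(t)|=|v_t(\gamma(t))|$ and the marginal property yields
\begin{equation*}
\int\!\!\int_0^T|\dot\gamma(t)|^p\,dt\,d\lambda(\gamma)=\int_0^T\!\!\int|v_t(x)|^p\,d\mu_t(x)\,dt=\int_0^T|\dot{\boldsymbol{\mu}}|^p_{W_p}(t)\,dt<+\infty,
\end{equation*}
which simultaneously shows that $\lambda$ is concentrated on $AC^p([0,T],\R^d)$ and gives \eqref{minimal lifting}.

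For the converse, given $\lambda\in\PP(C([0,T],\R^d))$ concentrated on $AC^p$ curves (with the natural integrability $\int\!\!\int_0^T|\dot\gamma|^p\,dt\,d\lambda<+\infty$) and $(\e_0)_\sharp\lambda\in\PP_p(\R^d)$, I would set $\mu_t:=(\e_t)_\sharp\lambda$ and test $\mu_s,\mu_t\in\PP_p(\R^d)$ using the admissible coupling $(\e_s,\e_t)_\sharp\lambda\in\Gamma(\mu_s,\mu_t)$. Combining
\begin{equation*}
W_p^p(\mu_s,\mu_t)\leq\int|\gamma(t)-\gamma(s)|^p\,d\lambda(\gamma)
\end{equation*}
with $|\gamma(t)-\gamma(s)|\leq\int_s^t|\dot\gamma(r)|\,dr$ and Hölder's inequality in $r$, Fubini then yields
\begin{equation*}
W_p(\mu_s,\mu_t)\leq|t-s|^{1-1/p}\Bigl(\int_s^t g(r)\,dr\Bigr)^{1/p},\qquad g(r):=\int|\dot\gamma(r)|^p\,d\lambda(\gamma)\in L^1(0,T),
\end{equation*}
which proves narrow (hence $W_p$-) continuity, membership in $AC^p([0,T],\PP_p(\R^d))$, and the bound $|\dot{\boldsymbol{\mu}}|^p_{W_p}(t)\leq g(t)$ a.e.

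The only delicate point I anticipate is matching the energies in \eqref{minimal lifting}: one direction (the inequality $\leq$) could alternatively be obtained from the converse coupling estimate, but the equality genuinely requires the minimality of the chosen $v_t$ in $L^p(\mu_t)$, which is the non-trivial ingredient borrowed from \cite[Thm.~8.3.1]{ambrosio2005gradient}.
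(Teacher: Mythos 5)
Your forward direction is exactly the proof the paper has in mind: invoke \cite[Thm.~8.3.1]{ambrosio2005gradient} to get the minimal velocity $v$ with $\|v_t\|_{L^p(\mu_t)}=|\dot{\boldsymbol{\mu}}|_{W_p}(t)$, then the superposition principle of Theorem \ref{thm: superposition}, then integrate the a.e.\ identity $|\dot\gamma(t)|=|v_t(\gamma(t))|$ against $\lambda$ and $dt$ and use $(\e_t)_\sharp\lambda=\mu_t$; the energy identity \eqref{minimal lifting} falls out precisely as you say. You are also right that the converse, as printed, is missing the hypothesis $\int\int_0^T|\dot\gamma|^p\,dt\,d\lambda<+\infty$: without it one can easily make $\mu_t\notin\PP_p(\R^d)$ for $t>0$ even though $\mu_0\in\PP_p(\R^d)$, so adding it is necessary, not cosmetic.

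The one place your converse sketch is slightly loose is the passage from the Hölder-in-$r$ bound $W_p(\mu_s,\mu_t)\leq|t-s|^{1-1/p}\bigl(\int_s^t g\bigr)^{1/p}$ to ``membership in $AC^p$''. That estimate gives continuity and the a.e.\ metric-derivative bound $|\dot{\boldsymbol{\mu}}|_{W_p}\leq g^{1/p}$, but it is not of the form $W_p(\mu_s,\mu_t)\leq\int_s^t h$ with $h\in L^p$, which is what the definition asks for. Two equally short fixes: apply Young's inequality $a^{1/p'}b^{1/p}\leq a/p'+b/p$ to conclude $W_p(\mu_s,\mu_t)\leq\int_s^t\bigl(\tfrac{1}{p'}+\tfrac{g(r)}{p}\bigr)\,dr$, i.e.\ $\boldsymbol{\mu}\in AC([0,T],\PP_p(\R^d))$, and then upgrade to $AC^p$ using the metric-derivative bound above; or, cleaner, replace the Hölder step by Minkowski's integral inequality, which yields directly
\begin{equation*}
W_p(\mu_s,\mu_t)\ \leq\ \Bigl(\int\Bigl(\int_s^t|\dot\gamma(r)|\,dr\Bigr)^p d\lambda(\gamma)\Bigr)^{1/p}\ \leq\ \int_s^t\Bigl(\int|\dot\gamma(r)|^p\,d\lambda(\gamma)\Bigr)^{1/p}dr\ =\ \int_s^t g(r)^{1/p}\,dr,
\end{equation*}
i.e.\ the $AC^p$ bound in one stroke, since $g^{1/p}\in L^p(0,T)$. (A very minor aside: in your closing remark the inequality produced by the coupling estimate is $\int_0^T|\dot{\boldsymbol{\mu}}|^p_{W_p}\leq\int\int_0^T|\dot\gamma|^p\,dt\,d\lambda$, i.e.\ the $\geq$ half of \eqref{minimal lifting}, not the $\leq$ half.)
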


For simplicity, we presented this structure in the space $\R^d$, but it can be recovered in more general metric spaces as well, as shown in \cite{stepanov2017three}.

\subsubsection{Continuity equation for random measures} In \cite{pinzisavare2025}, the authors showed that a similar structure holds at the level of random measures.

As already said in the introduction, here the role of the leading vector field is played by a non-local Borel measurable vector field $b:[0,T]\times \R^d \times \PP(\R^d) \to \R^d$, while the class of `smooth' functions to test the continuity equation is the one of cylinder functions, defined in \ref{def: cyl functions}. Now, let $\boldsymbol{M} = (M_t)_{t\in[0,T]}\in C([0,T],\PP(\PP(\R^d)))$ be a continuous curve of random probability measures: we say that $\partial_t M_t + \operatorname{div}_\PP(b_tM_t) = 0$ if 
\begin{equation}\label{eq: CERM}
    \begin{aligned}
        \int_0^T \int_{\PP}\int_{\R^d} |b_t(x,\mu)|d\mu(x) dM_t(\mu) dt <+\infty \  \text{and} \  \text{for all }\xi\in C_c^1(0,T), \ F \in \operatorname{Cyl}_c^1(\PP(\R^d))
        \\
        \int_0^T \xi'(t) \int_{\PP} F(\mu) dM_t(\mu) dt = - \int_0^T \xi(t) \int_{\PP}\int_{\R^d} \nabla_W F(x,\mu) \cdot b_t(x,\mu) d\mu(x) dM_t(\mu) dt.
    \end{aligned}   
\end{equation}
    
Then, we recall some of the main results presented in \cite{pinzisavare2025} (see also \cite{pinzifirst2025} for more general integrability conditions and for stochastic evolutions).

\begin{teorema}
    Let $\boldsymbol{M} = (M_t)_{t\in[0,T]}\in C([0,T],\PP(\PP(\R^d))) $ and assume that $M_0 \in \PP_p(\PP_p(\R^d))$. If $p>1$, the following are equivalent:
    \begin{enumerate}
        \item $\boldsymbol{M}\in AC^p([0,T],\PP_p(\PP_p(\R^d)))$;
        \item there exists $b:[0,T]\times \R^d \times \PP(\R^d) \to \R^d$ Borel measurable such that 
        \begin{equation}
            \int_0^T \int_\PP \int_{\R^d}|b_t(x,\mu)|^pd\mu(x) dM_t(\mu) dt <+\infty \quad \text{and} \quad \partial_tM_t + \operatorname{div}_\PP(b_tM_t) = 0.
        \end{equation}
    \end{enumerate}
\end{teorema}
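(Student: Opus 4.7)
The statement is the natural ``Wasserstein-on-Wasserstein'' analogue of the classical Ambrosio--Gigli--Savar\'e characterization (Theorem 8.3.1 of \cite{ambrosio2005gradient}), so the plan is to deduce it by combining the classical equivalence on each level with the Lisini-type lifting available in both the base space and the space of random measures, together with the nested superposition principle. The two implications are handled separately, and the passage from a curvewise vector field $v^{\boldsymbol{\mu}}_t$ to a non-local vector field $b_t(x,\mu)$ is the key construction.

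\textbf{From (1) to (2).} Assume $\boldsymbol{M}\in AC^p([0,T],\PP_p(\PP_p(\R^d)))$. Applying Lisini's lifting theorem to $\boldsymbol{M}$ regarded as a curve in the Polish metric space $(\PP_p(\R^d),W_p)$ produces a measure $\Lambda\in \PP(C([0,T],\PP_p(\R^d)))$ concentrated on $AC^p([0,T],\PP_p(\R^d))$ such that $(\mathfrak{e}_t)_{\sharp}\Lambda=M_t$ for every $t$ and
\[
\int\!\int_0^T |\dot{\boldsymbol{\mu}}|^p_{W_p}(t)\,dt\,d\Lambda(\boldsymbol{\mu})=\int_0^T |\dot{\boldsymbol{M}}|^p_{\mathcal{W}_p}(t)\,dt<+\infty.
\]
For $\Lambda$-a.e.\ $\boldsymbol{\mu}$ the classical equivalence (Theorem 8.3.1 of \cite{ambrosio2005gradient}) yields a Borel velocity $v^{\boldsymbol{\mu}}_t(x)$ satisfying the local continuity equation with $\int_0^T\int_{\R^d}|v^{\boldsymbol{\mu}}_t|^p d\mu_t dt=\int_0^T|\dot{\boldsymbol{\mu}}|^p_{W_p}(t)\,dt$; a measurable selection (e.g.\ the minimal one inside the appropriate $L^p$-bundle) gives joint Borel dependence on $(t,x,\boldsymbol{\mu})$. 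Disintegrate $\Lambda=\int_{\PP_p(\R^d)}\Lambda^{\mu}_t\,dM_t(\mu)$ along the map $\mathfrak{e}_t$, and define
\[
b_t(x,\mu):=\int_{\mathfrak{e}_t^{-1}(\mu)} v^{\boldsymbol{\mu}}_t(x)\,d\Lambda^{\mu}_t(\boldsymbol{\mu}).
\]
Jensen's inequality gives $\int\!\int |b_t(x,\mu)|^p d\mu\,dM_t\le \int\!\int |v^{\boldsymbol{\mu}}_t(x)|^p d\mu_t\,d\Lambda$, which is integrable in $t$. The random continuity equation \eqref{eq: CERM} is then verified by testing against $F\in\operatorname{Cyl}_c^1(\PP(\R^d))$: the Wasserstein gradient $\nabla_W F(x,\mu)$ depends only on $\mu$, so one may first integrate $v^{\boldsymbol{\mu}}_t$ along the fibers of $\mathfrak{e}_t$ and reduce to the curvewise local continuity equation written in cylindrical form, then integrate against $\Lambda$.

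\textbf{From (2) to (1).} Given $b$ with the stated $L^p$-integrability and $\partial_t M_t+\operatorname{div}_\PP(b_tM_t)=0$, apply the nested superposition principle recalled in the introduction to obtain $\Lambda\in \PP(C([0,T],\PP(\R^d)))$ with $(\mathfrak{e}_t)_{\sharp}\Lambda=M_t$ and $\Lambda$-a.e.\ $\boldsymbol{\mu}$ solving $\partial_t\mu_t+\operatorname{div}(b_t(\cdot,\mu_t)\mu_t)=0$; the hypothesis forces $\int_0^T\int_{\R^d}|b_t(x,\mu_t)|^p d\mu_t\,dt<+\infty$ for $\Lambda$-a.e.\ $\boldsymbol{\mu}$. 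The classical theorem upgrades each such $\boldsymbol{\mu}$ to $AC^p([0,T],\PP_p(\R^d))$ with $|\dot{\boldsymbol{\mu}}|^p_{W_p}(t)\le \int |b_t(x,\mu_t)|^p d\mu_t(x)$. For $0\le s\le t\le T$, for $\Lambda$-a.e.\ $\boldsymbol{\mu}$ one has $W_p(\mu_s,\mu_t)\le \int_s^t|\dot{\boldsymbol{\mu}}|_{W_p}(r)dr$; using $(\mathfrak{e}_s,\mathfrak{e}_t)_{\sharp}\Lambda$ as a coupling between $M_s$ and $M_t$ and integrating this pointwise inequality yields, via Jensen, $\mathcal{W}_p^p(M_s,M_t)\le (t-s)^{p-1}\int_s^t \int |b_r(x,\mu_r)|^p d\mu_r\,dM_r\,dr$, proving $\boldsymbol{M}\in AC^p([0,T],\PP_p(\PP_p(\R^d)))$.

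\textbf{Main obstacle.} The delicate step is the construction of the non-local vector field $b$ in the forward direction: one must ensure Borel measurability of $(t,x,\boldsymbol{\mu})\mapsto v^{\boldsymbol{\mu}}_t(x)$ and that the conditional-expectation formula defining $b_t(x,\mu)$ produces a jointly Borel map on $[0,T]\times\R^d\times \PP(\R^d)$ which still drives the continuity equation for $\boldsymbol{M}$. This will be carried out by selecting the minimal velocity in $L^p(\mu_t;\R^d)$ (which is intrinsic and hence measurable in $\boldsymbol{\mu}$) and invoking the standard disintegration theorem on Polish spaces.
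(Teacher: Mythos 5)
The paper does not actually prove this statement: it is quoted from \cite{pinzisavare2025} without an in-paper proof, so there is no proof here to compare your attempt against line by line. That said, your proposal is the natural ``two-level'' reduction (Lisini lifting $\Rightarrow$ curvewise AGS Thm.~8.3.1 $\Rightarrow$ fiberwise averaging via disintegration for the forward direction; nested superposition $\Rightarrow$ curvewise AGS $\Rightarrow$ coupling estimate for the converse), and this is indeed the scheme used in \cite{pinzisavare2025}. A few remarks on what you gloss over or could tighten.

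In the direction $(2)\Rightarrow(1)$, your computation is correct; just note that the final absolute-continuity estimate is cleanest with Minkowski's integral inequality rather than Jensen: from $W_p(\mu_s,\mu_t)\le\int_s^t|\dot{\boldsymbol{\mu}}|_{W_p}(r)\,dr$ one gets directly
$\mathcal{W}_p(M_s,M_t)\le\bigl(\int W_p^p(\mu_s,\mu_t)\,d\Lambda\bigr)^{1/p}\le\int_s^t\bigl(\int |\dot{\boldsymbol{\mu}}|^p_{W_p}(r)\,d\Lambda\bigr)^{1/p}\,dr$,
which is the modulus in $L^p(0,T)$ without needing the auxiliary $(t-s)^{p-1}$ factor and Lebesgue differentiation (both work, but the Minkowski route is the standard one). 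You should also record that the $L^p$ hypothesis on $b$ dominates the $L^1$ hypothesis required by the nested superposition principle, since all measures involved are probabilities.

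In the direction $(1)\Rightarrow(2)$, the genuinely delicate step is exactly the one you flag but do not carry out. You need a jointly Borel map $(t,x,\boldsymbol{\mu})\mapsto v^{\boldsymbol{\mu}}_t(x)$ on $[0,T]\times\R^d\times C([0,T],\PP(\R^d))$ that agrees $\mu_t\otimes\mathcal{L}^1$-a.e.\ with the minimal velocity of $\boldsymbol{\mu}$ for $\Lambda$-a.e.\ curve; because the minimal velocity lives in $L^p(\mu_t;\R^d)$, which changes with $(t,\boldsymbol{\mu})$, one cannot invoke a simple measurable-selection theorem and must argue as in the measurable-bundle arguments of \cite{ambrosio2005gradient} or \cite{pinzisavare2025} (this is non-trivial and a citation to the relevant lemma would be expected). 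Similarly, the disintegration $\Lambda=\int\Lambda^\mu_t\,dM_t(\mu)$ must be produced jointly Borel in $(t,\mu)$; a standard device is to disintegrate $\Lambda\otimes\mathcal{L}^1|_{[0,T]}$ with respect to the Borel map $(\boldsymbol{\mu},t)\mapsto(\mu_t,t)$. Granting these, your Jensen inequality for the $L^p$-bound on $b$ and the verification of \eqref{eq: CERM} by averaging the curvewise continuity equation against $\Lambda$ (using that $\nabla_W F(x,\mu)$ is constant on the fiber $\mathfrak{e}_t^{-1}(\mu)$) are correct. So the proposal captures the right strategy; the gap is the missing measurable-selection argument, which you have named but not resolved.
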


\begin{teorema}\label{thm: nested superposition}
    Let $b:[0,T]\times \R^d \times \PP(\R^d)\to \R^d$ be Borel measurable and $\boldsymbol{M} = (M_t)_{t\in[0,T]} \in C([0,T],\PP(\PP(\R^d)))$ be satisfying \eqref{eq: CERM}. Then, there exists $\Lambda\in \PP(C([0,T],\PP(\R^d)))$ and $\mathfrak{L}\in \PP(\PP(C([0,T],\R^d)))$ such that $(\mathfrak{e}_t)_{\#}\Lambda = M_t$ and $(E_t)_\#\mathfrak{L}=M_t$ for any $t\in[0,T]$, and:
    \begin{enumerate}
        \item\label{prop of Lambda R^d}  $\Lambda$-a.e.~$\boldsymbol{\mu}$ belongs to $
        AC([0,T],\PP(\R^d))$ 
        and solves 
        $$\partial_t\mu_t + \operatorname{div}(b_t(\cdot,\mu_t)\mu_t)=0
        \quad\text{as in \eqref{eq: CE}}.$$
        \item\label{properties of L R^d} $\mathfrak{L}$-a.e.~$\lambda\in \PP(C([0,T],\R^d))$
        is concentrated over 
        absolutely continuous curves 
        $\gamma$ that are solutions of
        $$\dot{\gamma}(t) = b(t,\gamma_t,(e_t)_\#\lambda)\quad\text{in }(0,T).$$
    \end{enumerate}
    Vice versa, if $\Lambda$ satisfies (1) and $\int \int_0^T \int|b_t(x,\mu_t)|d\mu_t(x) dt d\Lambda(\boldsymbol{\mu}) <+\infty$, then $M_t := (\mathfrak{e}_t)_\sharp \Lambda$ satisfies \eqref{eq: CERM}. Similarly, if $\mathfrak{L}$ satisfies (2) and $\int\int \int_0^T |b_t(\gamma(t),(\e_t)_\sharp\lambda)|dt d\lambda(\gamma) d\mathfrak{L}(\gamma) <+\infty$, then $M_t := (E_t)_\sharp\mathfrak{L}$ satisfies \eqref{eq: CERM}.
\end{teorema}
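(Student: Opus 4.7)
The plan is to perform the construction in two stages: first lift the curve $\boldsymbol{M}$ itself to a measure $\Lambda\in\PP(C([0,T],\PP(\R^d)))$ concentrated on solutions of the local continuity equation $\partial_t\mu_t+\operatorname{div}(b_t(\cdot,\mu_t)\mu_t)=0$, and then, for $\Lambda$-a.e.\ such curve, invoke the classical superposition principle (Theorem \ref{thm: superposition}) to lift it to a probability on $C([0,T],\R^d)$, assembling these fiberwise liftings into $\mathfrak{L}$ through a measurable selection.

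For the first stage I would view $\PP(\R^d)$ as a Polish ``state space'' on which $(M_t)$ is a narrowly continuous curve of probabilities, and read \eqref{eq: CERM} as an abstract continuity equation on $\PP(\R^d)$ whose driving first-order operator at the point $\mu$ is $F\mapsto \int \nabla_W F(x,\mu)\cdot b_t(x,\mu)\,d\mu(x)$, tested against the $C^1$-algebra $\operatorname{Cyl}_c^1(\PP(\R^d))$ (which separates points and generates the Borel structure of $\PP(\R^d)$). To produce $\Lambda$ I would follow the smoothing scheme underlying Ambrosio's Theorem 8.2.1 in \cite{ambrosio2005gradient}, adapted to this nonlocal setting: regularize $b$ in $x$ by a smooth kernel so that $b^\varepsilon_t(\cdot,\mu)$ is Lipschitz uniformly in $(t,\mu)$, integrate the resulting characteristic ODE to obtain a map $\mu\mapsto\boldsymbol{\mu}^{\varepsilon,\mu}\in C([0,T],\PP(\R^d))$, and push $M_0$ through it to define $\Lambda^\varepsilon$. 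Tightness of $\{\Lambda^\varepsilon\}$ in $\PP(C([0,T],\PP(\R^d)))$ is supplied by the $L^1$-bound on $b$ in \eqref{eq: CERM} (which controls the modulus of narrow continuity of the approximating curves), and any narrow cluster point $\Lambda$ is concentrated on solutions of \eqref{eq: nlce intro} after a standard passage to the limit in the weak formulation tested against $\operatorname{Cyl}_c^1$. The marginal identity $(\mathfrak{e}_t)_\sharp\Lambda=M_t$ follows because the corresponding identity holds at the level of $\Lambda^\varepsilon$ (both sides solving the same continuity equation with the same initial datum and regularized velocity) and is stable under narrow limits.

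For the second stage, for each $\boldsymbol{\mu}\in C([0,T],\PP(\R^d))$ solving the local continuity equation with vector field $v^{\boldsymbol{\mu}}_t(x):=b_t(x,\mu_t)$, Theorem \ref{thm: superposition} produces a probability $\lambda_{\boldsymbol{\mu}}\in\PP(C([0,T],\R^d))$ with $(\e_t)_\sharp\lambda_{\boldsymbol{\mu}}=\mu_t$ that is concentrated on absolutely continuous integral curves of $v^{\boldsymbol{\mu}}$. To turn this fiberwise statement into a single object I would apply the Kuratowski--Ryll-Nardzewski selection theorem to the multifunction
\[
\boldsymbol{\mu}\longmapsto \bigl\{\lambda\in\PP(C([0,T],\R^d)): (\e_t)_\sharp\lambda=\mu_t,\ \lambda\text{-a.e.\ }\gamma\text{ solves }\dot\gamma=v^{\boldsymbol{\mu}}(\gamma)\bigr\},
\]
which has nonempty values by Theorem \ref{thm: superposition} and whose graph is Borel (the marginal constraint and the ODE constraint being closed conditions in $\lambda$ and measurable in $\boldsymbol{\mu}$). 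This yields a Borel map $G\colon\boldsymbol{\mu}\mapsto\lambda_{\boldsymbol{\mu}}$ and I set $\mathfrak{L}:=G_\sharp\Lambda$. Then $(E_t)_\sharp\mathfrak{L}=(\e_t)_{\sharp\sharp}G_\sharp\Lambda=(\mathfrak{e}_t)_\sharp\Lambda=M_t$ because $E_t\circ G=\mathfrak{e}_t$ on the support of $\Lambda$. The converse implications are direct: if $\Lambda$ satisfies (1) and the integrability bound, testing \eqref{eq: CE} for $\Lambda$-a.e.\ $\boldsymbol{\mu}$ against $\phi_1,\dots,\phi_k$ and recomposing through $\Psi$ recovers \eqref{eq: CERM} for any $F=\Psi\circ L_\Phi\in\operatorname{Cyl}_c^1$ after integrating in $\boldsymbol{\mu}$; the analogous reduction works for $\mathfrak{L}$ by first averaging in $\lambda$ to get \eqref{eq: CE} for $(\e_t)_\sharp\lambda$ and then integrating in $\mathfrak{L}$.

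The main obstacle I anticipate is the interplay between the abstract superposition on $\PP(\R^d)$ (first stage) and the measurable selection (second stage). The first stage requires upgrading Ambrosio's argument to an infinite-dimensional Polish state space where the ``velocity'' is itself a nonlocal object, which forces one to choose the regularization $b^\varepsilon$ so that (i) the regularized problem still makes sense as a classical flow, (ii) the $L^1$ integrability along the approximating $\Lambda^\varepsilon$ is uniform in $\varepsilon$, and (iii) the regularization is compatible with testing against $\operatorname{Cyl}_c^1$ in the narrow limit. The second stage is technically delicate because the classical superposition $\lambda_{\boldsymbol{\mu}}$ is not canonical; Borel measurability of the selection rests on verifying that the graph of the above multifunction is indeed a Borel subset of $C([0,T],\PP(\R^d))\times\PP(C([0,T],\R^d))$, which in turn uses the Borel measurability of $b$ jointly in $(t,x,\mu)$ and the narrow-continuity structure of $\boldsymbol{\mu}$. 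Both difficulties are precisely the content of the construction in \cite{pinzisavare2025}.
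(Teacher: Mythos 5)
The paper does not prove Theorem~\ref{thm: nested superposition}: it is explicitly recalled from \cite{pinzisavare2025}, so there is no in-text argument to compare against. That said, your two-stage plan (construct $\Lambda$ first, then obtain $\mathfrak{L}$ by measurable selection so that $\mathfrak{L}=G_\sharp\Lambda$ and $\Lambda=E_\sharp\mathfrak{L}$) agrees with what the paper itself reports about the structure of the cited construction, in the remark after Theorem~\ref{superposition for manifolds}, and the converse implications you sketch (average the local continuity equation against cylinder test functions and recompose through $\Psi$) are the standard and correct route.

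The genuine gap is in your Stage~1. You propose to mollify $b$ in the $x$-variable only, so that $b^\varepsilon_t(\cdot,\mu)$ is Lipschitz uniformly in $(t,\mu)$, then ``integrate the resulting characteristic ODE'' to produce a flow $\mu\mapsto\boldsymbol{\mu}^{\varepsilon,\mu}$ on $\PP(\R^d)$, push $M_0$ through it, and finally match marginals by uniqueness. But the characteristic system you are implicitly invoking,
\begin{equation*}
\dot\gamma(t)=b^\varepsilon_t(\gamma(t),\mu^\varepsilon_t),\qquad \mu^\varepsilon_t=(\gamma_\cdot(t))_\sharp\mu,
\end{equation*}
is a McKean--Vlasov system: the right-hand side depends on the unknown curve of measures through the nonlocal argument. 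Lipschitz regularity of $x\mapsto b^\varepsilon_t(x,\mu)$ alone does not make this self-consistent problem well-posed; one also needs quantitative continuity (typically Lipschitz in a Wasserstein distance) of $\mu\mapsto b^\varepsilon_t(x,\mu)$, which spatial mollification does not provide for a merely Borel $b$. Without well-posedness, the map $\mu\mapsto\boldsymbol{\mu}^{\varepsilon,\mu}$ is not defined, and the subsequent uniqueness argument you use to show $(\mathfrak{e}_t)_\sharp\Lambda^\varepsilon=M^\varepsilon_t$ has nothing to stand on. You would either have to regularize $b$ in the measure argument as well (and then ensure that the regularization is compatible with the fixed marginals $M_t$, which it generally is not), or replace the flow-based construction with an approach that produces $\Lambda$ directly from the linear continuity equation \eqref{eq: CERM} on the Polish state space $\PP(\R^d)$ without invoking characteristics --- for instance via an abstract superposition principle of Ambrosio--Trevisan type, or by projecting onto finitely many cylinder coordinates $L_{\phi_1},\dots,L_{\phi_k}$ to reduce to Ambrosio's Euclidean result and then passing to a projective limit. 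Your Stage~2 is fine modulo the standard technicalities of measurable selection (the set $\operatorname{SPS}(b)$ is Borel, see~\eqref{SPSb} and \cite[Section~5]{pinzisavare2025}, and the image of a Borel set under the continuous map $E$ is Souslin, hence universally measurable).
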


Similarly to the classic continuity equation, a metric result can be recovered from the last two theorems, but we will not need it in the following.

Given a Borel non-local vector field $b:[0,T]\times \R^d \times \PP(\R^d) \to \R^d$, we recall also the definition of the sets $\operatorname{CE}(b) \subset C([0,T],\PP(\R^d))$ and $\operatorname{SPS}(b) \subset \PP(C([0,T],\R^d))$: 
\begin{equation}
\begin{aligned}
    \CE(b):= \bigg\{(\mu_t)_{t\in[0,T]} \in C([0,T],\PP(\R^d)) \ : \ 
    \int \int |b_t(x,\mu_t)|d\mu_t(x) dt <+\infty, & \\
    \partial_t\mu_t + \operatorname{div}(b_t(\cdot,\mu_t)\mu_t)=0& \bigg\},
\end{aligned}
\end{equation}

\begin{equation}\label{SPSb}
\begin{aligned}
    \operatorname{SPS}(b):= \{\lambda \in & \PP(C([0,T],\R^d)) \ : \ \int \int |b_t(\gamma(t),(e_t)_\#\lambda)| d\lambda(\gamma) dt<+\infty ,
    \\
    &\lambda \big( \operatorname{AC}([0,T],\R^d) \big) =1, \ \dot{\gamma}(t) = b_t(\gamma_t,(e_t)_\#\lambda) \ \mathcal{L}^1_T\otimes\lambda\text{-a.e.} \}.
\end{aligned}
\end{equation}

As shown in \cite[Section 5]{pinzisavare2025}, they are Borel measurable. Moreover, in Theorem \ref{thm: nested superposition}, (1) and (2) can be summarized, resp., as $\Lambda$ concentrated on $\operatorname{CE}(b)$ and $\mathfrak{L}$ concentrated on $\operatorname{SPS}(b)$.

\subsection{The family \texorpdfstring{$\mathcal{Q}$}{} of reference random atomic measures}\label{subsec: reference}
Let $(X,\tau)$ be a Polish space. We denote $\boldsymbol{X}:= X^\infty_{\neq}$.
Recall from \eqref{eq: embedding atomic} and \eqref{eq: ref meas class intro}, respectively, the definition of the map $\operatorname{em}$ and of the class of reference measures $\mathcal{Q}$. Then, we have the following results.

\begin{lemma}
    Endowing $\bold{T}\times \boldsymbol{X}$ with the product topology, the map $\operatorname{em}$ is continuous. Moreover, the image of $\operatorname{em}$ is $\PPpa(X)$.
\end{lemma}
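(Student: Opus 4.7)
The plan is to decompose the lemma into the two assertions and handle them separately, with the main work in the continuity part.

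For continuity, suppose $(\bold{a}^n,\bold{x}^n)\to(\bold{a},\bold{x})$ in the product topology, so that $a_i^n\to a_i$ and $x_i^n\to x_i$ coordinatewise as $n\to\infty$. Setting $\mu_n:=\operatorname{em}(\bold{a}^n,\bold{x}^n)$ and $\mu:=\operatorname{em}(\bold{a},\bold{x})$, I want to show $\mu_n\to\mu$ narrowly, i.e. $\int\phi\, d\mu_n\to\int\phi\, d\mu$ for every $\phi\in C_b(X)$. Fix such a $\phi$ with $M:=\|\phi\|_\infty$, and fix $\varepsilon>0$. Since $\sum_i a_i=1$, pick $N$ so large that $\sum_{i>N}a_i<\varepsilon/(4M)$. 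The coordinatewise convergence $a_i^n\to a_i$ yields $\sum_{i\le N}a_i^n\to\sum_{i\le N}a_i$, and since $\sum_i a_i^n=1=\sum_i a_i$, the tails satisfy $\sum_{i>N}a_i^n\to\sum_{i>N}a_i$, hence $\sum_{i>N}a_i^n<\varepsilon/(2M)$ for $n$ sufficiently large. Then
\begin{equation*}
\Big|\int\phi\, d\mu_n-\int\phi\, d\mu\Big|\le\sum_{i=1}^N|a_i^n\phi(x_i^n)-a_i\phi(x_i)|+M\Big(\sum_{i>N}a_i^n+\sum_{i>N}a_i\Big),
\end{equation*}
where the tail contribution is smaller than $\varepsilon$, while the finite head tends to $0$ by continuity of $\phi$ at each $x_i$ combined with $a_i^n\to a_i$, $x_i^n\to x_i$, $i=1,\dots,N$. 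Taking $\varepsilon\to 0$ gives the conclusion.

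For the image identification, the inclusion $\operatorname{em}(\bold{T}\times\boldsymbol{X})\subseteq\PPpa(X)$ is immediate because any $\sum_i a_i\delta_{x_i}$ with $(a_i)\in\bold{T}$ and $(x_i)\in X^\infty_{\neq}$ is purely atomic by the very definition of $\PPpa(X)$. Conversely, given $\mu\in\PPpa(X)$, the definition provides $(a_i)\in\bold{T}$ and $(z_i)\in X^\infty_{\neq}$ with $\mu=\sum_i a_i\delta_{z_i}$, i.e. $\mu=\operatorname{em}(\bold{a},\bold{z})$; when $\mu$ has only finitely many atoms the tail weights are zero and one completes the location sequence by any distinct points of $X$, which is possible because $X$ is a Polish space with infinitely many points (otherwise $X^\infty_{\neq}$ would itself be trivial and the statement is vacuous).

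The main subtlety lies in the continuity step: narrow convergence must be tested against \emph{all} of $C_b(X)$ and pointwise convergence alone is not enough to interchange the infinite sum and the limit. The decisive ingredient is that $\sum_i a_i^n=\sum_i a_i=1$, which upgrades the coordinatewise convergence $a_i^n\to a_i$ to uniform control of the tails $\sum_{i>N}a_i^n$. Everything else reduces to a finite-sum continuity argument and a tautological unwinding of the definition of $\PPpa(X)$.
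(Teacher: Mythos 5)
Your proof is correct, but it takes a genuinely more elementary route than the paper's. The paper's one-line proof asserts continuity with respect to the \emph{atomic} topology $\tau_{\mathtt{a}}$ on $\PP(X)$ (and then uses the coincidence of Borel $\sigma$-algebras to get Borel measurability); verifying atomic continuity would require, beyond narrow convergence, showing $\sum_i(a_i^n)^2\to\sum_i a_i^2$ (e.g. via a Scheff\'e-type argument from the simplex constraint), and that step genuinely uses the restriction to $\boldsymbol{X}=X^\infty_{\neq}$, since on $\bold{T}\times X^\infty$ coalescing atoms can destroy $\tau_{\mathtt{a}}$-continuity. You instead prove narrow continuity directly via the head--tail truncation, which does not use the distinctness of the $x_i$ at all and would apply to the full domain $\bold{T}\times X^\infty$; it is a weaker conclusion than the paper states but is exactly what the lemma claims under the paper's default convention that $\PP(X)$ carries the narrow topology, and it is all that is needed downstream (Borel measurability of $\operatorname{em}$). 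For the image identification, the paper's proof says nothing and the claim is essentially tautological given the definition of $\PPpa(X)$ in \S 2.1.4 as precisely $\operatorname{em}(\bold{T}\times X^\infty_{\neq})$; your converse step about padding the location sequence is harmless but superfluous, since the very definition of $\PPpa(X)$ already supplies a representation with $(z_i)\in X^\infty_{\neq}$ and $(a_i)\in\bold{T}$.
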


\begin{proof}
    It is clearly continuous considering the atomic topology over $\PP(X)$, and thus Borel.
\end{proof}

\begin{lemma}\label{lemma: barycenter of mu times mu}
    Let $\Q_{\pi,\nu}\in \mathcal{Q}$. The following hold:
    \begin{enumerate}
        \item $\operatorname{bar}[\Q_{\pi,\nu}] =  \nu$;
        \item let 
        \[c_1 := \int_{\bold{T}} \bigg(\sum_{i\neq j} a_ia_j\bigg) d\pi(\bold{a}), \quad c_2:= \int_{\bold{T}} \bigg(\sum_{i=1}^{+\infty} a_i^2 \bigg) d\pi(\bold{a}).\]
        Then for all $g:X\times X \to [0,1]$ Borel measurable it holds
        \begin{equation}
            \int_{\PP(X)}\int_{X \times X} g(x,y) d\mu\otimes \mu(x,y) d\\Q_{\pi,\nu}(\mu) = c_1\int_{ X \times  X }g(x,y)d\nu\otimes\nu(x,y) + c_2\int_{ X } g(x,x) d\nu(x).
        \end{equation}
        In other words, given $C:\PP( X )\to\PP( X \times  X )$ to be $C(\mu):= \mu\otimes \mu$, then $\operatorname{bar}[C_\#\Q_{\pi,\nu}] = c_1\nu\otimes \nu + c_2 (\operatorname{id,\operatorname{id}})_\#\nu$.
    \end{enumerate}
\end{lemma}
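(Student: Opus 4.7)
The plan is to unfold the definition of $\Q_{\pi,\nu}$ as a push-forward by $\operatorname{em}$, expand the atomic measure explicitly, and then use Fubini together with the product structure of $\pi\otimes\nu^\infty$ to separate the weight contributions (integrals against $\pi$) from the location contributions (integrals against $\nu^\infty$).

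For part (1), I would take any bounded Borel $f:X\to\R$ and compute
\[
\int_{\PP(X)}\int_X f\,d\mu\,d\Q_{\pi,\nu}(\mu)=\int_{\bold{T}\times \boldsymbol{X}}\sum_{i=1}^{+\infty} a_i f(x_i)\,d(\pi\otimes\nu^\infty)(\bold{a},\bold{x}).
\]
By monotone convergence and Fubini, this equals $\sum_i\Big(\int_{\bold{T}}a_i\,d\pi\Big)\Big(\int_X f\,d\nu\Big)$ since $\nu^\infty$ is a product of copies of $\nu$. Exchanging sum and integral over $\bold{T}$ using $\sum_i a_i=1$ on $\bold{T}$ (justified because all $a_i\ge 0$) gives the factor $1$, leaving $\int_X f\,d\nu$. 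This identifies $\operatorname{bar}[\Q_{\pi,\nu}]$ with $\nu$.

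For part (2), the same scheme applied to $g:X\times X\to[0,1]$ yields
\[
\int_{\PP(X)}\int_{X\times X} g\,d\mu\otimes\mu\,d\Q_{\pi,\nu}(\mu)=\int_{\bold{T}\times \boldsymbol{X}}\sum_{i,j}a_ia_j g(x_i,x_j)\,d(\pi\otimes\nu^\infty).
\]
I would now split the double sum as $\sum_{i\ne j}+\sum_{i=j}$. For the off-diagonal part, the two indices $x_i,x_j$ are independent under $\nu^\infty$, so Fubini gives
\[
\int_{\boldsymbol{X}} g(x_i,x_j)\,d\nu^\infty(\bold{x}) = \int_{X\times X} g(x,y)\,d\nu\otimes\nu(x,y)
\]
for each $i\ne j$; summing the weights against $\pi$ produces the constant $c_1$. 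For the diagonal part, the single index $x_i$ under $\nu^\infty$ gives $\int_X g(x,x)\,d\nu(x)$, and the weight sum against $\pi$ produces $c_2$. Combining the two pieces yields the claimed identity. Interchanges between $\sum$ and $\int$ are justified throughout because the summands are non-negative (reduce to the case $g\ge 0$ by splitting into positive and negative parts if needed, but the statement already assumes $g\in[0,1]$).

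The final assertion on $\operatorname{bar}[C_\#\Q_{\pi,\nu}]$ is then immediate from the definition of the barycenter: for every Borel $g:X\times X\to[0,1]$,
\[
\int g\,d\operatorname{bar}[C_\#\Q_{\pi,\nu}]=\int_{\PP(X\times X)}\!\!\int g\,d\eta\,d(C_\sharp\Q_{\pi,\nu})(\eta)=\int_{\PP(X)}\!\!\int g\,d(\mu\otimes\mu)\,d\Q_{\pi,\nu}(\mu),
\]
which by the previous identity equals $c_1\int g\,d\nu\otimes\nu+c_2\int g(x,x)\,d\nu(x)=\int g\,d\big(c_1\nu\otimes\nu+c_2(\operatorname{id},\operatorname{id})_\#\nu\big)$. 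There is no real obstacle here: the whole argument is a Fubini computation on the explicit product representation of $\Q_{\pi,\nu}$; the only small care is in cleanly separating the diagonal contribution from the off-diagonal one so that the constants $c_1$ and $c_2$ appear naturally.
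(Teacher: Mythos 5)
Your proposal is correct and takes essentially the same approach as the paper's proof: unfold $\Q_{\pi,\nu}$ as $\operatorname{em}_\#(\pi\otimes\nu^\infty)$, use Fubini and the product structure of $\nu^\infty$, and for part (2) split the double sum into off-diagonal ($i\neq j$) and diagonal ($i=j$) contributions to produce $c_1$ and $c_2$ respectively. The only stylistic difference is that the paper keeps the sum over $i$ inside the $\pi$-integral and invokes $\sum_i a_i=1$ pointwise, whereas you exchange sum and $\pi$-integral first; the two are equivalent.
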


\begin{proof}
    For all $f: X  \to [0,1]$ Borel measurable, we have
    \begin{align*}
        & \int_{\PP( X )}\int_{ X } f(x) d\mu(x) d\Q_{\pi,\nu}(\mu) = \int_{\bold{T}} \left(\int_{\boldsymbol{X}}\sum_{i=1}^{+\infty} a_i f(x_i) d\nu^{\infty}(\bold{x})\right)d\pi(\bold{a})
        \\
        & \ =  
        \int_{\bold{T}} \left(\sum_{i=1}^{+\infty} a_i \int_{\boldsymbol{X}}f(x_i) d\nu^{\infty}(\bold{x})\right)d\pi(\bold{a})
        = 
        \int_{\bold{T}} \left(\sum_{i=1}^{+\infty} a_i \int_{ X }f(x) d\nu(x)\right)d\pi(\bold{a})
        =  \int_{ X } f(x)d\nu(x).
    \end{align*}
    Regarding the second part 
    \begin{align*}
        & \int_{\PP( X )}\int_{X^2} g(x,y) d\mu\otimes \mu(x,y) d\Q_{\pi,\nu}(\mu) 
        = 
        \int_{\bold{T}} \int_{\boldsymbol{X}} \bigg( \sum_{i,j\geq 1} a_ia_j g(x_i,x_j) \bigg) d\nu^{\infty}(\bold{x}) d\pi(\bold{a})
        \\
        & \ =
        \int_{\bold{T}} \bigg(\sum_{i\neq j} a_ia_j \int_{\boldsymbol{X}} g(x_i,x_j) d\nu^\infty(\bold{x}) + \sum_{i\geq 1} a_i^2\int_{\boldsymbol{X}} g(x_i,x_i) d\nu^\infty(\bold{x}) \bigg)d\pi(\bold{a})
        \\
        & \ =
        \int_{\bold{T}} \bigg(\sum_{i\neq j} a_ia_j \int_{X^2} g(x,y) d\nu\otimes\nu(x,y) \bigg) d\pi(\bold{a}) + \int_{\bold{T}} \bigg(\sum_{i\geq 1} a_i^2\int_{ X } g(x,x) d\nu(x) \bigg) d\pi(\bold{a})
        \\
        & \ =
        c_1 \int_{X^2} g(x,y) d\nu\otimes\nu(x,y) + c_2 \int_{ X } g(x,x) d\nu(x). 
        \qedhere
    \end{align*}
\end{proof}

    We have many examples of measures $\Q_{\pi,\nu}\in \mathcal{Q}$ coming from Bayesian non-parametric statistical theory. One is the Dirichlet-Ferguson measure $D_{\beta\nu}$ (see \cite{ferguson1973bayesian, dello2022dirichlet}), where $\beta>0$: the measure $\pi\in \PP(\bold{T})$ is determined by the stick-breaking procedure \cite{sethuraman1994constructive}, and it is concentrated over $\bold{T}_0$, so that $D_{\beta \nu}$ is concentrated over $\PP^{\operatorname{pa}}_0( X )$.
    Another example is the Poisson measure $P_{\lambda,\nu}$, with $\lambda>0$: a sample $\mu \sim P_\nu$ is built sampling $N\in \N$ according to a Poisson distribution with parameter $\lambda$ and $\bold{x} \sim \nu^\infty$ independently, then $\mu= \frac{1}{N+1} \sum_{i=1}^{N+1} \delta_{x_i}$.

\subsection{Capacity of the diagonal with respect to \texorpdfstring{$\nu\otimes\nu$}{}}\label{subsec: capacity}
The definitions in this subsection are specific to $\R^d$. However, in Section \ref{sec: manifold} we will exploit it to bring on a similar study on compact Riemannian manifolds.

Given an atomless probability measure $\nu\in \PP(\R^d)$, we will make use of the concept of \textit{capacity} with respect to the product measure $\nu\otimes \nu$. In particular, given $r\geq 1$, we need the definition of $(r,\nu)$-capacity of the diagonal set $\Delta:= \{(x,x) \ : \ x\in \R^d\} \subset \R^d \times \R^d$, that we define as
    \begin{equation}\label{capacity definition}
    \begin{aligned}
         \operatorname{cap}_{r,\nu}(\Delta) :=\inf \bigg\{ & \int_{\R^{2d}} |h(x,y)|^r +  |\nabla_{x,y} h(x,y)|^r d\nu\otimes\nu(x,y) \ : \\
         & h\in C^{1}_b(\R^{2d}), \ h= 1 \text{ on }\Delta, \  h \leq 1 \bigg\}.
    \end{aligned}
    \end{equation}

A fundamental assumption for our main theorem will be that $\operatorname{cap}_{r,\nu} (\Delta)= 0$ for some $r\geq 1$. We will extensively study this assumption in Section \ref{section: capacity}, in which we will find sufficient conditions on $\nu$ and $r$ for which that holds.

Before proceeding, we want to say that a more common definition for the capacity would be 
\begin{equation}
\widetilde{\operatorname{cap}}_{r,\nu} (\Delta) :=\inf \bigg\{  \int_{\R^{2d}} |h|^r +  |\nabla_{x,y} h|^r d\nu\otimes\nu \ : h\in C^{1}_b(\R^{2d}), \ h\geq 1 \text{ on }\Delta \bigg\}.
\end{equation}
In Appendix \ref{app: capacity}, we show that the two definitions coincide, at least when $\nu\ll \mathcal{L}^d$.

\section{Continuity equation for atomic measures}\label{section: counterexample}
\subsection{An empirical superposition principle and the tangent space of purely atomic measures}
In this section, we will deal with narrowly continuous curves of atomic probability measures, i.e. $\boldsymbol{\mu} = (\mu_t)_{t\in[0,T]} \in C([0,T],\PP(\R^d))$ such that $\mu_t \in \PPpa(\R^d)$ for all $t\in[0,T]$.  

First, we recall a superposition principle for measures with a fixed finite number of atoms, shown in \cite[Theorem C.1]{cavagnari2022lagrangian}, that is an empirical version of Theorem \ref{thm: superposition}. We denote 
\[\PP^N(\R^d):= \left\{\mu\in \PP(\R^d) \ : \ \exists x_1,\dots,x_n \in \R^d \text{ s.t. }\mu = \frac{1}{N}\sum_{i=1}^N \delta_{x_i}\right\} \quad \forall N\geq1.\]

\begin{teorema}\label{thm: atomic superposition}
    Let $(\mu_t)_{t\in[0,T]} \in C([0,T],\PP(\R^d))$ be such that there exists $N \geq 1$ for which $\mu_t \in \PPN(\R^d)$ for all $t\in[0,T]$. Assume that there exists a Borel vector field $v:[0,T]\times \R^d \to \R^d$ and 
    \begin{equation}\label{eq:2}
    \int_0^T \int_{\R^d}|v(t,x)| d\mu_t(x)dt<+\infty \quad \text{and} \quad \partial_t \mu_t + \operatorname{div}(v_t\mu_t) = 0.
    \end{equation}
    Then there exists $\lambda \in \PPN(C([0,T],\R^d))$ such that $(\e_t)_\sharp\lambda = \mu_t$ for all $t\in[0,T]$ and such that 
    \begin{equation}\label{eq:3}
    \lambda = \frac1N \sum_{i=1}^N \delta_{\gamma_i} \quad \text{and} \quad \dot\gamma_i(t) = v(t,\gamma_i(t)) \ \text{for a.e. }t\in [0,T].
    \end{equation}
    Moreover, the vector field $v$ for which \eqref{eq:2} holds, is uniquely determined $\mu_t\otimes dt$-a.e.
\end{teorema}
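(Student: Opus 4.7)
The plan is to first apply Ambrosio's classical superposition principle (Theorem~\ref{thm: superposition}) to the continuity equation~\eqref{eq:2}, obtaining some $\tilde{\lambda}\in\PP(C([0,T],\R^d))$ with $(\e_t)_\sharp\tilde{\lambda}=\mu_t$ for every $t$, concentrated on absolutely continuous curves $\gamma$ solving $\dot{\gamma}(t)=v(t,\gamma(t))$ for a.e.~$t$. At this stage $\tilde{\lambda}$ need not be empirical, so the task is to upgrade it to a representative of the form $\frac{1}{N}\sum_{i=1}^N \delta_{\gamma_i}$.

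The key structural input is that every $\mu_t$ lies in $\PPN(\R^d)$, hence its atoms carry weights that are integer multiples of $1/N$ summing to one. Writing $\mu_0=\frac{1}{N}\sum_{i=1}^N \delta_{y_i^0}$ with multiplicities, I would disintegrate $\tilde{\lambda}=\frac{1}{N}\sum_{i=1}^N \tilde{\lambda}_i$ along $\e_0$, so that each $\tilde{\lambda}_i$ is a probability measure concentrated on ODE-solution curves starting at $y_i^0$. From each $\tilde{\lambda}_i$ I would extract a single curve $\gamma_i$ in its support, taking care that the extractions are done consistently so that $\lambda:=\frac{1}{N}\sum_{i=1}^N\delta_{\gamma_i}$ still has marginals $\mu_t$ at every $t$. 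One implementation uses the identification of $\PPN(\R^d)$ with the symmetric product $(\R^d)^N/S_N$, together with a continuous-selection argument: the continuous path $\boldsymbol{\mu}$ lifts to a continuous $N$-tuple $(\gamma_1(t),\dots,\gamma_N(t))$ in $(\R^d)^N$, and by matching these selections with trajectories drawn from $\supp(\tilde{\lambda})$ on intervals away from atom collisions, and gluing continuously at collision times, one obtains absolutely continuous $\gamma_i$'s that solve the ODE a.e. By construction $(\e_t)_\sharp\lambda=\mu_t$ for every $t$.

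For the uniqueness of $v$, once such an empirical representation is available, the identities $\dot{\gamma}_i(t)=v(t,\gamma_i(t))$, holding for a.e.~$t$ and every $i$, determine the value of $v$ on $\{(t,y):y\in\supp(\mu_t)\}$, which has full $\mu_t\otimes dt$-measure; hence any two Borel vector fields fulfilling~\eqref{eq:2} must coincide $\mu_t\otimes dt$-a.e. The main obstacle I expect lies in the second paragraph, namely coordinating the extraction of the $N$ curves across times at which several atoms of $\mu_t$ collide, without any uniqueness assumption on the merely Borel vector field $v$: one must ensure that the combinatorics of atom multiplicities remains compatible with the possible branching of ODE solutions at collision times, exploiting the fact that all branches emanating from a common collision point share the common value $v(t,y)$ and so contribute identically to the empirical marginal.
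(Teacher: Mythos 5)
The paper does not actually prove Theorem~\ref{thm: atomic superposition}: it recalls it from \cite[Theorem C.1]{cavagnari2022lagrangian} and only observes (in a suppressed remark) that, by the uniqueness of the velocity field granted by Lemma~\ref{lemma: tangent}, the driving field $v$ can be replaced by the one appearing in that reference. Your attempt at a direct argument is therefore a genuinely different route, but as written it has two real gaps.

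First, the step ``from each $\tilde{\lambda}_i$ I would extract a single curve $\gamma_i$ in its support'' does not work as stated. That $\tilde{\lambda}$ is concentrated on solutions of $\dot{\gamma}=v(t,\gamma)$ is a measure-theoretic statement and does \emph{not} guarantee that a chosen element of $\supp(\tilde{\lambda}_i)$ solves the ODE; the Borel field $v$ has no continuity, so the set of solutions need not be closed and may fail to contain any given support point. Second, the ``gluing at collision times'' is precisely the hard content of the theorem and your sketch does not close it: a continuous lift of $t\mapsto\mu_t$ through $(\R^d)^N/S_N$ exists, but you have not shown the lifted paths $\gamma_i$ are absolutely continuous nor that they agree $\mathcal{L}^1$-a.e.\ with ODE trajectories seen by $\tilde\lambda$. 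Collision times may accumulate, and the observation that all branches share the value of $v$ at a collision point only controls the instantaneous velocity, not the compatibility of the subsequent continuations across a possibly infinite collision pattern. This matching combinatorics is exactly what \cite[Theorem C.1]{cavagnari2022lagrangian} resolves, and your proposal essentially re-derives that result without filling in its core.

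Your uniqueness argument is morally right, but it presupposes the existence of the empirical lifting (whose construction is the gap above). The paper's route is cleaner: Lemma~\ref{lemma: tangent} shows $\operatorname{Tan}_\mu\PP_p(\R^d)=L^p(\mu;\R^d)$ for $\mu\in\PPpa(\R^d)$, which immediately gives $\mu_t\otimes dt$-a.e.\ uniqueness of $v$ without first producing the curves $\gamma_i$.
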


Thanks to \cite[Proposition 8.4.5]{ambrosio2005gradient}, the previous theorem can be used to show that for all $\mu\in \PPN(\R^d)$, the tangent space $\operatorname{Tan}_\mu \PP_p(\R^d)$ is the whole $L^p(\mu;\R^d)$, for all $p>1$.
In the next proposition, we prove it for general $\mu\in \PPpa(\R^d)$ directly from the definition of the tangent space.

\begin{lemma}\label{lemma: tangent}
    For any $\mu\in \PPpa(\R^d)$ and $p>1$, we have 
    \[\operatorname{CoTan}_\mu \PP_p(\R^d) := \operatorname{Clos}_{L^{p'}(\mu;\R^d)} \left\{ \nabla f \ : \ f \in C_c^{\infty}(\R^d \right\} = L^{p'}(\mu;\R^d).\]
    In particular, $\operatorname{Tan}_\mu \PP_p(\R^d):= j_{p'}(\operatorname{CoTan}_\mu \PP_p(\R^d)) = L^p(\mu;\R^d)$, where $j_{p'}(H) = |H|^{p'-2}H$ for all $H\in L^{p'}(\mu;\R^d)$
\end{lemma}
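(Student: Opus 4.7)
Write $\mu=\sum_{i\in I} a_i\delta_{x_i}$ with the $x_i\in\R^d$ pairwise distinct and $a_i>0$; here $I\subseteq\N$ is countable. The inclusion $\operatorname{CoTan}_\mu\PP_p(\R^d)\subseteq L^{p'}(\mu;\R^d)$ is automatic by definition. The plan is to fix $V\in L^{p'}(\mu;\R^d)$ (identified with the sequence $v_i:=V(x_i)$ with $\sum_i a_i|v_i|^{p'}<+\infty$) and $\varepsilon>0$, and to construct $f\in C_c^\infty(\R^d)$ with $\|\nabla f-V\|_{L^{p'}(\mu)}<\varepsilon$.

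First I would truncate: pick $N$ with $\sum_{i>N}a_i|v_i|^{p'}<\varepsilon^{p'}$. Fix a cutoff $\psi\in C_c^\infty(\R^d;[0,1])$ with $\psi\equiv 1$ on $B(0,1/2)$ and $\psi\equiv 0$ outside $B(0,1)$, and for radii $r_i>0$ to be chosen, set
\[
f(x):=\sum_{i=1}^{N}\bigl[v_i\cdot(x-x_i)\bigr]\,\psi\!\Bigl(\tfrac{x-x_i}{r_i}\Bigr).
\]
Choosing the $r_i$ so small that the balls $B(x_i,r_i)$ for $i\le N$ are pairwise disjoint and none of them contains $x_k$ for $k\le N$, $k\ne i$, a direct computation gives $\nabla f(x_j)=v_j$ for every $j\le N$ and the uniform bound
\[
|\nabla f(x)|\le M:=\max_{1\le i\le N}|v_i|\,(1+\|\nabla\psi\|_\infty)
\]
on all of $\R^d$; crucially, $M$ does not depend on the $r_i$'s, because the factor $1/r_i$ produced by differentiating $\psi(\cdot/r_i)$ is compensated by the factor $r_i$ coming from the linear prefactor $v_i\cdot(x-x_i)$ on the support.

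Next I would control the tail error. Since $\nabla f(x_i)=v_i$ for $i\le N$,
\[
\|\nabla f-V\|_{L^{p'}(\mu)}^{p'}=\sum_{i>N}a_i|\nabla f(x_i)-v_i|^{p'}\le 2^{p'-1}\sum_{i>N}a_i|\nabla f(x_i)|^{p'}+2^{p'-1}\varepsilon^{p'}.
\]
For $i>N$, $\nabla f(x_i)=0$ unless $x_i\in B(x_j,r_j)$ for some $j\le N$, and on those atoms $|\nabla f(x_i)|\le M$. The key observation is that $\mu(B(x_j,r_j)\setminus\{x_j\})\to 0$ as $r_j\to 0$, because $\R^d$ is Hausdorff and $\mu$ is $\sigma$-additive, so shrinking the radii makes $\sum_{i>N,\ x_i\in B(x_j,r_j)} a_i$ arbitrarily small. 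Choosing the $r_j$'s accordingly, the first term is bounded by $2^{p'-1}M^{p'}\cdot N\cdot\delta$ for any prescribed $\delta>0$, which finishes the density argument. The main obstacle in this construction is precisely the possible accumulation of the atoms $\{x_i\}_{i>N}$ against the $\{x_i\}_{i\le N}$, which forbids choosing local neighborhoods that avoid all other atoms; the remedy is to exploit the outer regularity of $\mu$ at its atoms rather than trying to separate points.

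Finally, the statement on $\operatorname{Tan}_\mu\PP_p(\R^d)$ is immediate from $\operatorname{CoTan}_\mu\PP_p(\R^d)=L^{p'}(\mu;\R^d)$: the duality map $j_{p'}(H)=|H|^{p'-2}H$ sends $L^{p'}(\mu;\R^d)$ bijectively onto $L^p(\mu;\R^d)$ (its inverse being $G\mapsto|G|^{p-2}G$, using $(p-1)(p'-1)=1$), hence $\operatorname{Tan}_\mu\PP_p(\R^d)=j_{p'}(L^{p'}(\mu;\R^d))=L^p(\mu;\R^d)$.
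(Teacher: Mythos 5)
Your proof is correct, but it argues density of $\{\nabla f : f\in C_c^\infty(\R^d)\}$ in $L^{p'}(\mu;\R^d)$ directly, whereas the paper takes the dual route. The paper reduces (implicitly via the bipolar theorem) to showing the annihilator is trivial: if $H\in L^p(\mu;\R^d)$ satisfies $\int H\cdot\nabla f\,d\mu=0$ for all $f\in C_c^\infty(\R^d)$, then testing against $f_{m,v,i}(x)=\rho(m(x-x_i))\,v\cdot(x-x_i)$, letting $m\to\infty$ and invoking dominated convergence gives $H(x_i)\cdot v=0$ for every $v\in\R^d$ and every atom $x_i$, hence $H=0$ $\mu$-a.e. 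The core device is identical in both proofs --- a locally linear profile multiplied by a cutoff at scale $r$, with the $1/r$ from differentiating the cutoff cancelled by the $|x-x_i|\le r$ from the linear prefactor, giving a gradient bound independent of the localization scale. The trade-off: your constructive version needs a truncation to the first $N$ atoms plus the outer-regularity estimate $\mu(B(x_j,r_j)\setminus\{x_j\})\to 0$ to control the tail, while the paper's duality version handles one atom at a time and so sidesteps all tail bookkeeping, at the cost of relying on the functional-analytic equivalence between a dense subspace and a trivial annihilator in the dual. Both arguments are sound; yours is marginally more self-contained, the paper's is shorter. Your passage to $\operatorname{Tan}_\mu$ via the duality map $j_{p'}$ matches the paper's.
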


\begin{proof}
    It suffices to show that if $H\in L^p(\mu;\R^d)$ satisfies
    \begin{equation}\label{eq:1}
        \int_{\R^d} H(x)\cdot \nabla f(x) d\mu(x) = 0 \quad \forall f \in C_c^\infty(\R^d),
    \end{equation}
    then $H = 0$ $\mu$-a.e. To fix the notation, let $\mu = \sum_{i\geq1} a_i \delta_{x_i}$ with $(a_i)_{i\in \N} \in \bold{T}$ and $(x_i)_{i\in \N} \in (\R^d)^{\infty}_{\neq}$. Let $\rho\in C_c^\infty(\R^d)$ be a bump function, that is, such that $\operatorname{supp}\rho \subset B(0,2)$, $0\leq\rho \leq1$ and $\rho = 1$ on $B(0,1)$. Then, for all $i,m\in \N$ and $v\in \R^d$ let 
    \(f_{m,v,i}(x):= \rho(m(x-x_i)) v \cdot (x-x_i) \in C_c^\infty(\R^d).\)
    Its gradient is given by 
    \(\nabla f_{m,v,i}(x) = \rho(m(x-x_i))v + mv \cdot(x-x_i) \nabla \rho(m(x-x_i)),\)
    so that 
    $\nabla f_{m,v,i}(x_i) = v$ and
    \[|\nabla f_{m,v,i}(x_j)| \leq (|v| + m|v||x_j-x_i|\|\nabla\rho\|_\infty)\mathds{1}_{B(0,\frac2m)}(x_j-x_i) \leq |v|(1+ 2\|\nabla\rho\|_\infty)\mathds{1}_{B(0,\frac2m)}(x_j-x_i). \]
    This shows that $\nabla f_{m,v,i}(x) \to v \mathds{1}_{\{x_i\}}(x)$ as $m\to +\infty$ for $\mu$-a.e. $x\in \R^d$, dominated by the right hand side in the previous computation, so that 
    \[0 = \int_{\R^d} H(x)\nabla f_{m,v,i}(x) d\mu(x)\longrightarrow H(x_i)\cdot v \quad \text{as }m\to +\infty\]
    This shows that $H(x_i) = 0$ for all $i\in \N$ and concludes the proof.
\end{proof}

\subsection{Liftings of Lipschitz curves of purely atomic measures are not necessarily purely atomic measures of curves}
At this point, one is tempted to say that a version of Theorem \ref{thm: atomic superposition} holds substituting $\PPN(\R^d)$ with $\PPpa(\R^d)$. Unfortunately, this is not true, and in this section we build a counterexample for it. In fact, we build a curve of purely atomic probability measures that is Lipschitz with respect to $W_\infty$, but for which a purely atomic lifting does not exist.

We proceed in the following way: consider the infinite product space $\Omega := \{0,1\}^\mathbb{N}$, endowed with the Borel $\sigma$-algebra generated by the product topology, and with the probability measure $\mathbb{P}:= (\frac12\delta_0 + \frac12\delta_1)^{\infty}$. Notice that $\mathbb{P}$ is an atomless probability measure. Consider the sequence of times $\{t_k\}_{k\geq 0}$ given by
\[t_0 = 0, \quad t_{k+1} = t_k + \frac{1}{2^{k+1}} \quad \forall k\geq 0\]For any $\boldsymbol{\omega} = (\omega_0,\omega_1,\dots)\in \Omega$ let $\Tilde{\gamma}\in C([0,1],\R)$ be defined as 
\begin{equation}\label{eq:4}
    \Tilde{\gamma}_{\boldsymbol{\omega}}(t):= \sum_{k\geq 0} \omega_k\big( (t-t_k)_+ \wedge (t_{k+1}-t_k) \big).
\end{equation}

\begin{center}
\begin{tikzpicture}[scale=4.5]

\draw[->] (0,0) -- (1.05,0) node[right] {};
\draw[-] (0,0) -- (0,1.05) node[above] {};

\color{red}
\draw[->] (0,1) -- (0.25,0.75);
\draw[->] (0,1) -- (0.125,0.875);
\draw[->] (0,1) -- (0.375,0.625);
\draw[->] (0.5,0.5) -- (0.5,0.423);
\draw[->] (0.5,0.5) -- (0.5,0.337);
\draw[->] (0.5,0.25) -- (0.5625,0.1875);
\draw[->] (0.625,0.125) -- (0.625,0.07);
\normalcolor
\foreach \x/\y in { 
    0/1, 
    0/0.5,
    0/0.25, 0.25/0.25, 0.5/0.25, 0.75/0.25,
    0.125/0.125, 0.375/0.125, 0.625/0.125, 0.875/0.125, 
    0.5/0.5, 0/0.125, 0.25/0.125, 0.5/0.125, 0.75/0.125,0/0.0625,
    0.125/0.125-0.0625,
    0.25/0.125-0.0625,
    0.375/0.125-0.0625,
    0.5/0.125-0.0625,
    0.625/0.125-0.0625,
    0.75/0.125-0.0625,
    0.875/0.125-0.0625,
    0+0.0625/0.0625,
    0.125+0.0625/0.125-0.0625,
    0.25+0.0625/0.125-0.0625,
    0.375+0.0625/0.125-0.0625,
    0.5+0.0625/0.125-0.0625,
    0.625+0.0625/0.125-0.0625,
    0.75+0.0625/0.125-0.0625,
    0.875+0.0625/0.125-0.0625
}{
    \fill (\x,\y) circle[radius=0.2pt];
}

\draw[thick] (0,1) -- (0,0.5);
\draw[thick] (0,1) -- (0.5,0.5);

\draw[thick] (0,0.5) -- (0,0.25);
\draw[thick] (0,0.5) -- (0.25,0.25);
\draw[thick] (0.5,0.5) -- (0.5,0.25);
\draw[thick] (0.5,0.5) -- (0.75,0.25);

\draw[thick] (0,0.25) -- (0.125,0.125);
\draw[thick] (0.25,0.25) -- (0.375,0.125);
\draw[thick] (0.5,0.25) -- (0.625,0.125);
\draw[thick] (0.75,0.25) -- (0.875,0.125);
\draw[thick] (0,0.25) -- (0,0.125);
\draw[thick] (0.25,0.25) -- (0.25,0.125);
\draw[thick] (0.5,0.25) -- (0.5,0.125);
\draw[thick] (0.75,0.25) -- (0.75,0.125);

\draw[thick] (0,0.125) -- (0,0.0625);
\draw[thick] (0.125,0.125) -- (0.125,0.125-0.0625);
\draw[thick] (0.25,0.125) -- (0.25,0.125-0.0625);
\draw[thick] (0.375,0.125) -- (0.375,0.125-0.0625);
\draw[thick] (0.5,0.125) -- (0.5,0.125-0.0625);
\draw[thick] (0.625,0.125) -- (0.625,0.125-0.0625);
\draw[thick] (0.75,0.125) -- (0.75,0.125-0.0625);
\draw[thick] (0.875,0.125) -- (0.875,0.125-0.0625);
\draw[thick] (0,0.125) -- (0+0.0625,0.0625);
\draw[thick] (0.125,0.125) -- (0.125+0.0625,0.125-0.0625);
\draw[thick] (0.25,0.125) -- (0.25+0.0625,0.125-0.0625);
\draw[thick] (0.375,0.125) -- (0.375+0.0625,0.125-0.0625);
\draw[thick] (0.5,0.125) -- (0.5+0.0625,0.125-0.0625);
\draw[thick] (0.625,0.125) -- (0.625+0.0625,0.125-0.0625);
\draw[thick] (0.75,0.125) -- (0.75+0.0625,0.125-0.0625);
\draw[thick] (0.875,0.125) -- (0.875+0.0625,0.125-0.0625);

\node[below] at (1,0) {$x=1$};
\node[left] at (0,0) {$t=1$};
\node[left] at (0,1) {$t_0=0$};
\node[left] at (0,0.5) {$t_1=\frac{1}{2}$};
\node[left] at (0,0.25) {$t_2=\frac{3}{4}$};
\node[left] at (0,0.125) {$t_3=\frac{7}{8}$};

\node at (0.25, 0.0275) {$\dots$};
\node at (0.5, 0.0275) {$\dots$};
\node at (0.75, 0.0275) {$\dots$};

\end{tikzpicture}

\end{center}

The construction of these curves is summarized in the figure above, where the branches happen at the time $t_k$ and the choice between left or right depends on the value of $\omega_k$. In red, you can see the start of a curve corresponding to a sequence $\boldsymbol{\omega}$ with $\omega_0 = 1$, $\omega_1 = 0$, $\omega_2 = 1$ and $\omega_3 = 0$. It is clear that the map $\tilde{\iota}: \Omega \to C([0,1],\R)$ defined as $\tilde{\iota}(\boldsymbol{\omega}) = \tilde{\gamma}_{\boldsymbol{\omega}}$, is injective.

Then, define $\tilde{\lambda}:= \tilde{\iota}_\#\mathbb{P} \in \PP(C([0,1],\R))$ and $\tilde{\mu}_t:= (\e_t)_\sharp \tilde{\lambda}$. Notice that $\tilde{\mu}_t\in \PPpa(\R)$ for any $t\in[0,1)$: in fact, for any $t\in[0,1)$, there exists $n\in \N$ such that $t \in [t_n,t_{n+1}]$, and thanks to \eqref{eq:4} we have 
\[\tilde{\mu}_t = \frac{1}{2^{n+1}} \sum_{(\omega_0,\dots,\omega_n) \in \{0,1\}^{n+1}} \delta_{x_{\omega_0,\dots,\omega_n}(t)}, \quad \text{where }x_{\omega_0,\dots,\omega_n}(t) = \sum_{k=0}^{n-1} \frac{\omega_k}{2^{k+1}} + \omega_n(t-t_n).\]
Moreover, $(\mu_t)_{t\in[0,1]} \in \operatorname{Lip}\big([0,1],(\PP([0,1]),W_\infty)\big)$: indeed, for any $t_n\leq s<t \leq t_{n+1}$, the transport plan
\[\pi_{s,t}:=\frac{1}{2^{n+1}} \sum_{(\omega_0,\dots,\omega_n) \in \{0,1\}^{n+1}} \delta_{(x_{\omega_0,\dots,\omega_n}(t), x_{\omega_0,\dots,\omega_n}(s))} \]
is a coupling between $\mu_t$ and $\mu_s$, and with a simple computation we can see that 
\[W_{\infty}(\mu_s,\mu_t) \leq \|x-y\|_{L^\infty(\pi)} = (t-s).\]
Then, exploiting the triangular inequality and the continuity of the curve at the time $t=1$, we achieve Lipschitzianity in the whole interval $[0,1]$.

At this point, the only thing we need to fix is the fact that $\tilde{\mu}_1$ is atomless, since $\tilde{\iota}$ is injective (it gives actually the uniform distribution of $[0,1]$). To do this, we consider:
\begin{equation}
    \begin{aligned}
        \iota:\Omega \to \PP(C([0,1],\R)), \quad \iota(\boldsymbol{\omega}) = \gamma_{\boldsymbol{\omega}} \ \text{where } \gamma_{\boldsymbol{\omega}}(t) = (1-t) \tilde{\gamma}_{\boldsymbol{\omega}}(t),\quad 
        \lambda = \iota_\# \mathbb{P}, \quad \mu_t = (\e_t)_\#\lambda.
    \end{aligned}
\end{equation}
The distortion induced  by the term $(1-t)$ can be seen in the picture below (not on scale for the sake of visualization).

\begin{center}
\begin{tikzpicture}[x = 4cm, y=1cm, scale = 4.5]

\draw[->] (0,0) -- (0.27,0) node[right] {};
\draw[-] (0,0) -- (0,1.05) node[above] {};

\foreach \x/\y in { 
    0/1, 
    0/0.5,
    0/0.25, 0.25/0.25, 0.5/0.25, 0.75/0.25,
    0.125/0.125, 0.375/0.125, 0.625/0.125, 0.875/0.125, 
    0.5/0.5, 0/0.125, 0.25/0.125, 0.5/0.125, 0.75/0.125
}{
    \fill ({\x*\y},{\y}) circle[radius=0.2pt];
}

\node[below] at (0.25,0) {$x=\frac{1}{4}$};
\node[left] at (0,0) {$t=1$};
\node[left] at (0,1) {$t_0=0$};
\node[left] at (0,0.5) {$t_1=\frac{1}{2}$};
\node[left] at (0,0.25) {$t_2=\frac{3}{4}$};
\node[left] at (0,0.125) {$t_3=\frac{7}{8}$};

\node at (0.028, 0.0675) {$\dots$};
\node at (0.056, 0.0675) {$\dots$};
\node at (0.084, 0.0675) {$\dots$};

\draw[black,thick,domain=0.125:1,samples=200] plot ({\x*(1-\x)},{\x});
\draw[black,thick,domain=0.125:0.5,samples=200] plot ({\x*(1/2-\x)},{\x});
\draw[black,thick,domain=0.125:0.5,samples=200] plot ({\x/2)},{\x});
\draw[black,thick,domain=0.125:0.25,samples=200] plot ({\x/4)},{\x});
\draw[black,thick,domain=0.125:0.25,samples=200] plot ({\x*(1/4-\x)},{\x});
\draw[black,thick,domain=0.125:0.25,samples=200] plot ({\x*3/4},{\x});
\draw[black,thick,domain=0.125:0.25,samples=200] plot ({\x*(3/4-\x)},{\x});

\end{tikzpicture}

\end{center}

\noindent Now, it is not hard to see that:
\begin{enumerate}
    \item $\mu_t \in \PPpa(\R)$ for all $t\in[0,1]$ and $\mu_1 = \delta_0$;
    \item $(\mu_t)_{t\in [0,T]}\in \operatorname{Lip}\big([0,1],(\PP(\R),W_\infty)\big)$;
    \item $\lambda$ is atomless, since $\iota$ is injective as well.
\end{enumerate}
We conclude thanks to the following lemma.

\begin{lemma}
    If $\lambda^1,\lambda^2\in \PP(C([0,1],\R))$ are such that $(\e_t)_\sharp\lambda^1 = (\e_t)_\sharp\lambda^2 = \mu_t$ for all $t\in [0,1]$, then $\lambda^1 = \lambda^2$. In particular, $\lambda$ is the only admissible lifting of $(\mu_t)_{t\in[0,1]}$.
\end{lemma}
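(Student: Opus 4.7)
My plan is to force any lifting $\lambda^{*}$ to sit on the Borel image $\iota(\Omega)\subset C([0,1],\R)$, and then to use the branching structure of $(\mu_{t})$ to pin down the pull-back measure on $\Omega$ uniquely.

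First, I would show that $\lambda^{*}$-a.e.~$\gamma$ satisfies $\gamma(t)\in\supp(\mu_{t})$ for every $t\in[0,1]$. Pick a countable dense $D\subset[0,1]$ containing $\{t_{n}\}_{n\geq 0}\cup\{1\}$: for each $t\in D$ the hypothesis $(\e_{t})_{\sharp}\lambda^{*}=\mu_{t}$ gives $\gamma(t)\in\supp(\mu_{t})$ outside a $\lambda^{*}$-null set, and intersecting over $D$ yields a set of full $\lambda^{*}$-measure on which the inclusion holds for every $t\in D$. Since $\gamma$ is continuous and $t\mapsto\mu_{t}$ is narrowly continuous (so the supports are Kuratowski upper semicontinuous in $t$), the constraint upgrades to all $t\in[0,1]$. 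Next, for $t\in(t_{n},t_{n+1})$ the support $\supp(\mu_{t})$ consists of the $2^{n+1}$ distinct values $\gamma_{\sigma}(t)$ indexed by $\sigma\in\{0,1\}^{n+1}$, and these trace out $2^{n+1}$ disjoint continuous branches on the open interval. A continuous curve confined to the support must coincide with one such branch on each $(t_{n},t_{n+1})$, and the branching choice made at each $t_{n}$ is recorded by an $\omega_{n}\in\{0,1\}$; this identifies such $\gamma$ with a unique $\boldsymbol{\omega}\in\Omega$, i.e.~$\gamma=\gamma_{\boldsymbol{\omega}}=\iota(\boldsymbol{\omega})$. Continuity at the accumulation point $t=1$ causes no trouble thanks to the factor $(1-t)$, which collapses every branch to $0$.

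The map $\iota$ is a continuous injection from the compact metrizable space $\Omega$ onto $\iota(\Omega)$, hence a Borel isomorphism onto its image. Set $\mathbb{P}^{*}:=(\iota^{-1})_{\sharp}\lambda^{*}\in\PP(\Omega)$. For every cylinder $C_{a_{0},\ldots,a_{n}}:=\{\boldsymbol{\omega}\in\Omega:\omega_{k}=a_{k}\ \forall k\leq n\}$ and any $t\in(t_{n},t_{n+1})$, the marginal condition gives
\[
\mathbb{P}^{*}(C_{a_{0},\ldots,a_{n}})=\lambda^{*}\bigl(\{\gamma:\gamma(t)=\gamma_{a_{0},\ldots,a_{n}}(t)\}\bigr)=\mu_{t}\bigl(\{\gamma_{a_{0},\ldots,a_{n}}(t)\}\bigr)=2^{-(n+1)},
\]
where the last equality uses that $\mu_{t}$ is uniform on its $2^{n+1}$ atoms. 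Since cylinders generate the Borel $\sigma$-algebra of $\Omega$, Carath\'eodory's uniqueness of extension forces $\mathbb{P}^{*}=\mathbb{P}$ and hence $\lambda^{*}=\iota_{\sharp}\mathbb{P}=\lambda$. Applying this to $\lambda^{*}=\lambda^{1}$ and $\lambda^{*}=\lambda^{2}$ yields the claim.

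The delicate point I expect is the support-confinement in the second paragraph: propagating the pointwise constraint from $D$ to every $t$ (the accumulation of $\{t_{n}\}$ at $t=1$ must be handled) and ruling out that a continuous curve ``switches'' between distinct branches at the branching times $t_{n}$. Both are resolved by continuity of $\gamma$ together with the explicit disjointness of the branches on each $(t_{n},t_{n+1})$, and by the distortion $(1-t)$ which guarantees a common limit $\gamma(1)=0$.
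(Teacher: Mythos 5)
Your argument is correct and follows essentially the same route as the paper: both confine the lifting to the dyadic tree of admissible trajectories and then observe that the single-time marginal on each interval $(t_n,t_{n+1})$ pins down the branch weights. The packaging differs — you pull $\lambda^{*}$ back to $\Omega$ and invoke Carath\'eodory uniqueness on cylinders, while the paper restricts $\lambda^{i}$ to $[0,t_n]$, disintegrates over $\e_{t_n}$ to force a uniform measure on the finitely many branches in $\Gamma_n$, and then compares finite-dimensional distributions — but the underlying mechanism is identical.

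One inaccuracy to flag. The parenthetical claim that narrow continuity of $t\mapsto\mu_t$ implies Kuratowski upper semicontinuity of the supports is false as a general statement: for instance $(1-\tfrac{1}{n})\delta_0 + \tfrac{1}{n}\delta_1 \to \delta_0$ narrowly, yet $1$ lies in every approximating support but not in $\supp\delta_0$. The conclusion you need, namely $\gamma(t)\in\supp\mu_t$ for every $t$, is still true here, but for a reason specific to the example: on each $[t_n,t_{n+1}]$ the support is a Hausdorff-continuous finite family of points of equal mass (the pairwise collapse at the endpoints does not break continuity), and the $(1-t)$ factor shrinks the diameter of $\supp\mu_t$ to zero as $t\to 1$. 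Alternatively you can sidestep the pointwise-in-$t$ upgrade entirely: on any compact $K\subset(t_n,t_{n+1})$ the branches are uniformly separated and Lipschitz, so the constraint $\gamma(q)\in\supp\mu_q$ on a dense subset of $K$, combined with uniform continuity of $\gamma$, already forces $\gamma$ to follow a single branch on $K$. The remainder of your proof — the computation of $\mathbb{P}^{*}$ on cylinders, the $\pi$-system uniqueness, and the identity $\lambda^{*}=\iota_{\sharp}\mathbb{P}^{*}$ — is sound.
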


\begin{proof}
    Fix $t_n$ for some $n\in \N$ and define, for $i=1,2$,
    \[\Gamma_n:= \big\{ \gamma \in C([0,t_n],\R) \ : \ \gamma(t) \in \operatorname{supp} \mu_t \ \forall t\in[0,t_n]\big\}, \quad \lambda^i_n:= (\operatorname{restr}_{[0,t_n]})_\sharp \lambda^i \in \PP(C([0,t_n],\R)),\]
    where $\operatorname{restr}_{[0,t]} : C([0,T] , \R) \to C([0,t],\R)$ is the restriction map for all $t \in (0,T)$. 
    Notice that $\lambda_n$ is supported on $\Gamma_n$, that is a closed subset of $C([0,t_n],\R)$. Moreover, 
    \[\operatorname{supp}\mu_{t_n} = \{(1-t_n)x_{a_0,\dots,a_{n-1}}(t_n) \ : \ a_0,\dots,a_{n-1} \in \{0,1\}\}\]
    and the function $e_{t_n} : \Gamma_n \to \operatorname{\supp}\mu_{t_n}$ is injective. Then, disintegrating both $\lambda_n^1$ and $\lambda_n^2$ with respect to $\e_{t_n}$ (see also \cite{CavSavSod25}), we see that
    \[\mu_{t_n} = \frac{1}{2^{n}}\sum_{x\in \operatorname{supp}\mu_{t_n}} \delta_x \implies \lambda^1_n = \lambda^2_n = \frac{1}{2^n}\sum_{\gamma \in \Gamma_n} \delta_\gamma.\]
    At this point, consider any $0=s_0 <\dots < s_k < s_{k+1}=1$, and consider $t_n>s_k$, so that for any Borel set $A_0, \dots A_{k+1}$ of $\R$, it holds
    \[\lambda^i(\{\gamma \ : \ \gamma(s_j) \in A_j \ \forall j\leq k+1\}) = 
    \begin{cases}
        \lambda^i_n(\{\gamma \ : \ \gamma(s_j) \in A_j \ \forall j\leq k\}) & \text{if }0 \in A_{k+1}
        \\
        0 & \text{if }0 \notin A_{k+1},
    \end{cases}\]
    and we conclude because the right hand side is independent on $i=1,2$, showing that $\lambda^1 = \lambda^2$. 
\end{proof}
\section{A superposition principle for laws of atomic random measures}\label{section atomic}

From the result by S. Lisini in \cite{lisini2007characterization}, we know that the map $E = ((\e_t)_\sharp)_{t\in[0,T]}$ defined as
\[E:\PP(C([0,T],\R^d)) \to C([0,T],\PP(\R^d)), \quad E(\lambda) := \big( (\e_t)_\sharp \lambda \big)_{t\in[0,T]},\]
is surjective seen as a map 
\begin{equation}\label{eq:5}
    E:\PP(AC^p([0,T],\R^d)) \to AC^p([0,T],\PP_p(\R^d))
\end{equation}
for all $p\in(1,+\infty)$. Similarly, thanks to the superposition result \cite[Theorem 8.2.1]{ambrosio2005gradient}, $E$ is surjective as a map 
\begin{equation}\label{eq:6}
    E:\operatorname{SPS}(b) \to \operatorname{CE}(b)
\end{equation}
for all Borel $b:[0,T]\times \R^d \times \PP(\R^d) \to \R^d$.
In Section \ref{section: counterexample}, we have shown that, if we restrict again the previous maps to purely atomic measures, we lose surjectivity: in fact, for $p>1$
\begin{equation}\label{eq:7}
    E:\PPpa(C([0,T],\R^d))\cap \PP(AC^p([0,T],\R^d)) \to AC^p([0,T],\PP_p(\R^d)) \cap C([0,T],\PPpa(\R^d))
\end{equation}
is not onto, since the counterexample we built belongs to $AC^p([0,1],\PP_p(\R^)) \cap C([0,1],\PPpa(\R^))$ for all $p>1$. Similarly, since for all $\boldsymbol{\mu}=(\mu_t)_{t\in[0,T]} \in AC^p([0,T],\PP_p(\R^d))$ there exists a vector field $v:[0,T]\times \R^d \to \R^d$ such that $\boldsymbol{\mu} \in \operatorname{CE}(v)$, again the same counterexample shows that there exists a vector field $v$ such that the following map is not onto:
\begin{equation}\label{eq:8}
    E:\PPpa(C([0,T],\R^d))\cap \operatorname{SPS}(v) \to C([0,T],\PPpa(\R^d)) \cap \operatorname{CE}(v).
\end{equation}

Despite this lack of surjectivity, we will show that, for all $\Q\in \mathcal{Q}$, given a non-local vector field $b:[0,T]\times \R^d \times \PP(\R^d) \to \R^d$ and a curve of random measures $(M_t)_{t\in[0,T]} \in C([0,T],\PP(\PPpa(\R^d)))$ that satisfies
\begin{equation}\label{eq:9}
    \partial_tM_t + \operatorname{div}_\PP(b_tM_t) =0, \quad M_t \ll \Q \ \text{ for all }t\in [0,T],
\end{equation}
under the additional assumptions of Theorem \ref{main theorem intro}, there exist:
\begin{enumerate}
    \item $\Lambda \in \PP\big(C([0,T],\PPpa(\R^d))\cap \operatorname{CE}(b)\big)$ such that $(\mathfrak{e}_t)_\sharp \Lambda = M_t$ for all $t\in[0,T]$;
    \item $\mathfrak{L}\in \PP\big( \PPpa(C([0,T],\R^d)) \cap \operatorname{SPS}(b) \big)$ such that $(E_t)_\sharp\mathfrak{L} = M_t$ for all $t\in [0,T]$.
\end{enumerate}

We will see that the key ingredient to find a lifting $\lambda \in \PPpa(C([0,T],\R^d))$ for a curve $\boldsymbol{\mu} \in C([0,T],\PPpa(\R^d)$ is to solve a continuity equation in duality with \textit{generalized cylinder functions}, see Lemma \ref{lemma: main}.

\subsection{Approximation of generalized cylinder functions by cylinder functions}
For the whole section, we fix $\pi\in \PP(\bold{T})$ and an atomless measure $\nu\in \PP(\R^d)$, so that, for the sake of notation, we write $\Q:= \Q_{\pi,\nu} \in \PP(\PP(\R^d))$. Moreover, we denote $\bold{X} := (\R^d)^\infty_{\neq}$. 

Recall from Definition \ref{def: GC functions}, the notion of generalized cylinder functions. The aim of this section is to suitably approximate any generalized cylinder function with cylinder functions, with the aim of using them to test the continuity equation.

Notice that generalized cylinder functions are less regular than usual cylinder functions: they are still Borel measurable, since they are continuous with respect to the atomic topology, but not narrowly continuous. Moreover, they are $0$ when evaluated in an atomless measure $\mu$, see \cite[Proposition 3.8]{delloschiavo2024massive} for more details. However, their Wasserstein differential is well-defined, since for all smooth vector fields $\omega:\R^d \to \R^d$, given $\Psi^{\omega,t}$ its flow map at time $t$, it holds
\[\frac{d}{dt} \hat{F}(\Psi^{\omega,t}_\#\mu) |_{t=0}= \int_{\R^d} \nabla_W \hat{F}(x,\mu) \cdot \omega(x) d\mu(x).\]

\noindent Before proceeding, we need the following result, see also \cite[Lemma 7.19]{dello2022dirichlet}.

\begin{lemma}\label{interaction cyl function}
    Let $r\geq 1$. Given $\rho\in C^1(\R)$, $f\in C^1_b(\R^d)$, $h\in C^1_b(\R^d\times \R^d)$, define
    \begin{equation}\label{eq: interacting cyl functions} 
    F_{h,\rho,f}(\mu) := \int_{\R^d} f(x) \cdot \rho \left( \int_{\R^d} h(x,y) d\mu(y) \right) d\mu(x). 
    \end{equation}
    Then, there exists a sequence of cylinder functions $F_n\in \operatorname{Cyl}_c^1(\PP(\R^d))$ such that 
    \begin{equation}\label{appr by cyl of F_h,rho,f}
        \int_{\PP(\R^d)} |F_n(\mu) - F_{h,\rho,f}(\mu)|^r + \left(\int_{\R^d} |\nabla_W F_n - \nabla_W F_{h,\rho,f}|^r d\mu\right) d\Q(\mu) \to 0,
    \end{equation}
    where 
    \begin{equation}
    \begin{aligned}
        \nabla_W F_{h,\rho,f}(x,\mu) := & \rho\big(L_{h(x,\cdot)}(\mu)\big)\nabla f(x) + \rho'\big(L_{h(x,\cdot)}(\mu)\big)f(x) \int_{\R^d} \nabla_xh(x,y)d\mu(y) 
        \\
        & + \int_{\R^d} \rho'\big(L_{h(z,\cdot)}(\mu)\big) f(z) \nabla_y h(z,x) d\mu(z).
    \end{aligned}
    \end{equation}
\end{lemma}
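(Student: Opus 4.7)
The plan is to build each $F_n$ by two independent approximations: replace $\rho$ by a polynomial on the bounded range of its argument, and replace $h$ by a finite sum of $C^1_c$ tensor products. The resulting expression will expand into a polynomial in linear functionals $\mu\mapsto L_b(\mu)$, which is a legitimate cylinder function, and its Wasserstein gradient will inherit the same three-term structure as $\nabla_W F_{h,\rho,f}$.

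Concretely, I would fix $\varepsilon>0$ and first pick a polynomial $\rho_\varepsilon$ with $\sup_{|t|\leq\|h\|_\infty}(|\rho(t)-\rho_\varepsilon(t)|+|\rho'(t)-\rho_\varepsilon'(t)|)<\varepsilon$ by Stone--Weierstrass. Then, using Stone--Weierstrass on a large cube $[-R,R]^{2d}$ together with a smooth cutoff $\chi_R\in C^1_c(\R^d)$ equal to $1$ on $B(0,R)$, I would approximate $h$ and $\nabla h$ uniformly by a tensor sum $h_\varepsilon(x,y)=\sum_{i=1}^{N_\varepsilon}a_i(x)b_i(y)$ with $a_i,b_i\in C^1_c(\R^d)$ and uniform $L^\infty$ bounds. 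Setting $f_R:=f\chi_R\in C^1_c(\R^d)$ and
\[F_n(\mu):=\int_{\R^d}f_R(x)\,\rho_\varepsilon\bigl(L_{h_\varepsilon(x,\cdot)}(\mu)\bigr)\,d\mu(x),\]
the identity $L_{h_\varepsilon(x,\cdot)}(\mu)=\sum_i a_i(x)L_{b_i}(\mu)$ combined with the expansion of the polynomial $\rho_\varepsilon$ yields
\[F_n(\mu)=\sum_\beta c_\beta\,L_{q_\beta}(\mu)\prod_i L_{b_i}(\mu)^{\beta_i},\qquad q_\beta:=f_R\prod_i a_i^{\beta_i}\in C^1_c(\R^d),\]
so that, after truncating the resulting polynomial $\Psi$ outside a bounded region containing the relevant values of $L_{b_i},L_{q_\beta}$ to make it $C^1_b$, one obtains $F_n\in\operatorname{Cyl}_c^1(\PP(\R^d))$.

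A chain-rule computation identical to the derivation of $\nabla_W F_{h,\rho,f}$ then produces $\nabla_W F_n(x,\mu)$ by the same three-term formula with $(f,\rho,h)$ replaced by $(f_R,\rho_\varepsilon,h_\varepsilon)$, and one readily checks it agrees with the direct Wasserstein differentiation of the cylinder expansion. The convergence \eqref{appr by cyl of F_h,rho,f} then follows from triangular-inequality estimates of the three error sources, uniform $L^\infty$-boundedness of all integrands, and the barycenter identities of Lemma \ref{lemma: barycenter of mu times mu}: applying (1) reduces $\int\int|\cdot|^r d\mu(x)\,dQ(\mu)$ to $\int|\cdot|^r d\nu$, while (2) reduces $\int\int\int|\cdot|^r d\mu(x)d\mu(z)\,dQ(\mu)$ to integrals against $c_1\nu\otimes\nu+c_2(\operatorname{id},\operatorname{id})_\#\nu$. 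The error from $f\mapsto f_R$ is controlled by $\|f\|_\infty\nu(B(0,R)^c)\to 0$; the error from $\rho\mapsto\rho_\varepsilon$ is of order $\varepsilon$; the error from $h\mapsto h_\varepsilon$ is controlled by $L^r$-norms of $h_\varepsilon-h$ and $\nabla h_\varepsilon-\nabla h$ against $\nu\otimes\nu$ and the diagonal measure. Taking $\varepsilon_n\to 0$ and $R_n\to\infty$ sufficiently fast yields the desired sequence.

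The main obstacle, as in \cite[Lemma 7.19]{dello2022dirichlet}, is the third ``dual'' term of the Wasserstein gradient, namely $\int\rho'(L_{h(z,\cdot)}(\mu))f(z)\nabla_y h(z,x)\,d\mu(z)$, whose approximant depends simultaneously on the Wasserstein-gradient variable $x$ and on the integration variable $z$. The tensor-product structure of $h_\varepsilon$ is exactly what decouples these into $\sum_i a_i(z)\nabla b_i(x)$; only under this decoupling does Lemma \ref{lemma: barycenter of mu times mu}(2) reduce the $L^r(\mu\otimes Q)$-norm of the gradient difference to an $L^r$-estimate of $\nabla_y h_\varepsilon-\nabla_y h$ against $c_1\nu\otimes\nu+c_2(\operatorname{id},\operatorname{id})_\#\nu$, which is then handled by the uniform tensor approximation.
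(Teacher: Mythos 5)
Your proposal is correct and follows essentially the same route as the paper: polynomial approximation of $\rho$ by Stone--Weierstrass on the bounded range of $L_{h(x,\cdot)}(\mu)$, tensor-product approximation of $h$, and control of the resulting errors in the three-term Wasserstein gradient via the barycenter identities of Lemma~\ref{lemma: barycenter of mu times mu}. You are in fact slightly more explicit than the paper on the compact-support requirement for $\operatorname{Cyl}_c^1(\PP(\R^d))$: the paper chooses tensor factors $g_i\in C_b^1(\R^d)$ and keeps $f\in C_b^1(\R^d)$, which does not literally yield $\phi_i\in C_c^1(\R^d)$ in the cylinder-function expansion, whereas your cutoff $f_R=f\chi_R$ together with $C_c^1$ tensor factors and a truncated $\Psi\in C_b^1$ handles this point cleanly.
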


\begin{proof}
	Let us first show that \eqref{appr by cyl of F_h,rho,f} holds when replacing $F_{h,\rho,f}$ with $F = \Psi \circ L_{\Phi}$ as in \eqref{cyl functions} with $\Phi = (\phi_1,\dots,\phi_k) \in C_b^1(\R^d)$. Indeed, consider $\zeta\in C^\infty([0,+\infty))$ such that $\zeta(r) = 1$ if $r\in[0,1]$, $\zeta(r) =0$ if $r\geq 2$ and $|\zeta'|\leq 2$ in $[1,2]$. Then, for all $n\geq 1$ define $\eta_n\in C^\infty([0,+\infty))$ as $\eta_n(r) = \zeta(r/n)$, which satisfies
	\[\eta_n(r)= 1\ \ \forall r\in[0,n], \quad \eta_n(r) = 0\ \ \forall r\geq 2n, \quad |\eta_n'(r)|\leq \frac{2}{n} \ \ \forall r\in[n,2n].\]
	Now, define $\Phi_n:= (\phi_{1,n}, \dots, \phi_{k,n}) \in C_c^1(\R^d;\R^k)$ as $\phi_{i,n}(x) := \phi_i(x) \eta_n(|x|)$. We can use them to define the cylinder functions $F_n := \Psi \circ L_{\Phi_n}\in\operatorname{Cyl}_c^1(\PP(\R^d))$. Now, we briefly check the convergence:
	\begin{align*}
		\int_{\PP(\R^d)} |F_n(\mu) - F(\mu)|^r dQ(\mu) \to 0 \quad \text{ as }n\to+\infty,
	\end{align*}
	by dominated convergence theorem, since $F_n$ and $F$ are uniformly bounded by the supremum of $\Psi$ and $L_{\phi_{i,n}}(\mu) \to L_{\phi_i}(\mu)$ for all $\mu\in \PP(\R^d)$ again by dominated convergence. Regarding the gradients
	\begin{align*}
		\int_{\PP(\R^d)} & \int_{\R^d} |\nabla_W F_n - \nabla_W F|^r d\mu \, dQ(\mu) 
		\\
		= & \int_{\PP(\R^d)} \int_{\R^d} \left| \sum_{i=1}^k \partial_i\Psi(L_{\Phi_{n}}(\mu)) \nabla\phi_{i,n}(x) - \partial_i\Psi(L_{\Phi}(\mu)) \nabla\phi_{i}(x) \right|^r d\mu(x) dQ(\mu)
		\\
		\leq & 2^{r-1} \int_{\PP(\R^d)} \int_{\R^d} \left| \sum_{i=1}^k \partial_i\Psi(L_{\Phi_{n}}(\mu))  \nabla\phi_{i}(x)\eta_n(|x|) - \partial_i\Psi(L_{\Phi}(\mu)) \nabla\phi_{i}(x) \right|^r d\mu(x) dQ(\mu)\\
		& + 2^{r-1} \int_{\PP(\R^d)} \int_{\R^d} \left| \sum_{i=1}^k \partial_i\Psi(L_{\Phi_{n}}(\mu))  \nabla \phi_i(x) \eta_n'(|x|) \right|^r d\mu(x) dQ(\mu),
	\end{align*}
	and both terms go to $0$ as $n\to +\infty$: the first term converges pointwise in $(x,\mu)$ since $L_{\Phi_n}(\mu) \to L_{\Phi}(\mu)$ as observed before and $\eta_n(|x|) \to 1$, thus we conclude by dominated convergence theorem; the second term is bounded by $\frac{2^{2r-1}}{n^r}  (\sum_{i} \|\partial_i \Psi\|_\infty \|\nabla \phi_i\|_\infty)^r $.

	We now show \eqref{appr by cyl of F_h,rho,f} for $F_{h,\rho,f}$.
    The definition of Wasserstein gradient is justified from the fact that for all smooth vector fields $\omega: \R^d \to \R^d$, it holds
    \begin{align*}
        & \frac{d}{dt}F_{h,\rho,f}(\Psi^{\omega,t}_\#\mu) |_{t=0} 
        = \int_{\R^d}\rho\left( \int_{\R^d} h(z,y) d\mu(y) \right) \nabla f(z) \cdot \omega(z)d\mu(z) + 
        \\
        & \ \ \ + \int_{\R^d} f(z) \rho'\left(\int_{\R^d} h(z,y) d\mu(y)\right) \cdot \left(\int_{\R^d}\nabla_{z,y} h(z,y)\cdot (\omega(z),\omega(y)) d\mu(y) \right) d\mu(z)
        \\
        & \ = \int_{\R^d} \nabla_W F_{h,\rho,f}(x,\mu)\cdot \omega(x) d\mu(x).
    \end{align*}
    Without loss of generality we can assume that $\rho(0) = 0$, otherwise consider $\tilde{\rho} = \rho - \rho(0)$ and notice that $F_{h,\rho,f}(\mu) = F_{h,\tilde{\rho},f}(\mu) + \rho(0)L_{f}(\mu)$ for all $\mu\in \PP(\R^d)$.
    Now, consider $h_n \in \operatorname{Span}\big( g_1\otimes g_2 \ : \ g_i\in C_b^1(\R^d) \big)\subset C_b^1(\R^d\times \R^d)$ converging to $h$ on bounded sets w.r.t. $\|\cdot\|_{C^1}$ and such that $\|h_n\|_{C^1}\leq 2\|h\|_{C^1}$. Consider also $\rho_n(a) := \sum_{j=1}^n c_j a^j$ be a sequence of polynomials converging to $\rho$ in $C^1$ norm on $[-2\|h\|_{C^1}, 2\|h \|_{C^1} ]$ and define 
    \[F_n(\mu):= F_{h_n,\rho_n,f}(\mu).\]
    Given the structure of the chosen functions $h_n$ and $\rho_n$, the $F_n$'s are cylinder functions (with inner functions that are continuous and bounded as in the first step of the proof), and following the previous argument it is not hard to see that they satisfy \eqref{appr by cyl of F_h,rho,f}.
\end{proof}

We are going to use this result to approximate generalized cylinder functions with cylinder functions, under the additional assumption that the $r$-capacity of the diagonal $\Delta  \subset \R^d\times \R^d$ is $0$, see §\ref{subsec: capacity}.

\begin{prop}\label{prop: approx GC with cyl}
    Let $r\in[1,+\infty)$ be such that $\operatorname{cap}_{r,\nu}(\Delta) = 0$, where $\Delta := \{(x,x)\in \R^d\times\R^d\ : \ x\in \R^d \}$. Then, for any $\hat{F}\in \operatorname{GC}_c^1(\PP(\R^d))$, there exists a sequence $F_n \in \operatorname{Cyl}_c^1(\PP(\R^d))$ such that 
    \begin{equation}\label{eq: appr of GC from cyl}
        \int_{\PP(\R^d)} |F_n(\mu) - \hat{F}(\mu)|^r +\left( \int_{\R^d} |\nabla_WF_n(x,\mu) - \nabla_W \hat{F}(x,\mu)|^rd\mu(x) \right) d\Q(\mu) \to 0.
    \end{equation}
\end{prop}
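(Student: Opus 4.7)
The strategy is a two-stage approximation: first replace the singular weight $\mu[x]$ inside $\hat{\phi}$ by a smooth integral $\int h_\varepsilon(x,y)\,d\mu(y)$, producing an interaction-type functional $F_{h_\varepsilon,\rho,f}$ of the kind handled by Lemma \ref{interaction cyl function}; then invoke that lemma to reach genuine cylinder functions in $\operatorname{Cyl}_c^1(\PP(\R^d))$, closing with a diagonal extraction in $\varepsilon$ and $n$. By the chain rule it suffices to treat a single $\hat{L}_{\hat{\phi}}$, and by a standard $C^1$ tensor-product approximation of $\hat{\phi}$ (e.g.\ Bernstein polynomials on a compact containing $\supp\hat{\phi}$) I may further assume $\hat{\phi}(x,r)=f(x)\,g(r)$ with $f\in C_c^1(\R^d)$, $g\in C_c^1((0,1])$. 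From $\operatorname{cap}_{r,\nu}(\Delta)=0$ select admissible kernels $h_\varepsilon\in C_b^1(\R^{2d})$, $0\le h_\varepsilon\le 1$, $h_\varepsilon\equiv 1$ on $\Delta$, with $\|h_\varepsilon\|_{L^r(\nu\otimes\nu)}^r+\|\nabla h_\varepsilon\|_{L^r(\nu\otimes\nu)}^r\to 0$. After replacing $h_\varepsilon$ by its symmetrization $\tfrac12(h_\varepsilon(x,y)+h_\varepsilon(y,x))$, differentiating the identity $h_\varepsilon(x,x)\equiv 1$ along the diagonal forces $\nabla_x h_\varepsilon|_\Delta=\nabla_y h_\varepsilon|_\Delta=0$, a fact that will be crucial in the gradient estimate.

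\emph{Main estimate.} Set $F^\varepsilon:=F_{h_\varepsilon,g,f}$ in the notation of Lemma \ref{interaction cyl function}. For an atomic $\mu=\sum_i a_i\delta_{x_i}$ the pointwise defect at the atom $x_i$ is $\int h_\varepsilon(x_i,y)\,d\mu(y)-\mu[x_i]=\sum_{j\neq i}a_j\,h_\varepsilon(x_i,x_j)$; Lipschitz continuity of $g$, Jensen's inequality and Lemma \ref{lemma: barycenter of mu times mu} together give
\[\int_{\PP}|F^\varepsilon(\mu)-\hat L_{\hat\phi}(\mu)|^r\,d\Q \le C\int_{\PP}\sum_{i\ne j}a_ia_j|h_\varepsilon(x_i,x_j)|^r\,d\Q = Cc_1\int|h_\varepsilon|^r\,d\nu\otimes\nu \xrightarrow{\varepsilon\to 0} 0.\]
For the Wasserstein differential, expand $\nabla_W F^\varepsilon$ according to Lemma \ref{interaction cyl function}: the first piece $g(L_{h_\varepsilon(x,\cdot)}(\mu))\nabla f(x)$ converges to $g(\mu[x])\nabla f(x)=\nabla_W\hat L_{\hat\phi}(x,\mu)$ by the same argument, while the two remaining terms, carrying $\nabla_x h_\varepsilon$ and $\nabla_y h_\varepsilon$, are bounded via Jensen by $C\int|\nabla h_\varepsilon|^r\,d\operatorname{bar}[C_\#\Q]=C\bigl(c_1\int|\nabla h_\varepsilon|^r\,d\nu\otimes\nu+c_2\int|\nabla h_\varepsilon(x,x)|^r\,d\nu\bigr)$; the diagonal integral vanishes by the first paragraph, while the off-diagonal one vanishes by the capacity hypothesis.

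\emph{Cylindrification and diagonal argument.} For each fixed $\varepsilon$, Lemma \ref{interaction cyl function} furnishes $F_n^\varepsilon\in\operatorname{Cyl}_c^1(\PP(\R^d))$ converging to $F^\varepsilon$ in exactly the norm appearing in \eqref{eq: appr of GC from cyl}; a diagonal extraction $n=n(\varepsilon)$ delivers the required sequence. The decisive technical point is the gradient estimate in the previous paragraph: without symmetry of $h_\varepsilon$, the diagonal contribution $c_2\int|\nabla h_\varepsilon(x,x)|^r\,d\nu$ would in general survive and could not be controlled by $\operatorname{cap}_{r,\nu}(\Delta)$, so the identification $\nabla h_\varepsilon|_\Delta=0$ is the heart of the proof. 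The linear reductions at the outset are routine, the $L^r$-convergence of values is a direct Lipschitz plus Jensen estimate, and the only genuine obstacle is ensuring that the two ``stray'' gradient terms produced by differentiating the regularized mass $\int h_\varepsilon\,d\mu$ are absorbed by the capacity hypothesis.
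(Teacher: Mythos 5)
Your proof is correct and follows essentially the same two-stage approach as the paper: reduce to a tensor kernel $\hat\phi(x,a)=f(x)\rho(a)$, interpolate through the interaction functionals $F_{h_\varepsilon,\rho,f}$ with capacity-minimizing kernels $h_\varepsilon$, cylindrify via Lemma \ref{interaction cyl function}, and close with a diagonal extraction. The one small variation is how you obtain the key fact $\nabla h_\varepsilon|_\Delta=0$ needed to kill the $c_2$-term in Lemma \ref{lemma: barycenter of mu times mu}: you pass to the symmetrization $\tfrac12\bigl(h_\varepsilon(x,y)+h_\varepsilon(y,x)\bigr)$ and differentiate the identity $h_\varepsilon(x,x)\equiv 1$, whereas the paper's proof leaves this step implicit and actually relies on the constraint $h_\varepsilon\le 1$ built into \eqref{capacity definition}, which makes every diagonal point a global maximum of the $C^1$ function $h_\varepsilon$ and hence a critical point; both arguments are valid and equally economical, and your symmetrization has the mild advantage of not invoking the one-sided bound at all.
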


\begin{proof}
    It suffices to check the result for $\hat{F} = \hat{L}_{\hat{\phi}}$, for any $\hat{\phi} \in C^1_c(\R^d \times (0,1])$ of the form $\hat{\phi}(x,a) = f(x) \rho(a)$, with $f\in C^1_c(\R^d)$ and $\rho \in C^1_c((0,1])$. Moreover, thanks to Lemma \ref{interaction cyl function}, we can choose the approximating functions of the form \eqref{eq: interacting cyl functions}.

    \textbf{Step 1}: choice of the approximating sequence. Consider a sequence of functions $h_n \in C_b^1(\R^d \times \R^d)$ realizing the infimum in the definition of $\operatorname{cap}_{r,\nu}(\Delta)$, that is such that $ h_n \leq 1$, $h_n(x,x) = 1$ for all $x\in \R^d$ and 
    \[\int |h|^r + |\nabla h|^r d\nu\otimes \nu \to 0.\]
    Then, define $F_n(\mu) := F_{h_n,\rho,f}(\mu)$. Thanks to Lemma \ref{interaction cyl function} it is enough to show \eqref{eq: appr of GC from cyl} with this choice.
    \\
    \textbf{Step 2}: convergence of $F_n$ to $\hat{F}$ in $L^r(\Q)$. 
    \begin{align*}
        \int_{\PP(\R^d)} & |F_n(\mu) - \hat{F}(\mu)|^rd\Q(\mu) 
        =
        \int \left| \int_{\R^d} f(x) \left[ 
        \rho\left(\int_{\R^d} h_n(x,y) d\mu(y)\right) - \rho(\mu[x]) \right] \right|^r d\Q(\mu)
        \\
        \leq & 
        \|f\|_\infty^r\int \int  \left|
        \rho\left(\int_{\R^d} h_n(x,y) d\mu(y)\right) - \rho(\mu[x])  \right|^rd\mu(x) d\Q(\mu)
        \\
        \leq &
        \|f\|_\infty^r \|\rho'\|_\infty^r \int \int  \left|\int_{\R^d} h_n(x,y) d\mu(y) - \mu[x]  \right|^rd\mu(x) d\Q(\mu)
        \\
        = &
        \|f\|_\infty^r \|\rho'\|_\infty^r\int_{\bold{T}\times \boldsymbol{X}} \sum_{i=1}^\infty a_i \left| 
        \sum_{j\neq i} a_j h_n(x_i,x_j) \right|^r d\pi\otimes \nu^\infty(\bold{a},\bold{x})
        \\
        \leq &
        \|f\|_\infty^r\|\rho'\|_\infty^r \int_{\bold{T}\times \boldsymbol{X}} \sum_{i\neq j} a_i a_j \left|h_n(x_i,x_j) \right|^r d\pi\otimes \nu^\infty(\bold{a},\bold{x})
        \\
        \leq &
        \|f\|_\infty^r \|\rho'\|_\infty^r \int_{\R^d\times\R^d} \left|h_n(x,y) \right|^r d \nu^2(x,y) \to 0.
    \end{align*}
    \\
    \textbf{Step 3}: convergence of the Wasserstein gradients.
    \begin{align*}
         \int &  \int |\nabla_WF_n(x,\mu) - \nabla_W \hat{F}(x,\mu)|^rd\mu(x) d\Q(\mu)
         =
         \int \int \bigg| \rho\big(L_{h_n(x,\cdot)}(\mu)\big)\nabla f(x) - \rho(\mu[x])\nabla f(x) 
         \\
         & + \rho'\big(L_{h_n(x,\cdot)}(\mu)\big)f(x) \int_{\R^d} \nabla_x h_n(x,z)d\mu(z) 
         + \int_{\R^d} \rho'\big(L_{h_n(z,\cdot)}(\mu)\big) f(z) \nabla_y h_n(z,x) d\mu(z) \bigg|^r d\mu(x)d\Q(\mu)
         \\
         \leq &
         2^{r-1}\|\nabla f \|_\infty^r \int \int \bigg| \rho\big(L_{h_n(x,\cdot)}(\mu)\big) - \rho(\mu[x])\bigg|^r d\mu(x) d\Q(\mu) 
         \\
         & + 
         2^{r-1}\|\rho'\|_\infty^r \|f\|_\infty^r \int \int \int |\nabla_x h_n(x,z)|^r + |\nabla_y h_n(z,x)|^r d\mu(z) d\mu(x) d\Q(\mu)
         \\
         = & I_{1,n} + I_{2,n}.
    \end{align*}
    The first term $I_{1,n}$ goes to $0$, exactly as in the previous step. For the second term, thanks to Lemma \ref{lemma: barycenter of mu times mu}, we have 
    \begin{align*}
        I_{2,n}\leq & C(r,\rho',f) \int \int \int |\nabla_{x,y}h_n(x,y)|^rd\mu\otimes \mu(x,y) d\Q(\mu)
        \\
        = & 
        C(r,\rho',f)\left(c_1 \int_{\R^d\times\R^d} |\nabla_{x,y}h_n(x,y)|^r d\nu\otimes \nu(x,y) + c_2 \int_{\R^d} |\nabla_{x,y}h_n(x,x)|^r d\nu(x) \right) 
        \\
        = & 
        C(r,\rho',f)c_1 \int_{\R^d\times\R^d} |\nabla_{x,y}h_n(x,y)|^r d\nu\otimes \nu(x,y) \to 0,
    \end{align*}
    where the last equality follows from the fact that the gradient on diagonal terms vanishes.
\end{proof}

Now that we know that cylinder functions can approximate generalized cylinder functions in the sense of \eqref{eq: appr of GC from cyl}, we show that, under additional assumptions, any function in $\operatorname{GC}_c^1(\PP(\R^d))$ can be used to test the continuity equation.

\begin{teorema}\label{superposition in duality with gen cyl functions}
    Let $b:[0,T]\times \R^d \times \PP(\R^d)\to \R^d$ be a Borel vector field. Let $\Lambda\in \PP(C([0,T].\PP(\R^d)))$ be concentrated on $\operatorname{CE}(b)$ and define $\boldsymbol{M}=(M_t)_{t\in[0,T]}\in C_T(\PP(\PP(\R^d)))$ as $M_t := (\mathfrak{e}_t)_\sharp \Lambda$. Assume that 
    \begin{enumerate}
        \item $M_t = u_t \Q$, with $u \in L^1_t(L^q(\Q))$, with $q\in[1,+\infty]$. In other words 
        \begin{equation}
           \|u\|_{L^1_t(L^q(\Q))}:= \int_0^T \|u_t\|_{L^{q}(\Q)} dt <+\infty;
        \end{equation}
        \item $\|b\|_{L^p(\widetilde{M}_t\otimes dt)}^p=\int_0^T \int_{\PP(\R^d)} \int_{\R^d} |b_t(x,\mu)|^p d\mu(x) dM_t(\mu) dt<+\infty$, with $p\in[1,+\infty]$;
        \item $\operatorname{cap}_{r,\nu}(\Delta) = 0$, with $r\in [1,+\infty)$;
        \item $\frac{p'}{r}+\frac{1}{q}\leq1$.
    \end{enumerate}
    Then, for $\Lambda$-a.e. $\boldsymbol{\mu}= (\mu_t)_{t\in [0,T]} \in C_T(\PP(\R^d))$ and for all $\hat{F}\in \operatorname{GC}_c^1(\PP(\R^d))$, it holds
    \begin{equation}\label{eq: non-local CE in dual with GC}
        \frac{d}{dt} \hat{F}(\mu_t) = \int_{\R^d} \nabla_W\hat{F}(x,\mu_t) \cdot  b(t,x,\mu_t) d\mu_t(x),
    \end{equation}
    in the sense of distributions in $(0,T)$.
    Moreover, it holds $\partial_t M_t + \operatorname{div}_\PP(b_tM_t) = 0$ in the sense of \eqref{eq: CERM}, and in duality with generalized cylinder functions, that is for any $\hat{F}\in \operatorname{GC}_c^1(\PP(\R^d))$ it holds 
    \begin{equation}\label{eq: CERM in dual with GC}
        \frac{d}{dt}\int_{\PP(\R^d)} \hat{F}(\mu) dM_t(\mu) = \int_{\PP(\R^d)} \int_{\R^d} \nabla_W\hat{F}(x,\mu) \cdot  b(t,x,\mu) d\mu(x) dM_t(\mu).
    \end{equation}
\end{teorema}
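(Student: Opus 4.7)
The proof proceeds by approximation. Since $\operatorname{cap}_{r,\nu}(\Delta)=0$, Proposition~\ref{prop: approx GC with cyl} furnishes, for any $\hat F \in \operatorname{GC}_c^1(\PP(\R^d))$, a sequence $F_n \in \operatorname{Cyl}_c^1(\PP(\R^d))$ with $F_n \to \hat F$ in $L^r(Q)$ and $\nabla_W F_n \to \nabla_W \hat F$ in $L^r$ with respect to $\int \cdot \,d\mu(x)\,dQ(\mu)$. For each cylinder $F_n$ and $\Lambda$-a.e.\ $\boldsymbol\mu$, the classical continuity equation defining $\operatorname{CE}(b)$ combined with the chain rule (applied to $\Psi \circ \vec L$, using absolute continuity of $t \mapsto L_{\phi_i}(\mu_t)$) yields
\[
\frac{d}{dt} F_n(\mu_t) = \int \nabla_W F_n(x,\mu_t) \cdot b_t(x,\mu_t)\,d\mu_t(x) \qquad \text{in } \mathcal{D}'(0,T);
\]
integrating against $\Lambda$ and using Fubini gives the same identity for $\int F_n\,dM_t$, i.e.~\eqref{eq: CERM} in duality with cylinder functions (whose $L^1$-in-time integrability hypothesis follows from (2) via H\"older in $t$). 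The task is to pass to the limit $n\to\infty$.

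The technical core is a triple H\"older estimate whose exponents close exactly because of assumption~(4). For the zero-order term, since $p'\geq 1$ assumption~(4) forces $q'\leq r$, and as $Q$ is a probability measure
\[
\int_0^T\!\!\int |F_n - \hat F|\,dM_t\,dt \leq \int_0^T \|F_n-\hat F\|_{L^{q'}(Q)}\,\|u_t\|_{L^q(Q)}\,dt \leq \|F_n-\hat F\|_{L^r(Q)}\,\|u\|_{L^1_t(L^q(Q))} \to 0.
\]
For the first-order term, one writes $dM_t = u_t\,dQ$ and chains three H\"older steps: first $(p,p')$ on $d\mu\,dM_t$, then $(r/p',(r/p')')$ on $d\mu\,dQ$ after extracting the factor $u_t$, and finally $(p,p')$ in $t$. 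A direct computation shows that $(r/p')'\leq q$ is equivalent to $\frac{p'}{r}+\frac{1}{q}\leq 1$, so under~(4) the chain closes and
\[
\int_0^T\!\!\int\!\!\int |\nabla_W F_n - \nabla_W \hat F|\,|b_t|\,d\mu\,dM_t\,dt \leq C\Big(\iint|\nabla_W F_n - \nabla_W \hat F|^r\,d\mu\,dQ\Big)^{1/r} \to 0,
\]
with $C$ depending only on $\|b\|_{L^p(\widetilde M_t\otimes dt)}$ and $\|u\|_{L^1_t(L^q(Q))}$. These two bounds give $L^1(dt)$-convergence of both sides of the continuity equation for $M_t$, yielding~\eqref{eq: CERM in dual with GC}.

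For the $\Lambda$-a.e.\ identity~\eqref{eq: non-local CE in dual with GC}, fix $\xi\in C_c^1(0,T)$ and $\hat F$, and set
\[
X_n(\boldsymbol\mu) := \int_0^T \xi'(t)\,F_n(\mu_t)\,dt + \int_0^T \xi(t)\!\int \nabla_W F_n(x,\mu_t)\cdot b_t(x,\mu_t)\,d\mu_t(x)\,dt,
\]
and similarly $X(\boldsymbol\mu)$ using $\hat F$. By the first paragraph, $X_n=0$ $\Lambda$-a.e., and the two H\"older estimates above bound $\|X_n - X\|_{L^1(\Lambda)}$ by exactly the two displayed quantities, hence $X_n \to X$ in $L^1(\Lambda)$ and $X=0$ $\Lambda$-a.e. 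Extracting a diagonal subsequence along countable dense families of test functions $\xi$ and of the generating data $(\Psi,\hat\phi_1,\dots,\hat\phi_k)$ of $\operatorname{GC}_c^1$ (noting that both $\hat F$ and $\nabla_W \hat F$ depend continuously on these data in the relevant $L^r$ norms) upgrades the conclusion to hold simultaneously for all $\xi$ and all $\hat F$.

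The principal obstacle is precisely the exponent bookkeeping in the gradient estimate: assumption~(4) is the tight three-way H\"older balance that allows the capacity-based $L^r$-approximation from Proposition~\ref{prop: approx GC with cyl} to interact compatibly with both the $L^p$-integrability of $b$ and the $L^q$-integrability of the density $u_t$, and checking that each link of the H\"older chain closes under this single condition is the delicate ingredient of the proof.
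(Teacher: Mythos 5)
Your proof is correct and follows essentially the same approach as the paper's: approximate $\hat F$ by cylinder functions via Proposition~\ref{prop: approx GC with cyl}, pass to the limit using a three-way H\"older estimate whose exponents close precisely under assumption~(4), and uniformize the null set over a countable dense family. The paper organizes the H\"older chain slightly differently (applying $(p,p')$ on the full product $\mu\otimes M_t\otimes dt$ and then using that the inner $\mu\otimes Q$-integral is independent of $t$, rather than your separate $(p,p')$-in-$t$ step), and for the null-set uniformization it is a bit cleaner: it takes a countable dense family only of the $\hat\phi\in C^1_c(\R^d\times(0,1])$ and then extends to general $\hat F=\Psi(\hat L_{\hat\phi_1},\dots,\hat L_{\hat\phi_k})$ by the chain rule, thereby avoiding the need for a countable dense family of the $\Psi$'s — you may want to adopt this last point since ``dense countable generating data $(\Psi,\hat\phi_i)$'' is slightly delicate to make precise. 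Otherwise the estimates, the role of~(4), and the overall structure match the paper's argument.
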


\begin{proof}
Let $\hat{F} \in\operatorname{GC}_c^1(\PP(\R^d))$ and consider $(F_n)_{n\in \N}\subset \operatorname{Cyl}_c^1(\PP(\R^d))$ such that $F_n {\longrightarrow} \hat{F}$ in the sense of \eqref{eq: appr of GC from cyl}. For all $\xi \in C_c^1(0,T)$, let us first prove
\begin{equation}\label{eq: proof}
    0 = \int \bigg| \int_0^T \xi'(t)\hat{F}(\mu_t) + \xi(t)\left(\int_{\R^d} \nabla_W \hat{F}(x,\mu_t) \cdot b_t(x,\mu_t) d\mu_t(x) \right) dt \bigg| d\Lambda(\boldsymbol{\mu}).
\end{equation}
Since \eqref{eq: non-local CE in dual with GC} holds with $F_n$ in place of $\hat{F}$ for $\Lambda$-a.e. $\boldsymbol{\mu}$, we have 
\begin{align*}
     \int & \bigg| \int_0^T \xi'(t)\hat{F}(\mu_t) + \xi(t)\left(\int_{\R^d} \nabla_W \hat{F}(x,\mu_t) \cdot b_t(x,\mu_t) d\mu_t(x) \right) dt \bigg| d\Lambda(\boldsymbol{\mu})
     \\
     & = \int \bigg| \int_0^T \xi'(t)\hat{F}(\mu_t) - \xi'(t)F_n(\mu_t) dt
     \\
     & \quad \ \  + \int_0^T \xi(t)\left(\int_{\R^d} \nabla_W \hat{F}(x,\mu_t) \cdot b_t(x,\mu_t) - \nabla_W F_n\cdot b_t(x,\mu_t) d\mu_t(x) \right) dt \bigg| d\Lambda(\boldsymbol{\mu})
     \\
     & \leq \int  \bigg|
        \int_0^T  \xi'(t)\hat{F}(\mu_t) dt - \int_0^T \xi'(t)F_n(\mu_t) dt
        \bigg| d\Lambda\big(\boldsymbol{\mu}\big)
        \\
        & \quad \ \ + \int \bigg|  \int_0^T \int_{\R^d}\xi(t)\nabla_W \hat{F}(x,\mu_t) - \nabla_W F_n(x,\mu_t)\cdot b_t(x,\mu_t)d\mu_t(x) dt  \bigg|d\Lambda\big(\boldsymbol{\mu}\big)
        \\
        & \leq \|\xi'\|_\infty\int_0^T \int_{\PP(\R^d)} |\hat{F}(\mu)-F_n(\mu)| dM_t(\mu) dt
        \\
        & \quad \ \ + \|\xi\|_\infty
        \int_0^T \int_{\PP(\R^d)} \int_{\R^d} \left|\Big( \nabla_W \hat{F}(x,\mu)-\nabla_W F_n(x,\mu) \Big)\cdot b_t(x,\mu) \right|d\mu(x) dM_t(\mu)dt
        \\
        & = I_{1,n} + I_{2,n}.
\end{align*}
Both terms go to $0$, in fact:
\begin{align*}
    I_{1,n} = & \|\xi'\|_\infty
        \int_0^T \int |\hat{F}(\mu)- F_n(\mu)|u_t(\mu) d\Q(\mu) dt 
        \\
        \leq &
        \lVert \xi' \rVert_{\infty} \int_0^T\left(\int |\hat{F}-F_n |^r d\Q\right)^{\frac{1}{r}} \left( \int_\PP u_t^q  d\Q\right)^{\frac{1}{q}}dt \to 0;
\end{align*}

\begin{align*}
    I_{2,n}
        \leq &
        \|\xi\|_\infty \|b\|_{L^p(\tilde{M}_t\otimes dt)} \left(\int_0^T \int\int | \nabla_W \hat{F}-\nabla_W F_n|^{p'} d\mu(x) dM_t(\mu) dt\right)^{\frac{1}{p'}}
        \\
        \leq &
        \|\xi\|_\infty \|b\|_{L^p} \left( \int_0^T \left\{\int \left[\int |\nabla_W \hat{F}-\nabla_W F_n|^{p'} d\mu(x)\right]^{q'} d\Q(\mu)\right\}^{\frac{1}{q'}} \left\{\int |u_t|^q d\Q \right\}^{\frac1q} dt \right)^{\frac{1}{p'}}
        \\
        = & \|\xi\|_\infty \|b\|_{L^p}\|u\|_{L^1_t(L^q(\Q))}^{1/p'}\left\{\int \left[\int |\nabla_W \hat{F}-\nabla_W F_n |^{p'} d\mu(x)\right]^{q'} d\Q(\mu)\right\}^{\frac{1}{p'q'}}
        \\
        \leq &
        \|\xi\|_\infty \|b\|_{L^p}\|u\|_{L^1_t(L^q(\Q))}^{1/p'}\left(\int \int |\nabla_W\hat{F}-\nabla_WF_n |^{p'q'}d\mu(x)d\Q(\mu)\right)^{\frac{1}{p'q'}}
        \\
        \leq &
        \|\xi\|_\infty \|b\|_{L^p}\|u\|_{L^1_t(L^q(\Q))}^{1/p'}\left(\int \int |\nabla_W\hat{F}-\nabla_WF_n |^{r}d\mu(x)d\Q(\mu)\right)^{1/r} \to 0
\end{align*}
Exploiting \eqref{eq: proof} for a countable dense subset of $\{\xi_n\}\subset C_c^1(0,T)$, we have that \eqref{eq: non-local CE in dual with GC} holds for the chosen $\hat{F}$, for all $\boldsymbol{\mu} \notin \mathcal{N}_{\hat{F}} \subset C([0,T],\PP(\R^d))$, with $\Lambda(\mathcal{N}_{\hat{F}}) = 0$.  We need to find a set $\mathcal{N}$ that does not depend on the chosen generalized cylinder functions. To this aim, consider a countable dense subset $\{\hat{\phi}_n\}$ in $C_c^1(\R^d \times (0,1])$ and consider $\mathcal{N}:= \bigcap_{n\in \N} \mathcal{N}_{\hat{L}_{\hat{\phi}_n}}$. By density, for all $\boldsymbol{\mu}\notin \mathcal{N}$, \eqref{eq: non-local CE in dual with GC} holds for $\hat{L}_{\hat{\phi}}$ for all $\hat{\phi} \in C_c^1(\R^d\times (0,1])$. Then, the same holds for any $\hat{F}\in \operatorname{GC}_c^1(\PP(\R^d))$ simply by chain rule property.
Finally, \eqref{eq: CERM in dual with GC} follows from the previous computations.
\end{proof}

\subsection{Nested superposition principles for laws of purely atomic random measures}\label{subsec: nested atomic}

The main goal of this subsection is to prove the following \textit{atomic nested superposition principles}, that are, respectively, of a differential and a metric nature. The first theorem coincides with Theorem \ref{main theorem intro}, and we restate it here for completeness.

\begin{teorema}\label{main theorem}
    Let $b:[0,T]\times \R^d \times \PP(\R^d) \to \R^d$ be Borel measurable, $\Lambda \in \PP(C([0,T],\PP(\R^d)))$ and $\boldsymbol{M} = (M_t)_{t\in[0,T]}\in C([0,T],\PP(\PP(\R^d)))$ be satisfying the assumptions of Theorem \ref{superposition in duality with gen cyl functions}. Then, there exists $\mathfrak{L}\in \PP(\PP(C([0,T],\R^d)))$ such that $E_\sharp \mathfrak{L} = \Lambda$ and $\mathfrak{L}\in \PP\big( \PPpa(C([0,T],\R^d)) \cap \operatorname{SPS}(b) \big)$. In particular, $(E_t)_\sharp \mathfrak{L} = M_t$ for all $t\in[0,T]$ and $\Lambda$ is concentrated over $\operatorname{CE}(b)\cap C([0,T],\PP^{\operatorname{pa}}(\R^d))$.
\end{teorema}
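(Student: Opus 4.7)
The plan is to promote the continuity equation satisfied by $\Lambda$-a.e.\ $\boldsymbol{\mu}$ to one valid against generalized cylinder test functions, then to invoke the single-curve atomic superposition principle (Lemma \ref{lemma: main}) for each such curve to obtain a purely atomic lifting, and finally to assemble these liftings into $\mathfrak{L}$ by a measurable selection argument.

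First, I would directly apply Theorem \ref{superposition in duality with gen cyl functions}: under hypotheses (1)--(4), for $\Lambda$-a.e.\ $\boldsymbol{\mu} \in \operatorname{CE}(b)$ one has
\[
\frac{d}{dt}\hat{F}(\mu_t) = \int_{\R^d} \nabla_W \hat{F}(x,\mu_t) \cdot b_t(x,\mu_t)\, d\mu_t(x)
\]
in the distributional sense on $(0,T)$ for every $\hat{F} \in \operatorname{GC}_c^1(\PP(\R^d))$. Since $M_t \ll \Q$ and $\Q$ is concentrated on $\PPpa(\R^d)$, a Fubini argument in time together with the narrow continuity of $\boldsymbol{\mu}$ and the preservation of atomic weights (obtained by testing against $\hat{\phi}$ independent of $x$) will give that $\Lambda$-a.e.\ $\boldsymbol{\mu}$ takes values in $\PPpa(\R^d)$ for every $t \in [0,T]$.

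Next, for each such $\boldsymbol{\mu}$ I would specialize the enhanced equation to test functionals of the form $\hat{F}(\mu) = \int_{\R^d} \phi(x,\mu[x])\, d\mu(x)$ with $\phi \in C^1_c(\R^d \times (0,1])$, which yields exactly \eqref{eq: CE intro weights} for the vector field $v^{\boldsymbol{\mu}}_t(x) := b_t(x,\mu_t)$. By Lemma \ref{lemma: main}, one then obtains a lifting $\lambda \in \PPpa(C([0,T],\R^d))$ of $\boldsymbol{\mu}$, concentrated on absolutely continuous curves $\gamma$ solving $\dot{\gamma}(t) = v^{\boldsymbol{\mu}}_t(\gamma(t))$, i.e.\ $\lambda \in \operatorname{SPS}(b)$ with $E(\lambda) = \boldsymbol{\mu}$. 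To glue these curve-by-curve liftings, I would set
\[
A := \bigl\{(\boldsymbol{\mu},\lambda) : E(\lambda) = \boldsymbol{\mu},\ \lambda \in \PPpa(C([0,T],\R^d)) \cap \operatorname{SPS}(b)\bigr\},
\]
which is Borel by continuity of $E$, Lemma \ref{lemma: pa measures are measurable} applied to $\PP(C([0,T],\R^d))$, and the Borel measurability of $\operatorname{SPS}(b)$ recalled in §\ref{subsub: curves}. The previous two steps show that the projection of $A$ onto the first coordinate has full $\Lambda$-measure, so by the Jankov--von Neumann measurable selection theorem there exists a universally measurable map $\Phi : \boldsymbol{\mu} \mapsto \lambda_{\boldsymbol{\mu}}$ with $(\boldsymbol{\mu},\Phi(\boldsymbol{\mu})) \in A$ for $\Lambda$-a.e.\ $\boldsymbol{\mu}$. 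Setting $\mathfrak{L} := \Phi_\sharp \Lambda$ will then give $E_\sharp \mathfrak{L} = \Lambda$, whence $(E_t)_\sharp \mathfrak{L} = (\mathfrak{e}_t)_\sharp \Lambda = M_t$; the fact that $\Lambda$ is concentrated on $C([0,T],\PPpa(\R^d))$ will follow from Lemma \ref{lemma: push for of PA meas} applied to $\e_t$ for each $t$.

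I expect the main obstacle to lie in Lemma \ref{lemma: main} itself, namely the single-curve atomic superposition principle. There one must exploit the enlarged test class $\operatorname{GC}_c^1$ — and in particular the freedom to let $\phi$ depend on the atomic weight $\mu[x]$ — to force constancy in time of the weight attached to each moving atom, ruling out the pathological behaviour illustrated by the counterexample of Section \ref{section: counterexample}; only after this identification can the standard superposition of Theorem \ref{thm: superposition} be restricted to a purely atomic representative. The capacity hypothesis $\operatorname{cap}_{r,\nu}(\Delta) = 0$ enters only in the first step, through Proposition \ref{prop: approx GC with cyl}, and is precisely what enables the enlargement of the admissible test class from $\operatorname{Cyl}_c^1$ to $\operatorname{GC}_c^1$.
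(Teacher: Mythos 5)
Your proposal follows essentially the same route as the paper's proof: apply Theorem \ref{superposition in duality with gen cyl functions} to obtain (H1) of Lemma \ref{lemma: main}, verify (H2) by a Fubini argument using $M_t \ll \Q$, invoke Lemma \ref{lemma: main} curve-by-curve, and then glue the resulting purely atomic liftings via a measurable selection theorem (the paper uses \cite[Theorem 6.9.1]{bogachev2007measure} on the Souslin image $E\big(\PPpa(C([0,T],\R^d)) \cap \operatorname{SPS}(b)\big)$ rather than Jankov--von Neumann on the graph set $A$, but these are interchangeable). The only imprecision in your write-up is the claim that the Fubini argument already yields $\mu_t \in \PPpa(\R^d)$ for \emph{every} $t$: Fubini only gives it for a.e.\ $t$, which is exactly hypothesis (H2), and the upgrade to all $t$ is a conclusion of Lemma \ref{lemma: main}, not an input.
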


Notice that, given $(M_t)_{t\in[0,T]}$ that satisfies the hypothesis of Theorem \ref{superposition in duality with gen cyl functions}, then any $\Lambda \in \PP(C([0,T],\PP(\R^d)))$ given by Theorem \eqref{thm: nested superposition} satisfies the hypothesis of the previous theorem.

\begin{teorema}\label{main theorem metric}
    Let $\Lambda \in \PP(C([0,T],\PP(\R^d)))$ be concentrated over $AC^p([0,T],\PP_p(\R^d))$ and define $\boldsymbol{M} = (M_t)_{t\in[0,T]} \in C([0,T],\PP(\PP(\R^d)))$ as $M_t = (\mathfrak{e}_t)_\sharp \Lambda$. Assume:
    \begin{enumerate}
        \item $M_t = u_t \Q$, with $u \in L^1_t(L^q(\Q))$, with $q\in[1,+\infty]$;
        \item for some $p>1$, it holds
        \begin{equation}
            \int\int_0^T |\dot{\boldsymbol{\mu}}|_{W_p}^p(t)dt d\Lambda(\boldsymbol{\mu})<+\infty;
        \end{equation}
        \item $\operatorname{cap}_{r,\nu}(\Delta) = 0$, with $r\in [1,+\infty)$;
        \item $\frac{p'}{r}+\frac{1}{q}\leq1$.
    \end{enumerate}
    Then, there exists $\mathfrak{L}\in \PP(\PPpa(AC^p([0,T],\R^d)))$ such that $E_\sharp \mathfrak{L} = \Lambda$ and 
    \begin{equation}\label{eq: minimal lifting}
        \int\int\int_0^T |\dot\gamma|^p(t) dt d\lambda (\gamma)d\mathfrak{L}(\lambda) <+\infty
    \end{equation}
    In particular, $(E_t)_\sharp \mathfrak{L} = M_t$ for all $t\in[0,T]$ and $\Lambda$ is concentrated over $\PP(C([0,T],\PP^{\operatorname{pa}}(\R^d)))$.
\end{teorema}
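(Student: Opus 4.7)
The plan is to reduce Theorem \ref{main theorem metric} to its differential counterpart Theorem \ref{main theorem} by producing a Borel non-local vector field $b$ that is compatible with the given $\Lambda$, in the sense that $\Lambda$ is concentrated on $\CE(b)$ and $b$ satisfies the $L^p$-integrability required in Theorem \ref{main theorem}(2). The bridge is the characterization \cite[Theorem 8.3.1]{ambrosio2005gradient}: for $\Lambda$-a.e. $\boldsymbol{\mu}$ there exists a minimal Borel velocity $v^{\boldsymbol{\mu}}:[0,T]\times\R^d\to\R^d$ with $\lVert v^{\boldsymbol{\mu}}_t\rVert_{L^p(\mu_t)}=|\dot{\boldsymbol{\mu}}|_{W_p}(t)$ solving $\partial_t\mu_t+\operatorname{div}(v^{\boldsymbol{\mu}}_t\mu_t)=0$. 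Using the measurable-selection/disintegration technique developed in \cite{pinzisavare2025}, these fibrewise fields can be assembled into a single Borel non-local field $b:[0,T]\times\R^d\times\PP(\R^d)\to\R^d$ such that $\Lambda$-a.e. $\boldsymbol{\mu}$ lies in $\CE(b)$.

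Once $b$ is produced, Jensen's inequality applied inside each disintegration fibre yields
\begin{equation*}
\int_0^T\int_{\PP(\R^d)}\int_{\R^d}|b_t(x,\mu)|^p\,d\mu(x)\,dM_t(\mu)\,dt \leq \int\int_0^T|\dot{\boldsymbol{\mu}}|_{W_p}^p(t)\,dt\,d\Lambda(\boldsymbol{\mu}) <+\infty,
\end{equation*}
so hypothesis (2) of Theorem \ref{main theorem} is satisfied; hypotheses (1), (3), and (4) are identical to those assumed here. Theorem \ref{main theorem} then furnishes $\mathfrak{L}\in\PP(\PP(C([0,T],\R^d)))$ with $E_\sharp\mathfrak{L}=\Lambda$, concentrated on $\PPpa(C([0,T],\R^d))\cap\operatorname{SPS}(b)$.

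It remains to upgrade from continuous to absolutely continuous curves. For $\mathfrak{L}$-a.e. $\lambda\in\operatorname{SPS}(b)$, $\lambda$-a.e. $\gamma$ is absolutely continuous with $\dot\gamma(t)=b_t(\gamma(t),E_t(\lambda))$, so
\begin{equation*}
\int\int_0^T|\dot\gamma|^p(t)\,dt\,d\lambda(\gamma) = \int_0^T\int_{\R^d}|b_t(x,E_t(\lambda))|^p\,d(E_t\lambda)(x)\,dt.
\end{equation*}
Integrating once more against $\mathfrak{L}$, Fubini together with $(E_t)_\sharp\mathfrak{L}=M_t$ rewrites the double integral as $\int_0^T\int_{\PP}\int_{\R^d}|b_t|^p\,d\mu\,dM_t\,dt$, which is finite by the preceding estimate. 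Hence for $\mathfrak{L}$-a.e. $\lambda$ the curves carried by $\lambda$ are $AC^p$ with finite energy, so $\lambda\in\PPpa(AC^p([0,T],\R^d))$; a final integration in $\mathfrak{L}$ yields \eqref{eq: minimal lifting}, and the concentration of $\Lambda$ on $C([0,T],\PPpa(\R^d))$ follows from $E_\sharp\mathfrak{L}=\Lambda$ together with Lemma \ref{lemma: push for of PA meas}.

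The main obstacle is the first step: assembling the fibrewise minimal velocities $\{v^{\boldsymbol{\mu}}\}$ into a Borel non-local field $b$ for which the original $\Lambda$ itself is concentrated on $\CE(b)$, rather than only having the aggregate $\boldsymbol{M}=(\mathfrak{e}_t)_\sharp\Lambda$ solve the random continuity equation with $b$. This requires a careful measurable-selection argument tailored to the specific $\Lambda$, essentially the $AC^p$-to-$b$ reconstruction carried out in \cite{pinzisavare2025}; once it is available, the rest of the proof is a direct application of Theorem \ref{main theorem} and the pointwise ODE characterization of $\operatorname{SPS}(b)$.
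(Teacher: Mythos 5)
Your plan to reduce Theorem~\ref{main theorem metric} to Theorem~\ref{main theorem} by synthesizing a single non‑local Borel field $b$ with $\Lambda$ concentrated on $\operatorname{CE}(b)$ has a genuine gap exactly where you flag the ``main obstacle'', and it cannot be closed by the reconstruction in \cite{pinzisavare2025}. The issue is structural: $\operatorname{CE}(b)$ consists of curves $\boldsymbol{\mu}$ solving $\partial_t\mu_t+\operatorname{div}(b_t(\cdot,\mu_t)\mu_t)=0$, where the driving field at time $t$ depends \emph{only} on the current state $\mu_t$, not on the full trajectory. If two curves $\boldsymbol{\mu}^1\neq\boldsymbol{\mu}^2$ in $\operatorname{supp}\Lambda$ cross at the same $(t,\mu)$ with distinct tangent velocities $v^{\boldsymbol{\mu}^1}_t\neq v^{\boldsymbol{\mu}^2}_t$ in $L^p(\mu)$, no single choice of $b_t(\cdot,\mu)$ can make both curves solve the continuity equation; so $\Lambda$ simply cannot be concentrated on $\operatorname{CE}(b)$ for any $b$. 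The nested superposition machinery of \cite{pinzisavare2025} only produces a $b$ such that the \emph{aggregate} $M_t=(\mathfrak{e}_t)_\sharp\Lambda$ solves the random continuity equation (Theorem~\ref{thm: nested superposition} then yields \emph{some} $\Lambda'\in\PP(\operatorname{CE}(b))$ with the same marginals), which is not the same as, and does not imply, the original $\Lambda$ being concentrated on $\operatorname{CE}(b)$; and if you applied Theorem~\ref{main theorem} to that $\Lambda'$, you would get a lift $\mathfrak{L}'$ with $E_\sharp\mathfrak{L}'=\Lambda'\neq\Lambda$, which is not what the statement requires.

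The paper avoids this obstacle by never constructing a single $b$. Instead it works fibrewise: for $\Lambda$-a.e.\ $\boldsymbol{\mu}$ it takes the curve-dependent minimal velocity $v_{\boldsymbol{\mu}}$ from \cite[Theorem~8.3.1]{ambrosio2005gradient}, shows directly (via the approximation in Proposition~\ref{prop: approx GC with cyl} and the H\"older estimates, integrated against $\Lambda$) that the fibrewise continuity equation is satisfied in duality with $\operatorname{GC}_c^1(\PP(\R^d))$, and then applies Lemma~\ref{lemma: main} to each $\boldsymbol{\mu}$ separately. The finite-energy estimate \eqref{eq: vf and velocity} for $v_{\boldsymbol{\mu}}$, together with the measurable selection from $E(\PPpa(AC^p)\cap\mathfrak{A}_{\min})$, yields \eqref{eq: minimal lifting}. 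The parts of your argument after the $b$-construction step are sound in spirit (Jensen, Fubini, the $AC^p$-upgrade), but they cannot be reached because the $b$ you need does not exist for a general~$\Lambda$.
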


Both the theorems are a consequence of the following lemma, whose proof is postponed to the next subsection.

\begin{lemma}\label{lemma: main}
    Let $\boldsymbol{\mu} = (\mu_t)_{t\in[0,T]}\in C([0,T],\PP(\R^d))$ be satisfying:
    \begin{itemize}
        \item[(H1)] there exists $v:[0,T]\times \R^d \to \R^d$ Borel such that $\int_0^T \int_{\R^d} |v
        _t(x)| d\mu_t(x) dt<+\infty$ and $\partial_t\mu_t + \operatorname{div}(v_t\mu_t) = 0$ in the classic sense and in duality with $\operatorname{GC}_c^1(\PP(\R^d))$, that is
        \begin{equation}
        \frac{d}{dt} \hat{F}(\mu_t) = \int_{\R^d} \nabla_W\hat{F}(x,\mu_t) \cdot  v_t(x) d\mu_t(x) \quad \forall \hat{F}\in \operatorname{GC}_c^1(\PP(\R^d));
    \end{equation}
        \item[(H2)] there exists $I_{\boldsymbol{\mu}} \subset [0,T]$ of full Lebesgue measure such that $\mu_t \in \PPpa(\R^d)$ for all $t\in I_{\boldsymbol{\mu}}$.
    \end{itemize}
    Then, there exists $\lambda \in \PPpa(C([0,T],\R^d))$ such that $(\e_t)_\sharp\lambda = \mu_t$ for all $t\in [0,T]$ and that is concentrated over $\gamma \in AC([0,T],\R^d)$ solving $\dot{\gamma}(t) = v_t(\gamma(t))$ for a.e. $t\in [0,T]$. In particular $I_{\boldsymbol{\mu}} = [0,T]$, that is $\mu_t\in \PPpa(\R^d)$ for all $t\in[0,T]$, 
\end{lemma}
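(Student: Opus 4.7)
The plan is to use the enhanced continuity equation (H1) with carefully chosen generalized cylinder functions to show that the weight profile of the atoms of $\mu_t$ is preserved in time, then to decompose $\mu_t$ into finitely atomic pieces, each solving an ordinary continuity equation, and apply the empirical superposition principle (Theorem~\ref{thm: atomic superposition}) to each piece; gluing yields the desired purely atomic lifting.

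First I would test (H1) against the generalized cylinder function $\hat{F}_{\phi,\psi}(\mu)=\int_{\R^d}\phi(x)\psi(\mu[x])\,d\mu(x)$ with $\phi\in C_c^1(\R^d)$ and $\psi\in C_c^1((0,1])$. Choosing cutoffs $\phi_R\in C_c^1(\R^d)$ with $\phi_R\equiv 1$ on $B(0,R)$ and $|\nabla\phi_R|\le 2/R$, and using the integrability $\int_0^T\!\int|v_t|\,d\mu_t\,dt<+\infty$ to send $R\to\infty$, the resulting right-hand side vanishes and yields
\[
\frac{d}{dt}\int_{\R^d}\psi(\mu_t[x])\,d\mu_t(x)=0 \quad\text{in }\mathcal{D}'(0,T).
\]
For $t\in I_{\boldsymbol{\mu}}$, writing $\mu_t=\sum_i a_i(t)\delta_{x_i(t)}$, this integral equals $\int_{(0,1]}\psi\,d\rho_t$ with $\rho_t:=\sum_i a_i(t)\delta_{a_i(t)}$, so $\rho_t\equiv\rho$ on $I_{\boldsymbol{\mu}}$ for a single probability measure $\rho$ on $(0,1]$ (a monotone convergence argument with $\psi_n\uparrow 1_{(0,1]}$ recovers $\rho((0,1])=1$). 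Writing $\rho=\sum_j n_j\beta_j\,\delta_{\beta_j}$ with distinct $\beta_j\in(0,1]$ and integer multiplicities $n_j$, I decompose $\mu_t=\sum_j\beta_j\nu_t^j$ where $\nu_t^j:=\sum_{k=1}^{n_j}\delta_{x^{j,k}(t)}$ collects the atoms of $\mu_t$ of weight exactly $\beta_j$.

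Next, for each $j$ pick $\chi_{j,\varepsilon}\in C_c^1((0,1])$ equal to $1$ at $\beta_j$ and supported in a small neighborhood of $\beta_j$ disjoint from the remaining $\beta_l$'s. For $\varepsilon$ sufficiently small, $\hat{F}_j(\mu_t):=\beta_j^{-1}\int\phi(x)\chi_{j,\varepsilon}(\mu_t[x])\,d\mu_t(x)$ equals $\int\phi\,d\nu_t^j$ on $I_{\boldsymbol{\mu}}$, and testing (H1) with $\hat F_j$ gives $\partial_t\nu_t^j+\operatorname{div}(v_t\nu_t^j)=0$. The curve $t\mapsto\nu_t^j$ extends narrowly continuously to $[0,T]$ with constant total mass $n_j$, so I apply Theorem~\ref{thm: atomic superposition} to $\nu_t^j/n_j\in\PP^{n_j}(\R^d)$ to obtain $\lambda^j=\tfrac{1}{n_j}\sum_{k=1}^{n_j}\delta_{\gamma^{j,k}}$ with $\gamma^{j,k}\in AC([0,T],\R^d)$ solving $\dot\gamma=v_t(\gamma)$ and $(\e_t)_\sharp\lambda^j=\nu_t^j/n_j$ for every $t$. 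Setting $\lambda:=\sum_{j,k}\beta_j\,\delta_{\gamma^{j,k}}$ defines a purely atomic probability measure on $C([0,T],\R^d)$ (the masses sum to $\sum_j n_j\beta_j=1$), concentrated on ODE solutions, with $(\e_t)_\sharp\lambda=\sum_j\beta_j\nu_t^j=\mu_t$ for every $t\in[0,T]$; in particular $\mu_t\in\PPpa(\R^d)$ for all $t$, i.e.\ $I_{\boldsymbol{\mu}}=[0,T]$.

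The delicate step I anticipate is promoting the a.e.\ conservation of $\rho$ into the honest decomposition $\mu_t=\sum_j\beta_j\nu_t^j$ with $\nu_t^j$ being a sum of exactly $n_j$ Diracs for \emph{every} $t\in[0,T]$, as required to apply Theorem~\ref{thm: atomic superposition}. Narrow limits of $n_j$-atomic measures can a priori carry fewer distinct atoms (via collisions) or mass on a continuous part, which would produce atoms of weight outside $\operatorname{supp}(\rho)$ at such times. Ruling this out requires combining the narrow continuity of $\mu_t$ with the constant-mass property of each $\nu_t^j$ and the atomic-topology bookkeeping of \S\ref{subsec: atomic topology} together with Lemma~\ref{lemma: atomic topology}; this is the main technical obstacle.
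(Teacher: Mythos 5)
Your strategy is essentially the paper's own: derive conservation of the weight profile by testing (H1) against $\operatorname{GC}_c^1$ functions, split $\mu_t$ into finitely atomic pieces of fixed weight, each solving a classical continuity equation, and apply the empirical superposition principle (Theorem~\ref{thm: atomic superposition}) to each piece. The correspondence with the paper's proof is exact: your $\beta_j$ are the elements $a\in\mathcal{A}$, your $n_j$ are the multiplicities $N_a$, and your $\nu_t^j$ is $N_a\,\nu_t^{(a)}$ up to normalization. Using smooth bumps $\chi_{j,\varepsilon}$ once the weights are known is a slight reorganization of the paper's use of $\mathds{1}_{\{a\}}$, but not a different argument.

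The step you flag at the end is the genuine gap, and it is exactly the paper's Lemma~\ref{lemma: nu_t^a}; it closes with less effort than you anticipate. Two of your worries dissolve. First, collisions pose no obstruction: $\PP^{n_j}(\R^d)$ is defined so as to allow repeated points $x_i$, so the curve $\tilde\nu^j/n_j$ need only stay in that set, not in the set of $n_j$ \emph{distinct} points, and Theorem~\ref{thm: atomic superposition} still applies. Second, the narrow limit cannot develop a continuous part. The narrowly continuous extension $\tilde\nu_t^j$ is a probability measure (after normalization) at every $t$, so along any sequence $t_m\to t$ with $t_m$ in the conservation set, the family $\{\nu^j_{t_m}/n_j\}_m$ converges narrowly in $\PP(\R^d)$ and is therefore tight. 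Taking the tightness threshold below $1/n_j$ traps all $n_j$ atoms in a single compact set, so along a further subsequence each position $x^{j,k}(t_m)$ converges in $\R^d$; the limit is $\frac{1}{n_j}\sum_{k}\delta_{x^{j,k}}\in\PP^{n_j}(\R^d)$. Inserting this tightness argument fully closes your proof.
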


\begin{proof}[Proof of Theorem \ref{main theorem}]
    Thanks to Theorem \ref{superposition in duality with gen cyl functions}, $\Lambda$-a.e. $\boldsymbol{\mu}$ satisfies the hypothesis \textit{(H1)} in Lemma \ref{lemma: main}, with $v(t,x) = b(t,x,\mu_t)$. On the other hand, the following computation shows that $\Lambda$-a.e. $\boldsymbol{\mu}$ satisfies hypothesis \textit{(H2)} as well:
    \[
    \begin{aligned}
    \int &  \int_0^T \mathds{1}_{\{(t,\boldsymbol{\mu}) \ : \ \mu_t \notin \PPpa(\R^d)\}} dt d\Lambda(\boldsymbol{\mu}) = \int_0^T \int \mathds{1}_{\{(t,\boldsymbol{\mu}) \ : \ \mu_t \notin \PPpa(\R^d)\}} d\Lambda(\boldsymbol{\mu}) dt
    \\
    & = \int_0^T \int \mathds{1}_{\{(t,\mu) \ : \ \mu \notin \PPpa(\R^d)\}} dM_t(\mu) dt = \int_0^T M_t\big( \PP(\R^d) \setminus \PPpa(\R^d) \big) dt = 0.
    \end{aligned}
    \]
    Then, there exists $\lambda \in \PPpa(C([0,T],\R^d)) \cap \operatorname{SPS}(b)$ such that $E(\lambda) = \boldsymbol{\mu}$. In particular $\Lambda$ is concentrated over the set $E\big(\PPpa(C([0,T],\R^d)) \cap \operatorname{SPS}(b)\big)$, that is a Souslin set (and thus $\Lambda$-measurable) because $\PPpa(C([0,T],\R^d)) \cap \operatorname{SPS}(b)$ is Borel measurable, see \cite[Proposition 5.8]{pinzisavare2025}, and $E$ is continuous. Then, using the measurable selection theorem \cite[Theorem 6.9.1]{bogachev2007measure}, we can find a Souslin-Borel measurable right inverse $G: E(\PPpa(C([0,T],\R^d)) \cap \operatorname{SPS}(b)) \to \PPpa(C([0,T],\R^d)) \cap \operatorname{SPS}(b)$, i.e. such that $E(G(\boldsymbol{\mu})) = \boldsymbol{\mu}$ for all $\boldsymbol{\mu}\in E\big(\PPpa(C([0,T],\R^d)) \cap \operatorname{SPS}(b)\big)$. Then, since Souslin sets are universally measurable, the measure $\mathfrak{L}:= G_\sharp \Lambda$ is well-defined for any $\Lambda$ as in the statement, and it clearly satisfies the desired properties.
\end{proof}

\begin{proof}[Proof of Theorem \ref{main theorem metric}]
    Similarly as before, $\Lambda$-a.e. $\boldsymbol{\mu}$ satisfies the hypothesis \textit{(H2)} in Lemma \ref{lemma: main}. On the other hand, from \cite[Theorem 8.3.1]{ambrosio2005gradient}, for $\Lambda$-a.e. $\boldsymbol{\mu}$ there exists $v_{\boldsymbol{\mu}} : [0,T]\times \R^d \to \R^d$ Borel measurable and such that 
    \begin{equation}\label{eq: vf and velocity}
    \partial_t\mu_t + \operatorname{div}(v_{\boldsymbol{\mu},t}\mu_t) = 0 \quad \text{and} \quad \int_{\R^d} |v_{\boldsymbol{\mu}}(t,x)|^pd\mu_t(x) = |\dot{\boldsymbol{\mu}}|_{W_p}^p(t) \quad \text{for a.e. }t\in[0,T].
    \end{equation}
    We want to show that the continuity equation $\partial_t\mu_t + \operatorname{div}(v_{\boldsymbol{\mu},t}\mu_t) = 0$ is satisfied in duality with cylinder functions as well. Let $\hat{F}\in \operatorname{GC}_c^1(\PP(\R^d))$ and consider $F_n\in \operatorname{Cyl}_c^1(\PP(\R^d))$ such that $F_n \to F$ as in \eqref{eq: appr of GC from cyl}. By chain rule, we know that for all $\xi\in C_c^1(0,T)$ we have 
    \[\int_0^T \xi'(t) F_n(\mu_t) dt = - \int_0^T \xi(t) \int_{\R^d} \nabla_WF_n(x,\mu_t)\cdot v_{\boldsymbol{\mu},t}(x) d\mu_t(x) dt = 0.\]
    Then, we are done if we prove that 
    \begin{equation}\label{first}
        \int_0^T \xi'(t) F_n(\mu_t) dt \to \int_0^T \xi'(t) \hat{F}(\mu_t) dt
    \end{equation}
    and
    \begin{equation}\label{second}
    \int_0^T \xi(t) \int_{\R^d} \nabla_WF_n(x,\mu_t)\cdot v_{\boldsymbol{\mu},t}(x) d\mu_t(x) dt \to \int_0^T \xi(t) \int_{\R^d} \nabla_W\hat{F}(x,\mu_t)\cdot v_{\boldsymbol{\mu},t}(x) d\mu_t(x) dt
    \end{equation}
    for $\Lambda$-a.e. $\boldsymbol{\mu} = (\mu_t)_{t\in[0,T]}$, since, as for the proof of Theorem \ref{superposition in duality with gen cyl functions}, it suffices to check it for a countable amount of $\xi \in C_c^1(0,T)$ and $\hat{F}\in \operatorname{GC}_c^1(\PP(\R^d))$. Regarding \eqref{first}, notice that 
    \begin{align*}
        \int \bigg|\int_0^T \xi'(t) F_n(\mu_t) dt - \int_0^T \xi'(t) \hat{F}(\mu_t) dt \bigg| d\Lambda(\boldsymbol{\mu}) \leq \|\xi'\|_\infty \int_0^T \int |F_n(\mu) - \hat{F}(\mu)| dM_t(\mu) dt \to 0,
    \end{align*}
    as $I_{1,n}$ in the proof of Theorem \eqref{superposition in duality with gen cyl functions}, then \eqref{first} holds for $\Lambda$-a.e. $\boldsymbol{\mu}$. Regarding \eqref{second}, we have 
    \begin{align*} 
    \bigg|& \int_0^T \xi(t) \int_{\R^d} \nabla_WF_n(x,\mu_t)\cdot v_{\boldsymbol{\mu},t}(x) d\mu_t(x) dt - \int_0^T \xi(t) \int_{\R^d} \nabla_W\hat{F}(x,\mu_t)\cdot v_{\boldsymbol{\mu},t}(x) d\mu_t(x) dt \bigg| 
    \\
    & \leq \|\xi\|_\infty \int_0^T \int_{\R^d} |v_{\boldsymbol{\mu},t}(x)| |\nabla_W F_n(x,\mu_t) - \nabla_W\hat{F}(x,\mu_t)| d\mu_t(x) dt
    \\
    & \leq \|\xi\|_\infty \left( \int_{0}^T \int_{\R^d} |v_{\boldsymbol{\mu},t}|^pd\mu_t(x) dt \right)^{\frac{1}{p}}\left( \int_0^T \int_{\R^d} |\nabla_W F_n(x,\mu_t) - \nabla_W\hat{F}(x,\mu_t)|^{p'} d\mu_t(x) dt \right)^{\frac{1}{p'}}
    \\
    & = \|\xi\|_\infty \left( \int_{0}^T |\dot{\boldsymbol{\mu}}|_{W_p}^p(t) dt \right)^{\frac{1}{p}}\left( \int_0^T \int_{\R^d} |\nabla_W F_n(x,\mu_t) - \nabla_W\hat{F}(x,\mu_t)|^{p'} d\mu_t(x) dt \right)^{\frac{1}{p'}},
    \end{align*}
    and this last formula can be integrated with respect to $\Lambda$ since it is measurable in the variable $\boldsymbol{\mu}$ (see \cite[Appendix D]{pinzisavare2025}), so that 
    \begin{align*}
        & \int  \left( \int_{0}^T |\dot{\boldsymbol{\mu}}|_{W_p}^p(t) dt \right)^{\frac{1}{p}}\left( \int_0^T \int_{\R^d} |\nabla_W F_n(x,\mu_t) - \nabla_W\hat{F}(x,\mu_t)|^{p'} d\mu_t(x) dt \right)^{\frac{1}{p'}} d\Lambda(\boldsymbol{\mu}) 
        \\
        & \leq \left( \int \int_{0}^T |\dot{\boldsymbol{\mu}}|_{W_p}^p(t) dt d\Lambda(\boldsymbol{\mu}) \right)^{\frac{1}{p}}\left( \int \int_0^T \int_{\R^d} |\nabla_W F_n(x,\mu_t) - \nabla_W\hat{F}(x,\mu_t)|^{p'} d\mu_t(x) dt d\Lambda(\boldsymbol{\mu})\right)^{\frac{1}{p'}}
    \end{align*}
    and it goes to $0$ following the same computations done for $I_{2,n}$ in the proof of Theorem \ref{superposition in duality with gen cyl functions}. This shows that $\Lambda$-a.e. $\boldsymbol{\mu}$ satisfies \textit{(H1)} as well. Then, applying Lemma \ref{lemma: main}, for $\Lambda$-a.e. $\boldsymbol{\mu}$ there exists $\lambda \in \PPpa(C([0,T],\R^d))$ concentrated over absolutely continuous curves and such that $\dot\gamma(t) = v_{\boldsymbol{\mu},t}(\gamma(t))$ for $\lambda$-a.e. $\gamma$ and a.e. $t\in[0,T]$. Moreover, thanks to \eqref{eq: vf and velocity}, $\lambda$ is supported over $AC^p([0,T],\R^d)$ and 
    \[\int \int_0^T |\dot\gamma|^p(t)dt d\lambda(\gamma) = \int_0^T |\dot{\boldsymbol{\mu}}|_{W_p}^p(t)dt.\] In particular, $\Lambda$ is concentrated over the set $E\big( \PPpa(AC^p([0,T],\R^d)) \cap \mathfrak{A}_{\min} \big)$, where 
    \[\mathfrak{A}_{\min} = \left\{ \lambda \in \PP(AC^p([0,T],\R^d)) \ : \ \int \int_0^T |\dot\gamma|^p(t)dt d\lambda(\gamma) = \int_0^T |\dot{\boldsymbol{\mu}}|_{W_p}^p(t)dt, \text{ with }{\boldsymbol{\mu}} = E(\lambda) \right\}.\]
    Such a set is Souslin, being the image of the intersection of two Borel subsets (see for example \cite[Appendix D]{pinzisavare2025}) through the continuous function $E$. Then, using again the measurable selection theorem \cite[Theorem 6.9.1]{bogachev2007measure}, there exists a Souslin-Borel measurable map $G: E\big( \PPpa(AC^p([0,T],\R^d)) \big) \to\PPpa(AC^p([0,T],\R^d))\cap \mathfrak{A}_{\min}$ such that $E(G(\boldsymbol{\mu})) = \boldsymbol{\mu}$ for $\Lambda$-a.e. $\boldsymbol\mu \in E\big( \PPpa(AC^p([0,T],\R^d)) \cap \mathfrak{A}_{\min} \big)$. Then, the measure $\mathfrak{L}:= G_\sharp \Lambda$ is well-defined (because Souslin set are universally measurable) and clearly satisfies the requirements.
\end{proof}

\subsection{Proof of Lemma \ref{lemma: main}}\label{subsec: proof}

From the assumption \textit{(H2)}, for all $t\in I_{\boldsymbol{\mu}}$ there exists $\bold{a}(t) = (a_i(t))_{i\in \N} \in \bold{T}$ and $\bold{x}(t) = (x_i(t))_{i\in \N} \in \bold{X}$ such that 
    \begin{equation}
        \mu_t = \sum_{i\in \N} a_i(t) \delta_{x_i(t)} \quad \text{ for all }t \in I_{\boldsymbol{\mu}}.
    \end{equation}

We need to find a measure $\lambda \in \PP(C([0,T],\R^d))$ such that $(\e_t)_\sharp\lambda = \mu_t$ for all $t\in[0,T]$ and that has the form 
\begin{equation}
    \lambda = \sum_{i\in \N} a_i \delta_{\gamma_i},
\end{equation}
for some $\bold{a} \in \bold{T}$ and $(\gamma_i)_{i\in\N} \subset AC([0,T],\R^d)$ that satisfy $\dot{\gamma}_i(t) = v_t(\gamma_i(t))$. It is clear that, a posteriori, the weights $a_i(t)$ will not depend on $t$. The next lemma gets us closer to this property.

\begin{lemma}\label{lemma: dual with indicators}
    Let $\boldsymbol{\mu}\in C([0,T],\PP(\R^d))$ be satisfying (H1).
    Then, we can use as test functions 
    \begin{equation}
        \hat{L}_{\phi,a}(\mu):= \int_{\R^d} \phi(x) \mathds{1}_{\{a\}}(\mu[x]) d\mu(x), \quad \text{ with} \quad \nabla_W \hat{L}_{\phi,a}(x,\mu) := \nabla \phi(x) \mathds{1}_{\{a\}}(\mu[x])
    \end{equation}
    for all $\phi\in C_b^1(\R^d)$ and $a\in (0,1]$, that is, for all $\xi \in C_c^1(0,T)$
    \begin{equation}
        \int_0^T \xi'(t)\hat{L}_{\phi,a}(\mu_t) dt = - \int_0^T \xi(t) \int_{\R^d} \nabla_W \hat{L}_{\phi,a}(x,\mu_t)\cdot v_t(x) d\mu_t(x) dt.
    \end{equation}
    Moreover, there exists $J_{\boldsymbol{\mu}}\subset I_{\boldsymbol{\mu}}$ of full Lebesgue measure such that for all $a\in(0,1]$ there exists $C_a \in [0,1]$ such that
    \begin{equation}\label{eq: const on J_mu}
    \int_{\R^d} \mathds{1}_{\{a\}}(\mu_t[x]) d\mu_t(x) = C_a \ \text{ for all }t\in J_{\boldsymbol{\mu}}. 
    \end{equation}
\end{lemma}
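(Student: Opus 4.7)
The plan is to upgrade the class of admissible test functions in (H1) by approximating the indicator $\mathds{1}_{\{a\}}$ with smooth bumps, and then to extract the single full-measure set $J_{\boldsymbol{\mu}}$ by a density argument in $C_c((0,1])$, reading the weight profile of $\mu_t$ as a finite Radon measure on $(0,1]$ whose identification needs only countably many continuous test functionals.

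For the first claim, fix $a\in(0,1]$ and pick $\rho_n\in C_c^1((0,1])$ with $0\leq \rho_n\leq 1$, $\rho_n(a)=1$ and $\operatorname{supp}\rho_n\subset[a-1/n,a+1/n]\cap(0,1]$, so that $\rho_n\to\mathds{1}_{\{a\}}$ pointwise on $(0,1]$. For $\phi\in C_c^1(\R^d)$ the product $\hat\phi(x,s):=\phi(x)\rho_n(s)$ lies in $C_c^1(\R^d\times(0,1])$, hence $\hat L_{\phi\otimes\rho_n}\in\operatorname{GC}_c^1(\PP(\R^d))$ with Wasserstein differential $\nabla_W\hat L_{\phi\otimes\rho_n}(x,\mu)=\nabla\phi(x)\rho_n(\mu[x])$. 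Applying (H1) to $\hat L_{\phi\otimes\rho_n}$ and passing $n\to\infty$: the left-hand side is dominated by $|\xi'(t)|\|\phi\|_\infty$ (integrable since each $\mu_t$ is a probability measure) and the right-hand side by $|\xi(t)|\|\nabla\phi\|_\infty|v_t(x)|$ (integrable by the first part of (H1)). Dominated convergence yields the identity with $\mathds{1}_{\{a\}}(\mu_t[x])$ in place of $\rho_n(\mu_t[x])$. To reach arbitrary $\phi\in C_b^1(\R^d)$, I would use the cut-off $\phi_R(x):=\phi(x)\chi(x/R)$, with $\chi\in C_c^1(\R^d)$, $\chi\equiv 1$ on $B(0,1)$, $0\leq\chi\leq 1$: then $\phi_R\to\phi$ pointwise with $|\phi_R|\leq\|\phi\|_\infty$ and $\nabla\phi_R\to\nabla\phi$ pointwise with $|\nabla\phi_R|\leq \|\nabla\phi\|_\infty+R^{-1}\|\phi\|_\infty\|\nabla\chi\|_\infty$, so a further application of dominated convergence closes the extension.

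For the second claim, I would use exactly the same cut-off trick but with test functions depending only on the mass: for $\rho\in C_c^1((0,1])$ and $\phi_R=\chi(\cdot/R)$, applying (H1) to $\hat L_{\phi_R\otimes\rho}$ and sending $R\to\infty$, the right-hand side is controlled by $R^{-1}\|\nabla\chi\|_\infty\|\rho\|_\infty\int_0^T\int|v_t|\,d\mu_t\,dt\to 0$, while the left-hand side tends to $\int_0^T\xi'(t)\int\rho(\mu_t[x])\,d\mu_t(x)\,dt$. Hence $h_\rho(t):=\int\rho(\mu_t[x])\,d\mu_t(x)$ has zero distributional derivative on $(0,T)$ and, being bounded, is a.e.\ equal to some constant $C_\rho$. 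Choose a countable family $\{\rho_n\}\subset C_c^1((0,1])$ dense in $C_c((0,1])$ for the uniform norm, intersect the corresponding full-measure sets of constancy, and intersect further with $I_{\boldsymbol{\mu}}$ to obtain the desired set $J_{\boldsymbol{\mu}}$ of full Lebesgue measure. For $t\in J_{\boldsymbol{\mu}}$ the finite Radon measure $\sigma_t:=\sum_i a_i(t)\delta_{a_i(t)}$ on $(0,1]$ satisfies $\int\rho_n\,d\sigma_t=C_{\rho_n}$ for every $n$; by density and the Riesz representation theorem, $\sigma_t\equiv\sigma$ is a fixed Radon measure on $(0,1]$. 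Setting $C_a:=\sigma(\{a\})$ then gives $\int\mathds{1}_{\{a\}}(\mu_t[x])\,d\mu_t(x)=\sigma_t(\{a\})=C_a$ for every $a\in(0,1]$ and every $t\in J_{\boldsymbol{\mu}}$.

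The main obstacle I anticipate is the uncountable range of $a$: proving a.e.\ constancy for each fixed $a$ produces an $a$-dependent exceptional set, and there is no direct way to glue uncountably many such sets. The measure-theoretic detour through $\sigma_t$ circumvents this by reducing the question to the identification of a finite Radon measure on $(0,1]$, which is determined by countably many continuous test functionals via Riesz. The rest of the argument is a relatively standard cut-off/dominated-convergence exercise, modulo the small care needed to ensure that the generalized cylinder functions used to probe (H1) have compact support in the correct variable.
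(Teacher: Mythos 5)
Your proposal is correct and follows essentially the same strategy as the paper: approximate $\mathds{1}_{\{a\}}$ pointwise by smooth bumps $\rho_n$, pass to the limit by dominated convergence using the $L^1$ bound on $v$ from (H1), use the cut-off $\phi_R$ to reach $\phi\in C_b^1(\R^d)$, and then obtain the uniform exceptional set $J_{\boldsymbol{\mu}}$ by intersecting over a countable family $\{\rho_n\}$ uniformly dense in $C_c((0,1])$. The only (cosmetic) divergence is in the last step: the paper extends constancy from the $\rho_n$'s to all $\rho\in C_0$ by uniform approximation and then recovers $C_a$ via a further pointwise approximation of $\mathds{1}_{\{a\}}$, whereas you package the same information as the identification of the finite ``weight measure'' $\sigma_t=\sum_i a_i(t)\delta_{a_i(t)}$ via the Riesz representation theorem and set $C_a:=\sigma(\{a\})$; the two are equivalent.
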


\begin{proof}
    Let $\phi\in C_c^1(\R^d)$ and $a\in (0,1]$, the function $\mathds{1}_{\{a\}}$ can be approximated pointwise by $\rho_n \in C_c^1((0,1])$. The assumptions allow us to use the generalized cylinder function $\hat{L}_{\phi,\rho_n}(\mu) := \int_{\R^d} \phi(x) \rho_n(\mu\{x\})d\mu(x)$ as a test function, to obtain that $(\mu_t)_{t\in [0,T]} \in C([0,T],\PP(\R^d))$ satisfies 
    \[\int_0^T \xi'(t) \int_{\R^d} \phi(x) \rho_n(\mu_t[x])d\mu_t(x) dt = - \int_0^T \xi(t) \int_{\R^d}  \rho_n(\mu_t[x]) \nabla \phi(x)\cdot  v_t(x) d\mu_t(x) dt,\]
    for all $\xi \in C_c^1(0,T)$. Using the fact that $\rho_n \to \mathds{1}_{\{a\}}$ pointwise and dominated convergence theorem, it is not hard to obtain
    \[\int_0^T \xi'(t) \int_{\R^d} \phi(x) \mathds{1}_{\{a\}}(\mu_t[x])d\mu_t(x) dt = - \int_0^T \xi(t) \int_{\R^d}  \mathds{1}_{\{a\}}(\mu_t[x]) \nabla \phi(x)\cdot  v_t(x) d\mu_t(x) dt.\]

    The case in which $\phi \in C_b^1(\R^d)$ can be obtained through a standard cut-off argument, considering the approximating sequence $\phi_R(x) := \phi(x) f(x/R)$ as $R\to +\infty$, where $f \in C_c^1(\R^d)$ and $f\equiv 1$ in $B(0,1)$.

    To prove \eqref{eq: const on J_mu}, consider $\{\rho^{(n)}\}_{n\in \N}\subset C_c^1((0,1])$ a dense and countable subset of $C_0([0,1])$, that are continuous functions $\rho:[0,1]\to\R$ such that $\rho(0) = 0$. We know that, for all $n\in \N$,  
    \( t\mapsto \int_{\R^d} \rho^{(n)}(\mu_t[x]) d\mu_t(x)\)
    is weakly differentiable with null derivative, and so we can find a set $J_{\boldsymbol{\mu}} \subset I_{\boldsymbol{\mu}}$ where those maps are constant, say $C^{(n)}$. 
    Now, for all $\rho\in C_0([0,1])$, the map 
    \[J_{\boldsymbol{\mu}} \ni t \mapsto \int_{\R^d} \rho(\mu_t[x]) \ \text{ is constant.}\]
    Indeed, let $\rho^{(n_k)}$ be a sequence that converges uniformly to $\rho$, which implies that for all $t\in J_{\boldsymbol{\mu}}$
    \[\lim_{k\to +\infty} C^{(n_k)} = \lim_{k \to +\infty} \int_{\R^d} \rho^{(n_k)}(\mu_t[x]) d\mu_t(x) = \int_{\R^d} \rho(\mu_t[x]) d\mu_t(x),\]
    and in particular the right hand side does not depend on $t\in J_{\boldsymbol{\mu}}$, and we denote it by $C_{\rho}$. Similarly, for all $a\in (0,1]$, consider $\rho_{n} \in C_0([0,1])$ converging pointwise to $\mathds{1}_{\{a\}}$ as above, and then, by dominated convergence theorem, we have that for all $t\in J_{\boldsymbol{\mu}}$
    \[C_{\rho_n} = \int_{\R^d} \rho_n(\mu_t[x]) d\mu_t(x) \to \int_{\R^d} \mathds{1}_{\{a\}}(\mu_t[x]) d\mu_t(x),\]
    and in particular it does not depend on $t\in J_{\boldsymbol{\mu}}$.
\end{proof}

Now, fix $\Bar{t}\in J_{\boldsymbol{\mu}}$, and define
\[S_{a}:=\{i\in \N \ : \ a_i(\Bar{t}) = a\}, \quad a \in \mathcal{A}:= \{ a \in (0,1] \ : \ a = a_i(\Bar{t}) \text{ for some } i\in \N \}.\]

\noindent Consider $N_a = \sharp S_a$ for all $a\in \mathcal{A}$. Thanks to Lemma \ref{lemma: dual with indicators} and $\Bar{t}\in J_{\boldsymbol{\mu}}$, for all $a\in \mathcal{A}$
\[\int_{\R^d} \mathds{1}_{\{a\}}(\mu_t[x]) d\mu_t(x) = a N_a \quad \text{for all }t\in J_{\boldsymbol{\mu}}.\]

This argument leads to the equality $a_i(t) = a_i(\Bar{t})$ for all $t\in J_{\boldsymbol{\mu}}$, since for all $t\in J_{\boldsymbol{\mu}}$, the sequence $\{a_i(t)\}_{i\in \N}$ is non-increasing in $i$ and there exist $\sharp S_a$ points in $\R^d$ to which $\mu_t$ is assigning mass equal to $a$. In particular, thanks to Lemma \ref{lemma: atomic topology}, the map 
\begin{equation}\label{eq: j_mu}
J_{\boldsymbol{\mu}} \ni t\mapsto \mu_t \text{ is continuous in the atomic topology}.
\end{equation}

Thanks to \cite{delellis2003regular} and Lemma \ref{lemma: atomic topology}, we could also rearrange the positions $\{x_i(t)\}_{i\in \N}$ with permutations of $S_a$, to have that $J_{\boldsymbol{\mu}}\ni t \mapsto x_i(t) $ is continuous for all $i\in \N$. However, we do not need that to pursue our strategy and it will be a consequence of Lemma \ref{lemma: nu_t^a}.

Now, let us exploit again Lemma \ref{lemma: dual with indicators} using $\hat{L}_{\phi,a}$ as test function, with general $\phi\in C_b^1(\R^d)$ and $a\in \mathcal{A}$. This gives the following: for all $\xi \in C_c^1(0,T)$, $\phi \in C_b^1(\R^d)$ and $a\in \mathcal{A}$
\begin{equation}
   a \int_0^T \xi'(t) \sum_{i\in S_a} \phi(x_i(t)) dt = - a \int_0^T \xi(t) \sum_{i\in S_a}\nabla \phi(x_i(t))\cdot  v_t(x_i(t)) dt.
\end{equation}
This can be rewritten more compactly as 
\begin{equation}
    \partial_t \nu_t^{(a)} + \operatorname{div}(v_t \nu_t^{(a)}) = 0, \qquad \nu_t^{(a)}:= \frac{1}{N_a} \sum_{i\in S_a} \delta_{x_i(t)} \in \PP^{N_a}(\R^d). 
\end{equation}
The measure $\nu_t^{(a)}$ is well defined for all $t\in J_{\boldsymbol{\mu}}$ and there it is narrowly continuous because of \eqref{eq: j_mu} and Lemma \ref{lemma: atomic topology}. Thanks to \cite[Lemma 8.1.2]{ambrosio2005gradient}, it exists a unique narrowly continuous representative $\tilde{\nu}_t^{(a)}$ on $[0,T]$, that coincides with $\nu_{t}^{(a)}$ in $J_{\boldsymbol{\mu}}$.

\begin{lemma}\label{lemma: nu_t^a}
    For all $t\in [0,T]$, $\tilde{\nu}_t^{(a)} \in \PP^{N_a}(\R^d)$. 
\end{lemma}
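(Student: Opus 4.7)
The plan is to derive the claim from a closedness property: $\PP^{N_a}(\R^d)$ is closed in $\PP(\R^d)$ with respect to the narrow topology, and this closedness, combined with the density of $J_{\boldsymbol{\mu}}$ in $[0,T]$ and the narrow continuity of $t\mapsto \tilde{\nu}_t^{(a)}$, forces the limit values at times outside $J_{\boldsymbol{\mu}}$ to remain in $\PP^{N_a}(\R^d)$.

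First, I would verify narrow closedness of $\PP^{N_a}(\R^d)$. Let $\sigma_n = \frac{1}{N_a}\sum_{i=1}^{N_a}\delta_{y_i^{(n)}}$ converge narrowly to some $\sigma \in \PP(\R^d)$. By Prokhorov's theorem the sequence is tight, so for any $\varepsilon \in (0, 1/N_a)$ there exists a compact $K_\varepsilon \subset \R^d$ with $\sigma_n(K_\varepsilon) \geq 1-\varepsilon$ for all $n$. If some $y_i^{(n)}$ lay outside $K_\varepsilon$, then $\sigma_n(\R^d \setminus K_\varepsilon) \geq 1/N_a > \varepsilon$, a contradiction; hence
\[
\{y_i^{(n)} : n \in \N,\ i = 1, \dots, N_a\} \subset K_\varepsilon.
\]
A diagonal extraction then yields a subsequence along which $y_i^{(n)} \to y_i^\infty \in \R^d$ for each $i$, so $\sigma_n \to \frac{1}{N_a}\sum_{i=1}^{N_a}\delta_{y_i^\infty}$ narrowly; by uniqueness of the narrow limit $\sigma \in \PP^{N_a}(\R^d)$.

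To conclude, fix an arbitrary $t_0 \in [0,T]$. Since $J_{\boldsymbol{\mu}}$ has full Lebesgue measure in $[0,T]$ it is dense there, so we may select $(t_n)_{n\in\N} \subset J_{\boldsymbol{\mu}}$ with $t_n \to t_0$. The narrow continuity of the representative $\tilde{\nu}^{(a)}$ gives $\tilde{\nu}_{t_n}^{(a)} \to \tilde{\nu}_{t_0}^{(a)}$ narrowly, and the identifications $\tilde{\nu}_{t_n}^{(a)} = \nu_{t_n}^{(a)} \in \PP^{N_a}(\R^d)$ together with the closedness established above yield $\tilde{\nu}_{t_0}^{(a)} \in \PP^{N_a}(\R^d)$. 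I do not foresee a genuine obstacle in this argument: the only non-trivial step is the narrow closedness of $\PP^{N_a}(\R^d)$, which reduces to the simple observation that the tightness of a sequence of $N_a$-atomic probabilities of this form prevents any of the fixed-mass atoms from escaping to infinity.
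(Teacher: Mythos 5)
Your proof is correct and follows essentially the same route as the paper's: pick $t_n \in J_{\boldsymbol{\mu}}$ with $t_n \to t_0$, use narrow continuity of the representative, use tightness to confine the $N_a$ atoms (each carrying mass $1/N_a$) to a compact set, and extract a convergent subsequence of atom locations to identify the limit as a member of $\PP^{N_a}(\R^d)$. The only organizational difference is that you isolate and prove the narrow closedness of $\PP^{N_a}(\R^d)$ as a self-contained step (supplying the Prokhorov argument that the paper leaves implicit), which is a mild clarification rather than a different method.
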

\begin{proof}
    Take $t\in[0,T]\setminus J_{\boldsymbol{\mu}}$ and consider $t_n \in J_{\boldsymbol{\mu}}$ such that $t_n \to t$ as $n\to +\infty$. By tightness of $\{\nu_{t_n}^{(a)}\}_{n\in \N}$, there exists a compact set $K\subset \R^d$ such that $x_i(t_n)\in K$ for all $i,n\in \N$. Then, there exists a subsequence $n_k$ such that for all $i\in \N$ $x_i(t_{n_k}) \to x_i$ as $k\to +\infty$, for some $x_i\in \R^d$. Then, it holds \[ \tilde{\nu}_t^{(a)} = \frac{1}{N_a} \sum_{i\in S_a} \delta_{x_i}. \qedhere\]
\end{proof}

At this point, notice that 
\[[0,T] \ni t \mapsto \sum_{a\in \mathcal{A}} aN_a \tilde{\nu}_t^{(a)} \in \PPpa(\R^d) \text{ is narrowly continuous}.\]
Then, it must coincide with $\mu_t$ for all $t\in [0,T]$ by uniqueness of the continuous representative, since it is narrowly continuous and it coincides with $\mu_t$ for all $t\in J_{\boldsymbol{\mu}}$. In particular, $\mu_t \in \PPpa(\R^d)$ for all $t\in [0,T]$.
\\
Moreover, for all $a\in \mathcal{A}$, we can apply Theorem \ref{thm: atomic superposition} to the curve $(\tilde{\nu}_t^{(a)})_{t\in[0,T]}$, to obtain a measure $$\lambda^{(a)} = \frac{1}{N_a} \sum_{i\in S_a} \delta_{\gamma_i} \in \PP^{N_a}(C([0,T],\R^d))$$ for some $\gamma_i \in AC([0,T],\R^d)$ satisfying $\dot\gamma_i(t) =  v_t(\gamma_i(t))$, and such that $(e_t)_\sharp \lambda^{(a)} = \tilde{\nu}_t^{(a)}$ for all $t\in[0,T]$. Then, the following lemma concludes the proof.
\begin{lemma}
    The measure
    \begin{equation}
        \lambda := \sum_{a\in \mathcal{A}} aN_a \lambda^{(a)} = \sum_{i\in \N} a_i(\Bar{t}) \delta_{\gamma_i} \in \PPpa(C([0,T],\R^d))
    \end{equation}
    satisfies $(\e_t)_\#\lambda = \mu_t$ for all $t\in [0,T]$ and is concentrated on $\gamma \in AC([0,T],\R^d)$ that solves $\dot{\gamma}(t) = v_t(\gamma(t))$. 
\end{lemma}

\begin{proof}
    For all $t\in[0,T]$ and $g:\R^d\to [0,1] $ Borel measurable, it holds
    \begin{align*}
        \int f(\gamma(t)) d\lambda(\gamma) = \sum_{a\in \mathcal{A}} aN_a \int f(\gamma(t)) d\lambda^{(a)}(\gamma) = \sum_{a\in \mathcal{A}} a N_a \int f(x) d\tilde{\nu}_t^{(a)}(x) = \int f(x)d\mu_t(x),
    \end{align*}
    and in particular
    \[\int \int_0^T |v_t(\gamma(t))| dt d\lambda(\gamma) = \int_0^T \int | v_t(x)| d\mu_t(x) dt <+\infty.\]
    Moreover, $\lambda$ is concentrated on the collection of curves $\{\gamma_i : i\in \N\}$, that are absolutely continuous and are solutions of the ODE associated to $v$.    
\end{proof}

Afterwards, we can assess that $I_{\boldsymbol{\mu}} = [0,T]$, but not that $J_{\boldsymbol{\mu}} =[0,T]$. Indeed, $J_{\boldsymbol{\mu}}$ is characterized by \eqref{eq: j_mu} and it may happen that for some $a\in \mathcal{A}$ and $i,j\in S_a$, the curves $\gamma_i$ and $\gamma_j$ cross at a time $t\in[0,T]\setminus J_{\boldsymbol{\mu}}$, raising a singularity for $(\mu_t)_{t\in[0,T]}$ with respect to the atomic topology.

\section{On the capacity of the diagonal}\label{section: capacity}
We will analyze closer the hypothesis (3) in Theorem \ref{superposition in duality with gen cyl functions}. In particular, we will see sufficient conditions on $\nu$ that imply it, and how it is connected to geometric properties of the measure. Moreover, we show what we can deduce about the evolution of random measures under stronger assumptions.

\subsection{Measure of an \texorpdfstring{$\varepsilon$}{}-strip around the diagonal}
In the following, for all $\varepsilon$ we denote by $\Delta_\varepsilon\subset \R^d \times \R^d$ the $\varepsilon$-strip around the diagonal $\Delta$, that is
\begin{equation}
    \Delta_\varepsilon := \{(x,y) \in \R^d \times \R^d \ : \ |x-y|<\varepsilon\}.
\end{equation}

\begin{prop}\label{prop: strip}
    Let $\nu\in \PP(\R^d)$ be an atomless measure. Assume that, for some $\Tilde{r}\in [1,+\infty)$, it holds
    \begin{equation}\label{eq: strip rate}
        \nu\otimes \nu (\Delta_\varepsilon)\leq C \varepsilon^{\Tilde{r}},
    \end{equation}
    for a constant $C>0$ independent of $\varepsilon>0$. Then, 
    \begin{equation}
        \operatorname{cap}_{r,\nu}(\Delta) = 0 \quad \text{for all }r<\Tilde{r}.
    \end{equation}
\end{prop}

\begin{proof}
    Let $\rho\in C_c^1(\R^{2d})$ be a standard mollifier, and $\rho_\varepsilon(\cdot):= \frac{1}{\varepsilon^{2d}}\rho(\cdot/\varepsilon)$ for all $\varepsilon>0$. Then, define 
    \[h_\varepsilon(x,y):= \mathds{1}_{\Delta_{2\varepsilon}}* \rho_{\varepsilon/2}(x).\]
    It is immediate to verify that $0\leq h_\varepsilon \leq 1$, $h_\varepsilon(x,y) = 1$ for all $(x,y) \in \Delta_{\varepsilon}$ and $h_\varepsilon(x,y) = 0$ for all $(x,y)\notin \Delta_{3\varepsilon}$, which implies that $\nabla h_\varepsilon(x,y) = 0$ if $(x,y)\in \Delta_\varepsilon\cup \Delta_{3\varepsilon}^c$. On the other hand, if $(x,y) \in \Delta_{3\varepsilon}\setminus \Delta_\varepsilon$, it holds
    \begin{align*}
        |\nabla h_\varepsilon(x,y)| = & \left| \int_{\R^{2d}} \mathds{1}_{\Delta_{2\varepsilon}}(x-z,y-w) \nabla \rho_{\varepsilon/2}(z,w) dz dw \right|
        \\
        \leq & \frac{2^{2d+1}}{\varepsilon^{2d+1}}\int_{\{|(z,w)|\leq \varepsilon/2\}}\mathds{1}_{\Delta_{2\varepsilon}}(x-z,y-w) \left|\nabla\rho \left( \frac{2z}{\varepsilon},\frac{2w}{\varepsilon} \right) \right| dzdw
        \\
        \leq &
        \frac{2^{2d+1}\|\nabla\rho\|_\infty}{\varepsilon^{2d+1}} \mathcal{L}^{2d}\big( \{(z,w) \ : \ |(z,w)|\leq \varepsilon/2\} \big) \leq C(d)\varepsilon^{-1}.
    \end{align*}
    Then, for all $r<\Tilde{r}$, it holds
    \begin{align*}
        \int_{\R^{2d}} & |h_\varepsilon(x,y)|^r + |\nabla h_\varepsilon(x,y)|^r d\nu\otimes \nu(x,y) \leq \nu\otimes \nu(\Delta_{3\varepsilon}) + \int_{\Delta_{3\varepsilon}} |\nabla h_\varepsilon(x,y)|^r d\nu\otimes\nu(x,y)
        \\
        \leq & 
        C\varepsilon^{\Tilde{r}} + \frac{C(d)^r}{\varepsilon^r} \nu\otimes \nu(\Delta_{3\varepsilon}) 
        \leq \big(C + 3^{\Tilde{r}}C^{\Tilde{r}}C(d)^r\big) \left(\varepsilon^r + \varepsilon^{\Tilde{r}-r}\right) \to 0.
        \qedhere
    \end{align*}
\end{proof}

\noindent Some corollaries immediately follow from the previous Proposition.

\begin{co}
    Let $\nu\in \PP(\R^d)$ be an atomless measure and $\Tilde{r}\in [1,+\infty)$, be such that 
    \begin{equation}\label{eq: small ball rate}
        \sup_{x\in \R^d} \nu\big( B(x,\varepsilon) \big) \leq C \varepsilon^{\Tilde{r}},
    \end{equation}
    for some $C>0$. Then $\nu\otimes \nu(\Delta_\varepsilon) \leq C\varepsilon^{\tilde{r}}$, and in particular $\operatorname{cap}_{r,\nu}(\Delta) = 0$ for all $r<\Tilde{r}$.
\end{co}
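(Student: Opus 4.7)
The corollary splits into two parts: the estimate $\nu\otimes\nu(\Delta_\varepsilon)\leq C\varepsilon^{\tilde r}$, and the vanishing of capacity for $r<\tilde r$. The second part is an immediate invocation of the preceding proposition once the first is established, so the real content is the strip estimate, which is a one-line Fubini computation.

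First I would rewrite the measure of the strip by Fubini. Since $\Delta_\varepsilon=\{(x,y):|x-y|<\varepsilon\}$, for each fixed $x\in\R^d$ the slice is precisely the open ball $B(x,\varepsilon)$. Thus
\begin{equation*}
\nu\otimes\nu(\Delta_\varepsilon)
= \int_{\R^d}\int_{\R^d}\mathds{1}_{\{|x-y|<\varepsilon\}}\,d\nu(y)\,d\nu(x)
= \int_{\R^d}\nu(B(x,\varepsilon))\,d\nu(x).
\end{equation*}
Applying the hypothesis \eqref{eq: small ball rate} pointwise in $x$ and using $\nu(\R^d)=1$ yields
\begin{equation*}
\nu\otimes\nu(\Delta_\varepsilon)\leq C\varepsilon^{\tilde r}\int_{\R^d}d\nu(x)=C\varepsilon^{\tilde r},
\end{equation*}
which is exactly the hypothesis \eqref{eq: strip rate} of the previous proposition (with the same constant $C$ and exponent $\tilde r$).

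Finally, I would invoke the previous proposition directly to conclude that $\operatorname{cap}_{r,\nu}(\Delta)=0$ for every $r<\tilde r$. There is no genuine obstacle here: the only subtlety worth a word is that $\nu(B(x,\varepsilon))$ is a Borel-measurable function of $x$ (it is upper semicontinuous since $B(x,\varepsilon)$ is open), which legitimizes the application of Fubini.
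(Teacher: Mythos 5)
Your proof is correct and is exactly the argument the paper has in mind (the paper treats this corollary as immediate and gives no separate proof; the same Fubini rewriting $\nu\otimes\nu(\Delta_\varepsilon)=\int_{\R^d}\nu(B(x,\varepsilon))\,d\nu(x)$ appears in the proof of the later corollary about Hausdorff measures). One small slip in your closing remark: $x\mapsto\nu(B(x,\varepsilon))$ is \emph{lower} semicontinuous when $B(x,\varepsilon)$ is open, not upper semicontinuous — though in any case the measurability needed for Fubini follows most simply from the observation that $\Delta_\varepsilon$ is an open, hence Borel, subset of $\R^d\times\R^d$.
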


\begin{co}
    The conclusions of Theorem \ref{main theorem} and Theorem \ref{main theorem metric} hold if we replace assumptions (3) and (4), respectively, by 
    \begin{itemize}
        \item[(3')] $\nu\otimes \nu(\Delta_\varepsilon)\leq C\varepsilon^{\Tilde{r}}$ for all $\varepsilon>0$;
        \item[(4')] $\frac{p'}{\Tilde{r}} + \frac{1}{q} <1$.
    \end{itemize}
\end{co}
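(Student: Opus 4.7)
The plan is to reduce directly to Theorem \ref{main theorem} (resp.\ Theorem \ref{main theorem metric}) by selecting an admissible $r$ strictly less than $\tilde r$. The previous proposition already converts the geometric hypothesis (3') into the capacitary one: assumption (3') implies $\operatorname{cap}_{r,\nu}(\Delta) = 0$ for every $r\in[1,\tilde r)$. Hence (3) is available for any such $r$, and the only remaining task is to find one for which the integrability constraint (4) is satisfied.

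For this, I would use the strict inequality in (4'). Set
\[\delta := 1 - \frac{p'}{\tilde r} - \frac{1}{q} > 0.\]
The function $r \mapsto \frac{p'}{r}$ is continuous on $(0,+\infty)$, so there exists $r_0 \in [1, \tilde r)$ (chosen close enough to $\tilde r$) such that
\[\frac{p'}{r_0} - \frac{p'}{\tilde r} < \delta,\]
which, rearranged, gives
\[\frac{p'}{r_0} + \frac{1}{q} < 1 \leq 1,\]
so in particular (4) is fulfilled with exponent $r_0$. (If $p'/\tilde r < 1$ is not automatic, note that since $p'/\tilde r + 1/q < 1$ and $1/q \geq 0$ we have $p'/\tilde r < 1$, so that $r_0 > p' \geq 1$ can indeed be chosen in $[1,\tilde r)$.)

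With this choice of $r_0$, all four hypotheses of Theorem \ref{main theorem} hold: (1) and (2) are unchanged from the statement, (3) follows from the proposition above applied with $r = r_0 < \tilde r$, and (4) follows from the construction of $r_0$. Invoking Theorem \ref{main theorem} then yields the desired lifting $\mathfrak{L} \in \PP(\PPpa(C([0,T],\R^d)) \cap \operatorname{SPS}(b))$ with $E_\sharp \mathfrak{L} = \Lambda$. The same reasoning, verbatim but replacing Theorem \ref{main theorem} by Theorem \ref{main theorem metric}, yields the metric counterpart. No substantive obstacle appears; the argument is just a continuity-of-exponent selection on top of the two earlier results.
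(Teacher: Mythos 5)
Your argument is correct and is exactly the route the paper intends: the Corollary is stated without proof as an immediate consequence of the preceding Proposition (which gives $\operatorname{cap}_{r,\nu}(\Delta)=0$ for all $r<\tilde r$), and the strict inequality in (4') is precisely what lets one absorb the loss from $\tilde r$ to some $r_0<\tilde r$ while keeping $\tfrac{p'}{r_0}+\tfrac1q\le 1$. The small aside verifying $r_0\ge 1$ is a nice touch; note also that since $p>1$ one actually has $p'>1$, so $r_0>p'>1$ with room to spare.
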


We can exploit Proposition \ref{prop: strip} to find examples of $\nu$ and $r\geq 1$ that satisfy $\operatorname{cap}_{r,\nu} = 0$. In fact, we see how $\Tilde{r}$ for which \eqref{eq: strip rate} is valid, is related to the dimension of the measure $\nu$.

\begin{co}
    Let $\nu\in \PP(\R^d)$ and assume that $\nu\ll\mathcal{L}^d$, with density $f \in L^p(\R^d,\mathcal{L}^d)$. Then:
    \begin{itemize}
        \item[(i)] if $p\geq 2$, then \eqref{eq: strip rate} holds with $\Tilde{r} = d$;
        \item[(ii)] if $p\in (1,2)$, then \eqref{eq: strip rate} holds with $\Tilde{r} = \frac{2d}{p'}$.
    \end{itemize}
\end{co}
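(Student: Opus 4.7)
The plan is to rewrite the strip integral as a convolution
\[
\nu\otimes\nu(\Delta_\varepsilon)=\iint_{\R^{2d}} f(x)f(y)\mathds 1_{|x-y|<\varepsilon}\,dxdy=\int_{\R^d} f(x)\bigl(f*\mathds 1_{B(0,\varepsilon)}\bigr)(x)\,dx,
\]
and then to control it by two different classical inequalities, depending on whether $p\ge 2$ or $1<p<2$. The repeated use of the elementary identity $\|\mathds 1_{B(0,\varepsilon)}\|_{L^s(\R^d)}=\omega_d^{1/s}\varepsilon^{d/s}$ will deliver the desired powers of $\varepsilon$.

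For (i), since $f$ is a probability density we have $\|f\|_1=1$, and Lyapunov's interpolation between $L^1$ and $L^p$ gives
\[
\|f\|_2\le \|f\|_1^{1-p'/2}\|f\|_p^{p'/2}=\|f\|_p^{p'/2},
\]
where the interpolation exponent $p'/2$ lies in $(0,1]$ precisely because $p\ge 2$. Then, by Fubini--Tonelli, the change of variables $y=x+z$, and Cauchy--Schwarz on the inner integral,
\[
\nu\otimes\nu(\Delta_\varepsilon)=\int_{B(0,\varepsilon)}\left(\int_{\R^d} f(x)f(x+z)\,dx\right)dz\le \|f\|_2^2\,\omega_d\,\varepsilon^d,
\]
which is \eqref{eq: strip rate} with $\tilde r=d$.

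For (ii) the exponent $p'/2$ exceeds $1$, so the previous interpolation is not available; instead I would apply H\"older in the convolution rewriting to get $\nu\otimes\nu(\Delta_\varepsilon)\le\|f\|_p\|f*\mathds 1_{B(0,\varepsilon)}\|_{p'}$, and then Young's convolution inequality with the third exponent $s$ forced by $\tfrac1p+\tfrac1s=1+\tfrac1{p'}$, which solves to $s=p'/2$. The admissibility constraint $s\ge 1$ is equivalent to $p\le 2$, so Young's inequality applies exactly in this regime, yielding
\[
\nu\otimes\nu(\Delta_\varepsilon)\le \|f\|_p^2\,\|\mathds 1_{B(0,\varepsilon)}\|_{p'/2}=C\varepsilon^{2d/p'},
\]
i.e.\ \eqref{eq: strip rate} with $\tilde r=2d/p'$. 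The two estimates agree at the threshold $p=2$ (since there $2d/p'=d$), confirming the internal consistency of the dichotomy. There is no real obstacle: the only point requiring care is to track the admissible range of the Young exponent, which is precisely what dictates the case split.
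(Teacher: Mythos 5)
Your proposal is correct and takes essentially the same approach as the paper: rewrite $\nu\otimes\nu(\Delta_\varepsilon)$ as a convolution integral, then estimate it with Young's/H\"older's inequalities, with the case split governed exactly by the admissibility of the Young exponent. The only cosmetic differences are that in (i) you pass through $L^2$ via Lyapunov interpolation and Cauchy--Schwarz where the paper bounds $\tilde f * f$ in $L^\infty$ via $L^{p'}$--$L^p$ duality, and in (ii) you apply H\"older before Young whereas the paper applies Young to place $\tilde f * f$ in $L^{p/(2-p)}$ and then H\"older against $\mathds 1_{B(0,\varepsilon)}$; both orderings produce the same exponent $2d/p'$.
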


\begin{proof}
    Let $\Tilde{f}(x):= f(-x)$ for all $x\in \R^d$. For all $\varepsilon>0$ it holds
    \begin{align*}
        \nu\otimes \nu(\Delta_\varepsilon) = &\int_{\R^d} \nu(B(x,\varepsilon)) d\nu(x) 
         = \int_{\R^d} f(x) \left(\int_{B(x,\varepsilon)} f(y) dy\right) dx
         \\
         = & 
         \int_{\R^d} f(x) \left(\int_{B(0,\varepsilon)} f(x-y) dy \right) dx  = \int_{B(0,\varepsilon)} \left( \int_{\R^d} f(x-y) f(x) dx \right) dy 
         \\
         = & \int_{B(0,\varepsilon)} \Tilde{f}*f(y) dy.
    \end{align*}
    Then, by Young inequality and the fact that $f,\tilde{f}\in L^1(\mathcal{L}^d) \cap L^p(\mathcal{L}^d)$, it holds:
    \begin{itemize}
        \item[(i)] if $p\geq 2$, then $p'\in [1,p]$ and $\tilde{f} \in L^{p'}$, which gives $\tilde{f}*f \in L^{\infty}$ and $\nu\otimes \nu(\Delta_\varepsilon) \leq \omega_d\|\tilde{f}*f\|_{L^\infty} \varepsilon^d$;
        \item[(ii)] if $p\in (1,2)$, then $\tilde{f}*f\in L^{q}(\mathcal{L}^d)$ with $q>1$ satisfying $\frac{1}{q}+1 = \frac{2}{p}$, that is $q = \frac{p}{2-p}$. Then, by H\"older inequality, we have 
        \[\nu\otimes \nu (\Delta_\varepsilon) \leq \omega_d^{1/q'}\|\tilde{f}*f\|_{L^q} \varepsilon^{\frac{d}{q'}},\]
        and we can verify that $\frac{d}{q'} = \frac{2d}{p'}$. \qedhere
    \end{itemize}
\end{proof}

A different technique shows that, in case $\nu\ll\mathcal{L}^d$ has a bounded density, then $\operatorname{cap}_{d,\nu}(\Delta) = 0$. Before stating the result, let us introduce the `logarithmic-type' test functions that will make the job.

Consider $\eta \in C^{\infty}(\R)$ non-increasing and such that $\eta(x) = 1$ for all $x\leq 0$, $\eta(x) = 0$ for all $x\geq 1$ and $|\eta'(x)|\leq 2$ for all $x\in(0,1)$. Then, for all $0<\varepsilon<R<+\infty$ define
\begin{equation}\label{eq: log cut-off}
    f_{\varepsilon,R}(r) := \eta\left( \frac{\log r - \log \varepsilon}{\log R - \log \varepsilon} \right) \in C^{\infty}((0,+\infty)), \quad h_{\varepsilon,R}(x,y) := f_{\varepsilon,R}(|x-y|).
\end{equation}

The functions $h_{\varepsilon,R}$ are well-defined and smooth in $\R^d\times \R^d\setminus \Delta$. However, since $\eta\equiv 1$ on negative numbers, we have that $h_{\varepsilon,R} \equiv 1$ in $\Delta_{\varepsilon}\setminus \Delta$, and in particular it can be smoothly extended in the whole $\R^d\times\R^d$ simply putting $h_{\varepsilon,R}(x,x) = 1$ for all $x\in \R^d$.

Now, $\eta$ takes values in $[0,1]$, and its properties give $f_{\varepsilon,R}(r) = 1$ for all $r\leq \varepsilon$ and $f_{\varepsilon,R}(r) = 0$ for all $r\geq R$. Then, for all $(x,y)\in \R^d\times \R^d$ 
\begin{equation}\label{eq: estimates log cut-off}
    0 \leq h_{\varepsilon,R}(x,y) \leq 1, \quad |\nabla h_{\varepsilon,R}(x,y)| =\eta'\left( \frac{\log \frac{r}{\varepsilon}}{\log \frac{R}{\varepsilon}} \right) \frac{|\nabla_{x,y}|x-y||}{|x-y|\log \frac{R}{\varepsilon}} \leq \frac{2\sqrt{2}}{|x-y|\log \frac{R}{\varepsilon}} \mathds{1}_{(\varepsilon,R)}(|x-y|).
\end{equation}

\begin{prop}\label{prop: sharp capacity in R^d}
    Let $\nu\in \PP(\R^d)$ and assume that $\nu\ll\mathcal{L}^d$, with density $f \in L^\infty(\R^d,\mathcal{L}^d)$. Then, if $d\geq 2$, $\operatorname{cap}_{d,\nu}(\Delta) = 0$.
\end{prop}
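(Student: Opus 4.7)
The plan is to test the capacity with the logarithmic cut–off functions $h_{\varepsilon,R}$ already introduced in \eqref{eq: log cut-off}, and to let $\varepsilon$ and $R$ tend to $0$ with $R/\varepsilon \to +\infty$. Recall that $h_{\varepsilon,R} \in C^\infty_b(\R^{2d})$ (it extends smoothly across $\Delta$ because $\eta\equiv 1$ on $(-\infty,0]$), takes values in $[0,1]$, equals $1$ on $\Delta$, and satisfies the pointwise bound \eqref{eq: estimates log cut-off}. So $h_{\varepsilon,R}$ is an admissible competitor in \eqref{capacity definition}, and it suffices to show that both
\[
\int_{\R^{2d}}|h_{\varepsilon,R}|^{d}\,d\nu\otimes\nu\quad\text{and}\quad \int_{\R^{2d}}|\nabla h_{\varepsilon,R}|^{d}\,d\nu\otimes\nu
\]
can be made arbitrarily small.

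For the zeroth–order term, since $0\leq h_{\varepsilon,R}\leq 1$ and $h_{\varepsilon,R}=0$ outside $\Delta_R$, one has
\[
\int_{\R^{2d}}|h_{\varepsilon,R}|^{d}\,d\nu\otimes\nu \leq \nu\otimes\nu(\Delta_R) \leq \|f\|_\infty\int_{\R^d}\nu(B(x,R))\,dx \leq \|f\|_\infty^{2}\,\omega_{d}\, R^{d}\cdot \nu(\R^d),
\]
and more cleanly, bounding only one factor by the density,
\[
\nu\otimes\nu(\Delta_R) \leq \|f\|_\infty\,\omega_{d}\, R^{d}.
\]
So this first term is $O(R^{d})$ irrespective of $\varepsilon$.

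For the gradient term, I would combine \eqref{eq: estimates log cut-off} with a Fubini/polar–coordinates computation. Bounding one of the two copies of $d\nu$ by $\|f\|_\infty\,dx$,
\[
\int_{\R^{2d}}|\nabla h_{\varepsilon,R}|^{d}\,d\nu\otimes\nu \leq \frac{2^{d}\,\|f\|_\infty}{\bigl(\log\tfrac{R}{\varepsilon}\bigr)^{d}} \int_{\R^d}\!\!\!\int_{\{\varepsilon<|y-x|<R\}}\!\!\frac{dy}{|x-y|^{d}}\,d\nu(x).
\]
The inner integral equals $|S^{d-1}|\log(R/\varepsilon)$ by polar coordinates centered at $x$, so
\[
\int_{\R^{2d}}|\nabla h_{\varepsilon,R}|^{d}\,d\nu\otimes\nu \leq \frac{2^{d}\,\|f\|_\infty\,|S^{d-1}|}{\bigl(\log(R/\varepsilon)\bigr)^{d-1}}.
\]
This is precisely the step where the hypothesis $d\geq 2$ is used: the exponent $d-1$ is strictly positive, so the right–hand side tends to $0$ whenever $R/\varepsilon\to +\infty$.

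To conclude, I would pick any sequences $R_n\to 0$ and $\varepsilon_n\to 0$ with $R_n/\varepsilon_n\to +\infty$ (for instance $R_n=1/n$ and $\varepsilon_n=e^{-n}$), so that both estimates vanish simultaneously, giving $\operatorname{cap}_{d,\nu}(\Delta)=0$. The only mildly delicate point is the polar–coordinates bound, which is standard but requires the density to be bounded to reduce one of the factors of $\nu$ to Lebesgue measure; without this, the logarithmic decay of the gradient term is exactly enough to beat the critical exponent $d$ when $d\geq 2$.
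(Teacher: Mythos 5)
Your proof is correct and follows essentially the same route as the paper's: same logarithmic cut-off competitors $h_{\varepsilon,R}$, same polar-coordinate estimate yielding the $(\log(R/\varepsilon))^{-(d-1)}$ decay, same use of $d\geq 2$ to make that exponent positive. The only cosmetic differences are that you spell out a rate $O(R^d)$ for the zeroth-order term (the paper just notes $\nu\otimes\nu(\Delta_R)\to 0$) and you conclude with a generic sequence $R_n/\varepsilon_n\to\infty$ rather than the paper's specific choice $R=\sqrt{\varepsilon}$.
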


\begin{proof}
    For all $0<\varepsilon<R<+\infty$, the function $h_{\varepsilon,R}$ is a competitor for the minimum problem defining $\operatorname{cap}_{d,\nu}(\Delta)$. We need to prove that 
    \[\int_{\R^d \times \R^d}|h_{\varepsilon,R}(x,y)|^d + |\nabla h_{\varepsilon,R}(x,y)|^d d\nu\otimes \nu(x,y) \to 0,\]
    carefully choosing a way for letting $\varepsilon$ and $R$ go to $0$. The first term goes to $0$, as $h_{\varepsilon,R}(x,y) = 0$ outside $\Delta_R$ and is bounded by $1$, so that
    \[\int_{\R^d \times \R^d} |h_{\varepsilon,R}(x,y)|^d d\nu\otimes \nu(x,y) \leq \nu\otimes\nu(\Delta_R) \to 0, \quad \text{as }R\to 0.\]
    On the other hand, thanks to \eqref{eq: estimates log cut-off} 
    \begin{align*}
        \int_{\R^d \times \R^d} & |\nabla h_{\varepsilon,R}(x,y)|^d d\nu\otimes \nu(x,y) \leq \frac{(2\sqrt{2})^d}{\left(\log \frac{R}{\varepsilon}\right)^d} \int_{\R^d} \int_{\{x \in \R^d : |x-y|\in (\varepsilon,R)\}} \frac{1}{|x-y|^d} f(x)dx d\nu(y)
        \\
        \leq & \frac{(2\sqrt{2})^d\|f\|_\infty}{\left(\log \frac{R}{\varepsilon}\right)^d} \int_{\{z\in \R^d : |z|\in (\varepsilon,R)\}} \frac{1}{|z|^d}dz = \frac{d\omega_d (2\sqrt{2})^d\|f\|_\infty}{\left(\log \frac{R}{\varepsilon}\right)^d}  \int_\varepsilon^R \frac{1}{r}dr = \frac{d\omega_d (2\sqrt{2})^d\|f\|_\infty}{\left(\log \frac{R}{\varepsilon}\right)^{d-1}}, 
    \end{align*}
    where $\omega_d$ is the volume of the unit ball in $\R^d$. Choosing $R=\sqrt{\varepsilon}$ and allowing $\varepsilon\to0$, the proof is concluded.
\end{proof}

\subsection{Absolutely continuous curves of random measures with respect to a new metric}
In this subsection, we will show that we can obtain further information on the evolution of random measures $(M_t)_{t\in[0,T]} \in C([0,T],\PP(\PP(\R^d)))$ that satisfy a continuity equation, whenever $M_t \ll \Q_{\pi,\nu}$ and we have control of small strips around the diagonal as in \eqref{eq: strip rate}.
This information is encoded by a natural distance over random measures, similar to \eqref{eq: atomic wass distance}.

\begin{df}\label{def: atomic wass on wass}
    Let $\Psi:[0,+\infty)\to[0,1]$ be a continuous and non-increasing function with compact support, satisfying $\Psi(0) =1 $. Then, for all $M,N \in \PP_p(\PP_p(\R^d))$ we define
    \begin{equation}
    \begin{aligned}
    \mathcal{D}_{p,\Psi}(M,N) := \mathcal{W}_p(M,N) +
    \sup_{\varepsilon\in(0,1)} \left| \int F_{\varepsilon,\Psi}(\mu) dM(\mu) - \int F_{\varepsilon,\Psi}(\mu) dN(\mu) \right|
    \end{aligned}
    \end{equation}
    where
    \begin{equation}
        F_{\varepsilon,\Psi}(\mu):=  \int_{\R^{2d}} \Psi\left( \frac{|x-y|}{\varepsilon} \right) d\mu\otimes \mu(x,y).
    \end{equation}
\end{df}

\begin{lemma}
    The metric space $(\PP_p(\PP_p(\R^d)),\mathcal{D}_{p,\Psi})$ is complete and separable.
\end{lemma}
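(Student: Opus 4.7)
\textbf{The plan is to realize} $(\PP_p(\PP_p(\R^d)),\mathcal{D}_{p,\Psi})$ as an isometric subspace of a Polish product space. To each $M\in\PP_p(\PP_p(\R^d))$ I would associate
\[
g_M:(0,1)\to[0,1],\qquad g_M(\varepsilon):=\int_{\PP_p(\R^d)} F_{\varepsilon,\Psi}(\mu)\,dM(\mu),
\]
and then first prove that $g_M$ extends continuously to $[0,1]$. Continuity on $(0,1)$ is immediate from two applications of dominated convergence, since $(x,y)\mapsto\Psi(|x-y|/\varepsilon)$ is continuous and uniformly bounded by $\Psi(0)=1$. The extension at $\varepsilon=1$ is trivial via the defining formula; the key step will be the extension at $\varepsilon=0$, where the compact support of $\Psi$ and $\Psi(0)=1$ give $\Psi(|x-y|/\varepsilon)\to\indi_\Delta(x,y)$ pointwise as $\varepsilon\to 0^+$. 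Two further applications of dominated convergence (first in $(x,y)$ against $\mu\otimes\mu$, then in $\mu$ against $M$) would then yield $g_M(\varepsilon)\to\int\mu^*(\R^d)\,dM(\mu)=:g_M(0)$. Once $g_M-g_N\in C([0,1])$, the supremum over the open interval coincides with the sup norm on $[0,1]$, so
\[
\mathcal{D}_{p,\Psi}(M,N)=\mathcal{W}_p(M,N)+\|g_M-g_N\|_{C([0,1])},
\]
and $\iota:M\mapsto (M,g_M)$ is an isometric embedding of $(\PP_p(\PP_p(\R^d)),\mathcal{D}_{p,\Psi})$ into the Polish product space $\bigl(\PP_p(\PP_p(\R^d)),\mathcal{W}_p\bigr)\times\bigl(C([0,1]),\|\cdot\|_\infty\bigr)$ equipped with the additive product metric.

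\textbf{Separability} would then be immediate: every subspace of a separable metric space is separable, and the target of $\iota$ is separable because $\mathcal{W}_p$ makes $\PP_p(\PP_p(\R^d))$ Polish and $C([0,1])$ is separable under the sup norm (e.g.\ by polynomials with rational coefficients).

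\textbf{For completeness}, I would start from a $\mathcal{D}_{p,\Psi}$-Cauchy sequence $(M_n)$. Its $\mathcal{W}_p$-Cauchy property yields $M_n\to M$ in $\mathcal{W}_p$ for some $M\in\PP_p(\PP_p(\R^d))$ by completeness of the Wasserstein-on-Wasserstein distance. For each fixed $\varepsilon\in(0,1)$ the function $F_{\varepsilon,\Psi}$ is bounded and narrowly continuous on $\PP_p(\R^d)$, hence $W_p$-continuous, so $\mathcal{W}_p$-convergence of $M_n$ produces the pointwise limit $g_{M_n}(\varepsilon)\to g_M(\varepsilon)$. To upgrade pointwise to uniform convergence I would run a standard $3\delta$ argument: given $\delta>0$, pick $N$ with $\|g_{M_n}-g_{M_m}\|_\infty<\delta$ for $n,m\geq N$, send $m\to\infty$ at each $\varepsilon$ using the pointwise limit, and take supremum in $\varepsilon$ to conclude $\|g_{M_n}-g_M\|_\infty\leq\delta$ for $n\geq N$. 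Combining both parts yields $\mathcal{D}_{p,\Psi}(M_n,M)\to 0$. The only genuinely non-routine step in the entire argument will be the continuous extension of $g_M$ at $\varepsilon=0$, which makes essential use of the compact support of $\Psi$ and of $\Psi(0)=1$; the rest is bookkeeping around the isometric embedding.
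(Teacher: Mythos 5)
Your proof is correct. For completeness your $3\delta$ argument is essentially the paper's argument (the paper uses a $\limsup_n\limsup_m$ chase with near-optimal $\varepsilon_n$, which is just another phrasing of the same step), but for separability you take a genuinely different and arguably cleaner route. The paper compares $\mathcal{D}_{p,\Psi}$ to $W_{p,D_{p,|\cdot|,\Psi}}$, proving the one-sided bound $\mathcal{D}_{p,\Psi}^p \leq 2^p W_{p,D_{p,|\cdot|,\Psi}}^p$ via an optimal coupling and then importing separability of the latter (which itself rests on the earlier lemma that $D_{p,|\cdot|,\Psi}$ is Polish). You instead observe that the $\sup_{\varepsilon\in(0,1)}$ term is the uniform norm of the profile $\varepsilon\mapsto g_M(\varepsilon)-g_N(\varepsilon)$ once you show each $g_M$ extends to $C([0,1])$, which turns $\mathcal{D}_{p,\Psi}$ into the pullback of the additive product metric under the injection $M\mapsto(M,g_M)$ into $(\PP_p(\PP_p(\R^d)),\mathcal{W}_p)\times(C([0,1]),\|\cdot\|_\infty)$. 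This buys you a self-contained argument that does not rely on the earlier $D_{p,d,\Psi}$ lemma, and it correctly identifies where the compact support of $\Psi$ and $\Psi(0)=1$ enter (continuous extension at $\varepsilon=0$, giving $g_M(0)=\int\mu^*(\R^d)\,dM(\mu)$; without compact support the limit at $0$ need not exist and the image would land in the non-separable $C_b((0,1))$). You are also right to give a direct completeness argument rather than try to prove the image of $\iota$ is closed. One small presentational point: verifying that $\iota$ is injective (trivially so, via the first coordinate) is what makes $\mathcal{D}_{p,\Psi}$ a bona fide metric, which the paper dismisses as immediate and your embedding establishes implicitly; it would be worth stating explicitly.
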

\begin{proof}
    It is immediate to see that $\mathcal{D}_{p,\Psi} $ is a distance. To show separability, we compare it with the distance $W_{p,D_{p,|\cdot|,\Psi}}$, i.e. the $p$-Wasserstein distance defined over the distance $D_{p,|\cdot|,\Psi}$, which induces a separable topology. In fact, let $M,N\in \PP_p(\PP_p(\R^d))$ and consider $\Pi\in \Gamma(M,N)$ be optimal for $W_{p,D_{p,|\cdot|,\Psi}}$, so that 
    \begin{align*}
        \mathcal{D}_{p,\Psi}^p(M,N) \leq & 2^p\left( \mathcal{W}_p^p(M,N) + \sup_{\varepsilon\in(0,1)} \left|\int F_{\varepsilon,\Psi}(\mu) dM(\mu) - \int F_{\varepsilon,\Psi}(\nu) dN(\nu)\right|^p  \right)
        \\
        \leq & 2^p\left( \int W_p^p(\mu,\nu) d\Pi(\mu,\nu) + \sup_{\varepsilon\in(0,1)} \int \left|F_{\varepsilon,\Psi}(\mu) - F_{\varepsilon,\Psi}(\nu) \right|^p d\Pi(\mu,\nu) \right)
        \\
        \leq & 2^p\left( \int W_p^p(\mu,\nu) d\Pi(\mu,\nu) + \int \sup_{\varepsilon\in(0,1)}  \left|F_{\varepsilon,\Psi}(\mu) - F_{\varepsilon,\Psi}(\nu) \right|^p d\Pi(\mu,\nu) \right)
        \\
        \leq & 2^p \left( \int D_{p,|\cdot|,\Psi}^p d\Pi(\mu,\nu) \right) = 2^p W_{p,D_{p,|\cdot|,\Psi}}^p(M,N).
    \end{align*}
    In particular, if $\{M_n\}_{n\in \N} \subset \PP_p(\PP_p(\R^d))$ is dense with respect to $W_{p,D_{p,|\cdot|,\Psi}}$, then it is dense for $\mathcal{D}_{p,\Psi}$ as well.

    Regarding completeness, let $\{M_n\}_{n\in \N} \subset \PP_p(\PP_p(\R^d))$ be a Cauchy sequence with respect to $\mathcal{D}_{p,\Psi}$. Then, it is also a Cauchy sequence with respect to $\mathcal{W}_p$, so that there exists $M\in \PP_p(\PP_p(\R^d))$ for which $\mathcal{W}_p(M_n,M)\to 0$. To show that $\mathcal{D}_{p,\Psi}(M_n,M)\to 0$ we need to prove that 
    \[\alpha_n:= \sup_{\varepsilon\in(0,1)} \left| \int F_{\varepsilon,\Psi}(\mu) dM_n(\mu) - \int F_{\varepsilon,\Psi}(\nu) dM(\nu) \right| \to 0.\]
    First, notice that the function $F_\Psi$ is narrowly continuous, so that for all $\varepsilon\in(0,1)$, $\int F_{\varepsilon,\Psi}dM_n \to \int F_{\varepsilon,\Psi} dM$. Then, for all $n\in \N$ consider $\varepsilon_n\in(0,1)$ such that 
    \[\alpha_n \leq \left| \int F_{\varepsilon_n,\Psi}(\mu) dM_n(\mu) - \int F_{\varepsilon_n,\Psi}(\nu) dM(\nu) \right| + \frac{1}{n}.\]
    Then we conclude thanks to the following inequalities:
    \begin{align*}
        \limsup_{n\to +\infty} \alpha_n \leq & \limsup_{n\to+\infty} \left| \int F_{\varepsilon_n,\Psi}(\mu) dM_n(\mu) - \int F_{\varepsilon_n,\Psi}(\nu) dM(\nu) \right|
        \\
        = & \limsup_{n\to +\infty} \lim_{m\to+\infty} \left| \int F_{\varepsilon_n,\Psi}(\mu) dM_n(\mu) - \int F_{\varepsilon_n,\Psi}(\nu) dM_m(\nu) \right|
        \\
        \leq & \limsup_{n\to +\infty} \limsup_{m\to+\infty} \mathcal{D}_{p,\Psi}(M_n,M_m) = 0. \qedhere
    \end{align*}
\end{proof}

Notice that the topology $\tau_{p,\Psi}$ induced by $\mathcal{D}_{p,\Psi}$ is between the narrow over narrow topology $\tau_{nn}$ and the narrow over atomic topology $\tau_{na}$. We know $\tau_{nn} \subsetneq \tau_{na}$, but it is not clear whether $\tau_{p,\Psi}$ coincides with one of them.
In the following, we will again fix a reference measure $\Q = \Q_{\pi,\nu} \in \mathcal{Q}$.

\begin{teorema}
	Let $b:[0,T]\times \R^d \times \PP(\R^d)\to \R^d$ be a Borel vector field and $\boldsymbol{M}=(M_t)_{t\in[0,T]}\in C_T(\PP(\PP(\R^d)))$ be satisfying $\frac{d}{dt}M_t + \operatorname{div}_\PP (b_t M_t)=0$ and $M_0 \in \PP_p(\PP_p(\R^d))$.
	Assume that 
	\begin{enumerate}
		\item $M_t = u_t \Q$, with $u \in L^\infty_t(L^q(\Q))$, with $q\in[1,+\infty]$. In other words there exists $K\in (0,+\infty)$ such that
		\begin{equation}
			 \|u_t\|_{L^{q}(\Q)} \leq K \quad \text{ for a.e. }t\in(0,T);
		\end{equation}
		\item $\|b\|_{L^p(\widetilde{M}_t\otimes dt)}^p=\int_0^T \int_{\PP(\R^d)} \int_{\R^d} |b_t(x,\mu)|^p d\mu(x) dM_t(\mu) dt<+\infty$, with $p\in[1,+\infty]$;
		\item $\nu\otimes \nu(\Delta_\varepsilon) \leq C \varepsilon^{\tilde{r}}$ for some $\tilde{r}\geq 1$;
		\item $\frac{p'}{\tilde{r}}+\frac{1}{q}\leq1$.
	\end{enumerate}
	Further assuming that $\Psi\in C_c^1([0,\infty),[0,1])$ and $\Psi'(0) = 0$, we have that $\boldsymbol{M}$ is $p$-absolutely continuous with respect to $\mathcal{D}_{p,\Psi}$. 
\end{teorema}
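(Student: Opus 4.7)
The distance $\mathcal{D}_{p,\Psi}$ decomposes into the Wasserstein-on-Wasserstein piece $\mathcal{W}_p$ and a ``diagonal-probing'' piece $\sup_\varepsilon |G_\varepsilon(t)-G_\varepsilon(s)|$ where $G_\varepsilon(t):=\int F_{\varepsilon,\Psi}(\mu)\,dM_t(\mu)$. The $\mathcal{W}_p$-piece is immediate from the characterization of absolutely continuous curves of random measures recalled at the start of \S2.2.5: hypothesis (2) yields $\boldsymbol{M}\in AC^p([0,T],(\PP_p(\PP_p(\R^d)),\mathcal{W}_p))$ with metric derivative $r\mapsto \|b_r\|_{L^p(\widetilde{M}_r)}$, in particular $1$-AC. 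The heart of the proof is then to bound $|G_\varepsilon'(r)|$ by a $\varepsilon$-uniform $L^1(0,T)$-function.

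The functional $F_{\varepsilon,\Psi}$ coincides with the interaction cylinder function $F_{h,\mathrm{id},1}$ of Lemma \ref{interaction cyl function} with $h(x,y)=\Psi(|x-y|/\varepsilon)$, up to the non-compactly-supported weight $f\equiv 1$. After a spatial cutoff $\chi_R\in C_c^1(\R^d)$ (letting $R\to\infty$ by dominated convergence at the end), the approximation scheme of Lemma \ref{interaction cyl function}, combined with the approximation argument of Theorem \ref{superposition in duality with gen cyl functions} where the strip bound of (3) replaces the capacity assumption, justifies testing the continuity equation against $F_{\varepsilon,\Psi}$ and produces
\[
G_\varepsilon(t)-G_\varepsilon(s)=\int_s^t\int_{\PP(\R^d)}\int_{\R^d}\nabla_W F_{\varepsilon,\Psi}(x,\mu)\cdot b_r(x,\mu)\,d\mu(x)\,dM_r(\mu)\,dr.
\]
The Wasserstein gradient, computed as in Lemma \ref{interaction cyl function}, equals $\nabla_W F_{\varepsilon,\Psi}(x,\mu)=\frac{2}{\varepsilon}\int\Psi'(|x-y|/\varepsilon)\frac{x-y}{|x-y|}\,d\mu(y)$, extended by $0$ on the diagonal $y=x$ thanks to $\Psi'(0)=0$. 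Consequently, with $K$ such that $\operatorname{supp}\Psi'\subset[0,K]$,
\[
|\nabla_W F_{\varepsilon,\Psi}(x,\mu)|\le \tfrac{2\|\Psi'\|_\infty}{\varepsilon}\,\mu\bigl(B(x,K\varepsilon)\setminus\{x\}\bigr),
\]
so the diagonal contribution is absent.

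Applying H\"older with exponents $(p,p')$ in space and then $(q,q')$ against $M_r=u_r\Q$, and combining with Lemma \ref{lemma: barycenter of mu times mu} (which kills the purely diagonal part) and hypothesis (3),
\[
\iint|\nabla_W F_{\varepsilon,\Psi}|^{p'}\,d\mu\,dM_r\;\le\;C\varepsilon^{-p'}\|u_r\|_{L^q(\Q)}\bigl(c_1\,\nu\otimes\nu(\Delta_{K\varepsilon})\bigr)^{1/q'}\;\le\;C\|u_r\|_{L^q}\,\varepsilon^{\tilde r/q'-p'}.
\]
Hence $|G_\varepsilon'(r)|\le C\|u_r\|_{L^q}^{1/p'}\|b_r\|_{L^p(\widetilde{M}_r)}\,\varepsilon^{(\tilde r/q'-p')/p'}$, and hypothesis (4) renders the $\varepsilon$-exponent non-negative, so for $\varepsilon\in(0,1)$ the bound is dominated by $m(r):=C\|u_r\|_{L^q}^{1/p'}\|b_r\|_{L^p(\widetilde{M}_r)}$, which lies in $L^1(0,T)$ by H\"older in time and assumptions (1)--(2). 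Taking $\sup_\varepsilon$, integrating, and combining with the $\mathcal{W}_p$-bound proves absolute continuity in $\mathcal{D}_{p,\Psi}$.

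For the refinement under the strict inequality, the exponent $\gamma:=(\tilde r/q'-p')/p'$ is strictly positive; interpolating the estimate above with the trivial bound $|G_\varepsilon'(r)|\le C\varepsilon^{-1}\|b_r\|_{L^p(\widetilde{M}_r)}$ coming from $|\nabla_W F_{\varepsilon,\Psi}|\le C/\varepsilon$, and choosing the interpolation parameter so that the residual power of $\varepsilon$ cancels, produces an $\varepsilon$-uniform bound of the form $C\|u_r\|_{L^q}^{\alpha}\|b_r\|_{L^p}$ with $\alpha\in(0,1/p')$; H\"older in time using $\|u\|_{L^q}\in L^1_t$ and $\|b\|_{L^p}\in L^p_t$ then places the metric derivative in $L^\ell$ for the claimed $\ell$. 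The main obstacle is the approximation step: $F_{\varepsilon,\Psi}$ is neither a cylinder nor a generalized cylinder function, so the argument proceeds through a double limit (spatial cutoff $R\to\infty$ and polynomial/tensor approximation $n\to\infty$) whose convergence is controlled precisely by the strip bound of (3), mirroring the role of the capacity assumption in Theorem \ref{superposition in duality with gen cyl functions}.
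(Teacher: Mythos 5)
Your overall strategy — test the continuity equation with $F_{\varepsilon,\Psi}$, bound its Wasserstein gradient, and combine H\"older in $\mu$, in $Q$, and in $t$ with the strip bound (3) to obtain $\varepsilon$-uniform control — is exactly the paper's, and your treatment of the $\mathcal{W}_p$-piece and of the vanishing diagonal via $\Psi'(0)=0$ together with Lemma \ref{lemma: barycenter of mu times mu} is correct. Two details differ from the paper. First, the admissibility of $F_{\varepsilon,\Psi}$ needs no spatial cutoff and no appeal to the approximation machinery of Theorem \ref{superposition in duality with gen cyl functions}: $F_{\varepsilon,\Psi}$ is exactly $F_{h,\rho,f}$ of Lemma \ref{interaction cyl function} with $h(x,y)=\Psi(|x-y|/\varepsilon)\in C^1_b(\R^{2d})$, $\rho=\mathrm{id}$ and $f\equiv 1\in C^1_b(\R^d)$ (the lemma asks only $C^1_b$, not compact support), so it is admissible outright; the strip bound (3) only enters later, to make the gradient estimate $\varepsilon$-uniform. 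Second, you apply spatial H\"older with the pair $(p,p')$, whereas the paper fixes $\tilde p=\tilde r/(\tilde r-q')$ precisely so that $\tilde p'q'=\tilde r$ and the $\varepsilon$-power cancels on the nose; your route leaves a nonnegative residual $\varepsilon$-power, which is harmless for $\varepsilon\in(0,1)$, so the $1$-absolute continuity is still correct.

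For the $\ell$-absolute continuity, however, the sketch does not close. You announce that interpolation plus H\"older in time ``places the metric derivative in $L^\ell$ for the claimed $\ell$'' but never compute the exponent. Carrying it through: balancing $\theta\gamma=1-\theta$ with $\gamma=\tilde r/(q'p')-1$ gives $\theta=q'p'/\tilde r$, hence $\theta/p'=q'/\tilde r$, so the $\varepsilon$-uniform bound is $|G_\varepsilon'(r)|\lesssim\|b_r\|_{L^p(\widetilde M_r)}\,\|u_r\|_{L^q(\Q)}^{q'/\tilde r}$. H\"older in time against $\|b_\cdot\|_{L^p}\in L^p_t$ and $\|u_\cdot\|_{L^q}\in L^1_t$ forces $\ell\big(\tfrac1p+\tfrac{q'}{\tilde r}\big)=1$, i.e.\ $\ell=\tfrac{p\tilde r}{\tilde r+pq'}$, which equals the stated $\ell=\tfrac{p(\tilde r-q')}{\tilde r}$ only at equality in (4) and is strictly smaller when (4) is strict. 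The paper's own choice of exponents yields the same metric-derivative bound $\|b_t\|_{L^{\tilde p}(\widetilde M_t)}\|u_t\|_{L^q}^{1/\tilde p'}$ with $1/\tilde p'=q'/\tilde r$ and thus the same smaller $\ell$, so this discrepancy is not specific to your route; but as written your argument cannot be said to establish the stated $\ell$, and either the exponent must be corrected or a stronger hypothesis on $u$ (e.g.\ $u\in L^{p/\tilde p}_t(L^q)$) must be invoked.
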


\begin{proof}
	Thanks to \cite[Proposition 4.10]{pinzisavare2025}, the curve $\boldsymbol{M}$ is $p$-absolutely continuous with respect to $\mathcal{W}_p$. We will be done if we show that there exists $g\in L^p(0,T)$ such that for all $0\leq t_1 < t_2 \leq T$ it holds
	\begin{equation}\label{eq: AC D}
		\sup_{\varepsilon\in(0,1)} \left| \int F_{\varepsilon,\Psi}(\mu) dM_{t_1}(\mu) - \int F_{\varepsilon,\Psi}(\mu) dM_{t_2}(\mu) \right| \leq \int_{t_1}^{t_2} g(s) ds.
	\end{equation}
	To this aim, notice that $F_{\varepsilon,\Psi}$ is an admissible test function for all $\varepsilon\in(0,1)$ because of Lemma \ref{interaction cyl function}, for which it holds
	\[\frac{d}{dt} \int F_{\varepsilon,\Psi}(\mu) dM_t(\mu) = \int \int_{\R^d} \nabla_WF_{\varepsilon,\Psi}(x,\mu) \cdot b_t(x,\mu) d\mu(x)dM_t(\mu)\]
	in the sense of distribution, where 
	\[\nabla_WF_{\varepsilon,\Psi}(x,\mu) = \frac{2}{\varepsilon}\int_{\R^d} \Psi'\left(\frac{|x-y|}{\varepsilon}\right) \frac{x-y}{|x-y|} d\mu(y),\]
	that is well-defined since $\Psi'(0) = 0$. Then, consider
	\[ \tilde{p}:= \frac{\tilde{r}}{\tilde{r}-q'}, \quad \text{so that} \quad \frac{\tilde{p}'}{\tilde{r}} + \frac{1}{q} = 1.\]
	Notice that $1\leq \tilde{p} \leq p$ and that $\tilde{p} = p$ if and only if (4) holds with equality. Moreover, let $R>0$ be such that $\operatorname{supp}\Psi \subset [0,R]$. Then, for all $\varepsilon\in(0,1)$, similar computations as in the proof of Theorem \ref{superposition in duality with gen cyl functions} give    \begin{align*}
		\bigg| \int & F_{\varepsilon,\Psi}(\mu) dM_{t_1}(\mu) - \int F_{\varepsilon,\Psi}(\mu) dM_{t_2}(\mu) \bigg| \leq \int_{t_1}^{t_2} \int \int_{\R^d} |b_t(x,\mu)| |\nabla_W F_{\varepsilon,\Psi}(x,\mu)|d\mu(x)dM_{t}(\mu) dt
		\\
		\leq & \int_{t_1}^{t_2} \|b_t(\cdot,\cdot)\|_{L^{\tilde{p}}(\widetilde{M}_t)} \left( \int \int |\nabla_W F(x,\mu)|^{\tilde{p}'} d\mu(x) u_{t}(\mu)d\Q(\mu) \right)^{\frac{1}{\tilde{p}'}} dt
		\\
		\leq & \int_{t_1}^{t_2} \|b_t(\cdot,\cdot)\|_{L^{\tilde{p}}(\widetilde{M}_t)} \|u_t\|_{L^q(Q)}^{\frac{1}{\tilde{p}'}} \left( \int \int |\nabla_W F(x,\mu)|^{\tilde{p}'q'} d\mu(x) d\Q(\mu) \right)^{\frac{1}{\tilde{p}'q'}} dt
		\\
		\leq & 2 K^{\frac{1}{\tilde{p}'}} \left( \int_{t_1}^{t_2} \|b_t(\cdot,\cdot)\|_{L^{\tilde{p}}(\widetilde{M}_t)} dt \right)  \left( \int \int_{\R^{2d}} \frac{1}{\varepsilon^{\tilde{p}'q'}} \left| \Psi'\left(\frac{|x-y|}{\varepsilon}\right) \right|^{\tilde{p}'q'} d\mu\otimes \mu(x,y) d\Q(\mu) \right)^{\frac{1}{\tilde{p}'q'}}.
	\end{align*}
	If we prove that the last factor is uniformly bounded with respect to $\varepsilon>0$, then we are done taking the supremum on the left hand side in $\varepsilon>0$ and noticing that in \eqref{eq: AC D}, we can use $g(t):= \|b_t(\cdot,\cdot) \|_{L^{\tilde{p}}(\widetilde{M}_t)}$, which is $p$-integrable since, from hypothesis \textit{(2)} we have $\|b_t(\cdot,\cdot) \|_{L^{\tilde{p}}(\widetilde{M}_t)}\leq \|b_t(\cdot,\cdot)\|_{L^p(\widetilde{M}_t)} \in L^p(0,T)$.
	
	From Lemma \ref{lemma: barycenter of mu times mu}, the fact that $\Psi'(0) = 0$ and that $\Psi'(r/\varepsilon) = 0$ if $r \geq \varepsilon R$, we have that
	\begin{align*}
		\int & \int_{\R^{2d}}  \frac{1}{\varepsilon^{\tilde{p}'q'}} \left| \Psi'\left(\frac{|x-y|}{\varepsilon}\right) \right|^{\tilde{p}'q'} d\mu\otimes \mu(x,y) d\Q(\mu) = c_1 \int_{\R^{2d}} \frac{1}{\varepsilon^{\tilde{p}'q'}} \left| \Psi'\left( \frac{|x-y|}{\varepsilon} \right) \right|^{\tilde{p}'q'} d\nu\otimes \nu(x,y)
		\\
		= & c_1 \int_{\Delta_{\varepsilon R}} \frac{1}{\varepsilon^{\tilde{p}'q'}} \left| \Psi'\left(\frac{|x-y|}{\varepsilon}\right) \right|^{\tilde{p}'q'} d\nu\otimes \nu(x,y) \leq c_1 \|\Psi'\|_{C^0}^{\tilde{p}'q'} \frac{\nu\otimes\nu (\Delta_{\varepsilon R}) }{\varepsilon^{\tilde{p}'q'}}  \leq  c_1 C \|\Psi'\|_{C^0}^{\tilde{p}'q'}  R^{\tilde{r}}\varepsilon^{\tilde{r}- \tilde{p}'q'},
	\end{align*}
	thanks to which we conclude since $\tilde{r} = \tilde{p}'q'$ by definition of $\tilde{p}$.
\end{proof}

\section{Nested superposition principles on Riemannian manifolds}\label{sec: manifold}
In this subsection, we establish a version of Theorems \ref{thm: nested superposition}, \ref{main theorem} and \ref{main theorem metric} in a Riemannian setting. For the rest of the section, if not differently specified, $(\mathbb{S},\ttg)$ will be a compact boundaryless and oriented Riemannian manifold of dimension $k \geq 2$, with induced distance and volume measure denoted as $d_{\mathbb{S}}$ and $\operatorname{vol}_{\mathbb{S}}$. Moreover, we will use the same construction of reference random measures showed in §\ref{subsec: reference}: fix $\pi \in \PP(\bold{T})$ and $\nu\in \PP(\mathbb{S})$ an atomless probability measure, and denote $Q = Q_{\pi,\nu} \in \PP(\PP(\Ss))$.

The general idea is to use the Nash embedding theorem for transporting all the notions over $\mathbb{S}$ in $\R^d$ and then use the superposition principles in the Euclidean setting. To this end, here we collect the notation and the main properties of the Nash embedding theorem \cite{nash1954c}: there exists a smooth map (the Nash isometry) 
\begin{equation}\label{eq: nash isometry}
    \jmath: \mathbb{S} \to \R^d
\end{equation}
satisfying the following properties.
\begin{enumerate}
    \item It is a local isometry, and in particular for all $y\in \Ss$ and $v,w \in T_x\Ss$,
    \begin{equation}
        \ttg_{\operatorname{Eucl}}\big( d_y \jmath (v) , d_y \jmath(w) \big) = \ttg_{y}(v,w),
    \end{equation}
    where $\ttg_{\operatorname{Eucl}}$ is the standard Euclidean Riemannian metric on $\R^d$ and $d_y \jmath: T_y \Ss \to T_{\jmath(x)}S\subset \R^d$ is the differential operator of the map $\jmath$ in the point $y\in \Ss$.
    \item Its image $S := \jmath(\mathbb{S}) \subset \R^d$ is compact. Moreover, it inherits the differential and Riemannian structure from $\mathbb{S}$ and $\jmath^{-1}: S \to \mathbb{S}$ is well-defined and smooth;
    \item The intrinsic distance of $S$ as a subset of $\R^d$ is defined as 
    \begin{equation}
        d_{S}(x_0,x_1) := \inf\{\operatorname{length}(\gamma) \ : \ \gamma \in C([0,1],S), \ \gamma(0) = x_0, \ \gamma(1) = x_1\},
    \end{equation}
    where the length is evaluated looking at $\gamma$ as a curve in $\R^d$. The space $(S,d_S)$ is Polish and isometric to $(\mathbb{S},d_{\mathbb{S}})$ through $\jmath$. Moreover, there exists a constant $C\geq 1$ such that 
    \begin{equation}\label{eq: equivalence}
        |x-y| \leq d_S(x,y)\leq C|x-y| \quad \forall x,y \in S;
    \end{equation}
    \item The volume measure $\operatorname{vol}_{\mathbb{S}}$ is transported to $\operatorname{vol}_{S} = \mathcal{H}^k|_{S}$, that is 
    \begin{equation}\label{eq: push for volume}
        \jmath_{\sharp} \operatorname{vol}_{\mathbb{S}} = \mathcal{H}^k|_{S}.
    \end{equation}
    In particular, if $\nu\ll \operatorname{vol}_\Ss$, then $\tilde{\nu}:= \jmath_\sharp \nu$ is absolutely continuous w.r.t. $\mathcal{H}^k|_{S}$, with density given by $\frac{d\nu}{d\operatorname{vol}_{\mathbb{S}}}\circ \jmath^{-1}$;
    \item For all $p\in[1,+\infty]$, the spaces $(\PP_p(\mathbb{S}),W_{p,d_{\mathbb{S}}})$ and $(\PP_p(S),W_{p,d_S})$ are isometric through the push forward function $\boldsymbol{\jmath}:= \jmath_{\sharp}$, and $\boldsymbol{\jmath}^{-1} = \jmath^{-1}_\sharp$. Similarly, for the spaces of random measures $((\PP_p(\PP_p(\mathbb{S})),\mathcal{W}_{p,d_{\mathbb{S}}}))$ and $(\PP_p(\PP_p(S)),\mathcal{W}_{p,d_S})$ through $\boldsymbol{\jmath}_\sharp = \jmath_{\sharp\sharp}$. Moreover $\PP_p(S)$ and $\PP_p(\PP_p(S))$ are, respectively, embedded in $\PP_p(\R^d)$ and $\PP_p(\PP_p(\R^d))$, and \eqref{eq: equivalence} implies that 
    \begin{equation}
        \begin{gathered}
        W_p(\mu,\nu)\leq W_{p,d_{S}}(\mu,\nu) \leq C W_{p}(\mu,\nu) \quad \forall \mu,\nu\in \PP_p(S),
        \\
        \mathcal{W}_{p}(M,N) \leq \mathcal{W}_{p,d_S}(M,N)\leq C\mathcal{W}_{p}(M,N) \quad \forall M,N \in \PP_p(\PP_p(S));
        \end{gathered}
    \end{equation}
    \item The reference measure $Q_{\pi,\nu}$ is transported as
    \begin{equation}\label{eq: nash ref measure}
        \jmath_{\sharp\sharp}Q_{\pi,\nu} = Q_{\pi,\jmath_\sharp \nu}. 
    \end{equation}
\end{enumerate}

\subsection{Continuity equation and nested superposition principle for random measures on compact Riemannian manifolds}

Let $b:[0,T]\times \Ss \times \PP(\Ss) \to T \Ss$ be a Borel vector field (with the natural topology over the tangent bundle $T\Ss$), i.e. such that $b(t,y,\nu) \in T_y \Ss$ for any $(t,y,\nu) \in [0,T]\times \Ss \times \PP(\Ss)$.

\begin{df}\label{ce manifold}
    Let $\boldsymbol{\mu} = (\mu_t)_{t\in [0,T]} \in C([0,T],\PP(\Ss))$. We say that the continuity equation $\partial_t\mu_t + \operatorname{div}_{\Ss}(b_t(\cdot,\mu_t) \mu_t) = 0$ is satisfied if 
    \begin{equation}
        \int_0^T \int_\Ss \lVert b(t,y,\mu_t) \rVert_{\ttg_{y}} d\mu_t(y) dt <+\infty
    \end{equation}
    and for all $\xi\in C_c^1(0,T)$ and $\phi\in C^1(\Ss)$
    \begin{equation}\label{weak ce manifold}
        \int_0^T \xi'(t) \int_{\Ss} \phi(y) d\mu_t(y) dt = -\int_0^T \xi(t) \int_\Ss d_y\phi(b(t,y,\mu_t)) d\mu_t(y) dt.
    \end{equation}
\end{df}

As for the Euclidean case, we need to introduce the cylinder functions.

\begin{df}[Cylinder functions and Wasserstein differential]\label{def cyl func manifold and wass grad}
    A functional $F:\PP(\Ss)\to\R$ is called a cylinder function if there exists $k\in \N$, $\Phi = (\phi_1,\dots,\phi_k)\in C^1(\Ss;\R^k)$ and $\Psi\in C^1(\R^k)$ such that 
    \begin{equation}\label{cyl functions manifold}
        F(\mu) = \Psi\left( L_\Phi(\mu) \right), \quad L_\Phi(\mu) =  \big(L_{\phi_1}(\mu) ,\dots , L_{\phi_k}(\mu)\big), \quad  L_{\phi_i}(\mu) := \int_{\Ss} \phi_i(y) d\mu(y).
    \end{equation}
    Its Wasserstein differential is then defined as 
    \begin{equation}\label{wass gradient cyl manifold}
        D_W F(y,\mu):= \sum_{i=1}^k \partial_{i}\Psi\left( L_{\Phi}(\mu)\right) d_y \phi_i \quad \forall y\in \Ss, \ \forall\mu \in \PP(\Ss).
    \end{equation}
    The collection of all the cylinder functions its called $\operatorname{Cyl}^1(\PP(\Ss))$. If $\Psi \in C^\infty(\R^k)$ and $\phi_1,\dots,\phi_k \in C^\infty(\Ss)$, we write that $F\in \operatorname{Cyl}^\infty(\PP(\Ss))$. 
\end{df}

Similarly, we may define the sets of generalized cylinder functions $\operatorname{GC}_c^1(\PP(\Ss))$ and $\operatorname{GC}_c^\infty(\PP(\Ss))$: we say that $\hat{F} \in \operatorname{GC}_c^1(\PP(\Ss))$ (resp. $\operatorname{GC}_c^\infty(\PP(\Ss))$) is there exist $k\geq 1$, $\Psi \in C^1(\R^k)$ (resp. $C^\infty(\R^k)$) and $\hat{\Phi} = (\hat{\phi}_1,\dots,\hat{\phi}_k)\in C^1_c(\Ss\times(0,1];\R^k)$ (resp. $C_c^\infty(\Ss\times(0,1];\R^k)$) such that 
    \begin{equation}\label{gen cyl functions manifold}
        \hat{F}(\mu) = \Psi\left( \hat{L}_{\hat{\Phi}}(\mu) \right), \quad\hat{L}_{\hat{\Phi}}(\mu) =  \big(\hat{L}_{\hat{\phi}_1}(\mu) ,\dots , \hat{L}_{\hat{\phi}_k}(\mu)\big), \quad  \hat{L}_{\hat{\phi}_i}(\mu) := \int_{\Ss} \hat{\phi}_i(y,\mu[y]) d\mu(y).
    \end{equation}
Their Wasserstein differential is then defined as 
\begin{equation}\label{wass gradient gen cyl manifold}
        D_W \hat{F}(y,\mu):= \sum_{i=1}^k \partial_{i}\Psi\left( \hat{L}_{\hat{\Phi}}(\mu)\right) d_y \hat{\phi}_i(\cdot,\mu[y]) \quad \forall y\in \Ss, \ \forall\mu \in \PP(\Ss).
    \end{equation}

\begin{df}\label{ce manifold rm}
    Let $\boldsymbol{M}=(M_t)_{t\in [0,T]}\in C([0,T],\PP(\PP(\Ss)))$. We say that the continuity equation $\partial_tM_t + \operatorname{div}_{\PP(\Ss)}(b_t M_t) = 0$ holds, if 
    \begin{equation}
        \int_0^T \int_{\PP(\Ss)}\int_\Ss \lVert b(t,y,\mu) \rVert_{\ttg_{y}} d\mu(y) dM_t(\mu)dt <+\infty
    \end{equation}
    and for all $\xi\in C_c^1(0,T)$ and $F \in \operatorname{Cyl}^1(\PP(\Ss))$
    \begin{equation}\label{weak ce manifold rm}
    \begin{aligned}
        \int_0^T \xi'(t) & \int_{\PP(\Ss)} F(\mu) dM_t(\mu) dt =  -\int_0^T \xi(t) \int_{\PP(\Ss)} \int_\Ss D_W F(y,\mu)\big(b(t,y,\mu)\big) d\mu(y) dM_t(\mu) dt 
        \\
        = & -\int_0^T \xi(t) \int_{\PP(\Ss)}  \sum_{i=1}^k \partial_i \Psi(L_\Phi(\mu)) \int_\Ss d_y\phi_i\big(b(t,y,\mu)\big) d\mu(y) dM_t(\mu) dt.
    \end{aligned}
    \end{equation}
\end{df}

Given a Borel vector field $b:[0,T]\times \Ss \times \PP(\Ss) \to T\Ss$, define the Borel vector field $\Tilde{b}:[0,T]\times \R^d \times \PP(\R^d) \to \R^d$ as 
\begin{equation}\label{def of tilde b from b}
\Tilde{b}(t,x,\tilde{\mu}) = 
\begin{cases}
    d_{\jmath^{-1}(x)}\jmath\big( b(t,\jmath^{-1}(x), \boldsymbol{\jmath}^{-1}(\tilde{\mu})) \big) \quad \quad & \text{if } (x,\tilde{\mu})\in S\times \PP(S)
    \\
    0 & \text{otherwise}.
\end{cases}
\end{equation}

Moreover, given a curve of random measures on $\Ss$, $\boldsymbol{M} = (M_t)_{t\in[0,T]} \in C([0,T],\PP(\PP(\Ss)))$, define 
\begin{equation}\label{eq: def of M_t from N_t}
    \tilde{M}_t := \boldsymbol{\jmath}_\sharp M_t = \jmath_{\sharp\sharp} M_t \in \PP(\PP(S)).
\end{equation}

\begin{prop}\label{propr: tilde M}
Let $b:[0,T]\times \Ss \times \PP(\Ss) \to T\Ss$ be a Borel vector field and $\boldsymbol{M}=(M_t)_{t\in [0,T]}\in C([0,T],\PP(\PP(\Ss)))$ be a solution of the continuity equation $\partial_tM_t + \operatorname{div}_{\PP(\Ss)}(b_tM_t)=0$ in the sense of Def. \ref{ce manifold rm}. 
Then, the curve $(\tilde{M}_t)_{t\in[0,T]}\subset C([0,T],\PP(\PP(\R^d)))$ solves the continuity equation $\partial_t\tilde{M}_t + \operatorname{div}_\PP(\tilde{b}_t \tilde{M}_t) = 0$ in the sense of \eqref{eq: CERM}.
\end{prop}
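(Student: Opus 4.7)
The strategy is to pull every test object back to $\Ss$ via the Nash isometry $\jmath$ and reduce \eqref{eq: CERM} for $(\tilde M_t,\tilde b)$ to \eqref{weak ce manifold rm} for $(M_t,b)$. First I would establish the finite-mass condition: since $\jmath$ is a local Riemannian isometry one has $|d_y\jmath(v)|=\|v\|_{\ttg_y}$ for every $v\in T_y\Ss$, so by definition of $\tilde b$ in \eqref{def of tilde b from b} and the push-forward identities $\tilde M_t=\jmath^{-1}_{\sharp\sharp}M_t$, $\tilde\mu=\boldsymbol\jmath\mu$,
\begin{equation*}
\int_0^T\!\!\int_{\PP(\R^d)}\!\!\int_{\R^d}|\tilde b_t(x,\tilde\mu)|\,d\tilde\mu(x)\,d\tilde M_t(\tilde\mu)\,dt
=\int_0^T\!\!\int_{\PP(\Ss)}\!\!\int_{\Ss}\|b_t(y,\mu)\|_{\ttg_y}\,d\mu(y)\,dM_t(\mu)\,dt<+\infty.
\end{equation*}
Narrow continuity of $t\mapsto\tilde M_t$ is immediate from continuity of $\jmath^{-1}_{\sharp\sharp}:\PP(\PP(\Ss))\to\PP(\PP(\R^d))$.

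Next, for any $F\in\operatorname{Cyl}_c^1(\PP(\R^d))$, written as $F(\tilde\mu)=\Psi(L_{\phi_1}(\tilde\mu),\dots,L_{\phi_k}(\tilde\mu))$ with $\Phi=(\phi_1,\dots,\phi_k)\in C_c^1(\R^d;\R^k)$ and $\Psi\in C_b^1(\R^k)$, I would define the pullback $\tilde F(\mu):=F(\boldsymbol\jmath\mu)$ for $\mu\in\PP(\Ss)$. Since $\Ss$ is compact, $\phi_i\circ\jmath\in C^1(\Ss)$, so $\tilde F(\mu)=\Psi(L_{\phi_1\circ\jmath}(\mu),\dots,L_{\phi_k\circ\jmath}(\mu))$ belongs to $\operatorname{Cyl}^1(\PP(\Ss))$ in the sense of Definition \ref{def cyl func manifold and wass grad}, with Wasserstein differential
\begin{equation*}
D_W\tilde F(y,\mu)=\sum_{i=1}^k\partial_i\Psi\big(L_{\Phi\circ\jmath}(\mu)\big)\,d_y(\phi_i\circ\jmath)
=\sum_{i=1}^k\partial_i\Psi\big(L_{\Phi}(\boldsymbol\jmath\mu)\big)\,d_{\jmath(y)}\phi_i\circ d_y\jmath.
\end{equation*}

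Applying \eqref{weak ce manifold rm} to $\tilde F$ and a test $\xi\in C_c^1(0,T)$, the left-hand side rewrites as $\int_0^T\xi'(t)\int_{\PP(\R^d)}F(\tilde\mu)\,d\tilde M_t(\tilde\mu)\,dt$. For the right-hand side, the definition \eqref{def of tilde b from b} gives $d_y\jmath(b_t(y,\mu))=\tilde b_t(\jmath(y),\boldsymbol\jmath\mu)$, and since $\jmath(y)\in S$ and $d_{\jmath(y)}\phi_i(w)=\nabla\phi_i(\jmath(y))\cdot w$ for any $w\in\R^d$, one obtains
\begin{equation*}
\int_\Ss D_W\tilde F(y,\mu)(b_t(y,\mu))\,d\mu(y)
=\int_{\R^d}\nabla_WF(x,\boldsymbol\jmath\mu)\cdot\tilde b_t(x,\boldsymbol\jmath\mu)\,d(\boldsymbol\jmath\mu)(x),
\end{equation*}
by the change of variables $x=\jmath(y)$. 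Integrating over $M_t$ and using $\tilde M_t=\boldsymbol\jmath^{-1}_{\sharp}M_t$ converts the outer integral as well, yielding exactly \eqref{eq: CERM} for $(\tilde M_t,\tilde b)$.

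The only delicate point is verifying that the Euclidean gradient of $\phi_i$, when paired with vectors in $T_xS\subset\R^d$, coincides with the intrinsic differential of $\phi_i\circ\jmath$ pushed forward by $d_y\jmath$; this is just the chain rule together with the fact that $T_xS$ sits inside $\R^d$ as an affine subspace. No capacity hypothesis or approximation of generalized cylinder functions enters at this stage, since we are only testing against $\operatorname{Cyl}_c^1(\PP(\R^d))$ and pulling back to $\operatorname{Cyl}^1(\PP(\Ss))$ (which is legitimate because $\Ss$ is compact).
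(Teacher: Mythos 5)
Your proposal is correct and follows essentially the same route as the paper: verify the integrability via the local isometry, pull back each $F=\Psi\circ L_\Phi\in\operatorname{Cyl}_c^1(\PP(\R^d))$ to the cylinder function $\Psi\circ L_{\Phi\circ\jmath}\in\operatorname{Cyl}^1(\PP(\Ss))$, apply \eqref{weak ce manifold rm}, and unwind via the chain rule $d_y(\phi_i\circ\jmath)=d_{\jmath(y)}\phi_i\circ d_y\jmath$ and the push-forward identities. The paper carries this out as one chain of equalities rather than isolating the pullback as a lemma, but the content is identical.
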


\begin{proof}
The local isometry property of $\jmath$ implies that, for all $\mu\in \PP(\Ss)$, for $\mu$-a.e. $y$ and for a.e. $t\in[0,T]$, it holds $\|b_t(y,\mu)\|_{\ttg_{y}} =|\tilde{b}_t(\jmath(y), \jmath_\sharp \mu)|$, which gives
\[\int_0^T \int_{\PP(\R^d)} \int_{\R^d} |\tilde{b}(t,x,\tilde{\mu}) | d\tilde{\mu}(x) d\tilde{M}_t(\tilde{\mu}) dt = \int_0^T \int_{\PP(\Ss)} \int_{\Ss} \|b(t,y,\mu)\|d\mu(y) dM_t(\mu) dt<+\infty.\]
Let $\xi\in C_c^1(0,T)$ and $\tilde{F} = \Psi \circ L_{\tilde{\Phi}} \in \operatorname{Cyl}_c^1(\PP(\R^d))$, with $\tilde{\Phi} = (\tilde{\phi}_1,\dots, \tilde{\phi}_k) \in C^1_c(\R^d, \R^k)$ and $\Psi \in C_b^1(\R^k)$, then, exploiting the properties of $\jmath$ and $\boldsymbol{\jmath}$, we have 
\begin{align*}
    \int_0^T &  \xi'(t) \int_{\PP(\R^d)}\tilde{F}(\tilde{\mu}) d\tilde{M}_t(\tilde{\mu}) dt = 
    \int_0^T \xi'(t) \int_{\PP(S)}\tilde{F}(\tilde{\mu}) d\tilde{M}_t(\tilde{\mu}) dt 
    = 
    \int_0^T \xi'(t) \int_{\PP(\Ss)} \tilde{F}(\jmath_\sharp\mu) dM_t(\mu) dt
    \\
    = & 
    \int_0^T \xi'(t) \int_{\PP(\Ss)} \Psi\left( \int_\Ss \tilde{\phi}_1\circ \jmath d\mu, \dots , \int_\Ss \tilde{\phi}_k\circ \jmath d\mu \right) dM_t(\mu) dt
    \\
    = & 
    -\int_0^T \xi(t) \int_{\PP(\Ss)} \sum_{i=1}^k \left( \partial_i \Psi (L_{\tilde{\Phi} \circ \jmath}(\mu))\int_{\Ss} d_y(\Tilde{\phi_i}\circ \jmath)\big(b(t,y,\mu)\big) d\mu(y)\right) dM_t(\mu) dt
    \\
    = & 
    -\int_0^T \xi(t) \int_{\PP(\Ss)} \sum_{i=1}^k \left(\partial_i \Psi (L_{\tilde{\Phi} \circ \jmath}(\mu))\int_{\Ss} d_{\jmath(y)}\Tilde{\phi_i}\circ d_y\jmath\big(b(t,y,\mu)\big) d\mu(y) \right) dM_t(\mu) dt
    \\
    = & 
    -\int_0^T \xi(t) \int_{\PP(S)} \sum_{i=1}^k \left(\partial_i \Psi (L_{\tilde{\Phi}}(\tilde{\mu}))\int_{\Ss} d_{\jmath(y)}\Tilde{\phi_i}\circ d_y\jmath\big(b(t,y,\jmath^{-1}_\sharp\tilde{\mu})\big) d\jmath^{-1}_\sharp\tilde{\mu}(y) \right) d\tilde{M}_t(\tilde\mu) dt
    \\
    = & 
    -\int_0^T \xi(t) \int_{\PP(S)} \sum_{i=1}^k \left(\partial_i \Psi (L_{\tilde{\Phi}}(\tilde{\mu}))\int_{S} d_{x}\Tilde{\phi_i}\circ d_{\jmath^{-1}(x)}\jmath\big(b(t,\jmath^{-1}(x),\jmath^{-1}_\sharp\tilde{\mu})\big) d\tilde\mu(x) \right) d\tilde{M}_t(\tilde{\mu}) dt
    \\
    = & 
    -\int_0^T \xi(t) \int_{\PP(S)} \sum_{i=1}^k \left(\partial_i \Psi (L_{\tilde{\Phi}}(\tilde{\mu}))\int_{S} d_{x}\Tilde{\phi_i}\big(\Tilde{b}(t,x,\tilde{\mu})\big) d\tilde{\mu}(x) \right) d\tilde{M}_t(\tilde{\mu}) dt
    \\
    = & 
    -\int_0^T \xi(t) \int_{\PP(S)} \sum_{i=1}^k \left(\partial_i \Psi (L_{\tilde{\Phi}}(\tilde{\mu}))\int_{S} \nabla\Tilde{\phi_i}\cdot \Tilde{b}(t,x,\tilde{\mu})\ d\tilde{\mu}(x) \right) d\tilde{M}_t(\tilde{\mu}) dt
    \\
    = & 
    -\int_0^T \xi(t) \int_{\PP(S)} \int_{S} \nabla_W \tilde{F}(x,\tilde{\mu}) \cdot \Tilde{b}(t,x,\tilde{\mu}) d\tilde{\mu}(x) d\tilde{M}_t(\tilde{\mu}) dt.
\end{align*}
\end{proof}

\normalcolor

Before proceeding, let us give some preliminary results: define the maps,
\begin{equation*}
\begin{aligned}
     J:C([0,T],\PP(\Ss))  \to C([0,T],\PP(S)),
     \quad 
     (\mu_t)_{t\in [0,T]}  \mapsto (\jmath_\sharp\mu_t)_{t\in [0,T]},
\end{aligned}
\end{equation*}
\begin{equation}\label{eq: ell direct}
    \ell:C([0,T],\Ss) \to C([0,T],S), \ \  \ell(\boldsymbol{y}) = (\jmath(y_t))_{t\in [0,T]},
\end{equation}
\begin{equation*}
    \mathfrak{J}:\PP(C([0,T],\Ss)) \to \PP(C([0,T],S)), \ \ 
    \mathfrak{J}\lambda = \ell_\sharp \lambda.
\end{equation*}
and their inverse
\begin{equation*}
\begin{aligned}
     J^{-1}:C([0,T],\PP(S))  \to C([0,T],\PP(\Ss)),
     \quad 
     (\tilde\mu_t)_{t\in [0,T]}  \mapsto (\jmath^{-1}_\sharp\tilde\mu_t)_{t\in [0,T]},
\end{aligned}
\end{equation*}
\begin{equation}\label{eq: ell}
    \ell^{-1}:C([0,T],S) \to C([0,T],\Ss), \ \  \ell^{-1}(\boldsymbol{x}) = (\jmath^{-1}(x_t))_{t\in [0,T]},
\end{equation}
\begin{equation*}
    \mathfrak{J}^{-1}:\PP(C([0,T],S)) \to \PP(C([0,T],\Ss)), \ \ 
    \mathfrak{J}^{-1}\tilde\lambda = \ell^{-1}_\sharp \tilde\lambda.
\end{equation*}

\begin{prop}\label{prop: correspondence}
    The following hold:
    \begin{enumerate}
        \item $\boldsymbol{x} = (x_t)_{t\in[0,T]} \in AC^p([0,T],S)$ if and only if $\boldsymbol{y}:= \ell^{-1}(\boldsymbol{x}) \in AC^p([0,T],\Ss)$. Moreover
        \begin{equation}\label{eq: ac curves corr}
            |\dot{x}|_{d_S}(t) = |\dot{y}|_{d_\Ss}(t) \quad \text{for a.e. }t\in[0,T];
        \end{equation}
        \item $\boldsymbol{\tilde\mu} = (\tilde\mu_t)_{t\in[0,T]} \in AC^p([0,T],\PP_p(S))$ if and only if $\boldsymbol{\mu}:= J^{-1}(\boldsymbol{\tilde\mu}) \in AC^p([0,T],\PP_p(\Ss))$. Moreover
        \begin{equation}\label{eq: ac curves meas corr}
            |\dot{\tilde{\mu}}|_{W_{p,d_S}}(t) = |\dot{\mu}|_{W_{p,d_\Ss}}(t) \quad \text{for a.e. }t\in[0,T];
        \end{equation}
        \item $\tilde{\lambda} \in \PP(C([0,T],S))$ is concentrated over $\boldsymbol{x} = (x_t)_{t\in[0,T]} \in AC([0,T],S)$ that solve
        \begin{equation}\label{ode x}
            \dot{x}(t) = \tilde{b}(t,x_t,(e_t)_\sharp\tilde\lambda),
        \end{equation} 
        if and only if $\lambda := \mathfrak{J}^{-1} \tilde\lambda \in \PP(C([0,T],\Ss))$ is concentrated over $\boldsymbol{y}\in AC([0,T],\Ss)$ that solve
        \begin{equation}\label{ode y}
            \dot{y}(t) = b(t,y_t,(e_t)_\sharp\lambda);
        \end{equation}
        \item $\boldsymbol{\tilde\mu} = (\tilde\mu_t)_{t\in[0,T]} \in C([0,T],\PP(S)) \subset C([0,T],\PP(\R^d))$ is a solution of 
        \begin{equation}\label{eq: ce tilde}
            \partial_t\tilde\mu_t + \operatorname{div}(\tilde{b}_t(\cdot,\tilde{\mu}_t)\mu_t) = 0
        \end{equation} 
        if and only if $\boldsymbol{\mu}:= J^{-1}(\boldsymbol{\tilde\mu})$ is a solution of $\partial_t\mu_t + \operatorname{div}_{\Ss}(b_t(\cdot,\mu_t)\mu_t) = 0$.
    \end{enumerate}
\end{prop}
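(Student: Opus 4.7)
The proposition is essentially a compatibility statement between the manifold and its Nash-isometric image in $\R^d$, and each of the four parts should reduce either to the metric or to the differential compatibility of $\jmath$.

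For (1) and (2), I will exploit that $\jmath$ realises a global isometry of Polish metric spaces between $(\Ss, d_\Ss)$ and $(S, d_S)$ (property (3) of the Nash isometry recalled at the beginning of the section), and consequently that $\boldsymbol\jmath = \jmath_\sharp$ is a global isometry between $(\PP_p(\Ss), W_{p,d_\Ss})$ and $(\PP_p(S), W_{p,d_S})$ (property (5)). Since absolute continuity and metric derivatives are preserved under isometries of metric spaces, and since by construction $\ell(\boldsymbol y)_t = \jmath(y_t)$ and $J(\boldsymbol\mu)_t = \boldsymbol\jmath(\mu_t)$, the equivalences in (1) and (2) together with the identities \eqref{eq: ac curves corr} and \eqref{eq: ac curves meas corr} are immediate.

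For (3), I will first record the compatibility identity $\e_t \circ \ell = \jmath \circ \e_t$ on $C([0,T],\Ss)$, whose push-forward yields
\[
(\e_t)_\sharp \tilde\lambda \;=\; (\e_t)_\sharp\,\ell_\sharp \lambda \;=\; \jmath_\sharp\,(\e_t)_\sharp \lambda \;=\; \boldsymbol\jmath\bigl((\e_t)_\sharp\lambda\bigr).
\]
Since $\ell$ is a continuous bijection from $C([0,T],\Ss)$ to $C([0,T],S)$ sending $AC$ curves to $AC$ curves by (1), and since the chain rule gives $\dot x_t = d_{y_t}\jmath(\dot y_t)$ at every differentiability point, the equivalence of the ODEs \eqref{ode y} and \eqref{ode x} reduces to
\[
d_{y_t}\jmath\bigl(b(t, y_t, (\e_t)_\sharp \lambda)\bigr) \;=\; \tilde b\bigl(t, \jmath(y_t), \boldsymbol\jmath((\e_t)_\sharp\lambda)\bigr) \;=\; \tilde b(t, x_t, (\e_t)_\sharp \tilde\lambda),
\]
which is immediate from \eqref{def of tilde b from b}; the converse implication uses $(d_y\jmath)^{-1}$, well defined since $\jmath$ is an immersion.

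For (4), the argument mirrors that of Proposition \ref{propr: tilde M} with the outer random-measure layer removed. The integrability conditions of $b$ against $\mu_t \otimes dt$ and of $\tilde b$ against $\tilde\mu_t \otimes dt$ coincide thanks to the pointwise local-isometry identity $|\tilde b_t(\jmath(y), \boldsymbol\jmath\mu_t)| = \|b_t(y, \mu_t)\|_{\ttg_y}$ combined with push-forward. To match the test-function classes, from any $\tilde\phi \in C^1_c(\R^d)$ I set $\phi := \tilde\phi \circ \jmath \in C^1(\Ss)$; conversely, any $\phi \in C^1(\Ss)$ yields $\phi \circ \jmath^{-1} \in C^1(S)$, which extends to a function in $C^1_c(\R^d)$ via a tubular neighbourhood of the compact submanifold $S$ and a smooth cutoff (the extension off $S$ being irrelevant, as $\tilde\mu_t$ is concentrated on $S$). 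The chain-rule identity $d_x\tilde\phi \circ d_{\jmath^{-1}(x)}\jmath = d_{\jmath^{-1}(x)}(\tilde\phi\circ\jmath)$ together with \eqref{def of tilde b from b} then make the weak formulation \eqref{weak ce manifold} for $\phi$ and the weak formulation of \eqref{eq: ce tilde} for $\tilde\phi$ literally the same identity after the change of variables induced by $\jmath$. The only delicate point in the whole argument is precisely this matching of test-function classes; everything else is chain rule and push-forward.
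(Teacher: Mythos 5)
Your proposal is correct and follows essentially the same route as the paper: (1) and (2) by the global isometry of $\jmath$ (resp.\ $\jmath_\sharp$) and invariance of $AC^p$ and metric derivatives under metric-space isometries; (3) by the compatibility $\e_t\circ\ell = \jmath\circ\e_t$ and the chain rule; (4) by matching the weak formulations through $\phi\mapsto\phi\circ\jmath^{-1}$ and $\tilde\phi\mapsto\tilde\phi\circ\jmath$. Two minor presentational differences, neither a gap: for (3) the paper does not invoke $\dot x_t = d_{y_t}\jmath(\dot y_t)$ directly but characterizes $\dot{\boldsymbol y}(t) = v_t$ via $\frac{d}{dt}f(y_t) = d_{y_t}f(v_t)$ for all $f\in C^1(\Ss)$, which is a slightly more careful formulation of the same chain-rule argument; for (4), you make explicit the extension of $\phi\circ\jmath^{-1}\in C^1(S)$ to $C^1_c(\R^d)$ via a tubular neighbourhood, whereas the paper only remarks that the relevant integral is ``equal to $0$ whatever its extension to the whole $\R^d$ is'' (since $\tilde\mu_t$ lives on $S$) without constructing the extension --- your version fills in that small step.
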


\begin{proof}
    \textit{(1)} and \textit{(2)} follow directly from the metric properties of the Nash isometry, i.e. $\jmath$ is an isometry between the metric spaces $(\Ss,d_\Ss)$ and $(S,d_s)$, and $\jmath_\sharp$ is an isometry between $(\PP(\Ss),W_{p,d_\Ss})$ and $(\PP(S),W_{p,d_S})$. Regarding \textit{(3)}, since $\mathfrak{J}^{-1} = \ell^{-1}_\sharp$, for the direct implication it suffices to show that whenever $\boldsymbol{x} \in AC([0,T],S)$ solves \eqref{ode x}, then $\boldsymbol{y} := \ell^{-1}(\boldsymbol{x})$ solves \eqref{ode y}. This follows from noticing that for all $f\in C^1(\Ss)$
\begin{align*}
    \frac{d}{dt}  f(y_t) = &
    \frac{d}{dt}f\circ \jmath^{-1} (x_t) 
    = 
    d_{\jmath^{-1} (x_t)}f \circ d_{x_t}\jmath^{-1} \big( \tilde{b}(t,x_t,(e_t)_\sharp\tilde\lambda) \big) 
    \\
    = &
    d_{\jmath^{-1} (x_t)}f \circ d_{x_t}\jmath^{-1} \circ d_{\jmath^{-1}(x_t)}\jmath \big( b(t, \jmath^{-1}(x_t), (\jmath^{-1} \circ e_t)_\sharp \tilde\lambda) \big)
    \\
    = &
    d_{y_t}f \big( b(t,y_t, (e_t\circ \ell^{-1})_\sharp\tilde\lambda) \big) 
    =  d_{y_t}f \big( b(t,y_t, (e_t)_\sharp\lambda) \big),
\end{align*}  
where we used the trivial  equality $e_t\circ \ell^{-1} = \jmath^{-1}\circ e_t$. We conclude thanks to the fact that for a curve $\boldsymbol{y}:[0,T]\to \Ss$, being absolutely continuous is equivalent to ask that $f\circ \boldsymbol{y} \in AC(0,T)$ for all $f\in C^1(\Ss)$. Moreover, its derivative is characterized as follows: for a.e. $t\in (0,T)$
    \[\dot{\boldsymbol{y}} = v_t \in T_{y_t}\Ss \quad\text{for a.e. }t  \iff \frac{d}{dt}f\circ y(t) = d_{y_t}f\big( v_t \big)\quad\text{for a.e. }t.\]
The converse implication in \textit{(3)} follows from a similar computation. 
Regarding \textit{(4)}, the local isometry property of $\jmath$ implies 
\[\int_0^T \int_{\Ss} \| b(t,y,\mu_t) \|_{T_y\Ss} d\mu_t(y) dt = \int_0^T \int_S |\tilde{b}(t,x,\tilde\mu_t)|d\tilde\mu_t(x) dt,\]
so that the integrability assumptions for both the continuity equations are satisfied. 
Now, let $\xi\in C_c^1(0,T)$ and $\phi\in C^1(\Ss)$, then 
\begin{align*}
    \int_0^T & \xi'(t) \int_{\Ss} \phi(y) d\mu_t(y) dt + \int_0^T \xi(t) \int_{\Ss} d_y\phi\big( 
    b(t,y,\mu_t) \big) d\mu_t(y) dt 
    \\
    = & 
    \int_0^T \xi'(t) \int_{\Ss} \phi(y) d\jmath^{-1}_\sharp\tilde\mu_t(y) dt + \int_0^T \xi(t) \int_{\Ss} d_y\phi\big( 
    b(t,y,\jmath^{-1}_\sharp\tilde\mu_t) \big) d\jmath^{-1}_\sharp\tilde\mu_t(y) dt
    \\
    = & 
     \int_0^T \xi'(t) \int_{S} \phi(\jmath^{-1}(x)) d\tilde\mu_t(x) dt + \int_0^T \xi(t) \int_{S} d_{\jmath^{-1}(x)}\phi\big( 
    b(t,\jmath^{-1}(x),\jmath^{-1}_\sharp\tilde\mu_t) \big) d\tilde\mu_t(x) dt
    \\
    = & 
    \int_0^T \xi'(t) \int_{S} \phi(\jmath^{-1}(x)) d\tilde\mu_t(x) dt + \int_0^T \xi(t) \int_{S} d_{\jmath^{-1}(x)}\phi\Big( d_x\jmath^{-1} \circ d_{\jmath^{-1}(x)}\jmath\big(
    b(t,\jmath^{-1}(x),\jmath^{-1}_\sharp\tilde\mu_t) \big)\Big) d\tilde\mu_t(x) dt
    \\
    = & 
    \int_0^T \xi'(t) \int_{S} \phi(\jmath^{-1}(x)) d\tilde\mu_t(x) dt + \int_0^T \xi(t) \int_{S} d_{x}(\phi\circ \jmath^{-1})\big(
    \Tilde{b}(t,x,\tilde\mu_t) \big) d\tilde\mu_t(x) dt.
\end{align*}
Now, for the direct implication, since $\phi \circ \jmath^{-1} \in C^1(S)$, the last row is equal to $0$ whatever its extension to the whole $\R^d$ is. For the converse implication, choosing $\phi = \tilde{\phi}\circ \jmath$ with $\tilde{\phi} \in C_c^1(\R^d)$, again the previous computation allows to conclude because $\phi \in C^1(\Ss)$. 
\end{proof}

\begin{teorema}\label{superposition for manifolds}
    Let $\boldsymbol{M}=(M_t)_{t\in [0,T]}\in C([0,T],\PP(\PP(\Ss)))$ be a solution of the continuity equation $\partial_tM_t + \operatorname{div}_{\PP(\Ss)}(b_tM_t)=0$ in the sense of Definition \ref{ce manifold rm}. Then, there exists $\Lambda\in \PP(C([0,T],\PP(\Ss)))$ and $\mathfrak{L}\in \PP(\PP(C([0,T],\Ss)))$ such that:
    \begin{enumerate}
        \item\label{prop of Lambda} $(\mathfrak{e}_t)_{\sharp}\Lambda = M_t$ for any $t\in [0,T]$, with $\mathfrak{e}_t(\boldsymbol{\mu}) = \mu_t$ and $\Lambda$-a.e. $\boldsymbol{\nu} = (\mu_t)_{t\in[0,T]}\in C([0,T],\PP(\Ss))$ solves $\partial_t\mu_t + \operatorname{div}_\Ss(b_t(\cdot,\mu_t)\mu_t)=0$, in the sense of Def. \ref{ce manifold};
        \item\label{prop of mathfrak L} $(E_t)_\sharp\mathfrak{L}=M_t$, where $E_t=(e_t)_\sharp$ and $e_t(\boldsymbol{y}) = y_t$ for any $\boldsymbol{y}=(y_t)_{t\in [0,T]}\in C([0,T],\Ss)$. Moreover, for $\mathfrak{L}$-a.e. $\lambda\in \PP(C([0,T],\Ss))$, it holds that $\lambda$ is concentrated over $(y_t)_{t\in[0,T]}\in AC([0,T],\Ss)$ that are solutions of $\dot{\boldsymbol{y}}(t) = b(t,y_t,(e_t)_\sharp\lambda)$.
    \end{enumerate}
\end{teorema}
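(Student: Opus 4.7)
The plan is to reduce the statement to the already established Euclidean nested superposition principle (Theorem \ref{thm: nested superposition}) via the Nash isometric embedding $\jmath:\Ss\to\R^d$, and then transport the resulting liftings back to $\Ss$ through the maps $J^{-1}$ and $\mathfrak{J}^{-1}$ defined in \eqref{eq: ell} and the lines that precede and follow it.

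First, I would set $\tilde{M}_t := \boldsymbol{\jmath}_\sharp M_t$ as in \eqref{eq: def of M_t from N_t} and let $\tilde b$ be the Borel vector field on $[0,T]\times\R^d\times\PP(\R^d)$ associated to $b$ by \eqref{def of tilde b from b}. By Proposition \ref{propr: tilde M}, the curve $(\tilde M_t)_{t\in[0,T]}\in C([0,T],\PP(\PP(\R^d)))$ solves $\partial_t\tilde M_t+\operatorname{div}_{\PP}(\tilde b_t\tilde M_t)=0$ in the sense of \eqref{eq: CERM}. Applying Theorem \ref{thm: nested superposition} to this Euclidean continuity equation yields $\tilde\Lambda\in\PP(C([0,T],\PP(\R^d)))$ and $\tilde{\mathfrak{L}}\in\PP(\PP(C([0,T],\R^d)))$ such that $(\mathfrak{e}_t)_\sharp\tilde\Lambda=\tilde M_t$ and $(E_t)_\sharp\tilde{\mathfrak{L}}=\tilde M_t$ for every $t\in[0,T]$, with the corresponding differential properties \eqref{eq: nlce intro} and \eqref{eq: sps intro} holding almost surely.

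The key geometric step is to confine these Euclidean liftings to $S=\jmath(\Ss)$. Since $\tilde M_t\in\PP(\PP(S))$ for every $t$ and the set $\PP(S)$ is closed in $\PP(\R^d)$ (because $S$ is compact), a Fubini-type argument together with a countable dense set of times shows that $\tilde\Lambda$-a.e.\ curve $\boldsymbol{\tilde\mu}$ takes values in $\PP(S)$ on a dense set of times, hence, by narrow continuity, on all of $[0,T]$. Similarly, since $E_t(\tilde\lambda)=(\e_t)_\sharp\tilde\lambda\in\PP(S)$ for every $t$ and $\tilde{\mathfrak{L}}$-a.e.\ $\tilde\lambda$, I can apply the same density-plus-continuity argument twice (once in $t$ and once pointwise in the curve variable, using that $S$ is closed) to conclude that $\tilde{\mathfrak{L}}$-a.e.\ $\tilde\lambda$ is concentrated on $C([0,T],S)$. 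In particular, the ODEs $\dot\gamma(t)=\tilde b_t(\gamma_t,(\e_t)_\sharp\tilde\lambda)$ satisfied $\tilde\lambda$-almost surely take place entirely in $S$, so that the fact that $\tilde b$ was set to $0$ outside $S$ is harmless.

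I would then define $\Lambda:=(J^{-1})_\sharp\tilde\Lambda$ and $\mathfrak{L}:=(\mathfrak{J}^{-1})_\sharp\tilde{\mathfrak{L}}$. Property \ref{prop of Lambda} follows by combining $(\mathfrak{e}_t)_\sharp\tilde\Lambda=\tilde M_t$ with $\boldsymbol{\jmath}^{-1}_\sharp\tilde M_t=M_t$ and applying item (4) of Proposition \ref{prop: correspondence} to the $\tilde\Lambda$-a.e.\ curve $\boldsymbol{\tilde\mu}$. Property \ref{prop of mathfrak L} follows analogously: item (3) of Proposition \ref{prop: correspondence} turns the Euclidean ODE satisfied by $\tilde{\mathfrak{L}}$-a.e.\ $\tilde\lambda$ into the intrinsic ODE on $\Ss$ satisfied by $\mathfrak{L}$-a.e.\ $\lambda=\ell^{-1}_\sharp\tilde\lambda$, and the marginal identity $(E_t)_\sharp\mathfrak{L}=M_t$ is a direct consequence of the chain of push-forwards $(\e_t\circ\ell^{-1})=\jmath^{-1}\circ\e_t$. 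The main obstacle I foresee is the confinement argument outlined above, and the measurability issues it raises; everything else is an essentially mechanical transfer through the Nash embedding.
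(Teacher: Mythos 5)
Your proposal is correct and follows essentially the same route as the paper: transfer the data to $\R^d$ via the Nash embedding and Proposition \ref{propr: tilde M}, invoke the Euclidean Theorem \ref{thm: nested superposition}, note that the resulting liftings are confined to $S=\jmath(\Ss)$ because $S$ is closed, and pull back through $J^{-1}$ and $\mathfrak{J}^{-1}$ using Proposition \ref{prop: correspondence}. The only noticeable difference is that you spell out the confinement step (dense-in-$t$ plus narrow continuity plus closedness of $\PP(S)$, and likewise pointwise in the curve variable) more explicitly than the paper, which states it in one line; your version is a useful elaboration, not a different argument.
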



\begin{proof}
    Define $(\tilde{M}_t)_{t\in[0,T]} \in C([0,T],\PP(\PP(\R^d)))$ and $\tilde{b}:[0,T]\times \R^d \times \PP(\R^d) \to \R^d$ as in, respectively, \eqref{eq: def of M_t from N_t} and \eqref{def of tilde b from b}. Thanks to Proposition \ref{propr: tilde M}, the continuity equation $\partial_t\tilde{M}_t + \operatorname{div}(\tilde{b}_t\tilde{M}_t) = 0$ is satisfied. 

Thanks to Theorem \ref{thm: nested superposition}, there exists $\Tilde{\Lambda}\in \PP(C_T(\PP(\R^d)))$ such that $(\mathfrak{e}_t)_\sharp\Tilde{\Lambda}= \tilde{M}_t$ and $\Tilde{\Lambda}$-a.e. $\boldsymbol{\tilde\mu}=(\tilde\mu_t)_{t\in [0,T]}\in C([0,T],\PP(\R^d))$ solves 
\begin{equation*}
    \partial_t\tilde\mu_t +\operatorname{div}(\Tilde{b}_t(\cdot,\tilde\mu_t)\tilde\mu_t)= 0.
\end{equation*}
In particular, note that $\Tilde{\Lambda} \in \PP(C([0,T],\PP(S)))$, since $S\subset \R^d$ is closed.
Then, we set $\Lambda:= J^{-1}_\sharp \Tilde{\Lambda}$. We now show that $\Lambda$ satisfies the properties in \ref{prop of Lambda}. For all $F:\PP(\Ss)\to [0,+\infty]$ Borel, it holds
\begin{align*}
    \int_{C([0,T],\PP(\Ss))} &F(\mathfrak{e}_t\boldsymbol{\mu}) d\Lambda(\boldsymbol{\mu}) 
    = 
    \int_{C([0,T],\PP(\Ss))} F(\mathfrak{e}_t\boldsymbol{\mu})  dJ^{-1}_\sharp\Tilde{\Lambda}(\boldsymbol{\mu})
    = 
    \int_{C([0,T],\PP(S))} F(\mathfrak{e}_t(J^{-1}\boldsymbol{\tilde\mu})) d\Tilde{\Lambda}(\boldsymbol{\tilde\mu})
    \\
    = &
    \int_{C([0,T],\PP(S))} F(\jmath^{-1}_\sharp\tilde\mu_t) d\Tilde{\Lambda}(\boldsymbol{\tilde\mu}) 
    = 
    \int_{\PP(S)} F(\jmath^{-1}_\sharp \tilde\mu) d\tilde{M}_t(\tilde\mu) 
    =
    \int_{\PP(\Ss)} F(\mu) dM_t(\mu).
\end{align*}
The fact that it is concentrated over solutions of $\partial_t \mu_t + \operatorname{div}_\Ss(b_t(\cdot,\mu_t)\mu_t) = 0$, is a consequence of Proposition \ref{prop: correspondence}, \textit{(4)}.

Again by Theorem \ref{thm: nested superposition}, there exists a random measure $\Tilde{\mathfrak{L}}\in \PP(\PP(C([0,T],\R^d)))$ satisfying $(E_t)_\sharp\Tilde{\mathfrak{L}} = \tilde{M}_t$ and $\Tilde{\mathfrak{L}}$-a.e. $\tilde\lambda\in \PP(C([0,T],\R^d))$ is concentrated over curves $\boldsymbol{x}= (x_t)_{t\in [0,T]}\in AC([0,T],\R^d)$ that are solutions of $\dot{x}(t) = \Tilde{b}(t,x_t,(e_t)_\sharp\tilde\lambda)$. Notice that $\Tilde{\mathfrak{L}}\in \PP(\PP(C([0,T],S)))$, since $S\subset\R^d$ is closed. Then, set $\mathfrak{L}:= \mathfrak{J}^{-1}_\sharp \Tilde{\mathfrak{L}} = \ell^{-1}_{\sharp\sharp} \Tilde{\mathfrak{L}}$, for which we check that satisfies the properties in \ref{prop of mathfrak L}. For all $F:\PP(\Ss)\to[0,1]$ Borel, it holds
\begin{align*}
    & \int_{\PP(C([0,T],\Ss))}  F((e_t)_\sharp\eta) d\mathfrak{L}(\eta) = \int_{\PP(C([0,T],\Ss))} F((e_t)_\sharp\lambda) d\mathfrak{J}^{-1}_\sharp\Tilde{\mathfrak{L}}(\lambda)
    \\
    &=
    \int_{\PP(C([0,T],S))} F((e_t\circ\ell^{-1})_\sharp \tilde\lambda) d\Tilde{\mathfrak{L}}(\tilde\lambda)
    = 
    \int_{\PP(C([0,T],S))} F((\jmath^{-1}\circ e_t)_\sharp \tilde\lambda) d\Tilde{\mathfrak{L}}(\tilde\lambda)
    \\
    &=
    \int_{\PP(C([0,T],S))} F(\jmath^{-1}_\sharp(E_t\tilde\lambda)) d\Tilde{\mathfrak{L}}(\tilde\lambda)
    = 
    \int_{\PP(S)} F(\jmath^{-1}_\sharp\tilde\mu) d\tilde{M}_t(\tilde\mu)
    =
    \int_{\PP(\Ss)} F(\mu) dM_t(\mu),
\end{align*}
where we used $e_t \circ \ell^{-1} = \jmath^{-1}\circ e_t$. The other property follows from Proposition \ref{prop: correspondence}, \textit{(3)}. \qedhere
\end{proof}

\begin{oss}
    By construction, the following property holds as well: $\Lambda = E_\sharp\mathfrak{L}$, where $E: \PP(C([0,T],\Ss))\to C([0,T],\PP(\Ss))$ is defined as $E(\lambda) = ((e_t)_\sharp\lambda)_{t\in [0,T]}$, for any $\lambda\in \PP(C([0,T],\Ss))$. Moreover, there exists a (Souslin-Borel) measurable map $G:C([0,T],\PP(\Ss)) \to \PP(C([0,T],\Ss))$ such that $\mathfrak{L} = G_\sharp\Lambda$, and for $\Lambda$-a.e. $\boldsymbol{\mu}$ it holds $E(G(\boldsymbol{\mu})) = \boldsymbol{\mu}$. Indeed, such a property holds in $\R^d$, as shown in \cite{pinzisavare2025}, and it is inherited thanks to the properties of the Nash isometry.
\end{oss}

\subsection{Atomic nested superposition principle for compact Riemannian manifolds}\label{subsec: atomic manifolds}
In this subsection, we exploit again the Nash isometry theorem to prove a version of Theorem \ref{main theorem} and \ref{main theorem metric} in the ambient space $\Ss$.

The next proposition shows how the capacity assumption is related to the dimension of the manifold.
\begin{prop}\label{prop: manifold capacity}
    Let $(\Ss,g_{\Ss})$ be a compact Riemannian manifold of dimension $k\geq 2$, with $\operatorname{vol}_{\Ss}$ its natural volume measure. Then, for all $f\in L^{\infty}(\operatorname{vol}_{\Ss})$, given $\nu = f\operatorname{vol}_{\Ss}$, it holds
    $\operatorname{cap}_{k,\jmath_\sharp \nu}(\Delta) = 0$. In particular, $\operatorname{cap}_{r,\jmath_\sharp \nu}(\Delta) = 0$ for all $r\leq k$.
\end{prop}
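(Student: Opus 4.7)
My plan is to adapt the logarithmic cut-off argument from Proposition \ref{prop: sharp capacity in R^d} to the submanifold setting, exploiting the fact that the Nash isometry sends $\nu$ to a measure $\tilde\nu := \jmath_\sharp \nu$ that is absolutely continuous with respect to $\mathcal{H}^k|_S$ with bounded density. Indeed, by \eqref{eq: push for volume}, $\tilde\nu = (f\circ \jmath^{-1})\,\mathcal{H}^k|_S$, and $\|f\circ \jmath^{-1}\|_{L^\infty(\mathcal{H}^k|_S)} = \|f\|_{L^\infty(\operatorname{vol}_\Ss)} =: \|f\|_\infty$. The statement for $r<k$ already follows from Corollary \ref{prop: hausd meas capacity} combined with the proposition following it, so the nontrivial case is the endpoint $r=k$, which is why the simple $\varepsilon$-strip argument is not enough and one needs the logarithmic cut-offs.

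As competitors I would use exactly the functions $h_{\varepsilon,R}$ defined in \eqref{eq: log cut-off}. The "mass" term is immediate: since $h_{\varepsilon,R}$ is supported in $\Delta_R$ and bounded by $1$, Corollary \ref{prop: hausd meas capacity} applied to $\tilde\nu$ (with $s=k$) gives
\begin{equation*}
\int_{\R^{2d}} |h_{\varepsilon,R}|^k \, d\tilde\nu\otimes \tilde\nu \leq \tilde\nu\otimes\tilde\nu(\Delta_R) \leq C_1 R^k,
\end{equation*}
which vanishes as $R\to 0$. For the gradient term, using \eqref{eq: estimates log cut-off} I reduce to estimating
\begin{equation*}
I(\varepsilon,R) := \int_S\!\int_{S\cap \{\varepsilon<|x-y|<R\}}\frac{1}{|x-y|^k}\,d\mathcal{H}^k|_S(x)\,d\mathcal{H}^k|_S(y).
\end{equation*}
The crucial estimate is the uniform bound $\mathcal{H}^k(S\cap B(y,r))\leq C_S r^k$ for all $y\in \R^d$ and $r>0$, which follows from the compactness of $S$ and the existence of finitely many local bi-Lipschitz parametrizations, covering $S$, over which the intrinsic $k$-dimensional Hausdorff measure is comparable to $\mathcal{L}^k$ on balls. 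A dyadic decomposition of the annulus $\{\varepsilon<|x-y|<R\}$ into the rings $A_j(y):=\{2^j\varepsilon<|x-y|\leq 2^{j+1}\varepsilon\}$ with $j=0,\dots, \lceil \log_2(R/\varepsilon)\rceil$ then yields
\begin{equation*}
\int_{S\cap\{\varepsilon<|x-y|<R\}}\frac{d\mathcal{H}^k|_S(x)}{|x-y|^k} \leq \sum_{j}\frac{\mathcal{H}^k(S\cap A_j(y))}{(2^j\varepsilon)^k} \leq 2^kC_S \,\log_2(R/\varepsilon) + 2^kC_S,
\end{equation*}
uniformly in $y$. Integrating against $d\mathcal{H}^k|_S(y)$ on the compact set $S$ gives $I(\varepsilon,R)\leq C_2 \log(R/\varepsilon)$.

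Combining the two pieces with the density bound $\tilde\nu\leq \|f\|_\infty \mathcal{H}^k|_S$, I obtain
\begin{equation*}
\int_{\R^{2d}} \!|h_{\varepsilon,R}|^k + |\nabla h_{\varepsilon,R}|^k \, d\tilde\nu\otimes\tilde\nu \leq C_1 R^k + \frac{2^k\|f\|_\infty^2 C_2}{(\log(R/\varepsilon))^{k-1}},
\end{equation*}
using $k\geq 2$ so that $k-1\geq 1$. Choosing $R=\sqrt{\varepsilon}$ and letting $\varepsilon\to 0$ makes the right-hand side vanish, proving $\operatorname{cap}_{k,\tilde\nu}(\Delta)=0$. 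The monotonicity $\operatorname{cap}_{r,\tilde\nu}(\Delta)\leq \operatorname{cap}_{k,\tilde\nu}(\Delta)$ for $r\leq k$ (as one can test with the same competitors after a short Jensen/Hölder manipulation on the $L^r(\tilde\nu\otimes\tilde\nu)$ norms on bounded sets, using that $\tilde\nu$ is a probability measure) delivers the last assertion. The only delicate point is the uniform volume growth bound $\mathcal{H}^k(S\cap B(y,r))\leq C_S r^k$, but this is a standard consequence of the tubular neighbourhood theorem and compactness of $S$; everything else is a direct transcription of the Euclidean argument.
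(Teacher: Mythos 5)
Your proof is correct and uses the same competitor functions $h_{\varepsilon,R}$ from \eqref{eq: log cut-off} with the same scaling $R=\sqrt{\varepsilon}$, but you handle the key singular-integral estimate with a genuinely different tool than the paper. The paper transfers the gradient integral back to the intrinsic Riemannian setting, applies the coarea formula to slice over geodesic spheres, and invokes Gray's asymptotic expansion $\mathcal{H}^{k-1}(\{y_1 : d_\Ss(y_1,y_2)=r\}) \leq \alpha_k r^{k-1} + \tilde{C} r^{k+1}$ to control the surface measure of small spheres. You instead stay in the ambient $\R^d$, use the extrinsic Ahlfors-regularity bound $\mathcal{H}^k(S\cap B(y,r))\leq C_S r^k$ (uniform in $y$ by compactness and local bi-Lipschitz charts), and sum over dyadic annuli to get the logarithmic divergence $\int_{S\cap\{\varepsilon<|x-y|<R\}} |x-y|^{-k}\,d\mathcal{H}^k|_S(x) \lesssim \log(R/\varepsilon)$. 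Both routes yield the same $(\log(R/\varepsilon))^{-(k-1)}$ decay after dividing by $(\log(R/\varepsilon))^k$. Your dyadic/Ahlfors approach is arguably more elementary and avoids the Riemannian coarea formula and Gray's theorem entirely, at the cost of needing the uniform extrinsic volume-growth bound, which — as you correctly flag — is standard for compact $C^1$ embedded submanifolds but does require a short justification you only sketch. Your Jensen argument for the $r<k$ case is also fine (and slightly more direct than the paper's appeal to the $\varepsilon$-strip estimate), since $\tilde\nu\otimes\tilde\nu$ is a probability measure.
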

\begin{proof}
    The proof relies again on the logarithmic cutoff functions introduced in \eqref{eq: log cut-off}. As in Proposition \ref{prop: sharp capacity in R^d}, writing $\tilde{\nu}:= \jmath_\sharp\nu$ we show that 
    \[\int_{\R^d \times \R^d}|h_{\varepsilon,R}(x_1,x_2)|^k + |\nabla h_{\varepsilon,R}(x_1,x_2)|^k d\tilde{\nu}\otimes \tilde{\nu}(x_1,x_2) \to 0,\]
    when choosing $R=\sqrt{\varepsilon}$ and letting $\varepsilon\to 0$. The first term is again trivial, since 
    \[\int_{\R^d \times \R^d}|h_{\varepsilon,R}(x_1,x_2)|^k d\tilde{\nu}\otimes \tilde{\nu}(x_1,x_2) \leq \tilde{\nu}\otimes \tilde{\nu}(\Delta_R) \to \tilde{\nu}\otimes \tilde{\nu}(\Delta) =  0,\]
    as $R\to 0$, since $\tilde{\nu}$ is atomless. The second term is a bit more delicate, and we will use many properties of the Nash embedding $\jmath$, in particular \eqref{eq: equivalence}, \eqref{eq: push for volume} and the fact that $\{x\in S : |x_1-x_2|\in(\varepsilon,R)\}\subset \{x_1\in S : d_S(x_1,x_2) \in (\varepsilon,CR)\}$, as well as the estimate \eqref{eq: estimates log cut-off}:
    \begin{align*}
        \int_{\R^d \times \R^d} &|\nabla h_{\varepsilon,R}|^k d\tilde{\nu}\otimes \tilde{\nu} \leq \frac{(2\sqrt{2})^k}{\left( \log \frac{R}{\varepsilon} \right)^k} \int_S \int_{\{x_1\in S : |x_1-x_2|\in (\varepsilon,R)\}} \frac{1}{|x_1-x_2|^k} f\circ \jmath^{-1}(x_1) d\mathcal{H}^k(x_1) d\tilde\nu(x_2)
        \\
        \leq & 
        \frac{(2\sqrt{2})^k\|f\|_\infty}{\left( \log \frac{R}{\varepsilon} \right)^k} \sup_{x_2\in S} \int_{\{x_1\in S : |x_1-x_2|\in (\varepsilon,R)\}} \frac{1}{|x_1-x_2|^k} d\mathcal{H}^k(x_1)
        \\ 
        \leq & 
        \frac{(2\sqrt{2} C)^k\|f\|_\infty}{ \left( \log \frac{R}{\varepsilon} \right)^k} \sup_{x_2\in S} \int_{\{x_1\in S : d_S(x_1,x_2)\in (\varepsilon,CR)\}} \frac{1}{d_S^k(x_1,x_2)} d\mathcal{H}^k(x_1)
        \\
        = & 
        \frac{(2\sqrt{2}C)^k\|f\|_\infty}{ \left( \log \frac{R}{\varepsilon} \right)^k} \sup_{y_2\in \Ss} \int_{\{y_1\in \Ss : d_\Ss(y_1,y_2)\in (\varepsilon,CR)\}} \frac{1}{d_\Ss^k(y_1,y_2)} d\operatorname{vol}_\Ss(y_1)
        \\
        = &
        \frac{(2\sqrt{2}C)^k\|f\|_\infty}{ \left( \log \frac{R}{\varepsilon} \right)^k} \sup_{y_2\in \Ss} \int_\varepsilon^{CR}  \frac{1}{r^k} \mathcal{H}^{k-1}(\{y_1\in \Ss : d_\Ss(y_1,y_2) = r\}) dr.
    \end{align*}
    Notice that, in the last row, the measure $\mathcal{H}^{k-1}$ is the $(k-1)$-dimensional Hausdorff measure built on the manifold $\Ss$. Now, thanks to \cite[Equation (11)]{volume1974gray}, for all $y_2\in \Ss$ there exists a constant $\tilde{C}>0$ such that, for $r>0$ small enough, it holds $\mathcal{H}^{k-1}(\{y_1\in \Ss : d_\Ss(y_1,y_2) = r\}) \leq \alpha_{k} r^{k-1} + \tilde{C}r^{k+1}$, where $\alpha_k$ is the $(k-1)$-dimensional measure of a unit sphere in $\R^k$. Then 
    \begin{align*}
         \int_{\R^d \times \R^d} &|\nabla h_{\varepsilon,R}|^k d\tilde{\nu}\otimes \tilde{\nu} \leq \frac{(2\sqrt{2}C)^k\|f\|_\infty}{ \left( \log \frac{R}{\varepsilon} \right)^k} \int_\varepsilon^{CR} \frac{\alpha_k}{r} + \tilde{C}r dr 
         \\
         = & (2\sqrt{2}C)^k\|f\|_\infty \left(\frac{\alpha_k}{ \left( \log \frac{R}{\varepsilon} \right)^{k-1}} + \frac{\log C + C^2R^2 - \varepsilon^2}{ \left( \log \frac{R}{\varepsilon} \right)^k}\right),
    \end{align*}
    and we conclude choosing $R = \sqrt{\varepsilon}$ and letting $\varepsilon \to 0$.
\end{proof}


We are now ready to state and prove the atomic nested superposition principle, in both the differential and metric versions, when the ambient space is a Riemannian manifold.

\begin{teorema}\label{main theorem manifold}
    Let $(\mathbb{S},g_{\mathbb{S}})$ be a compact, boundaryless and oriented Riemannian manifold. Assume $\pi \in \PP(\bold{T})$ and $\nu\in \PP(\Ss)$ with bounded density with respect to $\operatorname{vol}_\Ss$.
    
    Let $\Lambda \in \PP(C([0,T],\PP(\mathbb{S})))$ be concentrated over solutions of $\partial_t\mu_t + \operatorname{div}_\Ss(b_t(\cdot,\mu_t)\mu_t) = 0$ and define $\boldsymbol{M} = (M_t)_{t\in[0,T]} \in C([0,T],\PP(\PP(\mathbb{S})))$ as $M_t = (\mathfrak{e}_t)_\sharp \Lambda$. Assume:
    \begin{enumerate}
        \item $M_t = u_t \Q_{\pi,\nu}$, with $u \in L^1_t(L^q(\Q_{\pi,\nu}))$, with $q\in[1,+\infty]$;
        \item for some $p>1$, it holds
        \begin{equation}
            \int_0^T\int_{\PP(\Ss)} \int_\Ss \|b_t(y,\mu)\|_{T_y\Ss}^p d\mu(y)dM_t(\mu)dt <+\infty;
        \end{equation}
        \item $\frac{p'}{k}+\frac{1}{q}\leq1$. 
    \end{enumerate}
    Then, there exists $\mathfrak{L}\in \PP(\PPpa(AC^p([0,T],\Ss)))$ such that $E_\sharp \mathfrak{L} = \Lambda$ and is concentrated over $\lambda\in \PP(C([0,T],\Ss))$ that are, in turn, concentrated over absolutely continuous curves $\boldsymbol{y} = (y_t)_{t\in [0,T]}$ that are solutions of $\dot{\boldsymbol{y}}(t) = b_t(y_t,(e_t)_\sharp\lambda)$.
    \\
    In particular, $(E_t)_\sharp \mathfrak{L} = M_t$ for all $t\in[0,T]$ and $\Lambda$ is concentrated over $C([0,T],\PP^{\operatorname{pa}}(\mathbb{S}))$.
\end{teorema}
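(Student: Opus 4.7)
The strategy is to transfer the problem to the Euclidean setting via the Nash isometry $\jmath:\mathbb{S}\to \R^d$, apply the Euclidean atomic nested superposition principle (Theorem \ref{main theorem metric}, or equivalently Theorem \ref{main theorem}), and push the resulting lifting back to $\mathbb{S}$.

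First, set $\tilde\Lambda := J_\sharp \Lambda \in \PP(C([0,T],\PP(S)))\subset \PP(C([0,T],\PP(\R^d)))$, $\tilde M_t := \jmath_{\sharp\sharp} M_t \in \PP(\PP(S))$ and the Euclidean non-local vector field $\tilde b:[0,T]\times \R^d\times \PP(\R^d)\to \R^d$ defined by \eqref{def of tilde b from b}. By Proposition \ref{prop: correspondence}(4), $\tilde \Lambda$ is concentrated on solutions of $\partial_t\tilde\mu_t + \operatorname{div}(\tilde b_t(\cdot,\tilde\mu_t)\tilde\mu_t)=0$, and clearly $(\mathfrak{e}_t)_\sharp \tilde\Lambda = \tilde M_t$. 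I will then verify the four hypotheses of Theorem \ref{main theorem} for $\tilde \Lambda$, $\tilde M$, $\tilde b$ with the reference measure $Q_{\pi,\jmath_\sharp\nu}=\jmath_{\sharp\sharp}Q_{\pi,\nu}$ provided by \eqref{eq: nash ref measure}.

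The absolute continuity hypothesis transfers for free: writing $\tilde u_t := u_t\circ \boldsymbol{\jmath}^{-1}$, one checks $\tilde M_t = \tilde u_t Q_{\pi,\jmath_\sharp\nu}$ with $\|\tilde u_t\|_{L^q(Q_{\pi,\jmath_\sharp\nu})} = \|u_t\|_{L^q(Q_{\pi,\nu})}$, so that $\tilde u\in L^1_t(L^q(Q_{\pi,\jmath_\sharp\nu}))$. The $p$-integrability of $\tilde b$ against $\widetilde{\tilde M_t}\otimes dt$ also transfers, since the local isometry of $\jmath$ gives $|\tilde b(t,x,\tilde \mu)| = \|b(t,\jmath^{-1}(x),\boldsymbol{\jmath}^{-1}(\tilde\mu))\|_{\ttg_{\jmath^{-1}(x)}}$ for $(x,\tilde\mu)\in S\times\PP(S)$. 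The capacity hypothesis is the crucial point and is supplied exactly by Proposition \ref{prop: manifold capacity}: since $\nu$ has bounded density with respect to $\operatorname{vol}_\Ss$, we have $\operatorname{cap}_{k,\jmath_\sharp\nu}(\Delta)=0$, so we may take $r=k$ in Theorem \ref{main theorem}, and the exponent condition $\frac{p'}{r}+\frac{1}{q}\leq 1$ coincides with assumption (3) of the present statement.

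Theorem \ref{main theorem} (together with the metric refinement in Theorem \ref{main theorem metric} to secure $AC^p$ regularity of the curves) then yields $\tilde{\mathfrak{L}} \in \PP(\PP(C([0,T],\R^d)))$ concentrated on $\PPpa(AC^p([0,T],\R^d))\cap \operatorname{SPS}(\tilde b)$, with $E_\sharp \tilde{\mathfrak{L}} = \tilde\Lambda$. Because $\tilde\Lambda$ is supported on $C([0,T],\PP(S))$ and $S$ is closed in $\R^d$, for $\tilde{\mathfrak{L}}$-a.e.~$\tilde\lambda$ the marginal curve $E(\tilde\lambda)$ takes values in $\PP(S)$, and by Fubini $\tilde\lambda$ is concentrated on curves staying in $S$ for a.e.~$t$; by continuity, for $\tilde{\mathfrak{L}}$-a.e.~$\tilde\lambda$ one concludes $\tilde\lambda(C([0,T],S))=1$.

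I then define $\mathfrak{L} := \mathfrak{J}^{-1}_\sharp \tilde{\mathfrak{L}} = \ell^{-1}_{\sharp\sharp} \tilde{\mathfrak{L}}$, where $\ell^{-1}$ was defined in \eqref{eq: ell}. The three properties required follow from the transfer lemmas: the identity $E_\sharp\mathfrak{L} = \Lambda$ and $(E_t)_\sharp \mathfrak{L} = M_t$ reduce to the computation in the proof of Theorem \ref{superposition for manifolds} using $e_t\circ \ell^{-1} = \jmath^{-1}\circ e_t$; the fact that $\mathfrak{L}$ is concentrated on $\PPpa(AC^p([0,T],\Ss))$ follows from Lemma \ref{lemma: push for of PA meas} applied to the Borel map $\ell^{-1}$ (which preserves purely atomic measures) combined with Proposition \ref{prop: correspondence}(1) for $AC^p$-regularity of the curves; finally, the concentration of $\mathfrak{L}$-a.e.~$\lambda$ over solutions of $\dot{\boldsymbol{y}}(t) = b_t(y_t,(\e_t)_\sharp\lambda)$ is precisely Proposition \ref{prop: correspondence}(3). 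The only delicate step in the whole argument is checking that the Euclidean capacity assumption is satisfied with exponent $r=k$ for $\jmath_\sharp\nu$, which is handled by Proposition \ref{prop: manifold capacity} via the logarithmic cut-off functions and Gray's asymptotic expansion of the volume of geodesic spheres.
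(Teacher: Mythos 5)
Your proof is correct and follows essentially the same route as the paper's: push everything to $\R^d$ via the Nash isometry, verify the four hypotheses of the Euclidean atomic nested superposition principle (with the capacity hypothesis supplied by Proposition \ref{prop: manifold capacity} with exponent $r=k$), and pull the lifting back to $\Ss$ via $\mathfrak{J}^{-1}$ using Proposition \ref{prop: correspondence} and Lemma \ref{lemma: push for of PA meas}. Your additional remarks -- invoking Theorem \ref{main theorem metric} to make the $AC^p$ regularity explicit, and the Fubini argument justifying $\tilde\lambda(C([0,T],S))=1$ -- are useful clarifications of two points the paper leaves implicit, but they do not change the structure of the argument.
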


\begin{proof}
    Define $\tilde\Lambda := J_\sharp \Lambda \in \PP(C([0,T],\PP(S))) \subset \PP(C([0,T],\PP(\R^d)))$. Thanks to Proposition \ref{prop: correspondence}, \textit{(4)}, $\tilde\Lambda$ is concentrated on $\operatorname{CE}(\tilde{b})$. Next, we show that it satisfies the hypothesis of Theorem \ref{main theorem}.
    \begin{enumerate}
        \item The equality $\mathfrak{e}_t\circ J = \jmath_\sharp \circ \mathfrak{e}_t$ for all $t\in[0,T]$, gives that $(\mathfrak{e}_t)_\sharp \tilde\Lambda = \tilde{M}_t := \jmath_{\sharp\sharp} M_t$. Moreover, recalling \eqref{eq: nash ref measure}, we have that 
        \[\tilde{M}_t = \tilde{u}_t Q_{\pi,\jmath_\sharp \nu}, \quad \tilde{u}_t := u_t \circ \jmath_{\sharp}^{-1},\]
        which satisfy $\tilde{u}\in L^1_t(L^q(Q_{\pi,\jmath\sharp \nu}))$ from the following computation:
        \[ \int_0^T \|u_t\|_{L^q(Q_{\pi,\nu})} dt = \int_0^T \|u_t\circ \boldsymbol{\jmath}^{-1} \circ \boldsymbol{\jmath}\|_{L^q(Q_{\pi,\nu})} dt = \int_0^T \|u_t\circ \boldsymbol{\jmath}^{-1}\|_{L^q(Q_{\pi,\boldsymbol{\jmath}(\nu)})} dt. \]
        \item Again, the local isometry property of $\jmath$ implies $\|b_t(y,\mu)\|_{\ttg} =|\tilde{b}_t(\jmath(y), \jmath_\sharp \mu)|$, and in particular $\|b_t(\cdot,\mu)\|_{L^p(\mu)} = \|\tilde{b}_t(\cdot,\jmath_\sharp\mu)\|_{L^p(\jmath_\sharp \mu)}$. Then, since $\tilde{M}_t = \jmath_{\sharp\sharp}M_t$, we finally have
        \[\int_0^T \int_{\PP(\Ss)}\int_{\Ss} \|b_t(y,\mu)\|_{\ttg}^p d\mu(y)dM_t(\mu)dt = \int_0^T \int_{\PP(\R^d)} \int_{\R^d} |\tilde{b}_t(x,\tilde{\mu})|^p d\tilde{\mu}(x) d\tilde{M}_tdt; \]
        \item Proposition \ref{prop: manifold capacity} gives that $\operatorname{cap}_{k,\jmath_\sharp\nu}(\Delta) = 0$.
        \item The balance between the exponents works for $r = k$.
    \end{enumerate}
    Then, there exists $\tilde{\mathfrak{L}} \in \PP(\PPpa(C([0,T],\R^d)))$ that is concentrated over $\operatorname{SPS}(\tilde{b})$ and satisfies $E_\sharp \tilde{\mathfrak{L}} = \tilde\Lambda$, which implies that $\mathfrak{\tilde{L}}\in \PP(\PP(C([0,T],S)))$. Then, defining $\mathfrak{L} := \mathfrak{J}^{-1}_\sharp \tilde{\mathfrak{L}} \in \PP(\PP(C([0,T],\Ss)))$, we have: it is concentrated over $\lambda \in \PPpa(C([0,T],\Ss))$ thanks to Lemma \ref{lemma: push for of PA meas}; it is concentrated over $\lambda$ that, in turn, are concentrated over $\boldsymbol{y} = (y_t)_{t\in [0,T]} \in AC([0,T],\Ss)$ that are solutions of $\dot{y}(t) = b_t(y_t,(e_t)_\sharp \lambda)$, thanks to Proposition \ref{prop: correspondence}, \textit{(3)}; the trivial equality $E\circ \mathfrak{J}^{-1} = J^{-1}\circ E$ gives that $E_\sharp \mathfrak{L} = (E\circ \mathfrak{J}^{-1})_\sharp \tilde{\mathfrak{L}} = (J^{-1} \circ E)_\sharp \tilde{\mathfrak{L}} = J^{-1}_\sharp \tilde\Lambda = \Lambda$. 
\end{proof}

\begin{teorema}\label{main theorem metric manifold}
    Let $(\mathbb{S},g_{\mathbb{S}})$ be a compact, boundaryless and oriented Riemannian manifold. Assume $\pi \in \PP(\bold{T})$ and $\nu\in \PP(\Ss)$ with bounded density with respect to $\operatorname{vol}_\Ss$.
    
    Let $\Lambda \in \PP(C([0,T],\PP(\mathbb{S})))$ be concentrated over $AC^p([0,T],\PP_p(\mathbb{S}))$ and define $\boldsymbol{M} = (M_t)_{t\in[0,T]} \in C([0,T],\PP(\PP(\mathbb{S})))$ as $M_t = (\mathfrak{e}_t)_\sharp \Lambda$. Assume:
    \begin{enumerate}
        \item $M_t = u_t \Q_{\pi,\nu}$, with $u \in L^1_t(L^q(\Q_{\pi,\nu}))$, with $q\in[1,+\infty]$;
        \item for some $p>1$, it holds
        \begin{equation}
            \int\int_0^T |\dot{\boldsymbol{\mu}}|_{W_{p,d_\Ss}}^p(t)dt d\Lambda(\boldsymbol{\mu})<+\infty;
        \end{equation}
        \item $\frac{p'}{k}+\frac{1}{q}\leq1$. 
    \end{enumerate}
    Then, there exists $\mathfrak{L}\in \PP(\PPpa(AC^p([0,T],\Ss)))$ such that $E_\sharp \mathfrak{L} = \Lambda$ and 
    \begin{equation}\label{eq: minimal lifting riem}
        \int\int\int_0^T |\dot{\boldsymbol{y}}|^p_{d_{\mathbb{S}}}(t) dt d\lambda (\boldsymbol{y})d\mathfrak{L}(\lambda) <+\infty
    \end{equation} In particular, $(E_t)_\sharp \mathfrak{L} = M_t$ for all $t\in[0,T]$ and $\Lambda$ is concentrated over $C([0,T],\PP^{\operatorname{pa}}(\mathbb{S}))$.
\end{teorema}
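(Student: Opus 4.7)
The plan is to mirror the strategy used for Theorem \ref{main theorem manifold}: transfer everything through the Nash isometry $\jmath$ to the Euclidean setting, apply the Euclidean metric atomic nested superposition (Theorem \ref{main theorem metric}), and then pull the resulting lifting back to $\mathbb{S}$. Define $\tilde\Lambda := J_\sharp \Lambda \in \PP(C([0,T],\PP(S)))\subset \PP(C([0,T],\PP(\R^d)))$ and $\tilde M_t := (\mathfrak{e}_t)_\sharp \tilde\Lambda = \jmath_{\sharp\sharp} M_t$. Thanks to Proposition \ref{prop: correspondence}(2) the push-forward $J$ preserves $p$-absolute continuity together with the metric derivative with respect to $W_{p,d_S}$, so $\tilde \Lambda$ is concentrated on $AC^p([0,T],\PP_p(S,d_S))$.

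The first task is to verify the four hypotheses of Theorem \ref{main theorem metric} for $\tilde\Lambda$. For (1), by \eqref{eq: nash ref measure} we have $\tilde M_t = \tilde u_t\, Q_{\pi,\jmath_\sharp\nu}$ with $\tilde u_t = u_t\circ \boldsymbol{\jmath}^{-1}$, and the change-of-variables for $\jmath_{\sharp\sharp}$ gives $\|\tilde u_t\|_{L^q(Q_{\pi,\jmath_\sharp\nu})} = \|u_t\|_{L^q(Q_{\pi,\nu})}$, so the time-integrability transfers. For (3), Proposition \ref{prop: manifold capacity} yields $\operatorname{cap}_{k,\jmath_\sharp\nu}(\Delta) = 0$; condition (4) then reduces to hypothesis (3) of the present theorem. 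The delicate point is (2): Theorem \ref{main theorem metric} asks for finiteness of the Euclidean metric derivative $|\dot{\tilde{\boldsymbol{\mu}}}|_{W_{p,|\cdot|}}$, while the assumption we have is the intrinsic $|\dot{\boldsymbol{\mu}}|_{W_{p,d_\Ss}}$. Using \eqref{eq: equivalence} together with Proposition \ref{prop: correspondence}(2), for $\Lambda$-a.e.\ $\boldsymbol{\mu}$ and a.e.\ $t$ we have
\begin{equation*}
|\dot{\tilde{\boldsymbol{\mu}}}|_{W_{p,|\cdot|}}(t)\leq |\dot{\tilde{\boldsymbol{\mu}}}|_{W_{p,d_S}}(t) = |\dot{\boldsymbol{\mu}}|_{W_{p,d_\Ss}}(t),
\end{equation*}
so integrating against $\Lambda = \mathfrak{J}^{-1}_\sharp \circ J_\sharp \Lambda$ (via the isometry $J$) and exploiting assumption (2) of the theorem yields
\begin{equation*}
\int\int_0^T |\dot{\tilde{\boldsymbol{\mu}}}|^p_{W_{p,|\cdot|}}(t)\, dt\, d\tilde\Lambda(\tilde{\boldsymbol{\mu}}) \leq \int\int_0^T |\dot{\boldsymbol{\mu}}|^p_{W_{p,d_\Ss}}(t)\, dt\, d\Lambda(\boldsymbol{\mu}) < +\infty.
\end{equation*}

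Applying Theorem \ref{main theorem metric} to $\tilde\Lambda$, we obtain $\tilde{\mathfrak{L}}\in \PP(\PPpa(AC^p([0,T],\R^d)))$ with $E_\sharp \tilde{\mathfrak{L}} = \tilde\Lambda$ and
\begin{equation*}
\int\int\int_0^T |\dot{\boldsymbol{x}}|^p(t)\, dt\, d\tilde\lambda(\boldsymbol{x})\, d\tilde{\mathfrak{L}}(\tilde\lambda)<+\infty.
\end{equation*}
Since $\tilde\Lambda$ is supported in curves valued in $\PP(S)$, the disintegration argument already used in the proof of Theorem \ref{main theorem metric} (via the measurable selection for the Souslin set $E\bigl(\PPpa(AC^p([0,T],S))\cap \mathfrak{A}_{\min}\bigr)$) ensures that $\tilde{\mathfrak{L}}$ is in fact concentrated on $\PPpa(AC^p([0,T],S))$; indeed $S\subset \R^d$ is closed and any atomic lifting of a curve supported on $S$ has atoms running inside $S$.

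Finally, set $\mathfrak{L} := \mathfrak{J}^{-1}_\sharp \tilde{\mathfrak{L}} = \ell^{-1}_{\sharp\sharp}\tilde{\mathfrak{L}}$. Since $\ell^{-1}$ is a homeomorphism onto $C([0,T],\Ss)$ that preserves absolute continuity and the metric derivative by Proposition \ref{prop: correspondence}(1), and since push-forwards of purely atomic measures remain purely atomic by Lemma \ref{lemma: push for of PA meas}, the measure $\mathfrak{L}$ belongs to $\PP(\PPpa(AC^p([0,T],\Ss)))$. The identity $E\circ \mathfrak{J}^{-1} = J^{-1}\circ E$ gives $E_\sharp \mathfrak{L} = J^{-1}_\sharp \tilde\Lambda = \Lambda$, and \eqref{eq: ac curves corr} together with the minimality from the Euclidean step yields
\begin{equation*}
\int\int\int_0^T |\dot{\boldsymbol{y}}|^p_{d_\Ss}(t)\, dt\, d\lambda(\boldsymbol{y})\, d\mathfrak{L}(\lambda) = \int\int\int_0^T |\dot{\boldsymbol{x}}|^p(t)\, dt\, d\tilde\lambda(\boldsymbol{x})\, d\tilde{\mathfrak{L}}(\tilde\lambda)<+\infty,
\end{equation*}
which is \eqref{eq: minimal lifting riem}. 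The reconciliation of the two Wasserstein metrics on $\PP(S)$ (Euclidean versus intrinsic) is the only non-automatic step; everything else follows by functoriality of the Nash transfer already systematized in Proposition \ref{prop: correspondence}.
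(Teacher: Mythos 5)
Your proof follows the same strategy as the paper: push $\Lambda$ forward through $J = (\jmath_\sharp)_{\#}$ to get $\tilde\Lambda$ on $C([0,T],\PP(S))$, check the hypotheses of the Euclidean Theorem \ref{main theorem metric} (using \eqref{eq: nash ref measure} for the density, Proposition \ref{prop: manifold capacity} for the capacity, and the comparison $W_p\leq W_{p,d_S}$ for the metric speed), apply it, and pull $\tilde{\mathfrak{L}}$ back via $\mathfrak{J}^{-1}$. The hypothesis-verification steps are fine, and your observation that hypothesis (2) follows directly from $|\dot{\tilde{\boldsymbol\mu}}|_{W_p}\le|\dot{\tilde{\boldsymbol\mu}}|_{W_{p,d_S}}=|\dot{\boldsymbol\mu}|_{W_{p,d_\Ss}}$ is actually cleaner than the paper's own version, which inserts an unnecessary constant.

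Two small points. First, the expression $\Lambda = \mathfrak{J}^{-1}_\sharp\circ J_\sharp\Lambda$ is a type mismatch: $\mathfrak{J}^{-1}$ acts on $\PP(C([0,T],S))$, not on $C([0,T],\PP(S))$; what you mean is $J^{-1}_\sharp J_\sharp\Lambda = \Lambda$, which is just the bijectivity of $J$. Second, your concluding \emph{equality} $\int\!\int\!\int_0^T|\dot{\boldsymbol y}|^p_{d_\Ss}\,dt\,d\lambda\,d\mathfrak{L} = \int\!\int\!\int_0^T|\dot{\boldsymbol x}|^p\,dt\,d\tilde\lambda\,d\tilde{\mathfrak{L}}$ is not supported by \eqref{eq: ac curves corr} alone, which only yields $|\dot{\boldsymbol y}|_{d_\Ss}=|\dot{\boldsymbol x}|_{d_S}$; you would still need the (true but unstated) fact that for an absolutely continuous curve lying in the closed set $S$, the $d_S$ metric derivative and the Euclidean metric derivative coincide a.e.\ (which holds because the $d_S$-length and the Euclidean length of curves contained in $S$ are equal). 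The paper sidesteps this by using only the one-sided bound $d_S\le C|\cdot|$, hence $|\dot{\boldsymbol x}|_{d_S}\le C|\dot{\boldsymbol x}|$, which already gives the required finiteness in \eqref{eq: minimal lifting riem}. Either justification works; just close the gap in yours or weaken to the inequality.
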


\begin{proof}
    The proof follows the same line of the one of Theorem \ref{main theorem manifold}. Define $\tilde{\Lambda} := J_\sharp \Lambda \in \PP(C([0,T],\PP(S)))$. Thanks to Proposition \ref{prop: correspondence}, \textit{(2)}, $\tilde\Lambda$ is concentrated over $AC^p([0,T],S)$, which coincides with $AC^p([0,T],\R^d) \cap C([0,T],S)$ because $|x-y| \leq d_S(x,y) \leq C|x-y|$ for all $x,y \in S$. Let us verify that $\tilde\Lambda$ satisfies the assumptions of Theorem \ref{main theorem metric}:
    \begin{enumerate}
        \item As in the proof of Theorem \ref{main theorem manifold}, $(\mathfrak{e}_t)_\sharp \tilde\Lambda = \tilde{M}_t := \jmath_{\sharp\sharp} M_t$, and $\tilde{M}_t = \tilde{u}_t Q_{\pi,\jmath_\sharp \nu}$, with $\tilde{u}\in L^1_t(L^q(Q_{\pi,\jmath_\sharp \nu}))$.
        \item Thanks to \eqref{eq: ac curves meas corr} and $W_p(\tilde{\mu}_1,\tilde{\mu}_2) \leq  W_{p,d_S}(\tilde{\mu}_1,\tilde{\mu}_2)$ for all $\tilde{\mu}_1,\tilde{\mu}_2 \in \PP(S)$, we have 
        \[\int\int_0^T |\boldsymbol{\dot{\tilde\mu}}|_{W_p}^p(t)dt d\tilde{\Lambda}(\boldsymbol{\tilde\mu}) \leq  \int\int_0^T |\boldsymbol{\dot{\tilde\mu}}|_{W_{p,d_S}}^p(t)dt d\tilde{\Lambda}(\boldsymbol{\tilde\mu}) =  \int\int_0^T |\dot{\boldsymbol{\mu}}|_{W_{p,d_\Ss}}^p(t)dt d\Lambda(\boldsymbol{\mu}) <+\infty. \]
        \item[(3)]and (4) They both easily follow as in the proof of Theorem \ref{main theorem manifold}.
    \end{enumerate}
    Then, there exists $\tilde{\mathfrak{L}} \in \PP(\PPpa(AC^p([0,T],\R^d)))$ satisfying $E_\sharp \tilde{\mathfrak{L}} = \tilde\Lambda$, and in particular $\tilde{\mathfrak{L}} \in \PP(\PP(C([0,T],S)))$, so that 
    \[\int \int \int_0^T |\dot{x}|_{d_S}^p(t) dt d\tilde{\lambda}(\boldsymbol{x}) d\tilde{\mathfrak{L}}(\tilde\lambda) \leq C^p \int \int \int_0^T |\dot{x}|^p(t) dt d\tilde{\lambda}(\boldsymbol{x}) d\tilde{\mathfrak{L}}(\tilde\lambda)<+\infty.\]
    Define $\mathfrak{L}:= \mathfrak{J}^{-1}_\sharp \tilde{\mathfrak{L}}$. Again, we have: $E_\sharp \mathfrak{L} = (E\circ \mathfrak{J}^{-1})_\sharp \tilde{\mathfrak{L}} = (J^{-1} \circ E)_\sharp \tilde{\mathfrak{L}} = J^{-1}_\sharp \tilde\Lambda = \Lambda$; $\mathfrak{L}\in \PP(\PPpa(C([0,T],\Ss)))$ thanks to Lemma \ref{lemma: push for of PA meas}; thanks to Proposition \ref{eq: ac curves corr}, \textit{(1)}, $\mathfrak{L}\in \PP(\PP(AC^p([0,T],\Ss)))$ and \eqref{eq: minimal lifting riem} holds as well.
\end{proof}

\section{A metric measure viewpoint on the Wasserstein space}\label{sec: sobolev and curvature}
Let us recall some general notions about Sobolev spaces on metric measure spaces, for which we mainly follow \cite{gigli2020lectures, AmIkLuPa24}. In particular, over all the equivalent definitions of Sobolev spaces, we will give the one that exploits \textit{test plans}, that will be particularly useful for our setting.
\\
Let $(X,d,\ttm)$ be a metric probability measure space, that is $(X,d)$ is a complete and separable metric space and $\ttm\in \mathcal{P}(X)$.
\begin{df}[$p'$-test plan]
    Let $p\in [1,+\infty)$. A $p'$-test plan is a probability measure over curves $\Pi\in \PP(C([0,1],X))$ satisfying:
    \begin{enumerate}
        \item there exists $C\geq 0$ (compression constant) such that $(\e_t)_\sharp \Pi \leq C \ttm$ for all $t\in[0,1]$;
        \item it has finite $p'$-energy, that is 
        \begin{equation}\label{eq: finite p' energy}
            \int \int_0^1 |\dot{\boldsymbol{x}}|_d^{p'}(t) dt d\Pi(\boldsymbol{x})<+\infty.
        \end{equation}
    \end{enumerate}
\end{df}

\begin{df}[$p$-Sobolev space]
    Let $p\in[1,+\infty)$ and $f\in L^p(\ttm)$. We say that $f\in W^{1,p}(X,d,\ttm)$ if there exists a non-negative function $G \in L^{p}(\ttm)$ such that for all $p'$-test plan $\Pi$, it holds $f\circ \boldsymbol{x} \in W^{1,1}(0,1)$ for $\Pi$-a.e. $\boldsymbol{x}$ and 
    \begin{equation}\label{eq: sob space}
        |(f\circ \boldsymbol{x})'(t)| \leq G(\boldsymbol{x}_t)|\dot{\boldsymbol{x}}|_d(t), \quad \text{for } \Pi \otimes \mathcal{L}^1\text{-a.e. }(\boldsymbol{x},t) \in C([0,1],X)\times [0,1].
    \end{equation}
    Any $G$ as above is called a $p$-weak upper gradient, and in the class of weak upper gradients we can consider the $\ttm$-a.e. pointwise minimum, denoted by $|Df|$. Then, $W^{1,p}(X,d,\ttm)$ is a Banach space when endowed with the norm 
    \begin{equation}
        \|f\|_{W^{1,p}(X,d,\ttm)} := \left(\|f\|_{L^p(\ttm)}^p+ \||Df|\|_{L^p(\ttm)}^p\right)^{1/p}.
    \end{equation}
\end{df}

A different, but equivalent, approach for defining the Sobolev spaces is by relaxing Lipschitz functions \cite[Section 5.1]{AmIkLuPa24}, giving in particular that globally $L$-Lipschitz functions $f$ are in $W^{1,p}(X,d,\ttm)$, with $|Df|\leq L$ $\ttm$-a.e. The vice versa is called \textit{Sobolev-to-Lipschitz} property and does not always hold:
\begin{equation}\tag{Sob-Lip}\label{eq: p-sob to lip}
    \begin{gathered}
        W^{1,p}(X,d,\ttm) \text{ satisfies the $p$-Sobolev-to-Lipschitz property}
        \\
        \iff 
        \\
        \text{for all }f\in W^{1,p}(X,d,\ttm), \ \ |Df|\leq L \implies \exists \Tilde{f}:X \to \R \text{ $L$-Lip. with }f=\Tilde{f} \ \ttm\text{-a.e.}
    \end{gathered}
\end{equation}

Another important property regards the Hilbertianity of the $2$-Sobolev space. It is not always true, as in the usual Sobolev theory, that $W^{1,2}(X,d,\ttm)$ is an Hilbert space. We thus recall the following definition due to N. Gigli \cite{gigli2013splitting}:
\begin{equation}\label{inf-H}\tag{inf-H}
    (X,d,\ttm) \text{ is \textbf{infinitesimally Hilbertian}} \iff W^{1,2}(X,d,\ttm) \text{ is a Hilbert space}.
\end{equation}

From now on, we assume that $(X,d,\ttm)$ is infinitesimally Hilbertian, which implies that the associated Cheeger energy 
\[\operatorname{Ch}(f) := \begin{cases}
\int_{X} |Df|^2 d\ttm, & f \in W^{1,2}(X,d,\ttm)
\\
+\infty, & f\in L^2(\ttm)\setminus W^{1,2}(X,d,\ttm)
\end{cases}\]
is quadratic. Moreover, it is lower semicontinuous with respect to the $L^2$-convergence. In particular, we can build its gradient flow in the Lebesgue space $L^2(\ttm)$, obtaining a contractive linear semigroup $\mathtt{H}_t:L^2(\ttm) \to L^2(\ttm)$, $t\geq 0$, usually called \textit{heat semigroup} associated to $(X,d,\ttm)$, performing the \textit{minimizing movement scheme}: for all $f\in L^2(\ttm)$ and $\tau>0$, define $f^{(\tau)}_{(0)} := f$ and
\begin{equation}\label{MMS}
    f^{(\tau)}_{(n+1)} \in \operatorname{argmin} \bigg( \operatorname{Ch}(\cdot) + \frac{\|\cdot - f^{(\tau)}_{(n)} \|^2}{2\tau} \bigg), \quad f^{(\tau)}_t := f^{(\tau)}_{(n)}, \ \text{with } t\in[n\tau, (n+1)\tau).
\end{equation}
Then, $\mathtt{H}_t f = \lim_{\tau\to 0} f^{(\tau)}_t$.
Its main properties are listed in \cite[Theorem 5.1.12]{gigli2020lectures}. Here we list the ones that are of our interest:
\begin{enumerate}
    \item for all $t\geq 0$, $\mathtt{H}_t$ is well-defined as an operator from $L^p(\ttm)$ to $L^p(\ttm)$, as shown in \cite[Proposition 5.2.15]{gigli2020lectures};
    \item for all $t>0$ and $f\in L^2(\ttm)$, $\mathtt{H}_t f \in W^{1,2}(X,d,\ttm)$;
    \item for all $p\in[1,+\infty)$ and $t>0$, $\|\mathtt{H}_t f\|_{L^p(\ttm)} \leq \|f\|_{L^p(\ttm)}$ for all $L^2(\ttm)\cap L^p(\ttm)$. In particular, it can be extended to a linear and continuous operator $\mathtt{H}_t:L^p(\ttm) \to L^p(\ttm)$;
    \item it has the Markov property, that is $\mathtt{H}_t \mathds{1} = \mathds{1}$ and $\mathtt{H}_t f \geq 0$ $\ttm$-a.e., whenever $f\geq 0$ $\ttm$-a.e. Both properties can be directly verified from the minimizing movement scheme \eqref{MMS}.
\end{enumerate}

The last property we introduce about general metric measure spaces is the \textit{Bakry--\'Emery curvature condition}, for some $K\in \R$: 
\begin{equation}\label{eq: BE}\tag{$\operatorname{BE}(K,\infty)$}
   |D(\mathtt{H}_t f)|^2 \leq e^{-2Kt} \mathtt{H}_t(|Df|^2) \quad \ttm\text{-a.e.}, \quad \  \forall f\in W^{1,2}(X,d,\ttm), \ \forall t\geq 0.
\end{equation}

The three conditions above provides one of the equivalent definitions of $\operatorname{RCD}(K,\infty)$ spaces (see for example \cite{savare2013self, ambrosio2015bakry}).

\begin{df}\label{def: rcd}
    Let $K\in \R$. A metric measure space $(X,d,\ttm)$ is said to be $\operatorname{RCD}(K,\infty)$ if it satisfies \eqref{inf-H}, \eqref{eq: p-sob to lip} with $p=2$ and \eqref{eq: BE}.
\end{df}

In the next section, we will see how our atomic nested superposition principles can be exploited to show that the $L^2$-Wasserstein space, endowed with suitable reference measures of the form $Q_{\pi,\nu}$, does not satisfy \eqref{eq: p-sob to lip}, and therefore it cannot be a $\operatorname{RCD}$ space.

Before applying this general theory to the Wasserstein space, let us state a useful lemma.

\begin{lemma}\label{lemma: BE dense in energy}
    Let $(X,d,\ttm)$ be infinitesimally Hilbertian, and $K\in \R$. Let $\mathcal{A}\subset W^{1,2}(X,d,\ttm)$ be dense in energy, that is, for all $f\in W^{1,2}(X,d,\ttm)$ there exist $f_n \in \mathcal{A}$ such that 
    \begin{equation}\label{eq: dense in energy}
        f_n \overset{L^2(\ttm)}{\longrightarrow} f, \quad |Df_n|  \overset{L^2(\ttm)}{\longrightarrow} |Df|.
    \end{equation}
    Then, it suffices to verify \eqref{eq: BE} for $f\in \mathcal{A}$.
\end{lemma}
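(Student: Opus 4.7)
The plan is to take an arbitrary $f \in W^{1,2}(X,d,\ttm)$ together with a sequence $f_n \in \mathcal{A}$ satisfying \eqref{eq: dense in energy}, and pass the inequality \eqref{eq: BE} for $f_n$ to the limit. Since both sides of \eqref{eq: BE} are nonnegative $L^1(\ttm)$-functions, it is equivalent to verify it in the integrated form
\begin{equation*}
\int_X \phi \, |D(\mathtt{H}_t f)|^2 \, d\ttm \leq e^{-2Kt} \int_X \phi \, \mathtt{H}_t(|Df|^2) \, d\ttm,
\end{equation*}
for every nonnegative $\phi \in L^2(\ttm) \cap L^\infty(\ttm)$. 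So I would test the BE inequality for $f_n$ against such a $\phi$ and then study separately the limits of the two sides.

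For the right-hand side I would exploit the infinitesimal Hilbertianity, which makes $\operatorname{Ch}$ a symmetric quadratic Dirichlet form and $\mathtt{H}_t$ its associated self-adjoint semigroup on $L^2(\ttm)$. Thus
\begin{equation*}
\int_X \phi \, \mathtt{H}_t(|Df_n|^2) \, d\ttm = \int_X \mathtt{H}_t \phi \cdot |Df_n|^2 \, d\ttm.
\end{equation*}
The $L^2$-convergence $|Df_n| \to |Df|$ implies $|Df_n|^2 \to |Df|^2$ in $L^1(\ttm)$ by Cauchy--Schwarz (the $L^2$-norms of $|Df_n|$ being bounded), while $\mathtt{H}_t \phi \in L^\infty(\ttm)$ by the Markov property. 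Hence the right-hand side above converges to $\int_X \mathtt{H}_t \phi \cdot |Df|^2 \, d\ttm = \int_X \phi \, \mathtt{H}_t(|Df|^2) \, d\ttm$, where we undo the duality once more.

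For the left-hand side I would combine the $L^2$-contractivity of $\mathtt{H}_t$, which gives $\mathtt{H}_t f_n \to \mathtt{H}_t f$ in $L^2(\ttm)$ with $\sup_n \operatorname{Ch}(\mathtt{H}_t f_n) \leq \sup_n \operatorname{Ch}(f_n) < \infty$, with the lower semicontinuity of the localized Dirichlet form $u \mapsto \int_X \phi \, |Du|^2 \, d\ttm$ for $\phi \in L^\infty_+(\ttm)$ along $L^2$-convergent sequences with uniformly bounded Cheeger energy. This yields
\begin{equation*}
\int_X \phi \, |D(\mathtt{H}_t f)|^2 \, d\ttm \leq \liminf_{n \to \infty} \int_X \phi \, |D(\mathtt{H}_t f_n)|^2 \, d\ttm,
\end{equation*}
and concatenating with the previous step gives the desired integrated inequality; letting $\phi$ vary among nonnegative functions in $L^2 \cap L^\infty$ then recovers \eqref{eq: BE} for $f$ pointwise $\ttm$-a.e. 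The only non-routine ingredient is the weighted lower semicontinuity of the Cheeger form, which is classical in the infinitesimal Hilbertian setting (it follows from the quadratic structure of $\operatorname{Ch}$, the additivity of the minimal weak upper gradient on Borel partitions, and a standard approximation of $\phi$ by simple functions, see e.g.\ \cite{savare2013self,ambrosio2015bakry}); I would cite it rather than reprove it.
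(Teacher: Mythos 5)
Your proof is correct and follows essentially the same strategy as the paper: localize the Bakry--Émery inequality against a bounded nonnegative weight $\phi$, use lower semicontinuity of the weighted Cheeger form for the left-hand side, and pass to the limit on the right-hand side using the $L^1$-convergence $|Df_n|^2\to|Df|^2$. The only technical differences are that you treat the right-hand side by moving $\mathtt{H}_t$ onto $\phi$ via self-adjointness (the paper instead uses $L^1$-contractivity of $\mathtt{H}_t$), and you cite the weighted lower semicontinuity directly for $\phi\in L^\infty_+$, whereas the paper proves it only for weights bounded away from zero and then removes the restriction via an $\varepsilon$-perturbation; both variants are sound.
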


\begin{proof}
    First, we show that for all Borel function $\varphi:X \to [0,+\infty)$ such that there exist $0< c < C <+\infty$ with $c\leq \varphi \leq C$, it holds 
    \begin{equation}
    \operatorname{Ch}_\varphi: L^2(\ttm) \to [0,+\infty], \quad 
    \operatorname{Ch}_\varphi(f):=
    \begin{cases} 
    \int_{X} \varphi |Df|^2 d\ttm, & f\in W^{1,2}(X,d,\ttm)
    \\
    +\infty, & f\in L^2(\ttm)\setminus W^{1,2}(X,d,\ttm)
    \end{cases}
    \end{equation}
    is lower semicontinuous with respect to the $L^2$-convergence. Take $f_n \to f$ in $L^2(\ttm)$. If $\liminf \operatorname{Ch}_\varphi(f_n) = +\infty$, there is nothing to prove, thus we assume it is finite and, up to consider a subsequence, we assume that the limit exists. Moreover, define the finite positive measure $\ttm_\varphi := \varphi \ttm$, which is equivalent to $\ttm$, since $c \ttm \leq  \ttm_\varphi \leq C \ttm$. Now, since the limit of $\operatorname{Ch}_\varphi(f_n)$ is finite, there exists a subsequence $f_{n_k}$ and $G \in L^2(\ttm)$ such that $|D f_{n_k}| \rightharpoonup G$ weakly in $L^2(\ttm_\varphi)$. Moreover, the same limit holds weakly in $L^2(\ttm)$, implying that $G \geq |Df|$, see for example \cite[Proposition 2.1.13]{gigli2020lectures}. In particular
    \[\lim_{n\to +\infty} \operatorname{Ch}_\varphi(f_n) = \lim_{n\to+\infty} \||Df_n|\|_{L^2(\ttm_\varphi)}^2 \geq \|G\|_{L^2(\ttm_\varphi)}^2 = \int_{X} \varphi G^2 \ttm \geq \int_X \varphi |Df|^2 d\ttm.\]
    
    Now, we are done if we prove that 
    \[\int_X \varphi |D(\mathtt{H}_t f)|^2 d\ttm  \leq e^{-2Kt} \int_X \varphi \ \mathtt{H}_t(|Df|^2) d\ttm, \]
    for all non-negative and bounded functions $\varphi$. Consider $f_n \in \mathcal{A}$ such that $f_n \to f$ as in \eqref{eq: dense in energy}. Then, $|Df_n|^2 \to |Df|^2 $ in $L^1(\ttm)$, and by the $L^1$-contraction property of $\ttH_t$, also $\ttH_t(|Df_n|^2) \to \ttH_t(|Df|^2) $ in $L^1(\ttm)$. Then, for all $\varepsilon>0$, we have 
    \[
    \begin{aligned}
    \int_X & \varphi |D(\mathtt{H}_t f)|^2 d\ttm \leq  \int_X  (\varphi+\varepsilon) |D(\mathtt{H}_t f)|^2 d\ttm \leq   \liminf_{n\to+\infty} \int_X (\varphi+\varepsilon) |D(\mathtt{H}_t f_n)|^2 d\ttm 
    \\
    & \leq \liminf_{n\to +\infty} e^{-2Kt} \int_X (\varphi +\varepsilon)\ \mathtt{H}_t(|Df_n|^2) d\ttm = e^{-2Kt}\left( \int_X \varphi \ \mathtt{H}_t(|Df|^2) d\ttm + \varepsilon \int_X \ttH_t(|Df|^2)d\ttm \right),  
    \end{aligned}
    \]
    and we conclude by arbitrariness of $\varepsilon>0$, since $\int_X \ttH_t(|Df|^2)d\ttm<+\infty$.
\end{proof}

\subsection{Lack of Sobolev-to-Lipschitz and Poincar\'e inequality}\label{subsec: sob to lip poincare}

\subsubsection{Euclidean case}
Let $\pi \in \PP(\bold{T})$ and $\nu\in \PP_p(\R^d)$ be an atomless measure. Here we show that, for suitable $p>1$, the $p$-Sobolev-to-Lipschitz and the Poincaré inequality properties fail in the metric measure space $(\PP_p(\R^d),W_p,\Q_{\pi,\nu})$. In the next theorem, we build a function that will be a counterexample to both properties.

\begin{teorema}\label{thm: euclidean case}
    Let $p\in(1,+\infty)$ and $r\in(1,+\infty)$ be such that $p\geq \frac{r}{r-1}$ and $\operatorname{cap}_{r,\nu}(\Delta) = 0$. Assume that $\pi \neq \delta_{\mathbf{a}}$ for all $\mathbf{a}\in \bold{T}$. 
    \\
    Then, there exists a function $F \in W^{1,p}(\PP_p(\R^d),W_p,Q_{\pi,\nu})$ for which $|DF| \equiv 0$, but it does not admit a constant representative.
\end{teorema}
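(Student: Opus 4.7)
The plan is to exploit the atomic superposition principle (Theorem~\ref{main theorem metric}) by building a functional depending only on the ordered weight sequence of the atoms: along every $AC^{p'}$ curve arising from a $p'$-test plan, this sequence is rigidly constant in time, and the functional will therefore have a vanishing weak upper gradient.

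\smallskip
\noindent\emph{Construction of $F$.} Define $\mathbf{a}:\PP(\R^d)\to\bold{T}$ by sending $\mu\in\PPpa(\R^d)$ to the decreasing rearrangement $(a_i(\mu))_{i\geq 1}$ of its atomic weights, and extend arbitrarily (in a Borel fashion) outside $\PPpa(\R^d)$. By Lemma~\ref{lemma: atomic topology}, $\mathbf{a}|_{\PPpa}$ is continuous in the atomic topology, so $\mathbf{a}$ is Borel on $\PP(\R^d)$. From $Q_{\pi,\nu}=\operatorname{em}_\sharp(\pi\otimes\nu^\infty)$ and atomlessness of $\nu$ one reads off $\mathbf{a}_\sharp Q_{\pi,\nu}=\pi$. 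By the assumption $\pi\neq\delta_{\mathbf{b}}$, pick two disjoint Borel sets $A_0,A_1\subset\bold{T}$ with $\pi(A_i)>0$, and set
\[
F:=\indi_{A_0}\circ\mathbf{a}\in L^\infty(Q_{\pi,\nu})\subset L^p(Q_{\pi,\nu}).
\]

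\smallskip
\noindent\emph{Vanishing weak gradient via the atomic lifting.} Let $\Pi$ be a $p'$-test plan on $(\PP_p(\R^d),W_p,Q_{\pi,\nu})$: the compression condition gives $(\mathfrak{e}_t)_\sharp\Pi=u_tQ_{\pi,\nu}$ with $u_t\leq C$ uniformly in $t$, so $u\in L^1_t(L^\infty(Q_{\pi,\nu}))$ and $q=\infty$ is admissible; moreover the $p'$-energy of $\Pi$ is finite. When $p\leq 2$, H\"older's inequality on the finite measure $\mathcal{L}^1\otimes\Pi$ upgrades the $p'$-energy bound to a $p$-energy bound and we apply Theorem~\ref{main theorem metric} with exponent $p$; when $p\geq 2$, the comparison $W_{p'}\leq W_p$ realises $\Pi$ as concentrated on $AC^{p'}((\PP_{p'},W_{p'}))$ and we apply the theorem with exponent $p'$. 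In either case, the balance condition $(4)$ becomes $\tilde{p}'/r\leq 1$, which is exactly the hypothesis $p\geq r/(r-1)$. Theorem~\ref{main theorem metric} furnishes $\mathfrak{L}\in\PP(\PPpa(AC([0,1],\R^d)))$ with $E_\sharp\mathfrak{L}=\Pi$. For $\Pi$-a.e.\ $\boldsymbol{\mu}$ there exists $\lambda=\sum_i\alpha_i\delta_{\gamma_i}\in\PPpa(C([0,1],\R^d))$ such that $\mu_t=\sum_i\alpha_i\delta_{\gamma_i(t)}$, so $\mathbf{a}(\mu_t)=(\alpha_i)_{i\geq 1}$ is independent of $t$. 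Hence $F\circ\boldsymbol{\mu}$ is constant on $[0,1]$, so $(F\circ\boldsymbol{\mu})'\equiv 0$ for $\mathcal{L}^1\otimes\Pi$-a.e.\ $(t,\boldsymbol{\mu})$. The constant $G\equiv 0$ is therefore a $p$-weak upper gradient of $F$, whence $F\in W^{1,p}(\PP_p(\R^d),W_p,Q_{\pi,\nu})$ with $|DF|\equiv 0$ $Q_{\pi,\nu}$-a.e.

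\smallskip
\noindent\emph{No constant representative and main obstacle.} From $\mathbf{a}_\sharp Q_{\pi,\nu}=\pi$ we get
\[
Q_{\pi,\nu}(F=1)=\pi(A_0)>0,\qquad Q_{\pi,\nu}(F=0)\geq \pi(A_1)>0,
\]
so $F$ admits no $Q_{\pi,\nu}$-a.e.\ constant representative, and \eqref{eq: poincare intro} fails as a consequence. The delicate point is the matching of the integrability exponents when invoking Theorem~\ref{main theorem metric}; the rigidity of the weight sequence along the atomic lift, which is the actual mechanism forcing $F$ to be constant along $\Pi$-a.e.\ curve, comes directly from the proof of Lemma~\ref{lemma: main} through the test functions $\hat{L}_{\phi,a}$ of Lemma~\ref{lemma: dual with indicators}.
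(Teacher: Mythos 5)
Your proof follows the paper's strategy: apply Theorem~\ref{main theorem metric} to every $p'$-test plan to obtain a purely atomic lifting, observe that the weight sequence is rigid along the lifting, and build $F$ from the weights alone so that $F\circ\boldsymbol{\mu}$ is a.e.\ constant in time for $\Pi$-a.e.\ $\boldsymbol{\mu}$. Your choice $F=\indi_{A_0}\circ\mathbf{a}$ genuinely differs from the paper's $F_{\underline a}(\mu)=\int\mathds{1}_{[\underline a,1]}(\mu[x])\,d\mu(x)$: there the second half of the proof extracts a suitable threshold $\underline a$ from the structure of $\pi$ via the index $\underline i$, whereas you read off non-constancy immediately from $\mathbf{a}_\sharp Q_{\pi,\nu}=\pi$ and $\pi$ not being a Dirac mass. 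Your mechanism is cleaner, and the measurability and push-forward facts you invoke for $\mathbf{a}$ are correct (the identity $\mathbf{a}_\sharp Q_{\pi,\nu}=\pi$ uses atomlessness of $\nu$, as you note).

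Your exponent bookkeeping, however, contains a concrete error for $p\geq 2$. Applying Theorem~\ref{main theorem metric} with $\tilde p=p'$, condition~(4) with $q=\infty$ becomes $\tilde p'/r=p/r\leq 1$, i.e.\ $p\leq r$; this is \emph{not} the hypothesis $p\geq r/(r-1)$ as you assert in ``In either case, the balance condition $(4)$ becomes $\tilde p'/r\leq 1$, which is exactly the hypothesis $p\geq r/(r-1)$.'' The two are compatible only when $p\in[r',r]$, forcing $r\geq 2$ and excluding $p>r$, where the admissible range $[r',\min(p,p')]$ of exponents for the lifting theorem applied to a $p'$-test plan is empty. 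To be fair, the paper's own proof is no more careful---it too invokes the theorem with $\tilde p=p'$ (producing $\mathfrak L\in\PP(\PPpa(AC^{p'}))$) and simply asserts the test plan ``falls in the hypothesis''---so this is an inherited gap rather than one you introduced. But having explicitly flagged ``the matching of the integrability exponents'' as the delicate point, you should have noticed that your claimed equivalence fails on one of your two cases. For the central application $p=2$, $r\geq 2$ (e.g.\ $r=d$ or $r=k$) the constraint $p\in[r',r]$ is met and the argument is sound.
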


\begin{proof}
    For all $a\in(0,1]$, consider the function $F_{a}(\mu):= \int_{\R^d} \mathds{1}_{[a,1]}(\mu[x]) d\mu(x)$, for all $\mu\in \PP_p(\R^d)$. We prove that $F_a \in W^{1,p}\big(\PP_p(\R^d),W_p,Q_{\pi,\nu}\big)$ and $|DF_a|\equiv 0$. To show that, consider a $p'$-test plan $\Lambda \in \PP(C([0,1],\PP_p(\R^d)))$ and notice that it falls in the hypothesis of Theorem \ref{main theorem metric}, so that there exists $\mathfrak{L}\in \PP(\PPpa(AC^{p'}([0,1],\R^d)))$ such that $E_\sharp \mathfrak{L} = \Lambda$. In particular, \eqref{eq: sob space} with $G\equiv 0$ is implied by the fact that for $\mathfrak{L}$-a.e. $\lambda \in \PPpa(AC^{p'}([0,1],\R^d))$, there exist $\mathbf{a}\in \bold{T}$ and $\{\gamma_i\} \in AC^{p'}([0,1],\R^d)$ such that 
    \[\lambda = \sum_{i=1}^{+\infty} a_i \delta_{\gamma_i} \quad \text{and} \quad F_a((\e_t)_\sharp \lambda) = \sum_{i=1}^{+\infty} \mathds{1}_{[a,1]}(a_i),\]
    which is constant in time, and in particular is in $W^{1,1}(0,1)$, thus $G\equiv 0$ is a $p$-weak upper gradient, which must coincide with $|DF_a|$ by minimality.
    \\
    Now, if \eqref{eq: p-sob to lip} holds, then for all $a\in(0,1]$ there exists a constant $C_a\geq 0$ such that $F_a(\mu) = C_a$ for $Q_{\pi,\nu}$-a.e. $\mu\in \PP_p(\R^d)$. We show that the hypothesis that $\pi$ is not a Dirac measure prevents this. Indeed, define
    \[\underline{i} := \min\left\{ j+1 \ : \ \nexists (s_1,\dots,s_j) \in [0,1]^j \ \text{s.t.} \ \pi(\{s_1\}\times \dots \times \{s_j\}\times[0,1]^{\infty}) = 1 \right\}.\]
    Then, there exists $1 \geq a_1\geq \dots \geq a_{\underline{i}} \geq \underline{a}>0$ such that 
    \[A^+ := \{\mathbf{a}\in \bold{T} \ : \ a_{\underline{i}+1} \geq \underline{a}\}, \quad 0<\pi(A^+) <1, \quad \pi(\{a_1\}\times \dots \times \{a_{\underline{i}}\}\times[0,1]^{\infty}) = 1.\]
    Define then $A^-:= \bold{T}\setminus A^+$, so that
    \[Q_{\pi,\nu}\big( \operatorname{em}(A^+ \times \bold{X}) \big)>0, \quad Q_{\pi,\nu}\big( \operatorname{em}(A^- \times \bold{X}) \big) >0. \]
    At this point, consider $\mu^+ = \sum_{j} a_j^+ \delta_{x_j^+} \in A^+ \times \bold{X}$ and $\mu^- = \sum_{j} a_j^-\delta_{x_j^-} \in A^- \times \bold{X}$, for some $\bold{a}^{\pm}\in \bold{T}$ and $\bold{x}^{\pm}\in \bold{X}$. Notice that $a_j^- < \underline{a}$ for all $j\geq \underline{i}+1$. Then
    \[F_{\underline{a}}(\mu^+) = \bigg(\sum_{j=1}^{\underline{i}} \mathds{1}_{[\underline{a},1]}(a_j)\bigg) + \mathds{1}_{[\underline{a},1]}(a_{\underline{i}+1}^+) + \bigg(\sum_{j\geq \underline{i}+2} \mathds{1}_{[\underline{a},1]}(a_j^{+})\bigg)\geq \underline{i}+1\]
    \[F_{\underline{a}}(\mu^-) = \bigg(\sum_{j=1}^{\underline{i}} \mathds{1}_{[\underline{a},1]}(a_j)\bigg) + \mathds{1}_{[\underline{a},1]}(a_{\underline{i}+1}^-) + \bigg(\sum_{j\geq \underline{i}+2} \mathds{1}_{[\underline{a},1]}(a_j^{-}) \bigg) = \underline{i}.\]
    In particular $F_{\underline{a}}$ cannot admit a constant representative that coincides with it $\Q_{\pi,\nu}$-a.e..
\end{proof}

\begin{co}\label{co: sob-to-Lip R^d}
     Let $p\in(1,+\infty)$ and $r\in(1,+\infty)$ be such that $p\geq \frac{r}{r-1}$ and $\operatorname{cap}_{r,\nu}(\Delta) = 0$. Assume that $\pi \neq \delta_{\mathbf{a}}$ for all $\mathbf{a}\in \bold{T}$. 
     Then, $W^{1,p}\big(\PP_p(\R^d),W_p,Q_{\pi,\nu}\big)$ does not satisfy \eqref{eq: p-sob to lip}.
\end{co}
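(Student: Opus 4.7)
The plan is very short: the corollary follows directly from the counterexample produced in Theorem \ref{thm: euclidean case}. Indeed, the Sobolev-to-Lipschitz property \eqref{eq: p-sob to lip} applied with $L = 0$ says that every $F \in W^{1,p}(\PP_p(\R^d),W_p,Q_{\pi,\nu})$ with $|DF| \leq 0$ (hence $|DF| \equiv 0$ in $L^p(Q_{\pi,\nu})$) must admit a $0$-Lipschitz representative, i.e.\ a constant representative. Theorem \ref{thm: euclidean case} produces, under exactly the hypotheses of the corollary, a function $F = F_{\underline{a}}$ lying in $W^{1,p}(\PP_p(\R^d),W_p,Q_{\pi,\nu})$ with $|DF| \equiv 0$ yet not admitting any constant $Q_{\pi,\nu}$-a.e.\ representative (because it takes at least two distinct integer values on two sets of positive $Q_{\pi,\nu}$-measure, namely $\operatorname{em}(A^+ \times \bold{X})$ and $\operatorname{em}(A^- \times \bold{X})$). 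This immediately contradicts \eqref{eq: p-sob to lip}, proving the corollary.

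There is no real obstacle: the whole work has already been done in Theorem \ref{thm: euclidean case}, whose proof crucially used the metric atomic nested superposition principle (Theorem \ref{main theorem metric}) to lift every $p'$-test plan $\Lambda$ to an $\mathfrak{L}\in \PP(\PPpa(AC^{p'}([0,1],\R^d)))$. This lifting forces the weights of the atoms of $(\e_t)_\sharp\lambda$ to be constant in time for $\mathfrak{L}$-a.e.\ $\lambda$, so that $t\mapsto F_a((\e_t)_\sharp\lambda)$ is constant and $G\equiv 0$ is a $p$-weak upper gradient, while the assumption $\pi \neq \delta_{\bold{a}}$ supplies two disjoint sets of positive $Q_{\pi,\nu}$-measure on which $F_a$ takes different integer values.
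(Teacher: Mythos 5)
Your proposal is correct and follows essentially the same route as the paper: both derive the contradiction from the function $F_{\underline{a}}$ constructed in Theorem~\ref{thm: euclidean case}, which lies in $W^{1,p}$ with vanishing weak gradient yet admits no constant $Q_{\pi,\nu}$-a.e.\ representative. The only cosmetic difference is that you apply \eqref{eq: p-sob to lip} directly with $L=0$, whereas the paper applies it with $L=\varepsilon$ and then lets $\varepsilon\to 0$; the two formulations are trivially equivalent.
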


\begin{proof}
    If \eqref{eq: p-sob to lip}, then the function $F$ from Theorem \ref{thm: euclidean case} would admit a representative $\Tilde{F}$ satisfying $|\tilde{F}(\mu) - \tilde{F}(\nu)| \leq \varepsilon W_p(\mu,\nu)$ for all $\mu,\nu \in \PP_p(\R^d)$. Letting $\varepsilon\to 0$, gives that $\tilde{F}$ is constant, giving a contradiction.
\end{proof}

\begin{co}\label{co: poincare R^d}
    Let $p\in(1,+\infty)$ and $r\in(1,+\infty)$ be such that $p\geq \frac{r}{r-1}$ and $\operatorname{cap}_{r,\nu}(\Delta) = 0$. Assume that $\pi \neq \delta_{\mathbf{a}}$ for all $\mathbf{a}\in \bold{T}$. 
    Then, there exists a function $F \in W^{1,p}(\PP_p(\R^d),W_p,Q_{\pi,\nu})$ such that 
    \begin{equation}\label{eq: non poincare}
        \int_{\PP_p(\R^d)} \bigg| F(\mu) - \int_{\PP_p(\R^d)} F dQ_{\pi,\nu} \bigg|^p dQ_{\pi,\nu}(\mu) > N \int_{\PP_p(\R^d)} |DF|^p(\mu) dQ_{\pi,\nu} \quad \text{for all }N\in \N.
    \end{equation}
    In particular, the metric measure space $(\PP_p(\R^d),W_p,Q_{\pi,\nu})$ does not support a Poincaré inequality.
\end{co}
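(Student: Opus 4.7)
The plan is to reuse verbatim the function $F := F_{\underline{a}}$ constructed in the proof of Theorem \ref{thm: euclidean case}, where $\underline{a} > 0$ is chosen exactly as in that proof. Theorem \ref{thm: euclidean case} already delivers $F \in W^{1,p}(\PP_p(\R^d),W_p,Q_{\pi,\nu})$ with $|DF| \equiv 0$ $Q_{\pi,\nu}$-a.e., so the right-hand side of \eqref{eq: non poincare} is identically zero for every $N \in \N$. Thus the whole argument reduces to checking (i) that $F \in L^p(Q_{\pi,\nu})$ and (ii) that $F$ is not $Q_{\pi,\nu}$-a.e. constant, so that the left-hand side of \eqref{eq: non poincare} is strictly positive.

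Both checks are immediate. For (i), writing a generic purely atomic $\mu = \sum_i a_i \delta_{x_i}$, the definition of $F_{\underline{a}}$ yields $F(\mu) = \sum_i a_i \mathds{1}_{[\underline{a},1]}(a_i) \in [0,1]$, hence $F \in L^\infty(Q_{\pi,\nu}) \subset L^p(Q_{\pi,\nu})$. For (ii), the non-Dirac assumption on $\pi$ produces, as already worked out in the last paragraph of the proof of Theorem \ref{thm: euclidean case}, two disjoint sets $\operatorname{em}(A^\pm \times \boldsymbol{X})$ of strictly positive $Q_{\pi,\nu}$-measure on which $F$ takes distinct values; in particular $F$ is not $Q_{\pi,\nu}$-a.e. constant, and so $\int |F - \int F\,dQ_{\pi,\nu}|^p\,dQ_{\pi,\nu} > 0$.

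Combining these two facts establishes \eqref{eq: non poincare} at once for every $N \in \N$. The absence of any $(p,p)$-Poincaré inequality of the form $\|F - \bar F\|_{L^p(Q_{\pi,\nu})} \leq C \|DF\|_{L^p(Q_{\pi,\nu})}$ on $(\PP_p(\R^d),W_p,Q_{\pi,\nu})$ follows by a one-line contradiction: picking $N > C^p$ in \eqref{eq: non poincare} directly contradicts the inequality applied to the chosen $F$. No step of this plan is delicate: the whole content is already encoded in Theorem \ref{thm: euclidean case}, and the corollary is essentially a tautological repackaging of the fact that $F$ has null gradient but positive variance.
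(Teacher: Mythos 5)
Your argument is correct and follows exactly the same route as the paper: reuse $F = F_{\underline{a}}$ from Theorem \ref{thm: euclidean case}, note that $|DF|\equiv 0$ kills the right-hand side of \eqref{eq: non poincare} while non-constancy of $F$ keeps the left-hand side strictly positive. (A minor aside: your computed value $F(\mu)=\sum_i a_i\mathds{1}_{[\underline{a},1]}(a_i)$ is the correct one; the paper's proof of Theorem \ref{thm: euclidean case} drops the $a_i$ weight in the analogous formula, but this slip does not affect the conclusion in either treatment, and you correctly cite only the consequence actually needed, namely that $F$ distinguishes two positive-measure sets.)
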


\begin{proof}
    The function $F$ from Theorem \ref{thm: euclidean case} is not constant, implying that the left hand side in \eqref{eq: non poincare} is strictly positive. On the other hand, $|DF| \equiv 0$ implies that the right hand side in \eqref{eq: non poincare} is always null. 
\end{proof}

\subsubsection{Riemannian case}

\begin{teorema}\label{thm: manifold case}
    Let $(\mathbb{S},g_{\mathbb{S}})$ be a boundaryless compact Riemannian manifold of dimension $k\geq 2$, and $\nu\in \PP(\mathbb{S})$ be a probability measure that has a bounded density with respect to $\operatorname{vol}_{\mathbb{S}}$. Assume that $\pi \neq \delta_{\mathbf{a}}$ for all $\mathbf{a}\in \bold{T}$.
    \\
    Then, there exists a function $F \in W^{1,p}(\PP_p(\Ss),W_p,Q_{\pi,\nu})$ for all $p\in[\frac{k}{k-1},+\infty)$, for which $|DF| \equiv 0$ and that does not admit a constant representative.
\end{teorema}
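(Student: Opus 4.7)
The strategy mirrors the Euclidean proof of Theorem \ref{thm: euclidean case}, with Theorem \ref{main theorem metric manifold} playing the role of Theorem \ref{main theorem metric} and Proposition \ref{prop: manifold capacity} providing the critical capacity estimate. For each $a \in (0,1]$, my plan is to consider the Borel functional
\[
F_a(\mu) := \int_\Ss \mathds{1}_{[a,1]}(\mu[x]) \, d\mu(x), \qquad \mu \in \PP_p(\Ss),
\]
which is continuous in the atomic topology and depends only on the atomic part of $\mu$, and to produce $\underline{a}\in(0,1]$ such that $F := F_{\underline{a}}$ lies in $W^{1,p}(\PP_p(\Ss),W_p,Q_{\pi,\nu})$ with $|DF|\equiv 0$ but admits no $Q_{\pi,\nu}$-a.e.\ constant representative.

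To check that $|DF_a|\equiv 0$, I would fix an arbitrary $p'$-test plan $\Lambda \in \PP(C([0,1], \PP_p(\Ss)))$ with compression constant $C$. The bound $(\mathfrak{e}_t)_\sharp \Lambda \leq C\, Q_{\pi,\nu}$ yields hypothesis (1) of Theorem \ref{main theorem metric manifold} with $q = \infty$, while the finite $p'$-energy condition yields hypothesis (2). The standing assumption $p \geq k/(k-1)$ gives $p' \leq k$, so the balance $\tfrac{p'}{k} + \tfrac{1}{q} \leq 1$ holds; the underlying capacity vanishing $\operatorname{cap}_{k,\jmath_\sharp \nu}(\Delta) = 0$ is supplied by Proposition \ref{prop: manifold capacity} thanks to $\nu \ll \operatorname{vol}_\Ss$ with bounded density. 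Theorem \ref{main theorem metric manifold} then produces $\mathfrak{L} \in \PP(\PPpa(AC^{p'}([0,1], \Ss)))$ with $E_\sharp \mathfrak{L} = \Lambda$. For $\mathfrak{L}$-a.e.\ $\lambda = \sum_i a_i \delta_{\gamma_i}$ the sequence of weights $\{a_i\}$ is fixed in $t$, so $t \mapsto F_a((\e_t)_\sharp \lambda)$ is constant; hence $t \mapsto F_a(\mu_t)$ is constant for $\Lambda$-a.e.\ $\boldsymbol{\mu}$. Thus $G\equiv 0$ is a $p$-weak upper gradient, and by minimality $|DF_a|\equiv 0$.

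For the non-constancy of $F_{\underline{a}}$, I would repeat verbatim the argument of Theorem \ref{thm: euclidean case}. Since $\pi$ is not a Dirac mass on $\mathbf{T}$, there exist a minimal index $\underline{i}$ and scalars $a_1 \geq \dots \geq a_{\underline{i}}$ such that $\pi$ is almost surely equal to these values on the first $\underline{i}$ coordinates while the $(\underline{i}+1)$-th coordinate has a nondegenerate distribution. I would then choose $\underline{a}\in(0,a_{\underline{i}}]$ so that both $A^+ := \{\mathbf{a} : a_{\underline{i}+1}\geq \underline{a}\}$ and $A^- := \mathbf{T}\setminus A^+$ carry strictly positive $\pi$-mass. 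Since $\nu$ is atomless, both $\operatorname{em}(A^\pm \times \Ss^\infty_{\neq})$ carry strictly positive $Q_{\pi,\nu}$-measure, and the atom-counting computation of the Euclidean case shows that $F_{\underline{a}}$ strictly separates them, ruling out any $Q_{\pi,\nu}$-a.e.\ constant representative.

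The substantive content is already packaged into Theorem \ref{main theorem metric manifold} and Proposition \ref{prop: manifold capacity}: matching the Sobolev exponent $p$ to the critical capacity exponent $k = \dim \Ss$ via the Nash embedding is the delicate step, while the remainder is a literal transcription of the Euclidean proof. Once $F_{\underline{a}}$ is in hand, the Riemannian analogues of Corollaries \ref{co: sob-to-Lip R^d} and \ref{co: poincare R^d} (failure of Sobolev-to-Lipschitz and of the Poincar\'e inequality) follow from exactly the same arguments, since $|DF|\equiv 0$ together with non-constancy trivially obstructs both properties.
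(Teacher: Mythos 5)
Your proposal is correct and follows essentially the same approach as the paper: take the indicator-based generalized cylinder functional $F_a$, verify via the compression bound (giving $q=\infty$) and the capacity estimate from Proposition \ref{prop: manifold capacity} that any $p'$-test plan satisfies the hypotheses of Theorem \ref{main theorem metric manifold}, lift it to $\mathfrak{L}\in\PP(\PPpa(AC^{p'}([0,1],\Ss)))$, use constancy of the atom weights to conclude $G\equiv 0$ is an upper gradient, and then reuse the atom-counting separation argument from Theorem \ref{thm: euclidean case} for non-constancy. The paper's proof is terser (it states the hypotheses are met and defers the conclusion to the Euclidean case), whereas you explicitly spell out the exponent balance $p\geq k/(k-1)\iff p'\leq k$ and the source of the capacity vanishing, but the argument is the same.
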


\begin{proof}
    The strategy is the same as for the Euclidean case, with the difference that we will make use of the atomic nested superposition principles in Section \ref{subsec: atomic manifolds}. For all $a\in(0,1]$, consider the function $F_a(\mu):= \int_{\Ss} \mathds{1}_{[a,1]}(\mu[y]) d\mu(y)$, for all $\mu\in \PP(\Ss)$. Consider a $p'$-test plan for the metric measure space $(\PP(\Ss),W_p,Q_{\pi,\nu})$, that is $\Lambda\in \PP(C([0,1],\PP(\Ss)))$, which falls in the hypothesis of Theorem \ref{main theorem metric manifold}, with $q=+\infty$. Then, there exists $\mathfrak{L}\in \PP(\PPpa(AC^{p'}([0,1],\Ss)))$ such that $E_\sharp \mathfrak{L} = \Lambda$. In particular, for $\mathfrak{L}$-a.e. $\lambda \in \PPpa(AC^{p'}([0,1],\Ss))$ there exist $\boldsymbol{a}\in \bold{T}$ and $\boldsymbol{y}_i \in AC^{p'}([0,1],\Ss)$ such that 
    \[\lambda = \sum_{i=1}^{+\infty}a_i\delta_{\boldsymbol{y}_i} \quad \text{and} \quad F_a((e_t)_\sharp \lambda) = \sum_{i=1}^{+\infty} \mathds{1}_{[a,1]}(a_i).\]
    Since it does not depend on $t\in[0,T]$, this shows that $G\equiv 0$ is a $p$-upper gradient for the function $F_a$. Then, the proof can be concluded as in Theorem \ref{thm: euclidean case}.
\end{proof}

As for the Euclidean case, the previous theorem provides a counterexample for the $p$-Sobolev-to-Lipschitz and the Poincar\'e inequality.

\begin{co}\label{co: manifold sob to lip}
    Let $(\mathbb{S},g_{\mathbb{S}})$ be a boundaryless compact Riemannian manifold of dimension $k\geq 2$, and $\nu\in \PP(\mathbb{S})$ be a probability measure that has a bounded density with respect to $\operatorname{vol}_{\mathbb{S}}$. Assume that $\pi \neq \delta_{\mathbf{a}}$ for all $\mathbf{a}\in \bold{T}$.
    \\
    Then, the space $W^{1,p}(\PP(\mathbb{S}),W_p,Q_{\pi,\nu})$ does not satisfy \eqref{eq: p-sob to lip} for all $p \in [\frac{k}{k-1},+\infty)$.
\end{co}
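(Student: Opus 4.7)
The plan is to deduce this corollary directly from Theorem \ref{thm: manifold case}, following the same scheme used in Corollary \ref{co: sob-to-Lip R^d} for the Euclidean case. The point is that Theorem \ref{thm: manifold case} already produces, under exactly the hypotheses of the corollary, a function $F \in W^{1,p}(\PP(\Ss),W_p,Q_{\pi,\nu})$ whose minimal weak upper gradient vanishes $Q_{\pi,\nu}$-a.e., but which does not admit any $Q_{\pi,\nu}$-a.e.\ constant representative. So the whole content of the corollary is the (essentially formal) step from this to the failure of \eqref{eq: p-sob to lip}.

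First I would record that, since $|DF| \equiv 0$, for every $\varepsilon > 0$ the function $F$ satisfies $|DF| \leq \varepsilon$ pointwise $Q_{\pi,\nu}$-a.e. If the space $W^{1,p}(\PP(\Ss),W_p,Q_{\pi,\nu})$ satisfied the $p$-Sobolev-to-Lipschitz property, then for each $\varepsilon>0$ there would exist a representative $\tilde F_\varepsilon$ of $F$ which is globally $\varepsilon$-Lipschitz on $(\PP(\Ss),W_p)$, that is
\begin{equation*}
|\tilde F_\varepsilon(\mu)-\tilde F_\varepsilon(\nu)|\le \varepsilon\, W_p(\mu,\nu) \qquad \forall \mu,\nu\in\PP(\Ss).
\end{equation*}

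Next I would fix any $\mu_0 \in \PP(\Ss)$ in the support of $Q_{\pi,\nu}$ and set $c_\varepsilon := \tilde F_\varepsilon(\mu_0)$. Since $(\PP(\Ss),W_p)$ has finite diameter (because $\Ss$ is compact), there is a constant $D>0$ with $W_p(\mu,\nu)\le D$ for all $\mu,\nu$. Hence $|\tilde F_\varepsilon(\mu) - c_\varepsilon| \le \varepsilon D$ for every $\mu$, and in particular $\tilde F_\varepsilon$ is bounded, so it lies in $L^p(Q_{\pi,\nu})$. Since $\tilde F_\varepsilon = F$ $Q_{\pi,\nu}$-a.e., the sequence $\tilde F_\varepsilon$ converges as $\varepsilon\to 0$ in $L^p(Q_{\pi,\nu})$ to a constant (any subsequential limit of $c_\varepsilon$ along a convergent subsequence), and on the other hand it equals $F$ $Q_{\pi,\nu}$-a.e. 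Therefore $F$ admits a $Q_{\pi,\nu}$-a.e.\ constant representative, contradicting the conclusion of Theorem \ref{thm: manifold case}.

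No real obstacle is expected: the argument is purely formal and transfers verbatim from Corollary \ref{co: sob-to-Lip R^d}; the only minor care needed is to invoke compactness of $\Ss$ to ensure boundedness of the Lipschitz representatives, so that convergence in $L^p(Q_{\pi,\nu})$ as $\varepsilon\to 0$ is immediate from dominated convergence. The nontrivial analytic content has already been absorbed into Theorem \ref{thm: manifold case}, whose own proof relies on the atomic nested superposition principle, Theorem \ref{main theorem metric manifold}.
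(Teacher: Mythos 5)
Your proposal is correct and follows essentially the same route the paper takes for the Euclidean analogue, Corollary \ref{co: sob-to-Lip R^d}: apply the $p$-Sobolev-to-Lipschitz hypothesis to the function $F$ from Theorem \ref{thm: manifold case} with upper bound $|DF|\le\varepsilon$, and let $\varepsilon\to0$ to force a constant representative, contradicting Theorem \ref{thm: manifold case}. The paper gives this corollary without a separate proof, clearly intending it to be read off from Theorem \ref{thm: manifold case} exactly as Corollary \ref{co: sob-to-Lip R^d} is read off from Theorem \ref{thm: euclidean case}.

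One small remark on exposition: your sentence ``the sequence $\tilde F_\varepsilon$ converges as $\varepsilon\to 0$ in $L^p(Q_{\pi,\nu})$ to a constant'' is potentially misleading, since each $\tilde F_\varepsilon$ equals $F$ $Q_{\pi,\nu}$-a.e., so as a sequence of $L^p$-classes it is literally constant. What you actually use is that along a subsequence $\varepsilon_n\to0$ with $c_{\varepsilon_n}\to c$, the functions $\tilde F_{\varepsilon_n}$ converge uniformly to $c$ (thanks to $|\tilde F_{\varepsilon_n}-c_{\varepsilon_n}|\le\varepsilon_n D$), and since they all coincide with $F$ $Q_{\pi,\nu}$-a.e., it follows that $F=c$ $Q_{\pi,\nu}$-a.e. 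With that phrasing tightened, your argument is complete and in fact slightly more careful than the paper's two-line proof of the Euclidean corollary, which leaves the $\varepsilon$-dependence of the Lipschitz representative implicit; alternatively one can observe that any two $\varepsilon$-Lipschitz representatives of $F$ agree on $\operatorname{supp} Q_{\pi,\nu}$, so there is a canonical such representative which is $\varepsilon$-Lipschitz for every $\varepsilon$ and hence constant on the support.
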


\begin{co}\label{co: manifold non poincare}
    Let $(\mathbb{S},g_{\mathbb{S}})$ be a boundaryless compact Riemannian manifold of dimension $k\geq 2$, and $\nu\in \PP(\mathbb{S})$ be a probability measure that has a bounded density with respect to $\operatorname{vol}_{\mathbb{S}}$. Assume that $\pi \neq \delta_{\mathbf{a}}$ for all $\mathbf{a}\in \bold{T}$.
    \\
    Then, there exists a function $F \in W^{1,p}(\PP_p(\Ss),W_p,Q_{\pi,\nu})$ such that for all $p \in [\frac{k}{k-1},+\infty)$
    \begin{equation}\label{eq: non poincare manifold}
        \int_{\PP_p(\Ss)} \bigg| F(\mu) - \int_{\PP_p(\Ss)} F dQ_{\pi,\nu} \bigg|^p dQ_{\pi,\nu}(\mu) > N \int_{\PP_p(\Ss)} |DF|^p(\mu) dQ_{\pi,\nu}, \quad \text{ for all }N\in \N.
    \end{equation}
    In particular, the metric measure space $(\PP_p(\Ss),W_p,Q_{\pi,\nu})$ does not support a Poincaré inequality.
\end{co}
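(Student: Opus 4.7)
The plan is to mirror the argument used for Corollary \ref{co: poincare R^d}, replacing the Euclidean atomic superposition with its Riemannian counterpart. Specifically, I would take as the candidate $F$ the function produced by Theorem \ref{thm: manifold case}, namely (for a suitable threshold $a\in(0,1]$)
\begin{equation*}
F(\mu) := F_a(\mu) = \int_{\Ss} \mathds{1}_{[a,1]}(\mu[y])\, d\mu(y),
\end{equation*}
which belongs to $W^{1,p}(\PP_p(\Ss),W_p,Q_{\pi,\nu})$ for every $p\in[\tfrac{k}{k-1},+\infty)$, satisfies $|DF|\equiv 0$ $Q_{\pi,\nu}$-a.e., and, crucially, admits no constant representative.

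The second step is to show that the right-hand side of \eqref{eq: non poincare manifold} vanishes: since $|DF|\equiv 0$, one has $\int_{\PP_p(\Ss)}|DF|^p\,dQ_{\pi,\nu}=0$, so the right-hand side is $N\cdot 0=0$ for every $N\in\N$. Simultaneously, the left-hand side is strictly positive. Indeed, if it were zero, then $F$ would coincide $Q_{\pi,\nu}$-a.e. with the constant $\int F\,dQ_{\pi,\nu}$, contradicting the non-existence of a constant representative established in Theorem \ref{thm: manifold case}. Concretely, one can repeat the $A^+/A^-$ dichotomy argument from the proof of Theorem \ref{thm: euclidean case}: the assumption $\pi\neq\delta_{\mathbf{a}}$ yields two disjoint Borel sets in $\PP_p(\Ss)$, both of positive $Q_{\pi,\nu}$-measure, on which $F$ takes distinct (integer) values, so $F$ is not $Q_{\pi,\nu}$-a.e. constant and the left-hand side is strictly positive.

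Combining these two observations, \eqref{eq: non poincare manifold} holds for every $N\in\N$. For the final sentence of the statement, a Poincaré inequality on $(\PP_p(\Ss),W_p,Q_{\pi,\nu})$ would read
\begin{equation*}
\int |F-\textstyle\int F\,dQ_{\pi,\nu}|^p\,dQ_{\pi,\nu} \le C\int |DF|^p\,dQ_{\pi,\nu}
\end{equation*}
for some finite constant $C$ and all $F\in W^{1,p}$; taking the $F$ above gives a strictly positive left-hand side and a vanishing right-hand side, a contradiction. I do not foresee a genuine obstacle here: all the hard work has been carried out in Theorem \ref{thm: manifold case} (which in turn relies on the atomic nested superposition principle Theorem \ref{main theorem metric manifold} and the capacity estimate Proposition \ref{prop: manifold capacity}); the corollary is a one-line deduction from the existence of a non-constant function with null minimal weak upper gradient.
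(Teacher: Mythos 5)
Your proof is correct and matches the paper's approach: the paper leaves this corollary without a written-out proof precisely because it is the verbatim analogue of Corollary \ref{co: poincare R^d}, namely take $F$ from Theorem \ref{thm: manifold case}, observe that $|DF|\equiv 0$ annihilates the right-hand side while the non-existence of a constant representative forces the left-hand side to be strictly positive. Your suggestion to re-run the $A^+/A^-$ dichotomy is harmless but redundant, since Theorem \ref{thm: manifold case} already delivers the non-constancy as part of its conclusion.
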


\subsection{Bakry--\'Emery curvature condition}\label{subsec: BE}
In this subsection, we establish a Bakry--\'Emery curvature dimension condition for the space $(\PP(\Ss),W_2,Q_{\pi,\nu})$, under the following assumption.

\begin{ass}\label{assumption: manifold}
    $(\Ss,g)$ is a compact, oriented and boundaryless Riemannian manifold of dimension $k\geq 2$, with associated volume measure $\operatorname{vol}_{\mathtt{g}}$. Moreover, $\nu = \rho \operatorname{vol}_{\mathtt{g}} \in \PP(\Ss)$ with $\rho = e^{-V}$ with $V\in C^\infty(\Ss)$.
\end{ass}

The necessity of working with a compact Riemannian manifold comes from the characterization of the diffusion process, and consequently of its associated semigroup $(\mathtt{H}^{\pi,\nu}_t)_{t\geq 0}$, induced by the Cheeger energy in $(\PP(\Ss),W_2,Q_{\pi,\nu})$, which is one of the main results of \cite[Theorem 3.24]{delloschiavo2024massive}. Thus, that hypothesis can be relaxed as soon as such a characterization is recovered: it is still open for non-compact underlying spaces (see \cite[Example 3.40]{delloschiavo2024massive}). 

Next, we introduce the main notation that we need to describe the aforementioned characterization. 

\subsubsection{Rescaled and product Riemannian metric} We will use the following notation:
\begin{enumerate}
    \item $\nabla^{\mathtt{g}}$ the gradient operator of $(\Ss,\mathtt{g})$, which is characterized by requiring that for all $f\in C^{1}(\Ss)$, $y\in \Ss$ and $v\in T_y\Ss$, $d_yf(v) = \mathtt{g}_y(\nabla^g f(y), v)$;
    \item for all $f\in C^1(\Ss)$ and $y\in \Ss$, denote $\Gamma^{\mathtt{g}}f(y) := \mathtt{g}_y(\nabla^\mathtt{g} f(y), \nabla^\mathtt{g} f(y))$. It is actually well defined for all $f\in W^{1,2}(\Ss,\mathtt{g})$ and it is called the \textit{carr\'e du champ} associated to its Dirichlet form;
    \item for all $y\in \Ss$, we can uniquely associate to the measure $\nu$ a diffusion process $(Y_t^{y,\nu})_{t\geq 0}$ with $Y_0^{y,\nu} = y$. Formally, it is a solution of the stochastic differential equation $dY_t^{y,\nu} = -\nabla V dt + \sqrt{2}dW_t^{y}$, where $(W_t^y)_{t\geq 0}$ is the Wiener process of the manifold exiting from $y$;
    \item the associated linear and contractive heat semigroup is denoted, for all $t\geq 0$, $\mathtt{h}_t^{\mathtt{g},\nu}:L^p(\nu) \to L^p(\nu)$, for $p\in[1,+\infty)$;
    \item the Ricci tensor of $(\Ss,\ttg)$ is denoted as $\operatorname{Ric}^{\mathtt{g}}$. We say that it is bounded from below by $K \in \R$ if $\operatorname{Ric}^{\mathtt{g}} \geq K \mathtt{g}$;
    \item we say that the triple $(\Ss,d_\ttg, \nu)$ satisfies the \textit{Bakry--\'Emery curvature condition} \ref{eq: BE}, with $K\in \R$, if
    \begin{equation}
        \Gamma^{\mathtt{g}}(\mathtt{h}^{\mathtt{g},\nu}_t f) \leq e^{-2Kt} \mathtt{h}^{\mathtt{g},\nu}_t(\Gamma^\mathtt{g}f), \quad \forall f\in C^1(\Ss), \ \forall t\geq 0.
    \end{equation} 
    Recall that, in this smooth Riemannian setting, if $\nu= \operatorname{vol}_\ttg$, \ref{eq: BE} is satisfied if and only if $\operatorname{Ric}^\ttg \geq K \ttg$.
\end{enumerate}

For all $a>0$, we define the rescaled metric $\mathtt{g}^a := a\ttg$, for which the objects introduced above rescale as (we omit the term $\ttg$ for the sake of notation)
\begin{equation}\label{eq: rescaled quantites}
\begin{gathered}
    \nabla^{a} = \frac{1}{a}\nabla, \quad \Gamma^{a} = \frac{1}{a}\Gamma , \quad\operatorname{vol}_{a} = a^{\frac{k}{2}} \operatorname{vol},
    \\
    \tth_t^{a} = \tth_{t/a}, \quad Y_t^{y,\nu,a} = Y_{t/a}^{y,\nu} , \quad\operatorname{Ric}^{a} = \operatorname{Ric}.
\end{gathered}
\end{equation}

In particular, if $(\Ss,\ttg)$ has Ricci curvature bounded from below by $K$, then $(\Ss,\ttg^a)$ has Ricci curvature bounded from below by $K/a$.

We proceed looking at the curvature properties of the product between rescaled spaces. Fix $\mathbf{a} = (a_1,a_2,\dots) \in \bold{T}_0$ and, for all $n\geq 1$, on the manifold $\Ss^n$ we consider the Riemannian metric $\ttg^{n,\mathbf{a}} := \ttg^{a_1}\oplus \dots\oplus \ttg^{a_n}$, which raises the objects
\begin{equation}\label{eq: product}
\begin{gathered}
    \nabla^{n,\mathbf{a}}:= \nabla^{\ttg^{n,\mathbf{a}}} = \nabla^{a_1}\oplus \dots \oplus \nabla^{a_n}, \quad\Gamma^{n,\mathbf{a}}:= \Gamma^{\ttg^{n,\mathbf{a}}} = \Gamma^{a_1}\oplus \dots \oplus \Gamma^{a_n}, 
    \\
    \operatorname{vol}_{n,\mathbf{a}} := \operatorname{vol}_{\ttg^{n,\mathbf{a}}} = \operatorname{vol}_{a_1}\otimes \dots\otimes \operatorname{vol}_{a_n},\quad 
    Y_t^{\mathrm{y},\nu,n,\mathbf{a}}:= \big( Y_{t/a_1}^{y_1,\nu}, \dots, Y_{t/a_n}^{y_n,\nu} \big),
    \\
    \ttH^{n,\mathbf{a}}:= \tth^{\ttg^{n,\mathbf{a}}} = \big( \tth_{t/a_1}, \dots, \tth_{t/a_n} \big), 
    \quad
    \operatorname{Ric}_{n,\mathbf{a}} := \operatorname{Ric}_{\ttg^{n,\mathbf{a}}} = \operatorname{Ric}\oplus \dots\oplus \operatorname{Ric}.
\end{gathered}
\end{equation}
where $\mathrm{y} = (y_1,y_2,\dots)\in \Ss^{\infty}$. Notice that the diffusion process $Y_t^{\mathrm{y},\nu,n,\mathbf{a}}$ is well-posed (existence and uniqueness for any starting vector $\mathrm{y}$) thanks to \cite[Corollary 2.25]{delloschiavo2024massive}.

\begin{lemma}\label{lemma: curvature product}
    Assume that $\operatorname{Ric}^{\ttg} \geq K \ttg$. Then, the Riemannian manifold $(\Ss^{n,\mathbf{a}}, \ttg^{n,\mathbf{a}})$ has Ricci curvature bounded from below by
    \[K^{n,\mathbf{a}} := 
    \begin{cases}
    \frac{K}{a_1} & \text{ if }K\geq0; \\
    \frac{K}{a_n} & \text{ if }K<0.
    \end{cases}\]
    \\
    Moreover, if $\nu = e^{-V}\operatorname{vol}^\ttg \in \PP(\Ss)$ is as in Assumption \ref{assumption: manifold} and $V$ is $\lambda$-convex, then the metric measure space $(\Ss^n, \ttg^{n,\mathbf{a}}, \nu^{n})$ satisfies $\operatorname{BE}( K^{n,\mathbf{a},\lambda} ,\infty)$, with 
    \[K^{n,\mathbf{a},\lambda} := 
    \begin{cases}
    \frac{K+\lambda}{a_1} & \text{ if }K+\lambda\geq0; \\
    \frac{K+\lambda}{a_n} & \text{ if }K+\lambda<0.
    \end{cases}\]
\end{lemma}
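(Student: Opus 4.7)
My plan is to reduce everything to the elementary observation that constant rescaling of a Riemannian metric preserves the Levi-Civita connection. Writing $\ttg^a = a\ttg$, the factor $a$ cancels between $\ttg^{-1}$ and $\partial\ttg$ in the Christoffel symbols, so the connection --- and hence the Ricci $(0,2)$-tensor and every Hessian $\operatorname{Hess}^\ttg f$ --- is unaffected by rescaling. From $\operatorname{Ric}^\ttg \geq K\ttg$ I therefore obtain
\[
\operatorname{Ric}^{\ttg^a} \;=\; \operatorname{Ric}^\ttg \;\geq\; K\ttg \;=\; \tfrac{K}{a}\ttg^a,
\]
so $(\Ss,\ttg^a)$ has Ricci curvature bounded below by $K/a$. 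This single rescaling principle drives both parts of the lemma.

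For the unweighted product bound, I would invoke the standard decomposition of the Ricci tensor of a Riemannian product as the direct sum of the Ricci tensors of the factors:
\[
\operatorname{Ric}^{\ttg^{n,\mathbf{a}}} \;=\; \bigoplus_{i=1}^n \operatorname{Ric}^{\ttg^{a_i}} \;\geq\; \bigoplus_{i=1}^n \tfrac{K}{a_i}\ttg^{a_i} \;\geq\; \Bigl(\min_{1\leq i\leq n}\tfrac{K}{a_i}\Bigr)\ttg^{n,\mathbf{a}}.
\]
Since $\mathbf{a}\in\bold{T}_0$ forces $a_1\geq a_2\geq\dots\geq a_n>0$, a sign analysis of $K$ identifies the minimum as $K/a_1$ when $K\geq 0$ and $K/a_n$ when $K<0$, which is exactly $K^{n,\mathbf{a}}$.

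For the weighted statement, my plan is to use the classical equivalence on a smooth compact weighted Riemannian manifold between $\operatorname{BE}(\kappa,\infty)$ for $(\Ss,\ttg,e^{-V}\operatorname{vol}^\ttg)$ and the pointwise bound $\operatorname{Ric}^\ttg + \operatorname{Hess}^\ttg V \geq \kappa\ttg$. The main bookkeeping obstacle here is that the reference volume changes under rescaling: since $\operatorname{vol}^{\ttg^a}=a^{k/2}\operatorname{vol}^\ttg$, I rewrite $\nu = e^{-V_a}\operatorname{vol}^{\ttg^a}$ with $V_a = V + \tfrac{k}{2}\log a$. The additive constant is killed by the Hessian, and the invariance of the connection gives $\operatorname{Hess}^{\ttg^a}V_a = \operatorname{Hess}^\ttg V$; thus $\lambda$-convexity of $V$ yields
\[
\operatorname{Ric}^{\ttg^a} + \operatorname{Hess}^{\ttg^a}V_a \;=\; \operatorname{Ric}^\ttg + \operatorname{Hess}^\ttg V \;\geq\; (K+\lambda)\ttg \;=\; \tfrac{K+\lambda}{a}\ttg^a,
\]
i.e., $\operatorname{BE}((K+\lambda)/a,\infty)$ on each factor. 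Tensorising as in the Ricci case --- the potential for $\nu^n$ with respect to $\operatorname{vol}^{\ttg^{n,\mathbf{a}}}$ is $W(y_1,\dots,y_n)=\sum_i V(y_i)+\tfrac{k}{2}\sum_i\log a_i$, whose Hessian splits as a direct sum --- the product Bakry--Émery tensor admits the lower bound $\min_i (K+\lambda)/a_i$, and the same monotonicity argument on $\mathbf{a}$ gives the stated $K^{n,\mathbf{a},\lambda}$.
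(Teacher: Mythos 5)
Your proof is correct and follows essentially the same route as the paper: rescaling invariance of the Levi--Civita connection (hence of the Ricci tensor and of Hessians), the direct-sum structure of the product Ricci tensor, the volume-density bookkeeping $\operatorname{vol}^{\ttg^{a}} = a^{k/2}\operatorname{vol}^{\ttg}$, and the monotonicity of $a\mapsto K/a$ on the ordered sequence $\mathbf a$. The only (harmless) deviation is the final citation: you close by invoking the classical smooth-manifold equivalence between $\operatorname{BE}(\kappa,\infty)$ and the Bakry--\'Emery Ricci bound $\operatorname{Ric}+\operatorname{Hess}V\geq\kappa\ttg$, whereas the paper passes through Sturm's $\operatorname{CD}(\kappa,\infty)$ for weighted Riemannian manifolds and then the Savar\'e/Ambrosio--Gigli--Savar\'e equivalence $\operatorname{CD}\Leftrightarrow\operatorname{BE}$ for infinitesimally Hilbertian spaces --- both are standard and rely on the same pointwise tensor inequality you derive.
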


\begin{proof}
    For all $y_1,\dots,y_n \in \Ss$ and $v_i \in T_{y_i}\Ss$, it holds
    \begin{equation}
    \operatorname{Ric}(v_1,v_1)+ \dots + \operatorname{Ric}(v_n,v_n) \geq \frac{K}{a_1} \ttg^{a_1}(v_1,v_1) + \dots + \frac{K}{a_n}\ttg^{a_n}(v_n,v_n), 
    \end{equation}
    which concludes the first statement, thanks to \eqref{eq: product}. Regarding the second statement, notice that 
    \[\nu^n = e^{-V\oplus \dots \oplus V} \operatorname{vol}_\ttg^n = e^{-\left(V\oplus \dots \oplus V + \frac{k}{2}\sum_{i=1}^n \log(a_i)\right)} \operatorname{vol}_{n,\mathbf{a}}.\]
    On the other hand, recall that $V$ is $\lambda$-convex if and only if the Hessian tensor $\operatorname{Hess}_\ttg V \geq \lambda \ttg$. The Hessian is invariant with respect to rescaling, so $V$ $\lambda$-convex implies that for all $i\leq n$, $\operatorname{Hess}_{\ttg^{a_i}} V = \operatorname{Hess}_{\ttg} V  \geq \frac{\lambda}{a_i}\ttg^{a_i}$. Putting everything together, we conclude thanks to the first part of the statement and \cite[Theorem 4.9]{sturm2006geometry}, since the $\operatorname{CD}(K,\infty)$ condition introduced there implies the $\operatorname{BE}(K,\infty)$ for infinitesimally Hilbertian metric measure spaces, see \cite{savare2013self}.
\end{proof}

\subsubsection{Characterization for the heat semigroup of $(\PP(\Ss),W_2, Q_{\pi,\nu})$}
    Under Assumption \ref{assumption: manifold}, further assume that $\pi \in \PP(\bold{T}_0)$, and define $Q_{\pi,\nu}$ as usual (see \eqref{subsec: reference}). Thanks to \cite[Theorem 3.39]{delloschiavo2024massive}, the quadratic pre-Dirichlet form $\big( \widehat{\mathcal{E}}, \operatorname{GC}_c^\infty(\PP(\Ss)) \big)$ defined as 
    \begin{equation}
        \widehat{\mathcal{E}}(\hat{F}) := \int_{\PP(\Ss)} \int_{\Ss} |\nabla_W\hat{F}(y,\mu)|^2
        _\ttg d\mu(y) dQ_{\pi,\nu}(\mu) \quad \forall \hat{F} \in \operatorname{GC}_c^{\infty},
    \end{equation}
    is closable. Moreover, by \cite[Proposition 3.31]{delloschiavo2024massive} and \cite[Theorem 6.2]{fornasier2023density}, it coincides with the Cheeger energy, in the sense that $\mathcal{D}(\widehat{\mathcal{E}}) = W^{1,2}(\PP(\Ss),W_2,Q_{\pi,\nu})$ and $\widehat{\mathcal{E}}(F) = \operatorname{Ch}(F)$ for all $F\in W^{1,2}(\PP(\Ss),W_2,Q_{\pi,\nu})$. In particular, for all $\widehat{F}\in \operatorname{GC}_c^1(\PP(\Ss))$, it holds
    \begin{equation}\label{eq: sob = dir form}
        \int_{\Ss} |\nabla_W\hat{F}(y,\mu)|^2
        _\ttg d\mu(y) = |D\widehat{F}|^2(\mu) \quad \text{ for }Q_{\pi,\nu}\text{-a.e. }\mu.
    \end{equation}
    Note that the equivalence with the Cheeger energy can also be recovered applying \cite[Theorem 6.2]{fornasier2023density}, Proposition \ref{prop: manifold capacity} and Proposition \ref{prop: approx GC with cyl}, after using the Nash isometry to embed $\Ss$ in some $\R^d$.

    The equivalence between the Cheeger energy and the Dirichlet form $\widehat{\mathcal{E}}$ allows us to exploit \cite[Theorems 3.15 \& 3.24]{delloschiavo2024massive}, which in particular gives the following characterization of the associated diffusion process: again by \cite[Corollary 2.25]{delloschiavo2024massive}, for all $\mathbf{a} = (a_1,a_2, \dots ) \in \bold{T}_0$ and $\mathrm{y} = (y_1,y_2,\dots) \in \Ss^\infty_{\neq}$, we can find a countable number of independent processes $(Y^{y_i,\nu}_t)_{t\geq 0}$, all defined on a fixed probability space $(\Omega,\mathcal{F},\mathbb{P})$. Then, for all $\mu\in \PPpa(\Ss)$, $\mu = \operatorname{em}(\mathbf{a}, \mathrm{y})$, the diffusion process associated to $(\PP(\Ss),W_2,Q_{\pi,\nu})$ exiting from $\mu$ is given by
    \begin{equation}
        \mu_t := \operatorname{em}\big(\mathbf{a}, \mathrm{Y}^{\mathbf{a},\mathrm{y},\nu}_t\big) = \sum_{i=1}^{+\infty} a_i \delta_{Y_{t/a_i}^{y_i, \nu}}, \quad \mathrm{Y}^{\mathbf{a},\mathrm{y},\nu}_t := \big(Y_{t/a_i}^{y_i, \nu}\big)_{i\geq 1}.
    \end{equation}
    In other words, the masses are kept fixed, while the positions move according to the underlying geometry, and the lighter the particles are (smaller weight $a_i$) the faster they move. We can use this to gain information on the heat semigroup $(H^{\pi,\nu}_t)_{t\geq 0}$: for our purposes, it suffices to see how it acts on generalized cylinder functions. First, we introduce the subsets of generalized cylinder function given $a\in(0,1)$
    \[\operatorname{GC}_{a}^\infty(\PP(\Ss)) := \left\{\Psi\circ \hat{L}_{\hat{\Phi}} \ : \ k\geq1, \ \Psi \in C^\infty(\R^k), \ \hat{\Phi} \in C^\infty_c(\Ss\times(a,1];\R^k) \right\}.\]
    Notice that $\operatorname{GC}_c^\infty(\PP(\Ss)) = \cup_{n\geq 2} \operatorname{GC}_{1/n}^\infty(\PP(\Ss))$.
    Let $n\geq 2$, $\hat{F} = \Psi(\hat{L}_{\hat{\phi}_1},\dots, \hat{L}_{\hat{\phi}_k}) \in \operatorname{GC}_{1/n}^\infty (\PP(\Ss))$,
    and $\mu = \operatorname{em}(\mathbf{a},\mathrm{y})$, with $\mathbf{a} \in \bold{T}_0$ and $\mathrm{y} \in \Ss^\infty_{\neq}$. Then define 
    \[\hat{F}^{n,\mathbf{a}}(y_1,\dots,y_n) = \hat{F} \big(\operatorname{em}\big(\mathbf{a}, \mathrm{y}\big)\big),\]
    which is a function that truly depends just on the first $n$-coordinates of $\mathrm{y}$ (possibly even less coordinates may contribute) whatever $\mathbf{a}\in \bold{T}_0$, and it is smooth on $\Ss^n$, so that
    \begin{equation}\label{eq: heat semigroup on GC}
    \begin{aligned}
        \ttH^{\pi,\nu}_t \hat{F} (\mu) = &\mathbb{E}\left[\hat{F}\big(\operatorname{em}\big(\mathbf{a}, \mathrm{Y}^{\mathbf{a},\mathrm{y},\nu}_t\big)\big) \right] = \mathbb{E}\left[\hat{F}^{n,\mathbf{a}} \big(Y_{t/a_1}^{y_1, \nu},\dots,Y_{t/a_n}^{y_n, \nu}\big) \right] 
        \\
        = & \ttH^{n,\mathbf{a},\nu}_t \hat{F}^{n,\mathbf{a}}(y_1,\dots,y_n).
    \end{aligned}
    \end{equation}
    Moreover, 
    \begin{equation}\label{eq: carre du champ transfer}
        \begin{aligned}
            |D\hat{F}|^2(\mu) = & \int_{\Ss} |\nabla_W \hat{F}(y,\mu)|_\ttg^2 d\mu(y) = \sum_{i=1}^n a_i  |\nabla_W \hat{F}(y_i,\mu)|_\ttg^2 
            \\
            = & 
            \sum_{i=1}^n a_i  \left|\frac{1}{a_i}\nabla_{y_i}^{\ttg}\hat{F}^{n,\mathbf{a}}(y_1,\dots,y_n)\right|_\ttg^2 = 
            \sum_{i=1}^n \frac{1}{a_i} \Gamma_{y_i}^{\ttg}\hat{F}^{n,\mathbf{a}}(y_1,\dots,y_n) 
            \\
            = & 
            \sum_{i=1}^n \Gamma_{y_i}^{a_i}\hat{F}^{n,\mathbf{a}}(y_1,\dots,y_n) = \Gamma^{n,\mathbf{a}} \hat{F}^{n,\mathbf{a}}(y_1,\dots,y_n).
        \end{aligned}
    \end{equation}
More generally, the intertwining property shown in \cite[Theorem 3.15]{delloschiavo2024massive}, gives that 
\begin{equation}\label{eq: intertwining}
    |D(\ttH^{\pi,\nu}_t \hat{F})|^2(\mu) = \Gamma^{n,\mathbf{a}}\big(\ttH^{n,\mathbf{a},\nu}_t \hat{F}^{n,\mathbf{a}}\big)(y_1,\dots,y_n)
\end{equation}

We can finally state and prove the Bakry--\'Emery condition for the metric measure space $(\PP(\Ss),W_2,Q_{\pi,\nu})$.

\begin{teorema}\label{thm: BE wass}
    Let $(\Ss,\ttg)$ be a compact, oriented and boundaryless Riemannian manifold of dimension $k\geq 2$ and with Ricci curvature bounded from below by $K\in \R$. Let $\pi\in \PP(\bold{T}_0)$ and $\nu = e^{-V}\operatorname{vol}_\ttg\in \PP(\Ss)$ as in Assumption \ref{assumption: manifold}.
    \\
    Assume that $V$ is $\lambda$-convex, with $K+\lambda\geq 0$. Then $(\PP(\Ss),W_2,Q_{\pi,\nu})$ satisfies $\operatorname{BE}(K+\lambda,\infty)$.
\end{teorema}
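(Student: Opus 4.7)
The plan is to apply Lemma \ref{lemma: BE dense in energy} with $\mathcal{A} = \operatorname{GC}_c^\infty(\PP(\Ss))$. First, density in energy of $\operatorname{GC}_c^\infty$ in $W^{1,2}(\PP(\Ss),W_2,Q_{\pi,\nu})$ follows from the closability result of \cite{delloschiavo2024massive} combined with the identification (recalled just before \eqref{eq: sob = dir form}) of the closure of $\widehat{\mathcal{E}}|_{\operatorname{GC}_c^\infty}$ with the Cheeger energy. So I would only need to verify
\[|D(\ttH^{\pi,\nu}_t \hat{F})|^2 \leq e^{-2(K+\lambda)t}\,\ttH^{\pi,\nu}_t(|D\hat{F}|^2) \quad Q_{\pi,\nu}\text{-a.e.},\]
for every $\hat{F} \in \operatorname{GC}_c^\infty(\PP(\Ss))$ and every $t\geq 0$.

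Next, I would fix such an $\hat{F}$ and pick $n\geq 2$ with $\hat{F} \in \operatorname{GC}_{1/n}^\infty(\PP(\Ss))$. Since $\pi$ is concentrated on $\bold{T}_0$, for $Q_{\pi,\nu}$-a.e. $\mu$ we may write $\mu = \operatorname{em}(\mathbf{a},\mathrm{y})$ uniquely with $\mathbf{a}\in \bold{T}_0$ and $\mathrm{y}\in \Ss^\infty_{\neq}$. The intertwining identity \eqref{eq: intertwining} immediately recasts the left-hand side as a finite-dimensional quantity on the rescaled product manifold $(\Ss^n,\ttg^{n,\mathbf{a}})$,
\[|D(\ttH^{\pi,\nu}_t \hat{F})|^2(\mu) = \Gamma^{n,\mathbf{a}}\big(\ttH^{n,\mathbf{a},\nu}_t \hat{F}^{n,\mathbf{a}}\big)(y_1,\dots,y_n).\]

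At this point the plan is to invoke Lemma \ref{lemma: curvature product}: under the assumption $K+\lambda\geq 0$, the weighted manifold $(\Ss^n,\ttg^{n,\mathbf{a}},\nu^n)$ satisfies $\operatorname{BE}((K+\lambda)/a_1,\infty)$, so
\[\Gamma^{n,\mathbf{a}}\big(\ttH^{n,\mathbf{a},\nu}_t \hat{F}^{n,\mathbf{a}}\big) \leq e^{-2(K+\lambda)t/a_1}\,\ttH^{n,\mathbf{a},\nu}_t\big(\Gamma^{n,\mathbf{a}}\hat{F}^{n,\mathbf{a}}\big),\]
and using $a_1\leq 1$ together with $K+\lambda\geq 0$ gives $e^{-2(K+\lambda)t/a_1}\leq e^{-2(K+\lambda)t}$, producing the desired prefactor. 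To close the inequality I would translate the right-hand side back by using \eqref{eq: carre du champ transfer} along the diffusion $\mu_t = \operatorname{em}(\mathbf{a},\mathrm{Y}^{\mathbf{a},\mathrm{y},\nu}_t)$, obtaining
\[\ttH^{n,\mathbf{a},\nu}_t\big(\Gamma^{n,\mathbf{a}}\hat{F}^{n,\mathbf{a}}\big)(y_1,\dots,y_n) = \mathbb{E}\big[|D\hat{F}|^2\big(\operatorname{em}(\mathbf{a},\mathrm{Y}^{\mathbf{a},\mathrm{y},\nu}_t)\big)\big] = \ttH^{\pi,\nu}_t(|D\hat{F}|^2)(\mu).\]

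The hard part will be this last identity: $|D\hat{F}|^2$ is not itself a generalized cylinder function in the sense of Definition \ref{def: GC functions}, and its representative on $\Ss^n$ depends on the parameter $\mathbf{a}$ through the rescaled carr\'e du champ $\Gamma^{n,\mathbf{a}}$. The point I would exploit is that the process $\mu_t$ keeps the weight sequence $\mathbf{a}$ frozen, so conditioning on $\mathbf{a}$ keeps the entire computation inside a single fibre $\{\mathbf{a}\}\times \Ss^\infty$, where the transfer formula \eqref{eq: heat semigroup on GC} applies verbatim to the smooth function $\Gamma^{n,\mathbf{a}}\hat{F}^{n,\mathbf{a}}$ on $\Ss^n$. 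Once this bookkeeping is justified, the rest is a clean chain of Lemmas \ref{lemma: BE dense in energy} and \ref{lemma: curvature product}, with the intertwining \eqref{eq: intertwining} acting as the bridge between the Wasserstein-level and the finite-dimensional level.
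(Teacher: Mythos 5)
Your strategy is exactly the paper's: reduce to $\hat{F}\in\operatorname{GC}_c^\infty(\PP(\Ss))$ via Lemma~\ref{lemma: BE dense in energy}, push the computation to the rescaled product $(\Ss^n,\ttg^{n,\mathbf{a}})$ via \eqref{eq: intertwining}, invoke the $\operatorname{BE}((K+\lambda)/a_1,\infty)$ bound from Lemma~\ref{lemma: curvature product}, use $a_1\le 1$ and $K+\lambda\ge 0$, and transfer back. What you flag as ``the hard part'' at the end, however, is a misdiagnosis: $|D\hat{F}|^2$ \emph{is} a generalized cylinder function in the sense of Definition~\ref{def: GC functions}. Indeed, if $\hat{F}=\Psi(\hat{L}_{\hat\phi_1},\dots,\hat{L}_{\hat\phi_k})$, then
\begin{equation*}
    |D\hat{F}|^2(\mu)=\int_\Ss |\nabla_W\hat F(y,\mu)|_\ttg^2\,d\mu(y)
    =\sum_{i,j=1}^k \partial_i\Psi\big(\hat{L}_{\hat{\boldsymbol\phi}}(\mu)\big)\,\partial_j\Psi\big(\hat{L}_{\hat{\boldsymbol\phi}}(\mu)\big)\,\hat{L}_{\hat\psi_{ij}}(\mu),
\end{equation*}
where $\hat\psi_{ij}(y,r):=\ttg_y\big(\nabla^\ttg_y\hat\phi_i(y,r),\nabla^\ttg_y\hat\phi_j(y,r)\big)\in C_c^\infty(\Ss\times(0,1])$, so $|D\hat{F}|^2\in\operatorname{GC}_c^\infty(\PP(\Ss))$. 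Consequently \eqref{eq: heat semigroup on GC} applies directly to $\hat{G}:=|D\hat{F}|^2$, and since \eqref{eq: carre du champ transfer} identifies $\hat{G}^{n,\mathbf{a}}=\Gamma^{n,\mathbf{a}}\hat F^{n,\mathbf{a}}$, one gets $\ttH_t^{n,\mathbf{a},\nu}\big(\Gamma^{n,\mathbf{a}}\hat F^{n,\mathbf{a}}\big)(\mathrm{y})=\ttH_t^{\pi,\nu}|D\hat F|^2(\mu)$ with no extra bookkeeping. Your proposed workaround (``conditioning on $\mathbf{a}$ keeps the computation in a fibre where \eqref{eq: heat semigroup on GC} applies verbatim'') is both vaguer and logically incomplete: \eqref{eq: heat semigroup on GC} is stated for GC functions and their $(n,\mathbf{a})$-representatives, so you would still need to exhibit a single $\mathbf{a}$-independent GC function whose representative in each fibre is $\Gamma^{n,\mathbf{a}}\hat F^{n,\mathbf{a}}$ --- which is precisely what the observation $|D\hat{F}|^2\in\operatorname{GC}_c^\infty$ provides. (Your alternative justification of density in energy, via closability plus Cheeger identification rather than via density of classical cylinder functions and approximation, is harmless and amounts to the same fact.)
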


\begin{proof}
    Thanks to Lemma \ref{lemma: BE dense in energy}, it suffices to prove the Bakry--\'Emery condition for all $\hat{F}\in \operatorname{GC}^{\infty}_c(\PP(\Ss))$. Indeed, the algebra of classic cylinder functions $\operatorname{Cyl}^\infty(\PP(\Ss))$ is dense in energy thanks to \cite[Section 6.2]{fornasier2023density} (see also \cite[Corollary 4.20]{sodini2023general}), and for all cylinder functions $F(\cdot) = \Psi (L_{\phi_1}(\cdot),\dots, L_{\phi_n}(\cdot) )$, it is approximated in the norm of $W^{1,2}(\PP(\Ss),W_2,Q_{\pi,\nu})$ by the generalized cylinder functions $\hat{F}_n \in \operatorname{GC}_c^\infty(\PP(\Ss))$ defined as 
    \[\hat{F}_n(\mu) := \Psi\left( \hat{L}_{\hat{\phi}_1}(\mu),\dots,\hat{L}_{\hat{\phi}_n}(\mu) \right), \quad \hat{\phi}_i(y,r) := \phi_i(y)\eta_n(r),\]
    where $\eta_n\in C_c^\infty((0,1])$ is such that $\eta_n \equiv 1$ in $[1/n,1]$. Then, fix $n\geq 2$ and $\hat{F}\in \operatorname{GC}_{1/n}^{\infty}(\PP(\Ss))$. Thanks to \eqref{eq: intertwining}, applying Lemma \ref{lemma: curvature product}, we have 
    \begin{align*}
         |D(\ttH^{\pi,\nu}_t \hat{F})|^2(\mu) = \Gamma^{n,\mathbf{a}}\big(\ttH^{n,\mathbf{a},\nu}_t \hat{F}^{n,\mathbf{a}}\big)(y_1,\dots,y_n) \leq e^{-2\frac{(K+\lambda)}{a_1}t} \ttH^{n,\mathbf{a},\nu}\big( \Gamma^{n,\mathbf{a}}\hat{F}^{n,\mathbf{a}} \big)(y_1,\dots,y_n).
    \end{align*}
    Now, thanks to \eqref{eq: carre du champ transfer}, $\Gamma^{n,\mathbf{a}}\hat{F}^{n,\mathbf{a}}(y_1,\dots,y_n) = |D\hat{F}|^2(\mu)$, and the latter is again a generalized cylinder function in $\operatorname{GC}_c^\infty(\PP(\Ss))$ satisfying $\Gamma^{n,\mathbf{a}}\hat{F}^{n,\mathbf{a}}(y_1,\dots,y_n) = |D\hat{F}|^2\big(\operatorname{em}(\mathbf{a},\mathrm{y})\big)$, so that \eqref{eq: heat semigroup on GC} gives
    \[\ttH^{n,\mathbf{a},\nu}\big( \Gamma^{n,\mathbf{a}}\hat{F}^{n,\mathbf{a}} \big)(y_1,\dots,y_n) = \ttH^{\pi,\nu}|D\hat{F}|^2(\mu).\]
    We conclude observing that $\frac{(K+\lambda)}{a_1} \geq K+\lambda$ for all $\mathbf{a} = (a_1,a_2,\dots) \in \bold{T}_0$.
\end{proof}

\normalcolor

\printbibliography

\appendix

\section{On the definition of capacity}\label{app: capacity}
\begin{lemma}\label{lemma: capacity appendix}
    Assume that $\nu \ll \mathcal{L}^d$. Then for all $p\geq 1$
    \[\operatorname{cap}_{p,\nu}(\Delta) = \widetilde{\operatorname{cap}}_{p,\nu}(\Delta).\]
\end{lemma}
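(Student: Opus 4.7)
My plan for this lemma is as follows. The inequality $\operatorname{cap}_{p,\nu}(\Delta) \geq \widetilde{\operatorname{cap}}_{p,\nu}(\Delta)$ is immediate, since every $h$ admissible for the first problem satisfies $h = 1 \geq 1$ on $\Delta$ and hence belongs to the larger admissible class defining $\widetilde{\operatorname{cap}}_{p,\nu}(\Delta)$. For the nontrivial reverse inequality, I would fix an admissible $h \in C_b^1(\R^{2d})$ for $\widetilde{\operatorname{cap}}$ and construct, for every small $\varepsilon > 0$, an admissible competitor $\hat h_\varepsilon$ for $\operatorname{cap}_{p,\nu}(\Delta)$ whose energy converges as $\varepsilon \to 0$ to a quantity bounded by $\int |h|^p + |\nabla h|^p\,d\nu\otimes\nu$; taking infimum over $h$ then yields the desired inequality.

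The construction I have in mind is a smooth truncation followed by a constant shift. For each $\varepsilon \in (0, 1/2)$ I would pick a non-decreasing $\psi_\varepsilon \in C^1(\R)$ with $0 \leq \psi_\varepsilon' \leq 1$, such that $\psi_\varepsilon(t) = t$ for $t \leq 1 - \varepsilon$ and $\psi_\varepsilon(t) = 1 - \varepsilon/2$ for $t \geq 1$ (such a $\psi_\varepsilon$ exists because the required increment $\varepsilon/2$ is distributed over an interval of length $\varepsilon$, leaving room for a continuous derivative in $[0,1]$ that vanishes at the right endpoint). I would then set
\[
\hat h_\varepsilon := \psi_\varepsilon(h) + \tfrac{\varepsilon}{2}.
\]
Since $\psi_\varepsilon(h) \leq 1 - \varepsilon/2$ globally, while $\psi_\varepsilon(h) = 1 - \varepsilon/2$ on $\Delta$ (where $h \geq 1$), the function $\hat h_\varepsilon$ lies in $C_b^1$, is bounded above by $1$, and equals $1$ on $\Delta$, hence is a competitor for $\operatorname{cap}_{p,\nu}(\Delta)$.

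Passing to the limit $\varepsilon \to 0$, pointwise everywhere we have
\[
\hat h_\varepsilon \longrightarrow \min(h,1), \qquad \nabla \hat h_\varepsilon \;=\; \psi_\varepsilon'(h)\,\nabla h \longrightarrow \mathds{1}_{\{h \neq 1\}}\,\nabla h.
\]
The dominations $|\hat h_\varepsilon| \leq \|h\|_\infty + 1$ and $|\nabla \hat h_\varepsilon| \leq |\nabla h|$ lie in $L^\infty \subset L^p(\nu\otimes\nu)$ because $h \in C_b^1$, so dominated convergence gives
\[
\int |\hat h_\varepsilon|^p + |\nabla \hat h_\varepsilon|^p\,d\nu\otimes\nu \;\longrightarrow\; \int |\min(h,1)|^p + \mathds{1}_{\{h\neq 1\}}|\nabla h|^p\,d\nu\otimes\nu \;\leq\; \int |h|^p + |\nabla h|^p\,d\nu\otimes\nu,
\]
and the conclusion follows by taking infimum over $h$.

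The main obstacle is the construction of $\psi_\varepsilon$ itself: one cannot directly smoothly cap at the level $1$, since the requirements $\psi_\varepsilon(t) = t$ near $t = 1^-$, $\psi_\varepsilon(t) = 1$ for $t \geq 1$, $\psi_\varepsilon \in C^1$ with $\psi_\varepsilon'(1) = 0$, and $|\psi_\varepsilon'| \leq 1$ are mutually incompatible---the first three force average slope $1$ on the transition interval, while the last two with $\psi_\varepsilon'$ continuous forbid the maximum from attaining $1$. Truncating at $1 - \varepsilon/2$ halves the required average slope and circumvents the obstruction, and the constant shift restores the unit value on $\Delta$. Note that, in this approach, the absolute continuity assumption $\nu \ll \mathcal{L}^d$ does not play an essential role; it would enter more crucially in the alternative route via Lipschitz truncation $\min(h,1)$ followed by mollification, where one needs to pass from $\mathcal{L}^{2d}$-a.e.\ convergence to $\nu\otimes\nu$-a.e.\ convergence.
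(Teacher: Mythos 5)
Your proof is correct and takes a genuinely different, and cleaner, route than the paper's. The paper passes through three steps: Lipschitz truncation $\tilde h := h \wedge 1$, composition with a $1$-Lipschitz retraction $p_\varepsilon$ that collapses an $\varepsilon$-strip around $\Delta$ (to create a flat plateau), and finally mollification $\overline{h}*\rho_\alpha$ to regain $C^1$ regularity while preserving the constraints $g\leq 1$, $g=1$ on $\Delta$. Your smooth truncation $\psi_\varepsilon$ combined with the $+\varepsilon/2$ shift produces an admissible $C^1_b$ competitor in one step, bypassing the retraction and mollification entirely; the observation that one must cap at $1-\varepsilon/2$ rather than $1$ (so that $\psi_\varepsilon'(1)=0$ is compatible with $0\le\psi_\varepsilon'\le1$ and the continuity of $\psi_\varepsilon'$) is exactly the right point to isolate. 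Your closing remark is also accurate: the paper needs $\nu\ll\mathcal{L}^d$ to transfer the $\mathcal{L}^{2d}$-a.e.\ locality identity $\nabla(h\wedge 1)=\mathds{1}_{\{h\leq 1\}}\nabla h$ to a $\nu\otimes\nu$-a.e.\ one, whereas your argument only uses pointwise limits and dominated convergence and so dispenses with that hypothesis; this makes your version of the lemma slightly stronger than the paper's. One small slip: the pointwise limit of $\nabla\hat h_\varepsilon=\psi_\varepsilon'(h)\nabla h$ is $\mathds{1}_{\{h<1\}}\nabla h$, not $\mathds{1}_{\{h\neq 1\}}\nabla h$, since $\psi_\varepsilon'(t)=0$ for all $t\geq 1$ and every $\varepsilon$; this does not affect the final inequality, as either indicator is bounded by $1$.
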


\begin{proof}
Clearly, $\operatorname{cap}_{p,\nu}(\Delta) \geq \widetilde{\operatorname{cap}}_{p,\nu}(\Delta)$, so we focus on the other inequality.
    Let us call $\mathcal{C}(h) := \int |h^p| + |\nabla h|^p d\nu\otimes \nu$. We are going to prove that for any $h\in C_b^1(\R^{2d})$ satisfying $h\geq 1$ on $\Delta$ and for any $\delta>0$, there exists a function $g\in C_b^1(\R^{2d})$ such that $g\leq 1$ everywhere, $g=1$ on $\Delta$ and 
    \begin{equation}\label{eq: proof of cap lemma}
       \mathcal{C}(g) \leq \mathcal{C}(h) + \delta.
    \end{equation}

    \textbf{Step 1}: define $\tilde{h}(x,y) := h(x,y)\wedge 1$. Notice that $\tilde{h}$ is a Lipschitz function, and satisfies $\tilde{h}\leq 1$, $\tilde{h} = 1$ on $\Delta$ and $\mathcal{C}(\tilde{h})\leq \mathcal{C}(h)$, thanks to the locality of the gradient for Lipschitz functions, i.e. 
    \begin{equation}\label{eq: gradient of truncated h}
    \nabla \tilde{h}(x,y) = \nabla h(x,y) \mathds{1}_{\{h\leq 1\}}(x,y) \quad \mathcal{L}^{2d}\text{-a.e.},
    \end{equation}
    and the fact that $\nu \ll \mathcal{L}^d$.

    \textbf{Step 2}: we claim that there exists a Lipschitz function $\overline{h}$ such that $\overline{h}\leq 1$, $\mathcal{C}(\overline{h})\leq \mathcal{C}(\tilde{h}) + \frac{\delta}{2}$ and $\overline{h} = 1$ on $\Delta_{\varepsilon}:= \Delta+B(0,\varepsilon)$, for some $\varepsilon>0$, i.e. there exists a `strip' of length $\varepsilon$ around the diagonal $\Delta$ where the function $\overline{h}$ is constantly $1$.
    \\
    To this aim, define $\pi_\Delta$ and $\pi_{\Delta^\perp}$, respectively, the orthogonal projection over $\Delta$ and $\Delta^\perp$, i.e. 
    \begin{align*}
        \pi_\Delta(x,y) = \left(\frac{x+y}{2}, \frac{x+y}{2}\right), \quad \pi_{\Delta^\perp}(x,y) = \left( \frac{x-y}{2}, - \frac{x-y}{2} \right), \\
        (x,y) = \pi_\Delta(x,y) + \pi_{\Delta^\perp}(x,y), \quad \|(x,y)\|^2 = \|\pi_\Delta(x,y)\|^2 + \|\pi_{\Delta^\perp}(x,y)\|^2.
    \end{align*}
    For any $\varepsilon>0$, define $p_\varepsilon:\R^{2d}\to \R^{2d}$ as follows 
    \begin{equation}
        p_\varepsilon(x,y) := \begin{cases}
            \pi_{\Delta}(x,y) \quad & \text{ if } (x,y) \in \Delta_\varepsilon
            \\
            (x,y) - \varepsilon \frac{\pi_{\Delta^\perp}(x,y)}{\|\pi_{\Delta^\perp}(x,y)\|} & \text{ otherwise}.
        \end{cases}
    \end{equation}
    In the next step, we will prove that $p_\varepsilon$ is $1$-Lipschitz for any $\varepsilon>0$. For now, assume that is true and we continue defining $h_\varepsilon(x,y) := \tilde{h}(p_\varepsilon(x,y))$. 
    We conclude the proof of this step if 
    \[\limsup_{\varepsilon\searrow 0} \mathcal{C}(h_\varepsilon)\leq \mathcal{C}(\tilde{h}).\]
    To this aim, notice that $h_\varepsilon \to \tilde{h}$ pointwise as $\varepsilon \searrow 0$, so that 
    \[\int |h_\varepsilon|^p d\nu\otimes \nu \to \int|\tilde{h}|^p d\nu\otimes \nu\]
    by dominated convergence theorem. Regarding the gradients, notice that $|\nabla \tilde{h}(x,y)|=  |\nabla h(x,y)| \mathds{1}_{\{ h\leq 1\} }(x,y)$ almost everywhere, and the right hand side is an upper semicontinuous function. Then, thanks to the straightforward fact that $(p_\varepsilon)_\#(\nu\otimes \nu) \to \nu\otimes \nu$ in the narrow convergence and 
    \[|\nabla h_\varepsilon(x,y)| \leq |\nabla \tilde{h}(p_\varepsilon(x,y))| = |\nabla h(p_\varepsilon(x,y))| \mathds{1}_{\{h\leq 1\}}(p_\varepsilon(x,y)) \quad \mathcal{L}^{2d}\text{-a.e.},\] 
    we have that 
    \begin{align*}
        \limsup_{\varepsilon\searrow 0} &  \int |\nabla h_\varepsilon|^p(x,y) d\nu\otimes \nu(x,y) \leq \limsup_{\varepsilon\searrow 0} \int |\nabla h(x,y)|^p\mathds{1}_{ \{h\leq 1\} } (x,y ) d\big((p_\varepsilon)_\#(\nu \otimes \nu)\big)(x,y) 
        \\
        \leq & 
        \int |\nabla h(x,y)|^p\mathds{1}_{\{h\leq 1\}}(x,y) d\nu\otimes \nu(x,y) = \int |\nabla \tilde{h}(x,y)|^p d\nu\otimes \nu(x,y).
    \end{align*}

    \textbf{Step 3}: proof that $p_\varepsilon$ is $1$-Lipschitz for all $\varepsilon>0$. We need to verify that $\|p_\varepsilon(x,y) - p_\varepsilon(z,w)\| \leq \|(x,y) - (z,w)\|$ for all $(x,y),(z,w)\in \R^d\times \R^d$, where $\|\cdot\|$ is the Euclidean norm of $\R^{2d}$. There are three cases:
    \begin{itemize}
        \item $(x,y),(z,w) \in \Delta_\varepsilon$, then it is straightforward because any orthogonal projection is $1$-Lipschitz;
        \item $(x,y)\in \Delta_\varepsilon$ and $(z,w)\notin \Delta_\varepsilon$, then notice that $\|\pi_{\Delta^\perp}(x,y)\|<\varepsilon$ and $\|\pi_{\Delta^\perp}(z,w)\|\geq \varepsilon$. Then
        \begin{align*}
            \|p_\varepsilon(x,y) - p_\varepsilon(z,w)\|^2 
             = &
            \| \pi_\Delta(x,y) - \pi_\Delta(z,w) -\left(1- \frac{\varepsilon}{\|\pi_{\Delta^\perp}(z,w)\|} \right)\pi_{\Delta^\perp}(z,w) \|^2
            \\
             = &
            \|\pi_\Delta(x,y) - \pi_\Delta(z,w)\|^2 + \left(\|\pi_{\Delta^\perp}(z,w)\| - \varepsilon\right)^2
            \\
            (\star) \leq &
            \|\pi_\Delta(x,y) - \pi_\Delta(z,w)\|^2 + \|\pi_{\Delta^\perp}(x,y) - \pi_{\Delta^\perp}(z,w)\|^2 
            \\
            = & \|(x,y) - (z,w)\|^2,
        \end{align*}
        where in $(\star)$ we used that $\|\pi_{\Delta^\perp}(z,w)\| - \varepsilon\leq \|\pi_{\Delta^\perp}(z,w)\| - \|\pi_{\Delta^\perp}(x,y)\|\leq \|\pi_{\Delta^\perp}(x,y) - \pi_{\Delta^\perp}(z,w)\|$;
        \item $(x,y),(z,w) \notin \Delta_\varepsilon$. Then 
        \begin{equation}\label{eq: outside strip}
        \begin{aligned}
            \|p_\varepsilon(x,y) & - p_\varepsilon(z,w)\|^2 
            = 
            \|\pi_{\Delta}(x,y) - \pi_{\Delta}(z,w)\|^2
            \\
            & + \left\|\left(1 - \frac{\varepsilon}{\|\pi_{\Delta^\perp}(x,y) \|}\right)\pi_{\Delta^\perp}(x,y) - \left(1 - \frac{\varepsilon}{\|\pi_{\Delta^\perp}(z,w) \|}\right)\pi_{\Delta^\perp}(z,w)\right\|^2.
        \end{aligned}
        \end{equation}
        For simplicity, call $v_1 =\pi_{\Delta^\perp}(x,y)$ and $v_2 =\pi_{\Delta^\perp}(z,w)$, together with $\alpha_i := 1- \varepsilon/\|v_i\|$, for $i=1,2$. Then
        \begin{align*}
            \|\alpha_1 v_1 - \alpha_2 v_2\| = & 
            \|(\alpha_1-\alpha_2)v_1 + \alpha_2 (v_1 - v_2)\|
            \leq 
            |\alpha_1 - \alpha_2|\|v_1\| + \alpha_2 \|v_1 - v_2\|
        \end{align*}
        and 
        \begin{align*}
            |\alpha_1 - \alpha_2|\|v_1\| = \varepsilon\left|\frac{1}{\|v_2\|} - \frac{1}{\|v_1\|}\right|\|v_1\|\leq\varepsilon \frac{\|v_1-v_2\|}{\|v_2\|},
        \end{align*}
        so that 
        \[\|\alpha_1 v_1 - \alpha_2 v_2\| \leq \frac{\varepsilon}{\|v_2\|}\|v_1 - v_2\| + \|v_1-v_2\| - \frac{\varepsilon}{\|v_2\|}\|v_1-v_2\| = \|v_1-v_2\|.\]
        Substituting in \eqref{eq: outside strip} we conclude that $p_\varepsilon$ is $1$-Lipschitz.
    \end{itemize}

    \textbf{Step 4}: convolution and conclusion. Consider $\overline{h}$ as in step 2. Then, for any $0 <\alpha < \varepsilon$, define the functions $g_\alpha:= \overline{h} * \rho_\alpha$, where $\rho_\alpha(x,y) := \frac{1}{\alpha^{2d}}\rho\left(\frac{(x,y)}{\alpha}\right)$ and $\rho$ is a mollifier with support in $B(0,1)$. Notice that each $g_\alpha\in C_b^1(\R^{2d})$ satisfies $h\leq 1$ and $h = 1$ on $\Delta$. We conclude observing that, up to selecting a suitable subsequence $\alpha_n \to 0$, $|g_\alpha|^p + |\nabla g_\alpha|^p \to |\overline{h}|^p + |\nabla \overline{h}|^p$ pointwise almost everywhere, and then by dominated convergence theorem we have that 
    \[\lim_{\alpha \to 0}\mathcal{C}(g_\alpha) = \mathcal{C}(\overline{h}). \qedhere \]
\end{proof}

{\small
		
		\vspace{15pt} (Alessandro Pinzi) Universit\`{a} Commerciale Luigi Bocconi, Dipartimento di Scienze delle Decisioni, \par
		\textsc{via Roentgen 1, 20136 Milano, Italy}
		\par
		\textit{e-mail address}: \textsf{alessandro.pinzi@phd.unibocconi.it}
		\par
		\textit{Orcid}: \textsf{https://orcid.org/0009-0007-9146-5434}
		\par

	}

\end{document}